\newtheorem{thm}{Theorem}[section]
\newtheorem{prop}[thm]{Proposition}
\newtheorem{lem}[thm]{Lemma}
\newtheorem{cor}[thm]{Corollary}
\theoremstyle{definition}
\newtheorem{defi}[thm]{Definition}
\newtheorem{rem}[thm]{Remark}
\newtheorem{ex}[thm]{Example}
\newtheorem{assum}[thm]{Assumption}
\newtheorem{nota}[thm]{Notation}
\newtheorem{setting}{Setting}
\newtheorem{problem}{Problem}
\DeclareMathOperator{\supp}{supp}
\DeclareMathOperator{\card}{card}
\DeclareMathOperator{\diam}{diam}
\DeclareMathOperator{\Patch}{Patch}
\DeclareMathOperator{\Pattern}{Pattern}
\DeclareMathOperator{\LF}{LF}
\DeclareMathOperator{\Map}{Map}
\DeclareMathOperator{\UD}{UD}
\DeclareMathOperator{\Sym}{Sym}
\DeclareMathOperator{\Aut}{Aut}
\DeclareMathOperator{\Del}{Del}
\DeclareMathOperator{\calC}{Cl}
\newcommand{\Od}{\mbox{O}(d)}
\newcommand{\Ed}{\mbox{E}(d)}
\newcommand{\calT}{\mathcal{T}}
\newcommand{\calS}{\mathcal{S}}
\newcommand{\calP}{\mathcal{P}}
\newcommand{\calQ}{\mathcal{Q}}
\newcommand{\calR}{\mathcal{R}}
\newcommand{\calE}{\mathcal{E}}
\newcommand{\frakF}{\mathfrak{F}}
\newcommand{\e}{\varepsilon}
\newcommand{\Rd}{\mathbb{R}^d}
\newcommand{\LD}{\overset{\mathrm{LD}}{\rightarrow}}
\newcommand{\MLD}{\overset{\mathrm{MLD}}{\leftrightarrow}}
\newcommand{\sci}{\wedge}
\renewcommand{\labelenumi}{\arabic{enumi}.}
\title[A general framework]{A general framework for tilings, Delone sets, functions and measures, and
their interrelation}
\author{Yasushi Nagai}
\address{Montanuniversit\"at, Department Mathematik und Informationstechnologie,
Lehrstuhl f\"ur Mathematik und Statistik,
Franz Josef Strasse 18, A-8700 Leoben, Austria}
\email{yasushi.nagai@unileoben.ac.at}
\date{\today}					
\thanks{The author was supported by the project I3346 of the Japan Society for the Promotion of Science (JSPS) and the Austrian Science Fund (FWF)}
\keywords{tiling, Delone set, almost periodic function, almost periodic measure}
\begin{document}

\maketitle

\begin{abstract}
        We define a general framework that includes objects such as tilings, Delone sets, 
        functions and measures.  We define local derivability and mutual local
        derivability (MLD) between any two of these objects in order to describe their
         interrelation. This is a generalization
  of the local derivability and MLD (or S-MLD) for tilings and
       Delone sets
        which are used in the literature, under a mild assumption.
       We show that
 several canonical maps in aperiodic order send an object $\calP$ to one
      that is MLD with $\calP$.
      Moreover we show that,
       for an object $\calP$ and a class $\Sigma$ of objects,
       a mild condition on them assures that
      there exists some $\calQ\in\Sigma$ that is MLD with $\calP$.
      As an application, we study pattern equivariant functions.
      In particular, we show that the space of all pattern-equivariant functions
     contains all the information of the original object up to MLD in a quite general
 setting.
\end{abstract}

\section{Introduction}

Objects such as tilings, Delone (multi) sets, measures and almost periodic functions
have been investigated in the literature, especially after the discovery of
quasicrystals in materials science.
Quasicrystals are not periodic but have long-range order, and the above mathematical
objects with similar properties are studied intensively. Especially, non-periodic
objects with pure point diffraction measures are interesting.
It is a fundamental problem to study which objects have pure point diffraction measures.
A classification of such objects is an ultimate goal.

To define the diffraction measure, one has to convert objects such as tilings to
measures. There are standard ways of converting, such as putting Dirac measures to
 each of points in a Delone set. It is also useful to convert Delone sets to
 tilings, since for certain tilings one has a theory of deformation \cite{MR1971208}
 \cite{MR2201938} and
 cohomology \cite{MR2371606}.
 It is also useful to convert certain tilings (such as Penrose tilings) to
 Meyer sets, a special case of Delone sets for which we have the equivalence of
 algebraic and analytical definitions (\cite{MR1460032}).
In all these cases the original object and the converted one are considered to be
mutually locally
derivable (MLD), which was defined in \cite{MR1132337}.
Two objects that are MLD are considered to be essentially same, at least under the
assumption of finite local complexity.

 However, often these techniques of conversions are folklore. In particular, MLD is
 defined only for patterns. In this article, we generalize the definition of MLD
 (and S-MLD \cite{MR1132337}, if we consider $\Od$-actions) to
 include other objects and
 show under standard conversions the original one and the converted one are MLD.
 Actually we show MLD under a more general setting than is known. 
 This becomes a reference for many researches using such conversions.

This is done by constructing a general framework, \textit{abstract pattern space},
 that includes the above objects
such as tilings and Delone sets.  Each example of abstract pattern space contains
the objects of interest, such as tilings and
Delone sets. In general, we call these objects of interest
\textit{abstract patterns}.
The framework of abstract pattern space is enough to define local derivability and MLD
between abstract patterns.

The framework is very general so that it includes many non-interesting examples.
We often restrain ourselves to interesting cases by putting the following assumptions.
\begin{assum}\label{simplicity_assumption}
\begin{enumerate}
 \item The diameters of components of each abstract pattern are bounded from above.
       For patterns, this means the diameters of elements of each pattern is bounded.
       For other abstract patterns, we assume they
       consists of bounded components
       (Definition \ref{def_bounded_coponents}).
 \item The objects are Delone-deriving (Definition \ref{def_Delone_deriving}).
       That means a Delone set is locally derivable form each object.
 \item Each set $\Sigma$ of abstract patterns of interest is assumed to be
       supremum-closed 
       and  inside a glueable abstract pattern space
       (Definition \ref{def_glueable_pattern_space}).
       This means we can take the ``union'' of ``nice'' family of objects.
       For example, the abstract pattern space of patches satisfies this property since
       we can take the union $\bigcup_i\calP_i$ to obtain a new patch from a family
       of patches $\{\calP_i\mid i\in I\}$ such that tiles in these patches either
       do not intersect or coincide.
\end{enumerate} 
\end{assum}
Under these assumptions our theory of MLD is rich enough to include many examples and
simple enough so that we can prove various results, including the equivalence of our
definition and the one in the literature.

Let us explain the plan of this article more concretely.
Objects such as tilings and Delone sets admit the following
structures, which play important roles explicitly or implicitly.
\begin{enumerate}
 \item They admit cutting-off operation. For example, if $\calT$ is a tiling in $\Rd$ and
       $C\subset\Rd$, we can ``cut off'' $\calT$ by $C$ by considering
       \begin{align*}
	    \calT\sci C=\{T\in\calT\mid T\subset C\}.
       \end{align*}
       By this operation we forget the behavior of $\calT$ outside $C$.
\item Some of the objects ``include'' other objects. For patches this means the usual
      inclusion of two sets; for measures this means one measure is a restriction of
      another.
 \item They admit glueing operation. For example, suppose
       $\{\calP_i\mid i\in I\}$ is a family of patch
       such that if $i,j\in I, T\in\calP_i$ and $S\in\calP_j$, then either
       $S=T$ or $S\cap T=\emptyset$.
       Then we can ``glue'' $\calP_i$'s and obtain a patch $\bigcup_{i\in I}\calP_i$.
 \item There are ``zero elements'', which contains nothing.
       For example, empty set is a patch that contains no tiles; zero function also
       contains no information.
         Such a zero element is often unique for each category
       of objects. 
\end{enumerate}

In Section \ref{section_pattern_space} we study these operations in an abstract
setting. In Subsection \ref{subsection_def_pat_sp}
we first find a set of axioms that the cutting-off operations should satisfy
(Definition \ref{def_pattern_sp}). Several concrete cutting-off operations
of objects such as patches and  point sets are proved to satisfy this axiom.
The sets with such cutting-off operations are called \textit{abstract pattern spaces}
and elements such as tilings and Delone sets in abstract pattern spaces are called
\textit{abstract patterns}. In the rest of this section we study the rest of the
structures given above by capturing them by cutting-off operation.
In Subsection \ref{subsection_order_pat_sp}, we study an order relation between
two abstract patterns. This is an abstract notion which captures ``inclusion'' in the
above list. This relation also gives a way to capture  the operation of
``taking the union'' of the list,
and in Subsection \ref{subsection_glueable_pat_sp} we define a
\textit{glueable abstract pattern space}, where we can ``often'' glue objects.
There we also show several examples of abstract pattern spaces are glueable.
In Subsection \ref{subsection_zero_elements} we define zero elements in an abstract
setting and give a sufficient condition for its uniqueness.

Abstract pattern spaces often admit group actions by the group of isometries of the
ambient space where abstract patterns live.
We study abstract pattern spaces with group actions in Section
\ref{section_gammma-pattern_space}.
In Subsection \ref{subsection_def_Gamma-pat_sp} we give the axiom that such group
actions should satisfy and examples.

In Subsection \ref{subsection_local_derivable} we define local derivability between
two abstract patterns, using the cutting-off operation and the group action.
We will prove that this is a generalization of local derivability and MLD
in the literature under a
mild assumption, by using the structures on abstract pattern spaces given above.

As was mentioned earlier, we prove MLD between various abstract patterns.
We answer the following two questions:
\begin{problem}\label{problem}
\begin{enumerate}
 \item There are several canonical maps, such as
        \begin{enumerate}
	 \item the map that sends a Delone set $D$ in a proper metric space $X$ to
                 a positive measure $\sum_{x\in D}\delta_x$, where
                $\delta_x$ is the Dirac measure at a point  $x$,
        \item  the map that sends a continuous bounded function $f$ on a 
              locally compact abelian group $G$ to
                a measure $fd\mu$,
                 where $\mu$ is a Haar measure,
	\end{enumerate}
        and so on. Do these maps send an object $\calP$ to  one that is MLD with
        $\calP$?
 \item For an abstract pattern
       $\calP$ and an interesting class $\Sigma$ of abstract patterns,
        can we describe a condition on $\calP$ and $\Sigma$ that assures that there is 
          $\calQ\in\Sigma$ which is MLD with $\calP$?
\end{enumerate} 
\end{problem}

We solve the first question affirmatively, in Subsection \ref{subsection_local_derivable}:
 see 
Proposition \ref{prop_MLD_D_sum_of_dirac_delta},
Proposition \ref{prop_MLD_f_fdmu},
and Proposition \ref{prop_MLD_voronoi}.
These show our generalized MLD is a natural concept.

We address the second question in Section \ref{section_trans_thm}. See
Theorem \ref{translation_thm}.
We use the structures listed above to
prove that a condition on $\calP$ and  one on $\Sigma$
(not on the relations between $\calP$ and $\Sigma$)
 assures that there exists a $\calQ\in\Sigma$ which is MLD with $\calP$.
 The conditions are mild enough so that many interesting examples
satisfy them.
In particular, many abstract patterns are MLD (with or without rotation)
with (non-multi) Delone sets (Corollary \ref{cor_MLD_with_Delone}).
Although this MLD can be proved in an ad-hoc fashion for
many abstract patterns, there seemed  to be no general treatment.
Corollary \ref{cor_MLD_with_Delone} gives a sufficient condition for an abstract pattern
to have a Delone set that is MLD with the abstract pattern.
The first three subsections in Section \ref{section_trans_thm} are preliminary
results for Theorem \ref{translation_thm}, which is proved in Subsection
\ref{subsection_proof_translation_thm}.

Our definition of local derivability defines a graph, consisting of vertices of abstract
patterns and edges of local derivability, and it is fruitful to
study the structure of this graph.
As an example of this direction of research,
in Section \ref{section_application_translation_thm}
 we study pattern equivariant functions, via a study of this graph
 (Lemma \ref{lem_max_element_sigma_coincide}). Pattern-equivariant functions
 were first defined by
Kellendonk \cite{MR1985494}. Later Rand \cite{de2006pattern} generalized Kellendonk's
definition to incorporate rotation or $\Od$-actions.
We first show that, in each of definitions,
 pattern equivariant functions for an abstract pattern $\calP$
are the functions that are locally derivable from $\calP$.
Next, we show that two abstract patterns $\calP$ and $\calQ$
are MLD if and only if the spaces of
the pattern-equivariant functions are the same, under a mild condition, in a quite
general setting.
As the referee pointed out, for FLC Delone sets and FLC tilings in $\Rd$, if we do not
consider $\Od$ action, this type of result is
already known, but we extend it to two directions: we generalize the Kellendonk's
definition to a more general space $X$ and a group action $\Gamma\curvearrowright X$ and
prove this type of result (Theorem \ref{thm1_pat-equi_remembers}); we then prove this type
of result for Rand's definition (Theorem \ref{thm_pat_equiv_ft_remembers_P}).
The space of pattern equivariant functions has all the information of the original
abstract pattern up to MLD; in order to analyze certain abstract patterns up to MLD, it
suffices to investigate its space of pattern equivariant functions.
Such graph-theoretic argument of studying abstract patterns via arrows of local
derivability may have other applications.


Finally let us mention whether our argument is topological or metrical.
Our argument is metrical and depends on the choice of metric.
For example, in the literature
a subset $D$ of $\Rd$ is relatively dense if there is a compact subset $K$
of $\Rd$ such that $D+K=\Rd$.
This definition makes sense if we replace $\Rd$ with a locally compact abelian group.
However, for general topological spaces this does not make sense since there is no
group structure available. We have to assume the space admits either a metric or a
group action in order to define relative denseness. In this article we assume the
existence of metric for the spaces $X$ where abstract patterns
 such as tilings live and say a subset
$D$ of $X$ is relatively dense if there is $R>0$ such that any balls in $X$ of radius
$R$ contain points in $D$.
By using metric structure, we can define other useful notions, such as ``uniformly
discrete'', which means the distances between two points in a set $D\subset X$ is bounded
from below, and for abstract patterns to ``consists of bounded components''
(Definition \ref{def_bounded_coponents}), which means the diameter of tiles are bounded
from above if the abstract patterns are tilings.
We use these metric-dependent notions throughout the article.
In particular, we limit the relevance of this article to the case where
abstract patterns consist of bounded components.

However, it is desirable to put a topological assumption on the metrics.
As the referee pointed out, if we consider a metric $\rho'(x,y)=\min\{1,\rho(x,y)\}$
of $\Rd$, where $\rho$ is the standard Euclidean
metric, any non-empty subset $D$ of $\Rd$ is
relatively dense with respect to our definition, which contradicts the standard
definition of relative denseness. Thus \emph{we always assume that the metrics we consider are
proper}, a topological condition on the metrics. By this assumption, some definitions
that use a metric become equivalent to a topological notion: our definition of
relative denseness coincides with the topological definition given above and
our definition of local derivability (Definition \ref{def_local_derive}) is equivalent
to a topological definition under a mild assumption (Lemma \ref{lem_local_derivability}).

\begin{nota}\label{notation}
 For a metric space $(X,\rho)$, the \emph{closed} ball with its center $x\in X$ and its radius $r>0$
     is denoted by $B(x,r)$; that is, $B(x,r)=\{y\in X\mid \rho(x,y)\leqq r\}$.
 As was mentioned above, we assume every metric we consider on topological spaces is
 assumed to be proper, which means all closed balls are compact.
\footnote{Note that every second countable locally compact group admits a left-invariant
 proper metric (\cite{MR0348037}).}

      For a positive integer $d$, 
      let $\rho$ be the Euclidean metric for the Euclidean space $\Rd$.
     Let $\Ed$ be the group of all isometries on the Euclidean space
     $\Rd$ 
      and $\Od$ be the orthogonal group.
      There is a group isomorphism $\Rd\rtimes\Od\rightarrow\Ed$, by which we can identify
     these two groups. Thus elements of $\Ed$ are recognized as pairs $(a,A)$ of $a\in\Rd$
     and
      $A\in\Od$. For $\Ed$, define
      a metric $\rho_{\Ed}$ by 
     $\rho_{\Ed}((a,A),(b,B))=\rho(a,b)+\|A-B\|$, where
      $\|\cdot\|$ is the operator norm
     for the operators on the Banach space $\Rd$ with the Euclidean norm.
     For any closed subgroup $\Gamma$ of $\Ed$,
    the restriction $\rho_{\Gamma}$ of
     $\rho_{\Ed}$is a left-invariant metric for $\Gamma$.
     Moreover, for any $\gamma,\eta\in\Gamma$, we have
     \begin{align}
            \rho(\gamma 0,\eta 0)\leqq\rho_{\Gamma}(\gamma,\eta)\leqq
              \rho(\gamma 0,\eta 0)+2.\label{eq_relation_rho_rhoGamma}
     \end{align}


     We set $\mathbb{T}=\{z\in\mathbb{C}\mid |z|=1\}$.

     For any group $\Gamma$ which acts on a set $X$, its isotropy group for a point $x\in X$
     is denoted by $\Gamma_x$. That is, $\Gamma_x=\{\gamma\in\Gamma\mid\gamma x=x\}$.
     The identity element of any group is denoted by $e$.
     If $\calP$ is an object such as a patch, a function, a measure or a
     subset of $X$, its
     group of symmetry is by definition
     $\Sym_{\Gamma}\calP=\{\gamma\in\Gamma\mid\gamma\calP=\calP\}$
     (a special case of isotropy groups).
\end{nota}

\section{General theory of abstract pattern spaces}
\label{section_pattern_space}
\emph{In this section $X$ represents a nonempty topological space unless otherwise stated.}
First, in Subsection \ref{subsection_def_pat_sp}, we define ``abstract pattern space''.
Several spaces such as the space of patches and the space of subsets of $\Rd$
have an operation
of ``cutting off'': for example, for a discrete
 set $D\subset\Rd$ and a subset $C$ of $\Rd$,
we can ``cut off'' $D$ by the window $C$ by taking intersection $D\cap C$.
We axiomatize the properties that such cutting-off operation should have and
obtain the notion of abstract pattern space.
Several spaces of objects such as patches, subsets of $\Rd$, functions and measures are
captured in this framework.
In Subsection \ref{subsection_order_pat_sp} we introduce an order relation on a
abstract pattern space, which is the inclusion between two patches when the abstract pattern space
is the set of all patches or the set of all subsets of the ambient space.
In Subsection \ref{subsection_glueable_pat_sp} we study the operation of
``gluing'' objects to obtain a new object. 
This is an abstract framework to capture the usual operation of taking 
union.
Finally, in Subsection \ref{subsection_zero_elements}
we define zero elements, which is the empty-set in the abstract pattern space of all patches and
is zero function in the abstract pattern space of all functions.

\subsection{Definition and examples of abstract pattern space}
\label{subsection_def_pat_sp}

Here we define the framework of ``abstract pattern space'' for objects such as tilings and
Delone sets.
\begin{nota}
       The set of all closed subsets of $X$ is denoted by $\calC(X)$.
\end{nota}

\begin{defi}\label{def_pattern_sp}
      A non-empty set $\Pi$ equipped with a map
      \begin{align}
           \Pi\times\calC (X)\ni (\mathcal{P},C)\mapsto \calP\sci C\in\Pi \label{scissors_operation}
      \end{align}
      such that
      \begin{enumerate}
       \item $(\calP\sci C_1)\sci C_2=\calP\sci (C_1\cap C_2)$ for any $\calP\in\Pi$ and 
                any $C_1,C_2\in\calC(X)$, and
       \item for any $\calP\in\Pi$ there exists $C_{\calP}\in\calC(X)$ such that
             \begin{align*}
	         \calP\sci C=\calP\iff C\supset C_{\calP},
	     \end{align*}
               for any $C\in\calC(X)$,
      \end{enumerate}
      is called a \emph{abstract pattern space over $X$}.
      The map (\ref{scissors_operation}) is called the \emph{cutting-off operation} of
      the abstract pattern space $\Pi$.
      The closed set $C_{\calP}$ that appears in 2.\  is unique.
      It is called the \emph{support} of $\calP$ and is represented by $\supp\calP$.
      Elements in $\Pi$ are called \emph{abstract patterns} in $\Pi$.
\end{defi}


\begin{rem}
    Note that the symbol $\cap$ in the first axiom of abstract pattern space is the intersection
 of two sets. Note also that if $A\supset B$, $A$ and $B$ may be equal.
\end{rem}

\begin{rem}
    It is sometimes impossible to recover $\supp\calP$ from the information of
    $\supp(\calP\sci K)$, where $K$ runs through the set of all compact subsets of $X$:
    consider a non-compact $X$ and an abstract pattern space $\Pattern(X)$ (Definition
     \ref{def_patterns});
   The abstract pattern $\{X\}$ in this pattern
    space satisfies the condition $\{X\}\sci K=\emptyset$ for all compact $K$.
 Often we can recover $\supp\calP$; in fact
   if $\calP$ consists of  bounded components (Definition \ref{def_bounded_coponents}),
 we can do so.   
\end{rem}

\begin{lem}\label{lemma_support_calP_sci_C}
    Let $\Pi$ be an abstract pattern space over $X$.
    For any $\calP\in\Pi$ and $C\in\calC (X)$, we have $\supp(\calP\sci C)\subset(\supp\calP)\cap C$.
\end{lem}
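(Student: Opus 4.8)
The plan is to prove the two inclusions $\supp(\calP\sci C)\subset\supp\calP$ and $\supp(\calP\sci C)\subset C$ separately, using only axiom (1) of Definition \ref{def_pattern_sp} (the associativity of cutting-off with respect to intersection) together with the characterizing property of the support from axiom (2). Recall that for any $\calQ\in\Pi$ and any $D\in\calC(X)$ we have $\calQ\sci D=\calQ$ if and only if $D\supset\supp\calQ$; this ``if and only if'' is the only tool we need, applied to $\calQ=\calP\sci C$.

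First I would show $\supp(\calP\sci C)\subset C$. By axiom (1), $(\calP\sci C)\sci C=\calP\sci(C\cap C)=\calP\sci C$, so $C$ is a closed set $D$ satisfying $(\calP\sci C)\sci D=\calP\sci C$. By axiom (2) applied to the abstract pattern $\calP\sci C$, this forces $C\supset\supp(\calP\sci C)$, which is the desired inclusion.

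Next I would show $\supp(\calP\sci C)\subset\supp\calP$. Set $D=(\supp\calP)\cap C$, which is closed since both $\supp\calP$ and $C$ are closed. Using axiom (1) twice together with $\calP\sci\supp\calP=\calP$ (from axiom (2), since $\supp\calP\supset\supp\calP$), we compute
\begin{align*}
 (\calP\sci C)\sci D=\calP\sci(C\cap D)=\calP\sci\bigl(C\cap(\supp\calP)\cap C\bigr)
 =\calP\sci\bigl((\supp\calP)\cap C\bigr)=(\calP\sci\supp\calP)\sci C=\calP\sci C.
\end{align*}
Hence $D$ is a closed set with $(\calP\sci C)\sci D=\calP\sci C$, so by axiom (2) applied to $\calP\sci C$ we get $D\supset\supp(\calP\sci C)$, i.e.\ $(\supp\calP)\cap C\supset\supp(\calP\sci C)$. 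This simultaneously gives both remaining inclusions and is in fact the full statement, so the first step above is subsumed; I would simply present this single computation.

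There is no serious obstacle here: the only mild subtlety is to remember that one must invoke the ``only if'' direction of axiom (2) for the abstract pattern $\calP\sci C$ (not for $\calP$), and to check that the sets one feeds into the cutting-off operation are genuinely closed so that the operation is defined. Everything else is a direct manipulation of the associativity identity in axiom (1).
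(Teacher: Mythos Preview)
Your proposal is correct and essentially identical to the paper's proof: the paper presents exactly the single computation $(\calP\sci C)\sci((\supp\calP)\cap C)=(\calP\sci\supp\calP)\sci C=\calP\sci C$ and implicitly invokes axiom (2) for $\calP\sci C$, which is precisely the argument you arrive at (and correctly note subsumes your first step).
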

\begin{proof}
\begin{align*}
   (\calP\sci C)\sci ((\supp\calP)\cap C)=(\calP\sci\supp\calP)\sci C=\calP\sci C.
\end{align*}
\end{proof}

\begin{rem}
    The inclusion in Lemma \ref{lemma_support_calP_sci_C}
    may be strict. In fact, in the abstract pattern space $\Patch(X)$ of all patches
   (Example \ref{example_patch}), if compared to the tiles in a patch $\calP$, a
   closed set $C$ is too small, then $\calP\sci C=\emptyset$ and so
    $\supp(\calP\sci C)=\emptyset$. On the other hand, $\supp\calP$ and $C$ may have
    non-empty intersection even if $C$ is small.
     For example, consider a tiling $\calP=\{(0,1)^d+x\mid x\in\mathbb{Z}^d\}$ and
      $C=B(0,1/2)$; then $\calP\sci C=\emptyset$ and $(\supp\calP)\cap C=C$.
\end{rem}

We now list several examples of abstract pattern space.
\begin{ex}[The space of patches in a metric space]\label{example_patch}
     Let $X$ be a (proper) metric space. An open, nonempty and bounded subset of $X$ is called a tile (in $X$).
     A set $\calP$ of tiles such that if $S,T\in\calP$, then either $S=T$ or $S\cap T=\emptyset$
     is called a patch (in $X$).
     The set of all patches in $X$ is denoted by $\Patch(X)$.
     For $\calP\in\Patch(X)$ and $C\in\calC(X)$, set
     \begin{align}
           \calP\sci C=\{T\in\calP\mid T\subset C\}.\label{def_scissor_for_patch}
     \end{align}
     With this cutting-off operation $\Patch(X)$ becomes an abstract pattern space over $X$.
     For $\calP\in\Patch(X)$, its support is
     \begin{align*}
           \supp\calP=\overline{\bigcup_{T\in\calP}T}.
     \end{align*}  
       Patches $\calP$ with $\supp\calP=X$ are called tilings.
\end{ex}

\begin{rem}
     Usually tiles are defined to be (1) a compact set that is the closure of its
 interior \cite{BH},
     or in Euclidean case,    (2) a polygonal subset of $\Rd$ \cite{Wh} or
 (3) a homeomorphic image of closed unit
 ball (for example, \cite{AP}).
     The advantage of our definition is that we can give punctures to tiles
    and we do not need to consider labels (Example \ref{ex_L-labeled_tiling}), and
     so we may avoid a slight abuse of language such as ``tiles $T$ and $S$ have disjoint
     interiors'' and simplify the notation.
     For example, we can define Robinson triangles (\cite[p.537]{MR857454})
      as the following four tiles:
     (1) the interior of triangle with side-length $\tau, \tau, 1$
      (where $\tau=\frac{1+\sqrt{5}}{2}$), with one point
       on the left-hand side removed, (2) the
 similar open set but one point on the right-hand
      side removed, (3) the interior of triangle with side-length $1,1,\tau$ with
      one point on the right-hand side removed, and (4) the similar open set but one point
       on the left-hand side removed.
        Giving punctures is also useful when we construct Voronoi tilings
       in Subsection \ref{subsection_local_derivable}, since in this case giving
      puncture is simpler than giving labels.

      The usual labeled tilings (Example \ref{ex_L-labeled_tiling})
      are often MLD with tilings with open tiles (Example \ref{example_patch}),
    and so in this article we mainly deal with
       tilings with  open tiles.
\end{rem}

\begin{ex}[The space of labeled patches, \cite{MR1976605}, \cite{MR2851885}]\label{ex_L-labeled_tiling}
      Let $L$ be a set. An $L$-labeled tile is a pair $(T,l)$ of a compact subset $T$
       of $X$ and $l\in L$, such that $T=\overline{T^{\circ}}$ (the closure of the
       interior).
       An $L$-labeled patch is a collection $\calP$ of $L$-labeled tiles such that
        if $(T,l), (S,k)\in\calP$, then either $T^{\circ}\cap S^{\circ}=\emptyset$, or
	$S=T$ and $l=k$. For an $L$-labeled patch $\calP$, define the support of $\calP$
       via
       \begin{align*}
	    \supp \calP=\overline{\bigcup_{(T,l)\in\calP}T}.
       \end{align*}
       An $L$-labeled patch $\calT$ with $\supp\calT=X$ is called an $L$-labeled tiling.
      Sometimes we suppress $L$ and call such tilings labeled tilings.

      For an $L$-labeled patch $\calP$ and $C\in\calC(X)$, define a cutting-off
       operation via
       \begin{align*}
	      \calP\sci C=\{(T,l)\in\calP\mid T\subset C\}.
       \end{align*}
       The space $\Patch_L(X)$ of all $L$-labeled patches is a pattern space over $X$
        with this cutting-off operation. 
\end{ex}

\begin{rem}
      There is another  operation of ``cutting off'' $L$-labeled patches, which is
     defined via
      \begin{align*}
             \calP\sqcap C=\{(T,l)\in\calP\mid T\cap C\neq\emptyset\}.
      \end{align*}
       However this does not define the structure of pattern space, as there is no
      unique support.
      Since we often assume the first condition of
    Assumption \ref{simplicity_assumption}, in most of the
 cases these two
      operations $\sci$ and $\sqcap$ are essentially the same.
\end{rem}

\begin{ex}[The space of patterns]\label{def_patterns}
      A set of non-empty subsets of $X$ is called a pattern
     (\cite[p.127]{baake2013aperiodic})\footnote{In \cite{baake2013aperiodic} patterns
      are assumed to be non-empty, but it is useful to include the  empty set as a pattern.}.
      The set of all patterns in $X$ is denoted by $\Pattern(X)$.
      $\Pattern(X)$ is an abstract pattern space over $X$ by the cutting-off operation
     defined via \eqref{def_scissor_for_patch}.
\end{ex}

There is another operation of ``cutting off'' pattern $\calP$, as follows:
\begin{defi}\label{def_sqcap}
     For a pattern $\calP$ in $\Rd$ and $C\subset\Rd$, we set
        \begin{align*}
	      \calP\sqcap C=\{T\in\calP\mid T\cap C\neq\emptyset\}.
       \end{align*}
\end{defi}
However this operation does not define a pattern space, since there is no unique
support.

\begin{ex}[The space of all locally finite subsets of a metric space]\label{example_LF(X)}
     Let $X$ be a metric space. 
     Let $\LF(X)$ be the set of all locally finite subsets of $X$; that is,
     \begin{align*}
           \LF(X)=\{D\subset X\mid \text{for all $x\in X$ and $r>0$, $D\cap B(x,r)$ is finite}\}.
     \end{align*}
     With the
     usual intersection
     $\LF(X)\times\calC(X)\ni(D,C)\mapsto D\cap C\in\LF(X)$ of two subsets
     of $X$ as a cutting-off operation, $\LF(X)$ is an abstract pattern space over $X$.
     For any $D\in\LF(X)$, its support is $D$ itself.     
\end{ex}

\begin{ex}[The space of all uniformly discrete subsets]\label{example_UD}
      We say, for $r>0$,
     a subset $D$ of a metric space $(X,\rho)$ is $r$-uniformly discrete if
     $\rho(x,y)>r$ for any $x,y\in D$ with $x\neq y$.
     The set $\UD_r(X)$ of all $r$-uniformly discrete subsets of $X$ is an abstract pattern space over $X$
     by the usual intersection as a cutting-off operation.
     If $D$ is $r$-uniformly discrete for some $r>0$, we say $D$ is uniformly discrete.
     The set $\UD(X)=\bigcup_{r>0}\UD_r(X)$ of all uniformly discrete subsets of $X$ is
     also an abstract pattern space over $X$.

     Subsets $D$ of $X$ that are uniformly discrete and relatively dense in $X$ are
 called \emph{Delone sets}. ``Relatively dense'' is defined as follows.
     For $R>0$, a subset $D\subset X$ is $R$-relatively
 dense if
        $D\cap B(x.R)^{\circ}\neq\emptyset$ for each $x\in X$.
        A subset $D$ of $X$ is relatively dense if it is $R$-relatively dense for
       some $R$. For $X=\Rd$ with the standard Euclidean metric, this definition is
      equivalent to the usual one (\cite[p.12]{baake2013aperiodic}).
\end{ex}

\begin{ex}\label{example_2^X_calC(X)}
     With the usual intersection of two subsets of $X$ as a cutting-off operation,
     the set $2^X$ of all subsets of $X$ and $\calC(X)$ are abstract pattern spaces over $X$.
      For $A\in 2^X$, the support $\supp A$ is the closure of $A$.
 
      For example, the union of all Ammann bars (\cite{MR857454}) in a
 Penrose tiling is an abstract pattern.
\end{ex}

The following example plays an important role when we discuss pattern-equivariant
functions in Section \ref{section_application_translation_thm}, because
pattern-equivariant functions are functions that are locally derivable from the
original abstract pattern. We later assume $Y$ has a topology and consider the
space of all continuous maps from $X$ to $Y$, but we need to consider the space
of all maps as follows, since after cutting-off, continuous maps may become
discontinuous.
\begin{ex}[The space of maps]\label{example_map}
      Let $Y$ be a nonempty set.
      Take one element $y_0\in Y$ and fix it.
      The abstract pattern space $\Map(X,Y,y_0)$ is defined as follows:
      as a set the space is equal to $\Map(X,Y)$ of all mappings from $X$ to $Y$;
      for $f\in\Map(X,Y,y_0)$ and $C\in\calC(X)$, the cutting-off operation is defined by
      \begin{align*}
           (f\sci C)(x)=
           \begin{cases}
	         f(x)&\text{if $x\in C$}\\
                 y_0&\text{if $x\notin C$}.
	   \end{cases}
      \end{align*}
      With this operation $\Map(X,Y,y_0)$ is an abstract pattern space over $X$
      and for $f\in\Map(X,Y,y_0)$ its support is 
      $\supp f=\overline{\{x\in X\mid f(x)\neq y_0\}}$.
\end{ex}

\begin{ex}[The space of measures]\label{ex_space_of_measures}
     Let $X$ be a locally compact $\sigma$-compact metric space.
     Let $C_c(X)$ be the space of all continuous and complex-valued functions on $X$
     which have compact supports.
     Its dual space $C_c(X)^*$ with respect to the inductive limit
     topology consists of Radon charges, that is,
     the maps $\Phi
     \colon C_c(X)\rightarrow\mathbb{C}$ such that there is a unique positive
     Borel measure $m$ and a Borel measurable map $u\colon X\rightarrow\mathbb{T}$
     such that
     \begin{align*}
      \Phi(\varphi)=\int_{X}\varphi udm
     \end{align*}
     for all $\varphi\in C_c(X)$.
     For such $\Phi$ and $C\in\calC (X)$ set
     \begin{align*}
         (\Phi\sci C)(\varphi)=\int_C \varphi udm
     \end{align*}     
     for each $\varphi\in C_c(X)$.
     Then the new functional $\Phi\sci C$ is a Radon charge.
     With this operation $C_c(X)^*\times\calC (X)\ni(\Phi,C)\mapsto
      \Phi\sci C\in C_c(X)^*$,
     the space $C_c(X)^*$ becomes an abstract pattern space over $X$.

      Note that if $m$ is a positive measure on $X$ and $u\colon X\rightarrow\mathbb{C}$
       is a bounded Borel map (not necessarily $\mathbb{T}$-valued), 
       then $\Phi\colon C_c(X)\ni\varphi\mapsto\int\varphi udm$ 
       is a Radon charge. If $C\in\calC(X)$, then
        \begin{align*}
	     (\Phi\sci C)(\varphi)=\int_C\varphi udm,
	\end{align*}
         for each $\varphi\in C_c(X)$.

      Note also that if $X$ is second countable and $\mu$ is a positive measure,
      the topological support of $\mu$ coincides with the support of the functional
       $C_c(X)\ni\varphi\mapsto\int\varphi d\mu$ as an abstract pattern.
\end{ex}

Next we investigate abstract pattern subspaces.
The relation between an abstract pattern space and its abstract
pattern subspaces is similar to the one
between  a set with a group action and its invariant subsets.

\begin{defi}
     Let $\Pi$ be an abstract pattern space over $X$.
     Suppose a non-empty subset $\Pi'$ of $\Pi$ satisfies the condition
     \begin{align*}
          \text{$\calP\in\Pi'$ and $C\in\calC(X)\Rightarrow\calP\sci C\in\Pi'$}.
     \end{align*}
     Then $\Pi'$ is called an \emph{abstract pattern subspace} of $\calP$.
\end{defi}

\begin{rem}
      If $\Pi'$ is an abstract
      pattern subspace of an abstract pattern space $\Pi$, then $\Pi'$ is a 
      abstract pattern space by restricting the cutting-off operation.
\end{rem}

\begin{ex}
      Let $X$ be a topological space. Then $\calC(X)$ is an abstract pattern subspace of
      $2^X$.
      If $X$ is a metric space, then $\LF(X)$ is an abstract
 pattern subspace of $\calC(X)$ and
           $\UD_r(X)$ is an abstract pattern subspace of $\UD(X)$ 
      for each $r>0$. 
       Since we assume the metrics we consider are proper,
      $\UD(X)$ is an abstract pattern subspace of
      $\LF(X)$.
\end{ex}

Next we investigate two ways to construct new abstract pattern spaces from
old ones; taking product and taking power set.

\begin{lem}\label{lemmma_product_pattern_space}
      Let $\Lambda$ be an index set and $\Pi_{\lambda},\lambda\in\Lambda$, is a 
     family of abstract pattern spaces over $X$.
     The direct product $\prod_{\lambda}\Pi_{\lambda}$ becomes an abstract pattern space over $X$
 with 
     the  cutting-off operation defined via
     \begin{align*}
      (\calP_{\lambda})_{\lambda\in\Lambda}\sci 
        C=(\calP_{\lambda}\sci C)_{\lambda\in\Lambda}.
     \end{align*}
     for $(\calP_{\lambda})_{\lambda}\in\prod_{\lambda}\Pi_{\lambda}$ and $C\in\calC(X)$.
     The support is given by
     $\supp(\calP_{\lambda})_{\lambda}=\overline{\bigcup_{\lambda}\supp\calP_{\lambda}}$.
\end{lem}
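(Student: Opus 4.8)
The plan is to verify the two axioms of Definition \ref{def_pattern_sp} for the product space $\prod_{\lambda}\Pi_{\lambda}$ with the componentwise cutting-off operation, and then to identify the support. None of the steps is difficult; the only mild subtlety is establishing that the closed set witnessing axiom 2 can be taken to be the closure of the union of the component supports, and not just some abstract closed set.

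First I would check that the proposed cutting-off operation is well-defined: for $(\calP_\lambda)_\lambda\in\prod_\lambda\Pi_\lambda$ and $C\in\calC(X)$, each $\calP_\lambda\sci C$ lies in $\Pi_\lambda$ by the definition of cutting-off in $\Pi_\lambda$, so $(\calP_\lambda\sci C)_\lambda$ indeed lies in the product. Then axiom 1 follows componentwise: for any $C_1,C_2\in\calC(X)$,
\begin{align*}
  \bigl((\calP_\lambda)_\lambda\sci C_1\bigr)\sci C_2
    =(\calP_\lambda\sci C_1)_\lambda\sci C_2
    =\bigl((\calP_\lambda\sci C_1)\sci C_2\bigr)_\lambda
    =(\calP_\lambda\sci(C_1\cap C_2))_\lambda
    =(\calP_\lambda)_\lambda\sci(C_1\cap C_2),
\end{align*}
using axiom 1 in each $\Pi_\lambda$.

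For axiom 2, I would set $C_0=\overline{\bigcup_\lambda\supp\calP_\lambda}$, which is a closed subset of $X$, and show that for $C\in\calC(X)$ one has $(\calP_\lambda)_\lambda\sci C=(\calP_\lambda)_\lambda$ if and only if $C\supset C_0$. The equality $(\calP_\lambda)_\lambda\sci C=(\calP_\lambda)_\lambda$ holds iff $\calP_\lambda\sci C=\calP_\lambda$ for every $\lambda$, which by axiom 2 in $\Pi_\lambda$ is equivalent to $C\supset\supp\calP_\lambda$ for every $\lambda$. That in turn holds iff $C\supset\bigcup_\lambda\supp\calP_\lambda$, and since $C$ is closed this is equivalent to $C\supset\overline{\bigcup_\lambda\supp\calP_\lambda}=C_0$. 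This establishes axiom 2 and, by the uniqueness of the support asserted in Definition \ref{def_pattern_sp}, shows $\supp(\calP_\lambda)_\lambda=C_0$, which is the claimed formula. The main (very mild) obstacle is precisely this last equivalence: one must use that $C$ is closed to pass from containing the union to containing its closure, but this is immediate. I would close the proof there.
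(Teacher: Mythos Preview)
Your proof is correct and is exactly the natural verification; the paper states this lemma without proof, so there is nothing to compare against beyond noting that your argument is the routine one the reader is implicitly expected to supply.
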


\begin{defi}\label{def_product_pattern_sp}
    Under the same condition as in Lemma \ref{lemmma_product_pattern_space}, we call
    $\prod\Pi_{\lambda}$ the \emph{product abstract pattern space} of $(\Pi_{\lambda})_{\lambda}$.
\end{defi}
This construction of product abstract pattern space will be used in Proposition
\ref{prop_calP_MLD_Gamma_lambda}. The following construction of Delone multi set,
which uses product, is
also essential.

\begin{ex}[uniformly discrete multi set, \cite{MR1976605}]\label{ex_multi_set}
     Let $I$ be a set. Consider the abstract pattern subspace $\UD^I(X)$ of
 $\prod_{i\in I}\UD(X)$, defined via
     \begin{align*}
          \UD^I(X)=\{(D_i)_{i\in I}\mid \bigcup_i D_i\in\UD(X)\}.
     \end{align*}
     Elements of $\UD^I(X)$ are called uniformly discrete multi sets.
      A uniformly discrete multi set $(D_i)_i\in\UD^I(X)$ is called a Delone multi set
     if each $D_i$ is a Delone set and the union $\bigcup_iD_i$ is a Delone set.
\end{ex}

The following yet another construction of abstract pattern space will be useful when
we deal with ``glueing'' of abstract patterns
(Subsection \ref{subsection_glueable_pat_sp}).
\begin{lem}
     Let $\Pi$ be an abstract pattern space over $X$.
     The set $2^{\Pi}$ of all subsets of $\Pi$ is an abstract pattern space over $X$ with the
 cutting-off operation defined via
     \begin{align}
         \Xi\sci C=\{\calP\sci C\mid \calP\in\Xi\},\label{eq_scissors_operation_powewset}
     \end{align}
     for any $\Xi\in 2^{\Pi}$ and $C\in\calC(X)$.
     The support is given by $\supp\Xi=\overline{\bigcup_{\calP\in\Xi}\supp\calP}$.
\end{lem}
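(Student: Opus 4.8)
The plan is to verify the two axioms of Definition~\ref{def_pattern_sp} for $2^{\Pi}$ equipped with \eqref{eq_scissors_operation_powewset}, and to read off the support along the way. First one notes the operation is well defined: $\calP\sci C\in\Pi$ whenever $\calP\in\Xi$ and $C\in\calC(X)$, so $\Xi\sci C$ is a subset of $\Pi$, i.e.\ an element of $2^{\Pi}$.

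For the first axiom, given $\Xi\in 2^{\Pi}$ and $C_1,C_2\in\calC(X)$, I would simply unwind the definition:
\begin{align*}
 (\Xi\sci C_1)\sci C_2
   &=\{\calQ\sci C_2\mid\calQ\in\Xi\sci C_1\}
    =\{(\calP\sci C_1)\sci C_2\mid\calP\in\Xi\}\\
   &=\{\calP\sci(C_1\cap C_2)\mid\calP\in\Xi\}
    =\Xi\sci(C_1\cap C_2),
\end{align*}
where the penultimate equality is the first axiom of abstract pattern space applied inside $\Pi$.

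For the second axiom I claim $C_{\Xi}:=\overline{\bigcup_{\calP\in\Xi}\supp\calP}$ works; it is a closed subset of $X$ (and equals $\emptyset$ when $\Xi=\emptyset$). If $C\in\calC(X)$ satisfies $C\supset C_{\Xi}$, then $C\supset\supp\calP$ for every $\calP\in\Xi$, so $\calP\sci C=\calP$ for every such $\calP$ by the second axiom in $\Pi$, whence $\Xi\sci C=\{\calP\mid\calP\in\Xi\}=\Xi$. Conversely, suppose $\Xi\sci C=\Xi$. Then $\calP\sci C\in\Xi$ for every $\calP\in\Xi$, so $f\colon\Xi\to\Xi$, $f(\calP)=\calP\sci C$, is a well-defined self-map; it is surjective since its image is exactly $\Xi\sci C=\Xi$, and it satisfies $f\circ f=f$ since $(\calP\sci C)\sci C=\calP\sci(C\cap C)=\calP\sci C$ by the first axiom in $\Pi$. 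A surjective self-map with $f\circ f=f$ is the identity: for $\calP\in\Xi$, pick $\calP'$ with $f(\calP')=\calP$; then $f(\calP)=f(f(\calP'))=f(\calP')=\calP$. Hence $\calP\sci C=\calP$ for every $\calP\in\Xi$, so $C\supset\supp\calP$ for every $\calP\in\Xi$, and as $C$ is closed, $C\supset C_{\Xi}$. This gives the second axiom, and by the uniqueness asserted in Definition~\ref{def_pattern_sp} we conclude $\supp\Xi=C_{\Xi}$.

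The only step needing care is the converse implication in the second axiom: a priori $\Xi\sci C=\Xi$ only says that cutting off by $C$ permutes the members of $\Xi$ as a set, not that it fixes each one individually. The observation that the induced self-map $f$ is idempotent (from $C\cap C=C$ and the first axiom) and surjective, hence the identity, is exactly what closes that gap; everything else is a routine transcription of the axioms already known to hold in $\Pi$.
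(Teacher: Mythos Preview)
The paper states this lemma without proof, treating it as a routine verification; your argument is correct and fills in exactly the details one would expect. You are right to flag the converse direction in the second axiom as the only non-obvious point: from $\Xi\sci C=\Xi$ one cannot immediately conclude that each $\calP\in\Xi$ is individually fixed, and your idempotent--surjective argument handles this cleanly. (One can phrase it slightly more directly without naming the map $f$: given $\calP\in\Xi=\Xi\sci C$, write $\calP=\calQ\sci C$ for some $\calQ\in\Xi$, and then $\calP\sci C=(\calQ\sci C)\sci C=\calQ\sci C=\calP$; but this is the same idea.)
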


\begin{defi}\label{Def_power_pattern_space}
     The power set $2^{\Pi}$ of an abstract pattern space $\Pi$, endowed with the cutting-off operation
      in equation (\ref{eq_scissors_operation_powewset}),
      is called the \emph{power abstract pattern space} 
      of $\Pi$.
\end{defi}

Note that the power pattern space
 $2^{2^X}$ is not $\Pattern (X)$, since $2^X$ includes the empty set.

Next, we define a notion which will be useful later.
Maps and elements of $2^X$ (and so uniformly discrete subsets of $X$) always 
satisfy this condition;
a patch (and so a tiling) satisfies this condition if and only if
the diameters of tiles in that patch are bounded from above.
This is one of the conditions in Assumption \ref{simplicity_assumption}, which we
frequently assume.

\begin{defi}\label{def_bounded_coponents}
      Let $\Pi$ be an abstract pattern space over a metric space $X$.
     For any element $\calP\in\Pi$,  we say $\calP$ \emph{consists of 
       bounded components} if there is $R_{\calP}>0$ such that
      for any $C\in\calC(X)$ and  $x\in\supp(\calP\sci C)$,
    we have $x\in\supp(\calP\sci C\sci B(x,R_{\calP}))$.
\end{defi}

\subsection{An order on abstract pattern spaces}
\label{subsection_order_pat_sp}

Here we introduce an order relation $\geqq$ on abstract pattern spaces, which captures
``inclusion'' between abstract patterns in a general context.

\begin{defi}\label{def_order_pattern_space}
     Let $\Pi$ be an abstract pattern space over $X$. We define a relation
      $\geqq$ on $\Pi$ as follows:
     for each $\calP,\calQ\in\Pi$, we set $\calP\geqq\calQ$ if 
     \begin{align*}
          \calP\sci\supp\calQ=\calQ.
     \end{align*}
\end{defi}

The following two lemmas will be used throughout the article.

\begin{lem}
     \begin{enumerate}
      \item If $\calP\geqq\calQ$, then $\supp\calP\supset\supp\calQ$.
      \item The relation $\geqq$
             is an order on $\Pi$.
     \end{enumerate}
\end{lem}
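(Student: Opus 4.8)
The plan is to establish the two assertions in turn, working directly from Definition \ref{def_pattern_sp} and Definition \ref{def_order_pattern_space}, and using Lemma \ref{lemma_support_calP_sci_C} as the key technical input.

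\textbf{Part 1.} Suppose $\calP\geqq\calQ$, so that $\calP\sci\supp\calQ=\calQ$. I would apply Lemma \ref{lemma_support_calP_sci_C} with $C=\supp\calQ$: it gives $\supp(\calP\sci\supp\calQ)\subset(\supp\calP)\cap(\supp\calQ)$. Since the left-hand side is $\supp\calQ$ by hypothesis, we get $\supp\calQ\subset(\supp\calP)\cap(\supp\calQ)\subset\supp\calP$, which is exactly the claim. This part is essentially immediate once one invokes the lemma.

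\textbf{Part 2.} Here I must check reflexivity, antisymmetry and transitivity of $\geqq$. For reflexivity, $\calP\sci\supp\calP=\calP$ holds by axiom 2 of Definition \ref{def_pattern_sp} (taking $C=\supp\calP=C_{\calP}$, which satisfies $C\supset C_{\calP}$), so $\calP\geqq\calP$. For antisymmetry, assume $\calP\geqq\calQ$ and $\calQ\geqq\calP$; then by Part 1 we get $\supp\calP\supset\supp\calQ$ and $\supp\calQ\supset\supp\calP$, hence $\supp\calP=\supp\calQ$, and therefore $\calP=\calP\sci\supp\calQ=\calP\sci\supp\calP=\calP$... more precisely $\calP\sci\supp\calQ=\calQ$ becomes $\calP\sci\supp\calP=\calQ$, and the left side is $\calP$, so $\calP=\calQ$. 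For transitivity, suppose $\calP\geqq\calQ$ and $\calQ\geqq\calR$; I want $\calP\sci\supp\calR=\calR$. By Part 1 applied to $\calQ\geqq\calR$ we have $\supp\calR\subset\supp\calQ$, so $\supp\calR\cap\supp\calQ=\supp\calR$. Then compute, using axiom 1 of Definition \ref{def_pattern_sp}:
\begin{align*}
   \calP\sci\supp\calR=\calP\sci(\supp\calQ\cap\supp\calR)=(\calP\sci\supp\calQ)\sci\supp\calR=\calQ\sci\supp\calR=\calR,
\end{align*}
where the third equality uses $\calP\geqq\calQ$ and the last uses $\calQ\geqq\calR$. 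This completes transitivity.

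I do not expect any serious obstacle here; the only subtle point is making sure that in the transitivity argument one genuinely needs the containment $\supp\calR\subset\supp\calQ$ (supplied by Part 1) to rewrite $\supp\calR$ as an intersection before applying the composition axiom — without that observation the chain of equalities does not close up. Everything else is a direct unwinding of the definitions.
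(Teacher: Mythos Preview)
Your proof is correct and follows essentially the same route as the paper. The only minor difference is in Part 1: you invoke Lemma \ref{lemma_support_calP_sci_C} to get $\supp\calQ\subset\supp\calP$ directly, whereas the paper computes $\calQ\sci\supp\calP=\calQ$ from the axioms and then reads off the containment from the defining property of the support; Part 2 (reflexivity, antisymmetry, transitivity) is argued identically.
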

\begin{proof}
      If $\calP\geqq\calQ$, then
      \begin{align*}
            \calQ\sci\supp\calP=\calP\sci\supp\calP\sci\supp\calQ=\calP\sci\supp
            \calQ=\calQ.
      \end{align*}
      Thus $\supp\calP\supset\supp\calQ$.
      Next we prove that $\geqq$ is an order. $\calP\geqq\calP$ is clear.
      If $\calP\geqq\calQ$ and $\calQ\geqq\calP$, then
      $\supp\calP=\supp\calQ$ and $\calP=\calP\sci\supp\calP=\calP\sci\supp\calQ=\calQ$.
      Finally, if $\calP\geqq\calQ\geqq\calR$, then
      $\supp\calP\supset\supp\calQ\supset\supp\calR$ and
      $\calP\sci\supp\calR=\calP\sci\supp\calQ\sci\supp\calR=\calQ\sci\supp\calR=\calR$,
      and so $\calP\geqq\calR$.
\end{proof}

\begin{lem}\label{lem_order_pattern_space}
     \begin{enumerate}
      \item If $\calP\in\Pi$ and $C\in\calC(X)$, then $\calP\geqq\calP\sci C$.
      \item If $\calP,\calQ\in\Pi$, $C\in\calC(X)$ and $\calP\geqq\calQ$, then
            $\calP\sci C\geqq\calQ\sci C$.
     \end{enumerate}      
\end{lem}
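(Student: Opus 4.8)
The plan is to prove both statements directly from the two axioms of abstract pattern space, using the key identity $(\calP\sci C_1)\sci C_2 = \calP\sci(C_1\cap C_2)$ together with the characterization of support via axiom 2.

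For part 1, I would start from the definitional characterization of $\supp\calP$: axiom 2 says $\calP\sci C = \calP$ if and only if $C\supset\supp\calP$. In particular, taking $C = \supp\calP$, we have $\calP\sci\supp\calP = \calP$. Now I need to check that $\calP\sci C$ is obtained from $\calP$ by cutting off at $\supp(\calP\sci C)$, i.e. that $(\calP\sci C)\sci\supp(\calP\sci C) = \calP\sci C$; but that is just the instance of $\calP\sci\supp\calP = \calP$ applied to the abstract pattern $\calP\sci C$ in place of $\calP$. So $\calP\sci\supp(\calP\sci C)$ can be rewritten: using the first axiom, $\calP\sci\supp(\calP\sci C) = \calP\sci\bigl((\supp\calP)\cap\dots\bigr)$ — actually the cleanest route is to observe that by Lemma \ref{lemma_support_calP_sci_C} we have $\supp(\calP\sci C)\subset\supp\calP$, and then compute $(\calP)\sci\supp(\calP\sci C) = (\calP\sci\supp\calP)\sci\supp(\calP\sci C) = \calP\sci\bigl(\supp\calP\cap\supp(\calP\sci C)\bigr) = \calP\sci\supp(\calP\sci C)$, which is circular; instead I should relate it to $C$. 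The right computation: $\calP\sci\supp(\calP\sci C)$ — I want this to equal $\calP\sci C$. We know $(\calP\sci C)\sci\supp(\calP\sci C) = \calP\sci C$, and the left side is $\calP\sci(C\cap\supp(\calP\sci C)) = \calP\sci\supp(\calP\sci C)$ since $\supp(\calP\sci C)\subset C$ (again Lemma \ref{lemma_support_calP_sci_C}). Hence $\calP\sci\supp(\calP\sci C) = \calP\sci C$, which is exactly the statement $\calP\geqq\calP\sci C$.

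For part 2, assume $\calP\geqq\calQ$, meaning $\calP\sci\supp\calQ = \calQ$, and let $C\in\calC(X)$. I must show $(\calP\sci C)\sci\supp(\calQ\sci C) = \calQ\sci C$. By Lemma \ref{lemma_support_calP_sci_C}, $\supp(\calQ\sci C)\subset(\supp\calQ)\cap C$, so in particular it is contained in both $\supp\calQ$ and $C$. Then, using the first axiom repeatedly, $(\calP\sci C)\sci\supp(\calQ\sci C) = \calP\sci\bigl(C\cap\supp(\calQ\sci C)\bigr) = \calP\sci\supp(\calQ\sci C) = \calP\sci\bigl(\supp\calQ\cap\supp(\calQ\sci C)\bigr) = (\calP\sci\supp\calQ)\sci\supp(\calQ\sci C) = \calQ\sci\supp(\calQ\sci C)$, and this last expression equals $\calQ\sci C$ by the instance of $\calP\sci\supp\calP=\calP$ applied to $\calQ\sci C$ combined with $\supp(\calQ\sci C)\subset C$ (as in part 1). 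This gives $\calP\sci C\geqq\calQ\sci C$.

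The arguments are entirely formal manipulations with the two axioms and Lemma \ref{lemma_support_calP_sci_C}; I do not anticipate a genuine obstacle, only the bookkeeping nuisance of making sure each rewriting step is licensed — specifically, the one subtle point is repeatedly invoking that $\supp(\calP\sci C)\subset C$ and $\supp(\calQ\sci C)\subset C$ so that intersecting the window with $C$ is harmless, and invoking that cutting off an abstract pattern at its own support is the identity. Both facts are available (the former from Lemma \ref{lemma_support_calP_sci_C}, the latter from axiom 2), so the proof should be short and clean.
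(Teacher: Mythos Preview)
Your argument is correct and is essentially the same as the paper's: both parts are reduced to the containment $\supp(\calP\sci C)\subset(\supp\calP)\cap C$ from Lemma~\ref{lemma_support_calP_sci_C} together with the identity $\calR\sci\supp\calR=\calR$ from axiom~2, and then a short chain of rewrites using axiom~1. The paper presents each part as a single displayed chain of equalities, but the content is identical to what you wrote once you settled on the right route.
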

\begin{proof}
      The statements follow from  Lemma \ref{lemma_support_calP_sci_C}.

      1.   $\calP\sci\supp(\calP\sci C)=\calP\sci \supp\calP\sci C\sci \supp(\calP\sci C)
           =\calP\sci C\sci\supp(\calP\sci C)=\calP\sci C.$

      2. $\calP\sci C\sci \supp(\calQ\sci C)=\calP\sci\supp\calQ\sci C\sci\supp(\calQ\sci C)
          =\calQ\sci C$.
\end{proof}

The supremum of a subset $\Xi\subset\Pi$ with respect to the order $\geqq$ is the
``union'' of abstract patterns $\calP\in\Xi$. It does not necessarily exist, but
is a important concept. Below we investigate elementary properties of supremum.

\begin{defi}
       Let $\Xi$ be a subset of $\Pi$.
       If the supremum of $\Xi$ with respect to the order $\geqq$ defined in 
       Definition \ref{def_order_pattern_space}
exists in $\Pi$,
       it is denoted by $\bigvee\Xi$.
\end{defi}

We briefly discuss relations between supremum and support.
\begin{lem}\label{lem_support_supremum}
       If a subset $\Xi\subset\Pi$ admits the supremum $\bigvee\Xi$, then
       $\supp\bigvee\Xi=\overline{\bigcup_{\calP\in\Xi}\supp\calP}$.
\end{lem}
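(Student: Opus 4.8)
The plan is to show the two inclusions $\supp\bigvee\Xi\subset\overline{\bigcup_{\calP\in\Xi}\supp\calP}$ and $\supp\bigvee\Xi\supset\overline{\bigcup_{\calP\in\Xi}\supp\calP}$ separately, using only the defining property of $\supp$ (axiom 2 of Definition \ref{def_pattern_sp}) together with the characterization of the order $\geqq$ via the cutting-off operation.

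For the inclusion $\overline{\bigcup_{\calP\in\Xi}\supp\calP}\subset\supp\bigvee\Xi$, I would argue as follows. For each $\calP\in\Xi$ we have $\bigvee\Xi\geqq\calP$ by definition of supremum, and hence $\supp\bigvee\Xi\supset\supp\calP$ by the first part of the lemma preceding Definition \ref{def_order_pattern_space} (the lemma stating that $\calP\geqq\calQ$ implies $\supp\calP\supset\supp\calQ$). Taking the union over all $\calP\in\Xi$ gives $\supp\bigvee\Xi\supset\bigcup_{\calP\in\Xi}\supp\calP$, and since $\supp\bigvee\Xi$ is closed (supports are always closed sets, being elements of $\calC(X)$), we may pass to the closure to obtain $\supp\bigvee\Xi\supset\overline{\bigcup_{\calP\in\Xi}\supp\calP}$.

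For the reverse inclusion, set $C=\overline{\bigcup_{\calP\in\Xi}\supp\calP}\in\calC(X)$. I claim that $(\bigvee\Xi)\sci C=\bigvee\Xi$, which by axiom 2 of Definition \ref{def_pattern_sp} forces $C\supset\supp\bigvee\Xi$, giving the inclusion we want. To prove the claim I would show that $(\bigvee\Xi)\sci C$ is also an upper bound for $\Xi$ and is itself $\leqq\bigvee\Xi$, so by the extremal property of the supremum the two must coincide. That $(\bigvee\Xi)\sci C\leqq\bigvee\Xi$ is immediate from Lemma \ref{lem_order_pattern_space}.1. To see $(\bigvee\Xi)\sci C\geqq\calP$ for each $\calP\in\Xi$: since $\bigvee\Xi\geqq\calP$ we have, using Lemma \ref{lem_order_pattern_space}.2 with the closed set $C$, that $(\bigvee\Xi)\sci C\geqq\calP\sci C$; but $C\supset\supp\calP$, so $\calP\sci C=\calP$ by axiom 2 of Definition \ref{def_pattern_sp}, whence $(\bigvee\Xi)\sci C\geqq\calP$. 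Thus $(\bigvee\Xi)\sci C$ is an upper bound lying below the supremum, so it equals $\bigvee\Xi$, completing the proof.

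I do not anticipate a serious obstacle here: the whole argument is a manipulation of the two defining axioms and the two order lemmas already proved. The one point requiring a little care is making sure that the chain of reasoning for $(\bigvee\Xi)\sci C=\bigvee\Xi$ uses the universal property of the supremum correctly — namely that any upper bound of $\Xi$ that is $\leqq\bigvee\Xi$ must equal $\bigvee\Xi$ — and this is just antisymmetry of the order $\geqq$ together with $\bigvee\Xi$ being the \emph{least} upper bound. Everything else is bookkeeping with $\sci$ and $\supp$.
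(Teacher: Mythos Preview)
Your proof is correct and follows essentially the same route as the paper's: both establish $\supp\bigvee\Xi\supset C$ from $\bigvee\Xi\geqq\calP$ and closedness of supports, and both obtain the reverse inclusion by observing that $(\bigvee\Xi)\sci C$ is an upper bound for $\Xi$ lying below $\bigvee\Xi$. The only cosmetic difference is that the paper phrases the second half as a contradiction (assume $\supp\bigvee\Xi\supsetneq C$, then $(\bigvee\Xi)\sci C$ is a strictly smaller upper bound), whereas you argue directly that $(\bigvee\Xi)\sci C=\bigvee\Xi$ and invoke axiom~2; your version is in fact slightly more explicit about why $(\bigvee\Xi)\sci C\geqq\calP$ via Lemma~\ref{lem_order_pattern_space}.2.
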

\begin{proof}
       Set $C=\overline{\bigcup_{\calP\in\Xi}\supp\calP}$.
       Since $\bigvee\Xi\geqq\calP$ for any $\calP\in\Xi$, by 
       Lemma \ref{lem_order_pattern_space}
       $\supp\bigvee\Xi\supset\supp\calP$ for each $\calP\in\Xi$.
       Since the support is closed, we have $\supp\bigvee\Xi\supset
       C$.
 
        If we assume $\supp\bigvee\Xi$ is strictly larger than $C$, then  we have
        the following contradiction.
        Since $\supp((\bigvee\Xi)\sci C)\subset C\neq\supp\bigvee\Xi$,
        the two abstract patterns $\bigvee\Xi$ and $(\bigvee\Xi)\sci C$
        are different and $\bigvee\Xi\geqq(\bigvee\Xi)\sci C$
        by Lemma \ref{lem_order_pattern_space}.
        On the other hand, $(\bigvee\Xi)\sci C$ majorizes $\Xi$.
        These contradict the fact that $\bigvee\Xi$ is the supremum.
\end{proof}
\begin{ex}\label{example_no_supremum}
        It is not necessarily true that any element $\calP_{0}$ in $\Pi$
        that majorizes $\Xi$ and $\supp\calP_{0}=\overline{\bigcup_{\calP\in\Xi}\supp\calP}$
        is the supremum of $\Xi$. For example, let the abstract pattern space be
         $\Map([0,1],\mathbb{C},0)$ (Example \ref{example_map}). Set
         $\Xi=\{\delta_x\mid 0<x\leqq 1\}$, where $\delta_x$ is the Kronecker
       delta function.
       For each $a\in\mathbb{C}$ define the function $f_a$ via
        \begin{align*}
	      f_a(x)=\begin{cases}
		          a\text{ if } x=0\\
		          1\text{ if } x>0.
		     \end{cases}
	\end{align*}
        Then each $f_a$ is a upper bound for $\Xi$ with $\supp f_a=[0,1]$ but there is
 no order relation
      between $f_a$'s.

         Also note that the first part of the proof of Lemma \ref{lem_support_supremum}
       shows that, if $\calQ$ is an upper bound for $\Xi$, then
       $\supp\calQ\supset\overline{\bigcup_{\calP\in\Xi}\supp\calP}$.
\end{ex}

The following lemma will be useful later.

\begin{lem}\label{useful_lem_for_last_half_of_main_thm}
       Let $F_j$ be a finite subset of $X$ for $j=1,2$.
       Take a positive real number $r$ such that for each $j=1,2$,
       any two distinct elements $x,y\in F_j$ satisfy $\rho(x,y)>4r$.
       Suppose for each $j$ and $x\in F_j$, there corresponds $\calP_{x}^j\in\Pi$
      such that $\emptyset\neq\supp\calP_{x}^j\subset B(x,r)$.
      Suppose also there is $Q^j=\bigvee\{\calP_{x}^j\mid x\in F_j\}$ for $j=1,2$.
      Then the following statements hold:
      \begin{enumerate}
       \item If $\supp\calQ^1\subset\supp\calQ^2$, then for each $x\in F_1$ there is 
              a unique $y\in F_2$ such that
               $\supp\calP_{x}^1\cap\supp\calP_{y}^2\neq\emptyset$.
              In this case $\supp\calP_{x}^1\subset\supp\calP_{y}^2$ holds.
	\item If $\supp\calQ^1=\supp\calQ^2$, then for each $x\in F_1$ there is a 
	      unique $y\in F_2$ such that $\supp\calP_{x}^1=\supp\calP_{y}^2$.
	\item If $\calQ^1=\calQ^2$, then for each $x\in F_1$ there is a unique
	      $y\in F_2$ such that $\calP_{x}^1=\calP_{y}^2$.
      \end{enumerate}
\end{lem}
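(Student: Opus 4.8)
The plan is to prove the three statements in order, using the first as the key technical step and then bootstrapping the other two from it. Throughout I will exploit the separation hypothesis: the supports $\supp\calP_x^1$ lie in pairwise $4r$-separated balls $B(x,r)$, $x\in F_1$, and similarly for $F_2$. Since $\supp\calP_x^j\subset B(x,r)$ and any two of these balls have centers more than $4r$ apart, the sets $\{\supp\calP_x^j\}_{x\in F_j}$ are themselves pairwise disjoint (in fact their closed $r$-neighborhoods are disjoint), and moreover the distance between $\supp\calP_x^j$ and $\supp\calP_{x'}^j$ for $x\neq x'$ exceeds $2r$. By Lemma \ref{lem_support_supremum}, $\supp\calQ^j=\overline{\bigcup_{x\in F_j}\supp\calP_x^j}$; since this is a finite union of closed sets (each $\supp\calP_x^j$ is closed), we in fact get $\supp\calQ^j=\bigsqcup_{x\in F_j}\supp\calP_x^j$, a disjoint union. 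The geometric picture is thus a collection of ``clumps'', each clump contained in a small ball, with large gaps between distinct clumps within the same family.

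For statement 1, suppose $\supp\calQ^1\subset\supp\calQ^2$. Fix $x\in F_1$ and pick any point $p\in\supp\calP_x^1$ (nonempty by hypothesis); then $p\in\supp\calQ^2=\bigsqcup_{y\in F_2}\supp\calP_y^2$, so $p\in\supp\calP_y^2$ for exactly one $y\in F_2$. This already gives existence of a $y$ with $\supp\calP_x^1\cap\supp\calP_y^2\neq\emptyset$. For uniqueness and for the inclusion $\supp\calP_x^1\subset\supp\calP_y^2$: if $y,y'\in F_2$ both met $\supp\calP_x^1$, then since $\supp\calP_x^1\subset B(x,r)$ has diameter at most $2r$, the sets $\supp\calP_y^2$ and $\supp\calP_{y'}^2$ would lie within distance $2r$ of each other, forcing $y=y'$ by the $4r$-separation of $F_2$ (two points of $B(y,r)$ and $B(y',r)$ are at least $\rho(y,y')-2r>2r$ apart). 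Hence the $y$ meeting $\supp\calP_x^1$ is unique. Finally, since $\supp\calP_x^1\subset\supp\calQ^2=\bigsqcup_{z\in F_2}\supp\calP_z^2$ and $\supp\calP_x^1$ is contained in $B(x,r)$, which can meet only the one clump $\supp\calP_y^2$ (same separation estimate), we get $\supp\calP_x^1\subset\supp\calP_y^2$.

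For statement 2, assume $\supp\calQ^1=\supp\calQ^2$. Apply statement 1 in both directions: for $x\in F_1$ there is a unique $y\in F_2$ with $\supp\calP_x^1\subset\supp\calP_y^2$, and for that $y$ there is a unique $x'\in F_1$ with $\supp\calP_y^2\subset\supp\calP_{x'}^1$. Then $\supp\calP_x^1\subset\supp\calP_{x'}^1$, and since distinct clumps in $F_1$ are disjoint and each is nonempty, this forces $x'=x$; hence $\supp\calP_x^1=\supp\calP_y^2$, and uniqueness of $y$ is inherited from statement 1. For statement 3, assume $\calQ^1=\calQ^2$; in particular their supports agree, so statement 2 gives for each $x\in F_1$ a unique $y\in F_2$ with $\supp\calP_x^1=\supp\calP_y^2=:S$. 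It remains to upgrade this to $\calP_x^1=\calP_y^2$. Here the idea is to recover $\calP_x^1$ from $\calQ^1$ by cutting off with a suitable closed set isolating the clump $S$: since $S\subset B(x,r)$ and every other clump of $F_1$ (and, by statement 2, of $F_2$) is at distance $>2r$ from $S$, the set $\overline{B(x,2r)}$ — more safely, $B(x,r)$ itself, or the closed $r$-neighborhood of $S$ — contains $S$ but is disjoint from all other clumps. Using Lemma \ref{lem_order_pattern_space} and the definition of $\geqq$ together with $\calQ^1\geqq\calP_x^1$ and $\supp\calP_x^1=S$, one shows $\calQ^1\sci S=\calP_x^1$; indeed $\calQ^1\sci\supp\calP_x^1=\calP_x^1$ is exactly the statement $\calQ^1\geqq\calP_x^1$, which holds because $\calP_x^1\in\{\calP_z^1\mid z\in F_1\}$ and the supremum dominates each member. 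Likewise $\calQ^2\sci S=\calP_y^2$. Since $\calQ^1=\calQ^2$ and $\supp\calP_x^1=S=\supp\calP_y^2$, we conclude $\calP_x^1=\calQ^1\sci S=\calQ^2\sci S=\calP_y^2$.

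The main obstacle, and the place where the separation constant $4r$ rather than $2r$ is presumably needed, is making precise the claim that the finite union $\overline{\bigcup_x\supp\calP_x^j}$ is a genuine disjoint union of the closed clumps and that a ball $B(x,r)$ can meet only one clump of the other family — i.e. converting ``supports lie in separated balls'' into ``supports themselves are well-separated closed sets.'' Once that disjointness-and-isolation lemma is nailed down (a short computation with the triangle inequality: points of $B(x,r)$ and $B(x',r)$ are at distance $>\rho(x,x')-2r>2r$, and a point of $\supp\calP_x^1\subset B(x,r)$ lies within $2r$ of all of $\supp\calP_x^1$), statements 1–3 follow by the elementary clump-matching and cutting-off arguments sketched above, with statement 3 additionally invoking the supremum property $\calQ^j\geqq\calP_x^j$ to identify the cut-off pieces.
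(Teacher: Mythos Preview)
Your proof is correct and follows essentially the same route as the paper: use Lemma~\ref{lem_support_supremum} to write $\supp\calQ^j$ as the finite (hence closed) disjoint union of the clumps $\supp\calP_x^j$, establish statement~1 by a triangle-inequality estimate showing a single clump of $F_1$ can meet at most one clump of $F_2$, then bootstrap statements~2 and~3 by applying statement~1 in both directions and, for~3, recovering $\calP_x^j$ as $\calQ^j\sci\supp\calP_x^j$ via the relation $\calQ^j\geqq\calP_x^j$. The extra discussion of a ``main obstacle'' is unnecessary---once the disjoint-union form of $\supp\calQ^j$ is in hand, everything is the short clump-matching argument you already wrote---but nothing is wrong.
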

\begin{proof}
      1.
      By Lemma \ref{lem_support_supremum}, 
      $\supp\calQ^j=\bigcup_{x\in F_j}\supp\calP^{j}_{x}$ for each $j=1,2$.
      For each $x\in F_1$, there is $y\in F_2$ such that 
      $\supp\calP_{x}^1\cap\supp\calP_y^2\neq\emptyset$.
      If there is another $y'\in F_2$ such that
      $\supp\calP_x^1\cap\supp\calP_{y'}^2\neq\emptyset$, then
      $B(x,r)\cap B(y,r)\neq\emptyset$ and $B(x,r)\cap B(y',r)\neq\emptyset$ and so
      $\rho(y,y')\leqq 4r$. By definition of $r$, we have $y=y'$.
      This shows the uniqueness of $y$.
      Uniqueness implies the last statement.

      2. By 1., for each $x\in F_1$ there is $y\in F_2$ such that 
         $\supp\calP_x^1\subset\supp\calP_y^2$.
         Applying 1. again, there is $x'\in F_1$ such that 
         $\supp\calP_y^2\subset\supp\calP_{x'}^1$.
         We have $\supp\calP_x^1\subset\supp\calP_{x'}^1$ and by applying
         uniqueness in 1., we see $x=x'$.
     
      3. By 2., for each $x\in F_1$ there is $y\in F_2$ such that
         $\supp\calP_x^1=\supp\calP_y^2$.
         Then 
         \begin{align*}
	       \calP_x^1=\calQ^1\sci\supp\calP_1^x=
                        \calQ^2\sci\supp\calP_2^y
	                =\calP_y^2.
	 \end{align*}
          Uniqueness follows from uniqueness in 1.
\end{proof}

\subsection{Glueable abstract pattern spaces}
\label{subsection_glueable_pat_sp}
        \emph{In this subsection  $X$ is a (proper) metric space with a metric $\rho$
        and $\Pi$ is an abstract pattern space over $X$.}

        Often we want to ``glue'' or ``take the union of ''
 	abstract patterns to obtain a larger abstract pattern.
        For example, suppose
         $\Xi$ is a collection of patches such that if $\calP,\calQ\in\Xi$,
         $S\in\calP$ and $T\in\calQ$, then we have either $S=T$ or $S\cap T=\emptyset$.
         Then we can ``glue'' patches in $\Xi$, that is, we can take 
         the union $\bigcup_{\calP\in\Xi}\calP$, which is also a patch.
         Abstract pattern spaces in which we can ``glue'' abstract patterns are called
         glueable abstract pattern spaces (Definition \ref{def_glueable_pattern_space}).
	 We define glueable abstract pattern spaces after introducing necessary notions
	 and proving a lemma.
	   Examples are given in page \pageref{ex_patch_compatible}.

\begin{defi}\label{def_local_finite_compatible}
      \begin{enumerate}
       \item Two abstract patterns $\calP,\calQ\in\Pi$ are said to be \emph{compatible}
              if there is $\calR\in\Pi$ such that $\calR\geqq\calP$ and
              $\calR\geqq\calQ$.
       \item A subset $\Xi\subset\Pi$ is said to be \emph{pairwise compatible} if
             any two elements $\calP,\calQ\in\Pi$ are compatible.
        \item A subset $\Xi\subset\Pi$ is said to be \emph{locally finite}
               if for any $x\in X$ and $r>0$, the set
               $\Xi\sci B(x,r)$, which was defined in 
             (\ref{eq_scissors_operation_powewset}),
             is finite.
      \end{enumerate}
\end{defi}

\begin{rem}
       We will prove in many abstract pattern spaces, the locally finite and pairwise
 compatible subsets admit supremums.
        Due to Example \ref{example_no_supremum}, in the space $\Map(X,Y,y_0)$
       (Example \ref{example_map}), a
      pairwise compatible $\Xi\subset\Map(X,Y,y_0)$ need not have supremum.
      We have to assume
      local finiteness for a subset $\Xi$ to admit supremum.
     Also, in order for a subset of an abstract pattern space
    to have the supremum,
 we have to assume pairwise compatibility, since
        this follows from the existence of supremum.
\end{rem}

\begin{lem}\label{lem_locally_finite_compatible}
        Let $\Xi$ be a subset of $\Pi$ and take $C\in\calC(X)$.
        Then the following hold.
        \begin{enumerate}
	 \item If $\Xi$ is locally finite, then so is $\Xi\sci C$.
         \item If $\Xi$ is pairwise compatible, then so is $\Xi\sci C$.
	\end{enumerate}            
\end{lem}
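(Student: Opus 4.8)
The plan is to prove each of the two statements directly from the definitions, using the cutting-off axioms and the basic order lemmas already established. For part 1, I would observe that $(\Xi \sci C) \sci B(x,r) = \{(\calP \sci C) \sci B(x,r) \mid \calP \in \Xi\}$ by the definition \eqref{eq_scissors_operation_powewset} of the cutting-off operation on the power abstract pattern space. By the first axiom of an abstract pattern space, $(\calP \sci C) \sci B(x,r) = \calP \sci (C \cap B(x,r))$. Now $C \cap B(x,r)$ is a closed subset of $B(x,r)$, hence compact (the metric is proper), so in particular it is a closed subset of $X$ of the form considered, and I want to bound the number of distinct values $\calP \sci (C \cap B(x,r))$. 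The cleanest route: since $C \cap B(x,r) \subset B(x,r)$, applying the first axiom again gives $\calP \sci (C \cap B(x,r)) = (\calP \sci B(x,r)) \sci (C \cap B(x,r))$, which shows that the map $\calP \sci B(x,r) \mapsto (\calP \sci B(x,r)) \sci (C \cap B(x,r))$ is well-defined, so the assignment $\calP \mapsto \calP \sci (C \cap B(x,r))$ factors through $\calP \mapsto \calP \sci B(x,r)$. Hence the image $(\Xi \sci C) \sci B(x,r)$ has cardinality at most that of $\Xi \sci B(x,r)$, which is finite by hypothesis. This gives local finiteness of $\Xi \sci C$.

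For part 2, let $\calP \sci C, \calQ \sci C \in \Xi \sci C$ with $\calP, \calQ \in \Xi$. Since $\Xi$ is pairwise compatible, there is $\calR \in \Pi$ with $\calR \geqq \calP$ and $\calR \geqq \calQ$. I claim $\calR \sci C$ witnesses compatibility of $\calP \sci C$ and $\calQ \sci C$. Indeed, by part 2 of Lemma \ref{lem_order_pattern_space}, $\calR \geqq \calP$ implies $\calR \sci C \geqq \calP \sci C$, and likewise $\calR \sci C \geqq \calQ \sci C$. Thus $\calR \sci C \in \Pi$ majorizes both, so $\calP \sci C$ and $\calQ \sci C$ are compatible, and $\Xi \sci C$ is pairwise compatible.

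I do not anticipate a serious obstacle here; both parts are essentially bookkeeping with the two axioms. The one point requiring a little care is part 1: one must make sure the factoring-through argument is legitimate, i.e. that $\calP \sci B(x,r) = \calP' \sci B(x,r)$ genuinely forces $\calP \sci (C \cap B(x,r)) = \calP' \sci (C \cap B(x,r))$ — this is exactly the computation $\calP \sci (C \cap B(x,r)) = (\calP \sci B(x,r)) \sci (C \cap B(x,r))$ using associativity of cutting-off (axiom 1) together with $C \cap B(x,r) \subset B(x,r)$, so it goes through cleanly. The properness of the metric is used only implicitly (it is not even strictly needed for this lemma, since the argument refers only to the abstract cutting-off operation on closed sets), so the proof is purely formal.
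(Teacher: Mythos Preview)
Your proof is correct and essentially the same as the paper's: part 2 is identical, and for part 1 both arguments rest on the observation that $\calP\sci C\sci B(x,r) = (\calP\sci B(x,r))\sci(C\cap B(x,r))$ is determined by $\calP\sci B(x,r)$. The only cosmetic difference is that you phrase this as a direct cardinality bound (factoring through a surjection), whereas the paper phrases it as a pigeonhole contradiction.
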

\begin{proof}
         1. Suppose there are $x\in X$, $r>0$ such that $\Xi\sci C\sci B(x,r)$ is
         infinite. There are $\calP_1,\calP_2,\ldots$ in $\Xi$ such that
         $\calP_n\sci C\sci B(x,r)$ are all distinct. 
         However by local finiteness of $\Xi$, there are distinct $n$ and $m$ such that
         $\calP_n\sci B(x,r)=\calP_m\sci B(x,r)$;
         this implies that
         $\calP_n\sci C\sci B(x,r)=\calP_m\sci C\sci B(x,r)$ and leads to a
          contradiction.

        2. Take $\calP,\calQ\in\Xi$ arbitrarily.
           By Definition \ref{def_local_finite_compatible}, there is $\calR\in\Xi$
           such that $\calR\geqq\calP$ and $\calR\geqq\calQ$.
           By Lemma \ref{lem_order_pattern_space},
           we have $\calR\sci C\geqq\calP\sci C$ and $\calR\sci C \geqq\calQ\sci C$,
           and so $\calP\sci C$ and $\calQ\sci C$ are compatible.
\end{proof}

\begin{defi}\label{def_glueable_pattern_space}
          An abstract pattern space $\Pi$ over a metric space $X$ is said to be \emph{glueable}
          if the following two conditions hold:
          \begin{enumerate}
	   \item If $\Xi\subset\Pi$ is both locally finite and pairwise compatible,
                 then there is the supremum $\bigvee{\Xi}$ for $\Xi$.
           \item If $\Xi\subset\Pi$ is both locally finite and pairwise compatible,
                 then for any $C\in\calC(X)$,
                 \begin{align}
		     \bigvee(\Xi\sci C)=(\bigvee\Xi)\sci C.\label{eq_def_glueable}
		 \end{align}
	  \end{enumerate}
\end{defi}

\begin{rem}
       By Lemma \ref{lem_locally_finite_compatible}, for $\Xi\subset\Pi$
      which is locally finite and pairwise compatible 
       and $C\in\calC(X)$,
       the left-hand side of the equation (\ref{eq_def_glueable}) makes sense.

       The first condition of this definition does not imply the second.
       Here is the sketch of the construction of a counterexample:
       for any abstract pattern space $\Pi$ and an element $\calP\in\Pi$, there is the
       smallest abstract pattern subspace $\Pi(\calP)$ that contains $\calP$.
       Let $\Pi$ be the abstract pattern space $\Pi=\Pattern(\mathbb{R})$
       (Example \ref{def_patterns}). Consider a pattern
        \begin{align*}
	    \calP=\{(0,1)+n\mid n\in\mathbb{Z}\}\cup\left\{\left(\frac{1}{2},\frac{3}{2}\right)+n
	 \mid n\in\mathbb{Z}\right\}.
	\end{align*}
         Any subset $\Xi$ of $\Pi(\calP)$ admits the supremum, but
         for $\Xi=\{\{(0,1)\},\{(1,2)\}\}$,
       we have $\bigvee\Xi=\{(0,1),(1/2,3/2),(1,2)\}$, and so
        $(\bigvee\Xi)\sci [0,3/2]=\left\{(0,1),\left(1/2,3/2\right)\right\}$ but
      $\bigvee(\Xi\sci [0,3/2])=\{(0,1)\}$.
\end{rem}

Before listing examples of glueable abstract pattern spaces, we show the result of
``two-step gluing'' is the same as the result of ``gluing once''.
\begin{lem}\label{lem_family_of_loc_fin_pair_comp_sets}
       Let $\Pi$ be glueable and $\Lambda$ a set.
       For each $\lambda\in\Lambda$, let $\Xi_{\lambda}\subset\Pi$ be a subset and
       suppose $\bigcup_{\lambda}\Xi_{\lambda}$ is locally finite and
       pairwise compatible.
       Then for each $\lambda$, the set $\Xi_{\lambda}$ is locally finite and
       pairwise-compatible. Moreover, if we set $\calQ_{\lambda}=\bigvee\Xi_{\lambda}$,
       the set $\{\calQ_{\lambda}\mid \lambda\in\Lambda\}$ is locally finite
       and pairwise-compatible and
       \begin{align*}
	     \bigvee\bigcup_{\lambda}\Xi_{\lambda}=\bigvee\{\calQ_{\lambda}\mid\lambda\in\Lambda\}.
       \end{align*} 
\end{lem}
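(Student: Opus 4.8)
The plan is to prove the three assertions in order: first that each $\Xi_\lambda$ is locally finite and pairwise compatible, then that $\{\calQ_\lambda\}$ is locally finite and pairwise compatible, and finally the equality of the two suprema. The first claim is immediate: $\Xi_\lambda \subset \bigcup_\mu \Xi_\mu$, and both local finiteness (since $\Xi_\lambda \sci B(x,r) \subset (\bigcup_\mu \Xi_\mu)\sci B(x,r)$) and pairwise compatibility are clearly inherited by subsets. Hence $\calQ_\lambda = \bigvee \Xi_\lambda$ exists by glueability.

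For the second claim, I would first establish pairwise compatibility of $\{\calQ_\lambda \mid \lambda \in \Lambda\}$. Fix $\lambda, \mu$. Since $\Xi_\lambda \cup \Xi_\mu \subset \bigcup_\nu \Xi_\nu$ is locally finite and pairwise compatible, glueability gives an element $\calR = \bigvee(\Xi_\lambda \cup \Xi_\mu)$. For every $\calP \in \Xi_\lambda$ we have $\calR \geqq \calP$, so $\calR$ is an upper bound of $\Xi_\lambda$, hence $\calR \geqq \bigvee \Xi_\lambda = \calQ_\lambda$; similarly $\calR \geqq \calQ_\mu$. So $\calQ_\lambda$ and $\calQ_\mu$ are compatible. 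For local finiteness of $\{\calQ_\lambda\}$, fix $x \in X$, $r > 0$; I want to show $\{\calQ_\lambda \sci B(x,r) \mid \lambda \in \Lambda\}$ is finite. Here the key is that by Lemma \ref{lem_support_supremum}, $\supp \calQ_\lambda = \overline{\bigcup_{\calP \in \Xi_\lambda} \supp \calP}$, and by the equation \eqref{eq_def_glueable} defining glueability, $\calQ_\lambda \sci B(x,r) = \bigvee(\Xi_\lambda \sci B(x,r))$. Since $\bigcup_\nu \Xi_\nu$ is locally finite, the set $\mathcal{F} := (\bigcup_\nu \Xi_\nu) \sci B(x,r)$ is finite, and each $\Xi_\lambda \sci B(x,r)$ is a subset of $\mathcal{F}$; there are only finitely many such subsets, and $\calQ_\lambda \sci B(x,r) = \bigvee(\Xi_\lambda \sci B(x,r))$ depends only on which subset of $\mathcal{F}$ we get, so only finitely many values occur.

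For the third claim, set $\mathcal{S} = \bigvee \bigcup_\lambda \Xi_\lambda$ (which exists by glueability) and $\mathcal{T} = \bigvee\{\calQ_\lambda \mid \lambda \in \Lambda\}$ (which exists by the second claim together with glueability). To see $\mathcal{S} \geqq \mathcal{T}$: $\mathcal{S}$ is an upper bound of each $\Xi_\lambda$, hence $\mathcal{S} \geqq \calQ_\lambda$ for all $\lambda$, hence $\mathcal{S}$ is an upper bound of $\{\calQ_\lambda\}$ and so $\mathcal{S} \geqq \mathcal{T}$. Conversely, $\mathcal{T} \geqq \calQ_\lambda \geqq \calP$ for every $\calP \in \Xi_\lambda$ and every $\lambda$ (using transitivity of $\geqq$), so $\mathcal{T}$ is an upper bound of $\bigcup_\lambda \Xi_\lambda$, hence $\mathcal{T} \geqq \mathcal{S}$. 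Antisymmetry of the order $\geqq$ then gives $\mathcal{S} = \mathcal{T}$.

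The only genuinely delicate point is the local finiteness of $\{\calQ_\lambda\}$ in the second claim: one must avoid accidentally double-counting and be careful that the finitely many subsets of $\mathcal{F}$ really do control the finitely many possible values of $\calQ_\lambda \sci B(x,r)$ — this is where the glueability identity $\calQ_\lambda \sci B(x,r) = \bigvee(\Xi_\lambda \sci B(x,r))$ is essential, since it reduces the question to a finite combinatorial one. Everything else is a routine application of the order axioms and the definition of supremum as least upper bound, combined with the defining properties of a glueable abstract pattern space.
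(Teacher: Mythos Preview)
Your proof is correct and follows essentially the same route as the paper: both use the glueability identity $\calQ_\lambda\sci B(x,r)=\bigvee(\Xi_\lambda\sci B(x,r))$ together with finiteness of $(\bigcup_\nu\Xi_\nu)\sci B(x,r)$ for local finiteness, and a straightforward least-upper-bound argument for the equality of suprema. The only cosmetic difference is that for pairwise compatibility the paper uses the full supremum $\calP=\bigvee\bigcup_\lambda\Xi_\lambda$ as a common majorant of every $\calQ_\lambda$, whereas you build a separate majorant $\bigvee(\Xi_\lambda\cup\Xi_\mu)$ for each pair; both work equally well.
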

\begin{proof}
      Set $\calP=\bigvee\bigcup_{\lambda}\Xi_{\lambda}$.
      For each $\lambda\in\Lambda$ and $\calQ\in\Xi_{\lambda}$, we have
      $\calP\geqq\calQ$ and so $\calP\geqq\calQ_{\lambda}$.
      This, in particular, shows that $\{\calQ_{\lambda}\mid\lambda\}$ is 
       pairwise compatible. Moreover, since for each $x\in X$ and $r>0$,
       \begin{align}
	    \{\calQ_{\lambda}\sci B(x,r)\mid\lambda\in\Lambda\}=
                    \{\bigvee(\Xi_{\lambda}\sci B(x,r))\mid \lambda\in\Lambda\},
            \label{eq_lem_suishin}
       \end{align}
       $\Xi_{\lambda}\sci B(x,r)\subset(\bigcup_{\lambda}\Xi_{\lambda})\sci B(x,r)$ and 
       $(\bigcup\Xi_{\lambda})\sci B(x,r)$ is finite by assumption,
       the set (\ref{eq_lem_suishin}) is finite: the set $\{\calQ_{\lambda}\mid\lambda\}$
       is locally finite.

       If $\calP'$ is a majorant for $\{\calQ_{\lambda}\mid\lambda\}$, then $\calP'\geqq\calQ$
       for each $\lambda\in\Lambda$ and $\calQ\in\Xi_{\lambda}$, and so $\calP'\geqq\calP$.
       As was mentioned above, $\calP$ is a majorant for $\{\calQ_{\lambda}\mid\lambda\}$, and
       so it is its supremum.
\end{proof}

We finish this subsection with examples.
\begin{ex}\label{ex_patch_compatible}
      Consider $\Pi=\Patch(X)$ (Example \ref{example_patch}).
      In this abstract pattern space, for two elements $\calP,\calQ\in\Patch(X)$,
       the following statements hold:
      \begin{enumerate}
       \item  $\calP\geqq\calQ\iff\calP\supset\calQ$.
       \item  $\calP$ and $\calQ$ are compatible if and only if for any $T\in\calP$ and
              $S\in\calQ$, either $S=T$ or $S\cap T=\emptyset$ holds.
      \end{enumerate}
       If $\Xi\subset\Patch(X)$ is pairwise compatible, then
       $\calP_{\Xi}=\bigcup_{\calP\in\Xi}\calP$ is a patch,
        which is the supremum of $\Xi$. If $C\in\calC(X)$, then
        \begin{align*}
	      (\bigvee\Xi)\sci C=(\bigcup_{\calP\in\Xi}\calP)\sci C
             =\bigcup(\calP\sci C)=\bigvee(\Xi\sci C).
	\end{align*}    
         $\Patch(X)$ is glueable.
\end{ex}

\begin{ex}\label{ex_compatible_2^X}
       For the abstract pattern space $2^X$ in Example \ref{example_2^X_calC(X)},
        two elements $A,B\in 2^X$
       are compatible if and only if 
       \begin{align}
        \text{ $\overline{A}\cap B\subset A$ and
       $A\cap\overline{B}\subset B$.}\label{condition_compatible_2^X}
       \end{align}
        Indeed, 
       if $A$ and $B$ are compatible, then there is a majorant $C$.
       By $C\supset A\cup B$,
       \begin{align*}
	     A\cup(\overline{A}\cap B)=(A\cup B)\cap\overline{A}
             =C\cap \overline{A}\cap (A\cup B)
	     =A\cap (A\cup B)=A,
       \end{align*}
       and so $\overline{A}\cap B\subset A$.
       A similar argument shows that $\overline{B}\cap A\subset B$.
       Conversely, if the condition (\ref{condition_compatible_2^X}) holds, then
       $(A\cup B)\cap \overline{A}=A\cup (B\cap\overline{A})=A$ and similarly 
       $(A\cup B)\cap \overline{B}=B$, and so $A\cup B$ is a majorant for $A$ and $B$.

       Suppose $\Xi\subset 2^X$ is locally finite and pairwise compatible.
       Note that $\bigcup_{A\in\Xi}\overline{A}=\overline{\bigcup_{A\in\Xi}A}$.
       Set $A_{\Xi}=\bigcup_{A\in\Xi}A$. For each $A\in\Xi$,
       $A_{\Xi}\cap \overline{A}=\bigcup_{B\in\Xi}(B\cap\overline{A})=A$;
       $A_{\Xi}$ is a majorant of $\Xi$. If $B$ is also a majorant for $\Xi$,
      then
       \begin{align*}
	B\cap\overline{A_{\Xi}}=B\cap(\bigcup_{A\in\Xi}\overline{A})
                              =\bigcup_{A\in\Xi}(B\cap\overline{A})
                             =\bigcup_{A\in\Xi}A=A_{\Xi},
       \end{align*}
        and so $B\geqq A_{\Xi}$.
       It turns out that $A_{\Xi}$ is the supremum for $\Xi$.
       Moreover, if $C\in\calC(X)$, then
      $A_{\Xi}\sci C=\bigcup_{A\in\Xi}(A\cap C)=\bigvee(\Xi\sci C)$.
      Thus  $2^X$ is a glueable space.  
\end{ex}



 \begin{rem}
       Let $\Pi_0$ be a glueable abstract pattern space and $\Pi_1\subset\Pi_0$ a
       pattern subspace. For any subset $\Xi\subset\Pi_1$,
       if  it is pairwise compatible
      in $\Pi_1$, then it is pairwise compatible in $\Pi_0$.  
      Moreover, whether a set is locally finite or not is independent of the
      ambient abstract pattern space in which the set is included.

  For a subset $\Xi\subset\Pi_1$ which is locally finite and
     pairwise compatible in $\Pi_1$, since $\Pi_0$ is glueable,
     there is the supremum $\bigvee\Xi$ in $\Pi_0$.
     If this supremum in $\Pi_0$ is always included in 
     $\Pi_1$, then $\Pi_1$ is glueable.

     By this remark, it is easy to see the abstract pattern spaces $\calC(X)$ 
     (Example \ref{example_2^X_calC(X)}), $\LF(X)$
     (Example \ref{example_LF(X)}), and $\UD_r(X)$ (Example
      \ref{example_UD},
      $r$ is an arbitrary positive number) are
      glueable.

      However, $\UD(X)$ (Example \ref{example_UD}) is not necessarily glueable.
      For example, set $X=\mathbb{R}$.
      Set $\calP_n=\{n,n+\frac{1}{n}\}$ for each integer $n\neq 0$.
      Each $\calP_n$ is in $\UD(\mathbb{R})$, $\Xi=\{\calP_n\mid n\neq 0\}$
      is locally finite and pairwise compatible, but it does not admit the supremum.
 \end{rem}

For the rest of this subsection we show that $\Map(X,Y,y_0)$ (Example \ref{example_map})
is glueable, where $X$ is a metric space, $Y$ a set and $y_0\in Y$.
This is proved in Proposition \ref{prop_map_glueable}, after proving preliminary
technical lemmas.

\begin{lem}\label{lem1_map_glueable}
      Two maps $f,g\in\Map(X,Y,y_0)$ are compatible if and only if
       $f|_{\supp f\cap\supp g}=g|_{\supp f\cap\supp g}$.
\end{lem}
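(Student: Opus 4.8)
The plan is to prove both implications directly from the definitions of compatibility, of the cutting-off operation, and of the support in $\Map(X,Y,y_0)$ (Example \ref{example_map}), together with the only topological input needed, namely that the complement of $\supp f$ is contained in $\{x\in X\mid f(x)=y_0\}$, which is immediate from $\supp f=\overline{\{x\mid f(x)\neq y_0\}}\supset\{x\mid f(x)\neq y_0\}$.

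For the ``only if'' direction, suppose $f$ and $g$ are compatible, so there is $h\in\Map(X,Y,y_0)$ with $h\geqq f$ and $h\geqq g$; by Definition \ref{def_order_pattern_space} this means $h\sci\supp f=f$ and $h\sci\supp g=g$. I would evaluate at an arbitrary $x\in\supp f\cap\supp g$: since $x\in\supp f$, the definition of the cutting-off operation gives $f(x)=(h\sci\supp f)(x)=h(x)$, and since $x\in\supp g$ it likewise gives $g(x)=(h\sci\supp g)(x)=h(x)$; hence $f(x)=g(x)$, which is the asserted equality of restrictions.

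For the ``if'' direction, assume $f|_{\supp f\cap\supp g}=g|_{\supp f\cap\supp g}$ and build an explicit common upper bound. Define $h\colon X\to Y$ by $h(x)=f(x)$ if $x\in\supp f$, $h(x)=g(x)$ if $x\in\supp g$, and $h(x)=y_0$ otherwise; this is unambiguous precisely because on the overlap $\supp f\cap\supp g$ the hypothesis forces $f(x)=g(x)$. It then remains to check $h\sci\supp f=f$ and, symmetrically, $h\sci\supp g=g$. For the first: if $x\in\supp f$ then $(h\sci\supp f)(x)=h(x)=f(x)$; if $x\notin\supp f$ then $(h\sci\supp f)(x)=y_0$, and by the topological observation above $f(x)=y_0$ as well. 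Thus $h\geqq f$ and $h\geqq g$, so $f$ and $g$ are compatible.

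I do not expect any substantive obstacle here; the one point deserving a little care is that the map $h$ in the second part is genuinely well defined, and this is exactly what the hypothesis on the restrictions supplies. Everything else is a pointwise verification unwinding the definitions.
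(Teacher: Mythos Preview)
Your proof is correct and follows essentially the same approach as the paper's own proof: both directions use the same majorant $h$ constructed in the same way, with the same pointwise verifications. Your explicit remark that $x\notin\supp f$ implies $f(x)=y_0$ is the one detail the paper leaves implicit in the final step.
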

\begin{proof}
       Suppose $f$ and $g$ are compatible. Take a majorant $h\in\Map(X,Y,y_0)$.
       For each $x\in\supp f\cap\supp g$,
       \begin{align*}
	     f(x)=(h\sci\supp f)(x)=h(x)=(h\sci\supp g)(x)=g(x).
       \end{align*}
       Conversely suppose $f|_{\supp f\cap\supp g}=g|_{\supp f\cap\supp g}$.
       Define a map $h\in\Map(X,Y,y_0)$ by
       \begin{align*}
	      h(x)=
              \begin{cases}
	             f(x)&\text{if $x\in\supp f$}\\
                     g(x)&\text{if $x\in\supp g$}\\
                     y_0 &\text{otherwise}.
	      \end{cases}
       \end{align*}
       This is well-defined. Next, $h\geqq f$ because
       \begin{align*}
	     (h\sci \supp f)(x)=&
                 \begin{cases}
		         h(x)&\text{if $x\in\supp f$}\\
                         y_0&\text{if $x\notin\supp f$}
		 \end{cases}
                 \\
                 =&
                 \begin{cases}
		        f(x)&\text{if $x\in\supp f$}\\
                        y_0&\text{if $x\notin\supp f$}
		 \end{cases}
                 \\
                 =&f(x)
       \end{align*}
       for any $x\in X$. Similarly $h\geqq g$ and so $f$ and $g$ are compatible.
\end{proof}

\begin{lem}\label{lem2_Map_glueable}
       For $f\in\Map(X,Y,y_0)$, $x\in X$ and two positive numbers $r>s>0$,
       we have $\supp(f\sci B(x,r))\supset(\supp f)\cap B(x,s)$.
       Consequently, $(\supp (f\sci B(x,r)))\cap B(x,s)=(\supp f)\cap B(x,s)$.
\end{lem}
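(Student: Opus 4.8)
The plan is to unwind the definitions of the cutting-off operation and of the support in $\Map(X,Y,y_0)$ (Example \ref{example_map}) and thereby reduce the statement to an elementary fact about closures of intersections with balls in a metric space.

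First I would put $A=\{z\in X\mid f(z)\neq y_0\}$, so that $\supp f=\overline{A}$. Reading off the cutting-off operation from Example \ref{example_map}, one has $(f\sci B(x,r))(z)\neq y_0$ precisely when $z\in A\cap B(x,r)$, hence $\supp(f\sci B(x,r))=\overline{A\cap B(x,r)}$. So the first assertion is exactly the inclusion $\overline{A\cap B(x,r)}\supset\overline{A}\cap B(x,s)$, which I would prove by a sequence argument: given $w\in\overline{A}\cap B(x,s)$, choose $a_n\in A$ with $a_n\to w$; since $\rho(x,w)\leqq s<r$, the triangle inequality gives $\rho(x,a_n)\leqq s+\rho(w,a_n)\leqq r$ for all large $n$ (using $r-s>0$ and $a_n\to w$), so $a_n\in A\cap B(x,r)$ eventually, and therefore $w=\lim a_n\in\overline{A\cap B(x,r)}$.

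For the "consequently" part I would combine the inclusion just established with Lemma \ref{lemma_support_calP_sci_C}, which gives $\supp(f\sci B(x,r))\subset(\supp f)\cap B(x,r)$. Intersecting both inclusions with $B(x,s)$ and using $B(x,s)\subset B(x,r)$ (valid since $s<r$) yields, on the one hand, $\supp(f\sci B(x,r))\cap B(x,s)\supset(\supp f)\cap B(x,s)$ and, on the other hand, $\supp(f\sci B(x,r))\cap B(x,s)\subset(\supp f)\cap B(x,r)\cap B(x,s)=(\supp f)\cap B(x,s)$, whence equality.

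I do not expect any real obstacle here; the only point to keep in mind is that $B(x,r)$ denotes the \emph{closed} ball (Notation \ref{notation}), so that the estimate $\rho(x,a_n)\leqq r$ coming from $a_n\to w$ genuinely places $a_n$ inside $B(x,r)$, and properness of the metric is not needed for this particular lemma.
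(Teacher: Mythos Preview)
Your proof is correct and follows essentially the same approach as the paper: both unwind the definition of support in $\Map(X,Y,y_0)$ and show that points where $f\neq y_0$ near a given $w\in(\supp f)\cap B(x,s)$ eventually lie in $B(x,r)$, the only cosmetic difference being your use of sequences versus the paper's $\varepsilon$-balls. You also spell out the ``consequently'' part via Lemma~\ref{lemma_support_calP_sci_C}, which the paper leaves implicit.
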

\begin{proof}
       Take $x'\in(\supp f)\cap B(x,s)$. For any $\e>0$, there is $x''\in B(x',\e)$ such that
       $f(x'')\neq y_0$. If $\e$ is small enough, this $x''$ is in $B(x,r)$ and so 
       $(f\sci B(x,r))(x'')\neq y_0$. Since $\e$ was arbitrary, $x'\in\supp(f\sci B(x,r))$.
\end{proof}

\begin{lem}\label{lem3_map_glueable}
       Let $\Xi$ be a subset of $\Map(X,Y,y_0)$. Take $x\in X$ and two numbers $r,s$ such that
      $r>s>0$.
       If  $\Xi\sci B(x,r)$ is finite, then
       $\{(\supp f)\cap B(x,s)\mid f\in\Xi\}$ is finite. 
\end{lem}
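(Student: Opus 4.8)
The plan is to show that the assignment $f\mapsto(\supp f)\cap B(x,s)$ defined on $\Xi$ actually factors through the cutting-off map $f\mapsto f\sci B(x,r)$, so that its range is the image of the finite set $\Xi\sci B(x,r)$ under a single well-defined map, hence finite.

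First I would apply Lemma \ref{lem2_Map_glueable} with the given pair $r>s>0$: for every $f\in\Map(X,Y,y_0)$ we have
\begin{align*}
 \bigl(\supp(f\sci B(x,r))\bigr)\cap B(x,s)=(\supp f)\cap B(x,s).
\end{align*}
This is the crucial identity, and it is exactly where the strict inequality $r>s$ is needed. Next, define a map $\Psi\colon\Xi\sci B(x,r)\to 2^X$ by $\Psi(g)=(\supp g)\cap B(x,s)$; this is well-defined simply because $g\mapsto(\supp g)\cap B(x,s)$ is an honest function on all of $\Map(X,Y,y_0)$, and $\Xi\sci B(x,r)$ is by definition the set $\{f\sci B(x,r)\mid f\in\Xi\}$, a subset of $\Map(X,Y,y_0)$.

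Then, for each $f\in\Xi$, the displayed identity gives $(\supp f)\cap B(x,s)=\Psi(f\sci B(x,r))$, so
\begin{align*}
 \{(\supp f)\cap B(x,s)\mid f\in\Xi\}=\Psi\bigl(\Xi\sci B(x,r)\bigr).
\end{align*}
Since $\Xi\sci B(x,r)$ is finite by hypothesis, its image under $\Psi$ is finite, which is the assertion. There is essentially no real obstacle here: the only point requiring care is to notice that $\Psi$ must be regarded as defined on the finite set $\Xi\sci B(x,r)$ (not on $\Xi$ itself, where the assignment need not be injective), and to invoke Lemma \ref{lem2_Map_glueable} in the form above rather than its weaker inclusion version.
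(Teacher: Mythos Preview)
Your argument is correct and is precisely the reasoning behind the paper's proof, which simply reads ``Clear by Lemma \ref{lem2_Map_glueable}.'' You have spelled out explicitly the factorization through $\Xi\sci B(x,r)$ that the paper leaves implicit.
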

\begin{proof}
       Clear by Lemma \ref{lem2_Map_glueable}.
\end{proof}

\begin{lem}\label{lem4_map_glueable}
      If $\Xi\subset\Map (X,Y,y_0)$ is locally finite, then $\bigcup_{f\in\Xi}\supp f$ is closed.
\end{lem}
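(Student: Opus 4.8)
The plan is to show that the complement of $\bigcup_{f\in\Xi}\supp f$ is open by exploiting local finiteness to reduce, near each point, to a finite union of closed sets. Fix $x\in X$ with $x\notin\bigcup_{f\in\Xi}\supp f$; I want to produce a radius $\delta>0$ such that $B(x,\delta)^{\circ}$ misses every $\supp f$. Pick any $r>s>0$ (say $r=2s$ with $s=1$, but the values are irrelevant). By local finiteness, $\Xi\sci B(x,r)$ is finite, so by Lemma \ref{lem3_map_glueable} the family $\{(\supp f)\cap B(x,s)\mid f\in\Xi\}$ is finite; write its distinct nonempty members as $A_1,\dots,A_n$. Each $A_i$ is closed (intersection of a support with a closed ball) and, by hypothesis, $x\notin A_i$ since $x$ lies in no support at all. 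Hence there is $\delta_i>0$ with $B(x,\delta_i)\cap A_i=\emptyset$, and I set $\delta=\min\{s/2,\delta_1,\dots,\delta_n\}>0$.

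Next I claim $B(x,\delta)^{\circ}\cap\supp f=\emptyset$ for every $f\in\Xi$. Suppose not: there is $f\in\Xi$ and $y$ with $\rho(x,y)<\delta\leqq s$, so $y\in(\supp f)\cap B(x,s)$, which is one of the $A_i$; but then $y\in B(x,\delta)\cap A_i=\emptyset$, a contradiction. Therefore the open ball $B(x,\delta)^{\circ}$ is a neighborhood of $x$ contained in the complement of $\bigcup_{f\in\Xi}\supp f$. Since $x$ was an arbitrary point of that complement, the complement is open and $\bigcup_{f\in\Xi}\supp f$ is closed.

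I expect no serious obstacle here; the only point requiring care is that local finiteness of $\Xi$ as a subset of the pattern space gives finiteness of $\Xi\sci B(x,r)$ (by Definition \ref{def_local_finite_compatible}), and then Lemma \ref{lem3_map_glueable} converts this into finiteness of the relevant family of truncated supports — after which the argument is the standard fact that a \emph{locally finite} family of closed sets has closed union. The reason we restrict attention to a ball $B(x,s)$ strictly inside $B(x,r)$, rather than working directly with the supports, is exactly so that Lemma \ref{lem2_Map_glueable}/\ref{lem3_map_glueable} applies; properness of the metric is what makes the balls compact but is not otherwise needed for this particular step.
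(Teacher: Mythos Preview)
Your proof is correct and follows essentially the same approach as the paper: take $x$ outside the union, use local finiteness together with Lemma~\ref{lem3_map_glueable} (the paper uses the concrete radii $r=1$, $s=\tfrac{1}{2}$) to reduce to finitely many closed truncated supports near $x$, and then find a small ball avoiding all of them. The paper's version is more terse, omitting the explicit $\delta_i$ bookkeeping that you spell out, but the argument is the same.
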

\begin{proof}
      Take $x\in X\setminus(\bigcup_{f\in\Xi}\supp f)$.
      Since $\Xi\sci B(x,1)$ is finite, by Lemma \ref{lem3_map_glueable},
      $\{(\supp f)\cap B(x,\frac{1}{2})\mid f\in\Xi\}$ is finite.
      There is $r>0$ such that $B(x,r)\cap\supp f=\emptyset$ for any $f\in\Xi$.
\end{proof}

\begin{prop}\label{prop_map_glueable}
      $\Map(X,Y,y_0)$ is glueable.
\end{prop}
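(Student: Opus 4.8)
The plan is to verify the two conditions in Definition~\ref{def_glueable_pattern_space} for $\Pi=\Map(X,Y,y_0)$. So let $\Xi\subset\Map(X,Y,y_0)$ be locally finite and pairwise compatible. First I would construct the candidate supremum by gluing the maps pointwise. By Lemma~\ref{lem4_map_glueable} the set $S=\bigcup_{f\in\Xi}\supp f$ is closed; define $g\in\Map(X,Y,y_0)$ by $g(x)=f(x)$ if $x\in\supp f$ for some $f\in\Xi$, and $g(x)=y_0$ otherwise. The key point is that this is \emph{well-defined}: if $x\in\supp f\cap\supp f'$ with $f,f'\in\Xi$, then since $\Xi$ is pairwise compatible, Lemma~\ref{lem1_map_glueable} gives $f(x)=f'(x)$. (One should also observe that only the value matters, not the choice of $f$, so local finiteness is not needed for well-definedness, only compatibility.) Then $\supp g\subset S$, and in fact $\supp g=S$ since $S$ is closed and contains each $\supp f$.

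Next I would show $g=\bigvee\Xi$. For each $f\in\Xi$ one checks $g\sci\supp f=f$ exactly as in the proof of Lemma~\ref{lem1_map_glueable}: for $x\in\supp f$ we have $(g\sci\supp f)(x)=g(x)=f(x)$ (using well-definedness again at points of $\supp f$), and for $x\notin\supp f$ both sides are $y_0$. Hence $g\geqq f$ for all $f\in\Xi$, so $g$ is an upper bound. If $h$ is any other upper bound, then $h\sci\supp f=f$ for each $f$, so for $x\in\supp f$ we get $h(x)=f(x)=g(x)$; thus $h$ and $g$ agree on $S=\supp g$, and since $\supp h\supset S$ (Example~\ref{example_no_supremum}) we get $h\sci\supp g=g$, i.e.\ $h\geqq g$. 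So $g$ is the least upper bound, establishing condition~1.

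For condition~2, fix $C\in\calC(X)$. By Lemma~\ref{lem_locally_finite_compatible}, $\Xi\sci C$ is again locally finite and pairwise compatible, so $\bigvee(\Xi\sci C)$ exists; call it $g'$, built the same way from $\{f\sci C\mid f\in\Xi\}$. I would then just compare the two maps pointwise. For $x\in C$: $((\bigvee\Xi)\sci C)(x)=g(x)$, which is $f(x)$ for some $f\in\Xi$ with $x\in\supp f$ (or $y_0$); and since $x\in C$, $(f\sci C)(x)=f(x)$ with $x\in\supp(f\sci C)$ by Lemma~\ref{lemma_support_calP_sci_C} applied in the right direction—actually I need $x\in\supp(f\sci C)$, which may fail if $f$ vanishes near $x$ within $C$; here I should instead argue directly that $g'(x)$ equals $g(x)$ by chasing which $\supp(f\sci C)$ contains $x$, using that $\supp(f\sci C)=\overline{\{y\in C^{\circ}\text{-ish}\mid f(y)\neq y_0\}}\cap\dots$. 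For $x\notin C$: $((\bigvee\Xi)\sci C)(x)=y_0=g'(x)$ since every $f\sci C$ vanishes off $C$.

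The main obstacle is precisely this last pointwise comparison on $C$: the support of $f\sci C$ is $\overline{\{x\in C\mid f(x)\neq y_0\}}$, which can be strictly smaller than $(\supp f)\cap C$, so a point $x\in C$ with $g(x)=f(x)\neq y_0$ need not lie in $\supp(f\sci C)$ if $x$ is a boundary point of $C$ approached only from outside. To handle this cleanly I would treat the interior and boundary of $C$ separately, or better, show that at any $x$ where $g(x)\neq y_0$ one can find $f\in\Xi$ with $f(x)\neq y_0$ (not merely $x\in\supp f$), and conversely; then for $x\in C$ this $f$ satisfies $(f\sci C)(x)=f(x)\neq y_0$, forcing $x\in\supp(f\sci C)$ and hence $g'(x)=f(x)=g(x)$; while at points where $g(x)=y_0$ one shows $g'(x)=y_0$ too. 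The asymmetry between ``$g(x)\neq y_0$'' and ``$x\in\supp g$'' is the subtlety, and Lemma~\ref{lem2_Map_glueable} is exactly the tool that lets local finiteness control it.
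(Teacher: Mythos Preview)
Your approach is the same as the paper's, and the fix you identify at the end is the right one. The paper simply states it upfront: it gives $f_{\Xi}$ two equivalent descriptions,
\[
f_{\Xi}(x)=
\begin{cases}
f(x)&\text{if }x\in\supp f\text{ for some }f\in\Xi\\
y_0&\text{otherwise}
\end{cases}
=
\begin{cases}
f(x)&\text{if }f(x)\neq y_0\text{ for some }f\in\Xi\\
y_0&\text{otherwise},
\end{cases}
\]
and with the second description condition~2 becomes a three-line pointwise computation, exactly along the lines of your last paragraph. The equivalence of the two descriptions is immediate from pairwise compatibility: if $g(x)\neq y_0$ then by your definition there is $f\in\Xi$ with $x\in\supp f$ and $f(x)=g(x)\neq y_0$.

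One correction: your closing remark that ``Lemma~\ref{lem2_Map_glueable} is exactly the tool that lets local finiteness control it'' is off. Neither that lemma nor local finiteness is needed for condition~2 once the alternative description of $g$ is in hand; pairwise compatibility alone does all the work. Local finiteness enters only through Lemma~\ref{lem4_map_glueable}, to pin down $\supp g=\bigcup_{f\in\Xi}\supp f$ in the proof that $g$ is the least upper bound.
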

\begin{proof}
      Suppose $\Xi\subset\Map(X,Y,y_0)$ is locally finite and pairwise compatible.
      Set
         \begin{align*}
	      f_{\Xi}(x)&=
              \begin{cases}
	             f(x)&\text{if there is $f\in\Xi$ such that $x\in\supp f$}\\
                     y_0&\text{otherwise}
	      \end{cases}\\
	     &=
	    \begin{cases}
	           f(x)&\text{if there is $f\in\Xi$ such that $f(x)\neq y_0$}\\
	           y_0&\text{otherwise}.
	    \end{cases}
	 \end{align*}
      This is well-defined by Lemma \ref{lem1_map_glueable}.
      For each $f\in\Xi$ and $x\in X$,
      \begin{align*}
             (f_{\Xi}\sci\supp f)(x)=&
                             \begin{cases}
			            f_{\Xi}(x)&\text{if $x\in\supp f$}\\
                                     y_0&\text{if $x\notin\supp f$}
			     \end{cases}\\
                             =&
                             \begin{cases}
			            f(x)&\text{if $x\in\supp f$}\\
                                     y_0&\text{if $x\notin\supp f$}
			     \end{cases}\\
                             =&f(x),
      \end{align*}
       and so $f_{\Xi}\geqq f$. In other words, $f_{\Xi}$ is a majorant for $\Xi$.

       Next, we prove that $f_{\Xi}$ is the supremum for $\Xi$.
       To this end, we first claim $\supp f_{\Xi}=\bigcup_{f\in\Xi}\supp f$.
       It is clear that $\{x\in X\mid f_{\Xi}(x)\neq y_0\}\subset\bigcup_{f\in\Xi}\supp f$ because
       if $f_{\Xi}(x)\neq y_0$, then there is $f\in\Xi$ such that $f(x)\neq y_0$.
       Together with Lemma \ref{lem4_map_glueable}, we see 
       $\supp f_{\Xi}\subset\bigcup_{f\in\Xi}\supp f$. Since $f_{\Xi}$ is a majorant for
       $\Xi$, the reverse inclusion is clear, and so  $\supp f_{\Xi}=\bigcup_{f\in\Xi}\supp f$.

       To prove that $f_{\Xi}$ is the supremum, we next take an arbitrary
      majorant $g$ for $\Xi$.
       Since
       for $f\in\Xi$ and $x\in\supp f$, we have $g(x)=(g\sci\supp f)(x)=f(x)$,
       we obtain 
       \begin{align*}
	    g\sci (\bigcup_{f\in\Xi}\supp f)(x)=&
                                             \begin{cases}
					    g(x)&\text{if there is $f\in\Xi$ such that $x\in\supp f$}\\
                                            y_0&\text{otherwise}
					     \end{cases}\\
                                              =&
                                          \begin{cases}
					    f(x)&\text{if there is $f\in\Xi$ such that $x\in\supp f$}\\
                                             y_0&\text{otherwise}
					  \end{cases}\\
                                            =&f_{\Xi}(x)
       \end{align*}
       for each $x\in X$, and so $g\sci (\supp f_{\Xi})=f_{\Xi}$, namely $g\geqq f_{\Xi}$.
      We have shown $f_{\Xi}$ is the supremum for $\Xi$; $f_{\Xi}=\bigvee\Xi$.

      It remains to show that $f_{\Xi}\sci C$ is equal to $\bigvee(\Xi\sci C)$ for each
      $C\in\calC(X)$. This is the case because
      \begin{align*}
              (f_{\Xi}\sci C)=&\begin{cases}
				     f_{\Xi}(x)&\text{if $x\in C$}\\
                                     y_0 &\text{otherwise}
			       \end{cases}\\
                             =&\begin{cases}
				     f(x)&\text{if $x\in C$ and $f(x)\neq y_0$ for some $f\in\Xi$}\\
                                     y_0&\text{otherwise}
			       \end{cases}\\
                             =&\begin{cases}
				    (f\sci C)(x)&\text{if there is $f\in\Xi$ such that
                                                          $(f\sci C)(x)\neq y_0$}\\
                                    y_0&\text{otherwise}.
			       \end{cases}
      \end{align*}
\end{proof}

\subsection{Zero Element and Its Uniqueness}
\label{subsection_zero_elements}

Here we discuss ``zero elements''.
Often there is only one zero element (Lemma \ref{uniqueness_zero_element})
and this fact plays an important role later.
(See Lemma \ref{lem_thereis_R_that_decomposes}.)
\begin{defi}
      Let $\Pi$ be an abstract pattern space over a topological space $X$.
      An element $\calP\in\Pi$ such that $\supp\calP=\emptyset$ is called
      a \emph{zero element} of $\Pi$. If there is only one zero element in $\Pi$, 
      it is denoted by $0$.
\end{defi}

\begin{rem}
      Zero elements always exist. In fact, take an arbitrary element $\calP\in\Pi$.
      Then by Lemma \ref{lemma_support_calP_sci_C}, $\supp(\calP\sci\emptyset)=\emptyset$
       and so $\calP\sci\emptyset$ is a zero element.

      Zero elements need not be unique in general. For example, any set $\Pi$ is an
     abstract pattern space over any topological space $X$ by defining a cutting-off operation via
     $\calP\sci C=\calP$ for any $\calP\in\Pi$ and $C\in\calC(X)$.
     In this abstract pattern space any element $\calP\in\Pi$ is a zero element.
     The disjoint union of two abstract pattern spaces also have two zero elements.
\end{rem}

\begin{lem}\label{uniqueness_zero_element}
        If $\Pi$ is a glueable abstract pattern space over a topological space $X$,
 there is only one zero element in $\Pi$.
\end{lem}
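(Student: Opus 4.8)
The plan is to show that any two zero elements of a glueable abstract pattern space $\Pi$ coincide by realizing each of them as the supremum of the empty subset $\emptyset\subset\Pi$. First I would observe that the empty set is trivially both locally finite and pairwise compatible: local finiteness asks that $\emptyset\sci B(x,r)$ be finite for all $x,r$, and $\emptyset\sci B(x,r)=\emptyset$; pairwise compatibility is a universally quantified statement over pairs of elements of $\emptyset$, hence vacuous. Therefore the first condition in the definition of glueable (Definition \ref{def_glueable_pattern_space}) applies and yields an element $\bigvee\emptyset\in\Pi$.

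Next I would identify $\bigvee\emptyset$ with a zero element. By Lemma \ref{lem_support_supremum}, $\supp\bigvee\emptyset=\overline{\bigcup_{\calP\in\emptyset}\supp\calP}=\overline{\emptyset}=\emptyset$, so $\bigvee\emptyset$ is indeed a zero element. Now let $\calP_0$ be an arbitrary zero element, i.e.\ $\supp\calP_0=\emptyset$. I claim $\calP_0$ is an upper bound for $\emptyset$ with respect to $\geqq$: the statement ``$\calP_0\geqq\calQ$ for all $\calQ\in\emptyset$'' is again vacuously true. Since $\bigvee\emptyset$ is the \emph{least} upper bound, we get $\calP_0\geqq\bigvee\emptyset$. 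Conversely, $\bigvee\emptyset$ itself is an upper bound for $\emptyset$ (trivially), so $\bigvee\emptyset$ is dominated by every upper bound; but we must compare in the other direction. Here I would instead argue directly: since $\supp\bigvee\emptyset=\emptyset$, we have $\calP_0\sci\supp\bigvee\emptyset=\calP_0\sci\emptyset$, and since $\supp\calP_0=\emptyset$ as well, $\calP_0\sci\emptyset=\calP_0\sci\supp\calP_0=\calP_0$; on the other hand from $\calP_0\geqq\bigvee\emptyset$ we have $\calP_0\sci\supp\bigvee\emptyset=\bigvee\emptyset$. Comparing the two computations of $\calP_0\sci\supp\bigvee\emptyset$ gives $\calP_0=\bigvee\emptyset$.

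Since $\calP_0$ was an arbitrary zero element, every zero element equals $\bigvee\emptyset$, so the zero element is unique. The only subtle point — and the one I would be careful to state explicitly — is the verification that $\emptyset\subset\Pi$ satisfies the hypotheses of glueability so that $\bigvee\emptyset$ exists; everything else is a routine unwinding of the definition of $\geqq$ and of Lemma \ref{lem_support_supremum}. There is no real obstacle: the argument is essentially the observation that in any poset arising this way, a bottom element, if it exists, is the supremum of the empty set, and glueability guarantees this supremum exists.
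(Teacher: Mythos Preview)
Your proof is correct and follows essentially the same line as the paper's: both take $\bigvee\emptyset$ (which exists since $\emptyset$ is vacuously locally finite and pairwise compatible), note via Lemma~\ref{lem_support_supremum} that it is a zero element, and then show any zero element $\calP_0$ majorizes $\bigvee\emptyset$ and hence equals it because $\calP_0=\calP_0\sci\emptyset=\calP_0\sci\supp\bigvee\emptyset=\bigvee\emptyset$. The only difference is that you spell out the vacuous verifications and the unwinding of $\geqq$ more explicitly than the paper does.
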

\begin{proof}
        The subset  $\emptyset$ of $\Pi$ is locally finite and pairwise compatible.
        Set $\calP=\bigvee\emptyset$.
       By Lemma \ref{lem_support_supremum}, $\calP$ is a zero element.
       If $\calQ$ is a zero element, then since
        $\calQ$ is a majorant for $\emptyset$, we see $\calQ\geqq\calP$.
        We have $\calQ=\calQ\sci\emptyset=\calP$.
\end{proof}

We finish this subsection by proving that zero elements play no role when we take
supremum.
\begin{lem}\label{lem_supremum_Xi_and_Xi_zero}
        Let $\Pi$ be a glueable abstract pattern space over a topological space $X$.
        Take a locally finite and pairwise compatible subset $\Xi$ of $\Pi$.
        Then $\bigvee (\Xi\cup \{0\})$ exists and
        $\bigvee(\Xi\cup\{0\})=\bigvee\Xi$.
\end{lem}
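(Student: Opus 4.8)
The plan is to first verify that $\Xi\cup\{0\}$ is again locally finite and pairwise compatible, so that its supremum exists by glueability, and then to compare the two suprema directly using antisymmetry of the order $\geqq$.

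The key observations I would record first concern the unique zero element $0$, which exists by Lemma \ref{uniqueness_zero_element}. First, $0\sci C=0$ for every $C\in\calC(X)$: indeed $\supp(0\sci C)\subset(\supp 0)\cap C=\emptyset$ by Lemma \ref{lemma_support_calP_sci_C}, so $0\sci C$ is a zero element and hence equals $0$. Second, and more importantly, $\calP\geqq 0$ for every $\calP\in\Pi$: the abstract pattern $\calP\sci\supp 0=\calP\sci\emptyset$ has empty support by Lemma \ref{lemma_support_calP_sci_C}, hence is a zero element, hence equals $0$, which is exactly the statement $\calP\sci\supp 0=0$. In particular $0$ is compatible with every element of $\Pi$, taking the element itself as the common majorant, so $\Xi\cup\{0\}$ is pairwise compatible. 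For local finiteness, for any $x\in X$ and $r>0$ we have $(\Xi\cup\{0\})\sci B(x,r)=(\Xi\sci B(x,r))\cup\{0\}$ using $0\sci B(x,r)=0$, and this set is finite because $\Xi\sci B(x,r)$ is.

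With these properties in hand, glueability of $\Pi$ yields the existence of $\bigvee(\Xi\cup\{0\})$, while $\bigvee\Xi$ exists because $\Xi$ is itself locally finite and pairwise compatible. To see that the two coincide I would argue by mutual domination. On one hand $\bigvee(\Xi\cup\{0\})$ majorizes every element of $\Xi$, hence is an upper bound of $\Xi$, so $\bigvee(\Xi\cup\{0\})\geqq\bigvee\Xi$. On the other hand $\bigvee\Xi$ majorizes every element of $\Xi$ and also majorizes $0$ by the observation above, hence is an upper bound of $\Xi\cup\{0\}$, so $\bigvee\Xi\geqq\bigvee(\Xi\cup\{0\})$. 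Since $\geqq$ is an order, in particular antisymmetric, we conclude $\bigvee(\Xi\cup\{0\})=\bigvee\Xi$.

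There is no real obstacle here; the only points requiring care are not to assume $0\in\Xi$ already (the identity for $(\Xi\cup\{0\})\sci B(x,r)$ holds in either case) and to invoke uniqueness of the zero element at the two places where it is used, namely to conclude $0\sci C=0$ and $\calP\sci\emptyset=0$. This is exactly where glueability, through Lemma \ref{uniqueness_zero_element}, enters beyond merely furnishing the supremum.
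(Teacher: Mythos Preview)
Your proof is correct and follows the same idea as the paper: both establish that $0$ is the minimum element of $\Pi$ (via $\calP\sci\emptyset=0$ from uniqueness of the zero element), and then conclude that adjoining $0$ cannot change the supremum. Your version is more thorough in that you explicitly verify $\Xi\cup\{0\}$ is locally finite and pairwise compatible before invoking glueability, a step the paper leaves implicit.
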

\begin{proof}
        For any $\calP\in\Pi$, the abstract pattern $\calP\sci\emptyset$ is a zero element
        and by the uniqueness of zero element
        (Lemma \ref{uniqueness_zero_element}), $\calP\sci\emptyset=0$ and $\calP\geqq 0$.
        The existence and the value of the supremum will not be changed by adding
        the minimum element $0$.
\end{proof}

\section{$\Gamma$-abstract pattern spaces over $X$, or abstract pattern spaces over $(X,\Gamma)$}
\label{section_gammma-pattern_space}
Here we incorporate group actions to the theory of abstract pattern spaces.
First, we define abstract pattern spaces over $(X,\Gamma)$, or $\Gamma$-abstract pattern spaces over $X$,
where $X$ is a topological space and a group $\Gamma$ acts on $X$ by homeomorphisms.
We require there is an action of the group $\Gamma$ on such an abstract pattern space and
the cutting-off operation is
equivariant.
In Subsection \ref{subsection_local_derivable} we define local derivation by using
the structure of $\Gamma$-abstract pattern spaces. There we show several maps in aperiodic order
send an abstract pattern $\calP$ to one which is mutually locally derivable (MLD)
with $\calP$; we solve the first question in Introduction affirmatively.
\subsection{Definition and Examples}\label{subsection_def_Gamma-pat_sp}
\begin{setting}
       \emph{In this subsection, unless otherwise stated, $X$ is a topological space,
       $\Gamma$ is a group that acts on $X$ as homeomorphisms, and $\Pi$ is a 
       abstract pattern space over $X$.}
\end{setting}

In this subsection we define abstract pattern spaces over $(X,\Gamma)$, or $\Gamma$-abstract pattern spaces
over $X$. We then study relations between the group action and the notions appeared in
Section \ref{section_pattern_space}. We also give examples of abstract pattern spaces
over $(X,\Gamma)$.

\begin{defi}\label{def_gamma-pattern_space}
       Given a group action $\Gamma\curvearrowright\Pi$ such that 
       for each $\calP\in\Pi, C\in\calC(X)$ and $\gamma\in\Gamma$,
       we have $(\gamma\calP)\sci (\gamma C)=\gamma(\calP\sci C)$,
       that is, the cutting-off operation
       is equivariant, we say
       $\Pi$ is a \emph{$\Gamma$-abstract pattern space} or a \emph{abstract pattern space over $(X,\Gamma)$}.
 
        For an abstract pattern space $\Pi$ over $(X,\Gamma)$, a nonempty subset $\Sigma$ of $\Pi$
       such that $\calP\in\Sigma$ and $\gamma\in\Gamma$ imply $\gamma\calP\in\Sigma$
      is called a \emph{subshift} of $\Pi$.
\end{defi}
Examples are given after the following few
 lemmas. First we describe the relation  among the
group action $\Gamma\curvearrowright\Pi$, the supports and the order $\geqq$.

\begin{lem}\label{lem1_before_examples_equiv_pat_sp}
      Let $\Pi$ be an abstract pattern space over $(X,\Gamma)$. For $\calP,\calQ\in\Pi$ and 
      $\gamma\in\Gamma$, the following statements hold:
      \begin{enumerate}
       \item $\gamma\supp\calP=\supp(\gamma\calP)$.
	\item If $\calP\geqq\calQ$, then $\gamma\calP\geqq\gamma\calQ$.
      \end{enumerate}
\end{lem}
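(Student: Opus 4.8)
The plan is to verify both statements directly from the defining axioms of a $\Gamma$-abstract pattern space, namely the equivariance of the cutting-off operation, together with the characterization of the support in Definition \ref{def_pattern_sp} and the definition of the order in Definition \ref{def_order_pattern_space}.

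For the first statement, $\gamma\supp\calP=\supp(\gamma\calP)$, I would argue via the uniqueness of the support. Since $\supp\calP=C_{\calP}$ is the unique closed set with the property that $\calP\sci C=\calP$ if and only if $C\supset\supp\calP$, it suffices to show that $\gamma\supp\calP$ has the analogous property for $\gamma\calP$: that is, $(\gamma\calP)\sci D=\gamma\calP$ if and only if $D\supset\gamma\supp\calP$. To see this, first note that $\gamma$ acts on $X$ by homeomorphisms, so $\gamma$ maps $\calC(X)$ bijectively onto itself and preserves inclusions; hence every $D\in\calC(X)$ is of the form $\gamma C$ for a unique $C\in\calC(X)$, and $D\supset\gamma\supp\calP$ if and only if $C\supset\supp\calP$. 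Now using equivariance, $(\gamma\calP)\sci D=(\gamma\calP)\sci(\gamma C)=\gamma(\calP\sci C)$, and this equals $\gamma\calP$ precisely when $\calP\sci C=\calP$ (applying $\gamma^{-1}$, which is injective on $\Pi$ since it has the inverse $\gamma$), which in turn holds precisely when $C\supset\supp\calP$. Chaining these equivalences gives the claim, and uniqueness of the support then forces $\supp(\gamma\calP)=\gamma\supp\calP$.

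For the second statement, suppose $\calP\geqq\calQ$, i.e.\ $\calP\sci\supp\calQ=\calQ$. Applying $\gamma$ to both sides and using equivariance together with part 1,
\begin{align*}
  \gamma\calQ=\gamma(\calP\sci\supp\calQ)=(\gamma\calP)\sci(\gamma\supp\calQ)=(\gamma\calP)\sci\supp(\gamma\calQ),
\end{align*}
which says exactly that $\gamma\calP\geqq\gamma\calQ$.

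I do not anticipate a genuine obstacle here; the statement is essentially a bookkeeping exercise in unwinding definitions. The only point requiring a small amount of care is the first part: one must use that $\gamma$ is a homeomorphism of $X$ (so that it induces an order-preserving bijection of $\calC(X)$) and that the $\Gamma$-action on $\Pi$ is by bijections (so that $\gamma^{-1}$ cancels $\gamma$), rather than assuming these implicitly. Once those observations are in place, invoking uniqueness of the support is the cleanest route; an alternative, slightly more computational route would be to check the two inclusions $\gamma\supp\calP\subset\supp(\gamma\calP)$ and $\supp(\gamma\calP)\subset\gamma\supp\calP$ separately using Lemma \ref{lemma_support_calP_sci_C} applied to $\calP$ and to $\gamma\calP$, but the uniqueness argument avoids that duplication.
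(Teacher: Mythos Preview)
Your proof is correct; the paper states this lemma without proof, treating it as an immediate consequence of the definitions, and your argument is exactly the natural verification one would supply.
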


Next, we prove two lemmas that are concerned with the construction of new
abstract pattern spaces over $(X,\Gamma)$ from existing abstract pattern spaces over $(X,\Gamma)$. The first way of construction is taking subspace.

\begin{lem}\label{lem_pattern_subspace_over_X_Gamma}
     Let $\Pi$ be an abstract pattern space over $(X,\Gamma)$.
       Suppose $\Pi'$ is an abstract pattern subspace of
      $\Pi$. If $\Pi'$ is closed under the $\Gamma$-action (that is, it is a subshift),
 then $\Pi'$ is an abstract pattern space
       over $(X,\Gamma)$.
\end{lem}

The second way is to take the product.

 \begin{lem}\label{lem_product_Gamma-pattern_space}
       Let $\Lambda$ be a set and $(\Pi_{\lambda})_{\lambda\in\Lambda}$ be a family of
       abstract pattern spaces over $(X,\Gamma)$.
       Then $\Gamma$ acts on the product space $\prod_{\lambda}\Pi_{\lambda}$ by 
       $\gamma(\calP_{\lambda})_{\lambda}=(\gamma\calP_{\lambda})_{\lambda}$ and
      by this action $\prod_{\lambda}\Pi_{\lambda}$ is an abstract pattern space over $(X,\Gamma)$.
 \end{lem}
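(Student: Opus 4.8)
The plan is to verify two things: first, that $\gamma(\calP_\lambda)_\lambda = (\gamma\calP_\lambda)_\lambda$ genuinely defines a group action of $\Gamma$ on $\prod_\lambda \Pi_\lambda$, and second, that this action is compatible with the product cutting-off operation from Lemma \ref{lemmma_product_pattern_space}, i.e.\ that it is equivariant in the sense of Definition \ref{def_gamma-pattern_space}. Both are essentially coordinatewise bookkeeping, so the proof should be short.

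First I would check the group-action axioms. For the identity $e\in\Gamma$, since each $\Pi_\lambda$ is a $\Gamma$-abstract pattern space we have $e\calP_\lambda = \calP_\lambda$ for every $\lambda$, hence $e(\calP_\lambda)_\lambda = (e\calP_\lambda)_\lambda = (\calP_\lambda)_\lambda$. For composition, given $\gamma,\eta\in\Gamma$, we compute $\gamma(\eta(\calP_\lambda)_\lambda) = \gamma(\eta\calP_\lambda)_\lambda = (\gamma(\eta\calP_\lambda))_\lambda = ((\gamma\eta)\calP_\lambda)_\lambda = (\gamma\eta)(\calP_\lambda)_\lambda$, using in the penultimate step that each $\Pi_\lambda$ carries a $\Gamma$-action. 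So $\Gamma$ acts on the product.

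Next I would verify equivariance of the cutting-off operation. Fix $(\calP_\lambda)_\lambda \in \prod_\lambda \Pi_\lambda$, $C\in\calC(X)$ and $\gamma\in\Gamma$. Using the definition of the product cutting-off operation and then the equivariance in each factor $\Pi_\lambda$, we get
\begin{align*}
 (\gamma(\calP_\lambda)_\lambda)\sci(\gamma C) = (\gamma\calP_\lambda)_\lambda \sci (\gamma C) = \bigl((\gamma\calP_\lambda)\sci(\gamma C)\bigr)_\lambda = \bigl(\gamma(\calP_\lambda\sci C)\bigr)_\lambda = \gamma(\calP_\lambda\sci C)_\lambda = \gamma\bigl((\calP_\lambda)_\lambda\sci C\bigr).
\end{align*}
This is exactly the equivariance condition, so $\prod_\lambda \Pi_\lambda$ is an abstract pattern space over $(X,\Gamma)$. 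There is no real obstacle here; the only point requiring any care is to make sure that the action on the product is well-defined, i.e.\ that $(\gamma\calP_\lambda)_\lambda$ is again an element of $\prod_\lambda\Pi_\lambda$, which is immediate since each $\gamma\calP_\lambda\in\Pi_\lambda$ by the $\Gamma$-action on $\Pi_\lambda$. (One might also remark that $\Gamma$ need only act on $X$ as maps, not necessarily homeomorphisms, for the algebra to go through, but since the ambient hypotheses already assume homeomorphisms this is not needed.)
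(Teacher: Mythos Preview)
Your proof is correct and follows essentially the same approach as the paper: the paper refers to Lemma \ref{lemmma_product_pattern_space} for the abstract pattern space structure and then says the equivariance $\gamma((\calP_{\lambda})_{\lambda}\sci C)=(\gamma(\calP_{\lambda})_{\lambda})\sci \gamma C$ holds ``by a straightforward computation,'' which is exactly the coordinatewise verification you wrote out. You additionally spell out the group-action axioms, which the paper leaves implicit.
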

\begin{proof}
       That $\prod\Pi_{\lambda}$ is an abstract pattern space is proved in Lemma 
       \ref{lemmma_product_pattern_space}. For $\gamma\in\Gamma, 
       (\calP_{\lambda})\in\prod\Pi_{\lambda}$ and $C\in\calC(X)$,
       $\gamma((\calP_{\lambda})_{\lambda}\sci C)=(\gamma(\calP_{\lambda})_{\lambda})\sci \gamma C$
       by a straightforward computation.
\end{proof}

\begin{defi}\label{def_product_Gamma_pattern_spd}
       The abstract pattern space $\prod\Pi_{\lambda}$ is called the \emph{product $\Gamma$-abstract pattern space}.
\end{defi}
By this construction we see the abstract pattern space $\UD^I(X)$ of uniformly
discrete multi sets
(Example \ref{ex_multi_set}) is a $\Gamma$-abstract pattern space and the space of all
Delone multi set is its subshift. We also use the structure of $\Gamma$-abstract
pattern space on the product in Proposition \ref{prop_calP_MLD_Gamma_lambda}.

The third way is to take the power set.
\begin{lem}\label{lem_power_equivariant_pat_sp}
       Let $\Pi$ be an abstract pattern space over $(X,\Gamma)$.
       Then the power abstract pattern space $2^{\Pi}$ (Definition \ref{Def_power_pattern_space})
         is an abstract pattern space over $(X,\Gamma)$ by an action 
      $\gamma\Xi=\{\gamma\calP\mid\calP\in\Xi\}$.
\end{lem}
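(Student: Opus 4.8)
The plan is to verify directly that the proposed action $\gamma\Xi=\{\gamma\calP\mid\calP\in\Xi\}$ on $2^{\Pi}$ is a group action by homeomorphisms and that the cutting-off operation on $2^{\Pi}$ (as in \eqref{eq_scissors_operation_powewset}) is equivariant with respect to it; by Definition \ref{def_gamma-pattern_space} this is exactly what must be shown. Since $2^{\Pi}$ is already known to be an abstract pattern space over $X$ (Definition \ref{Def_power_pattern_space}), the only remaining content is the interplay of the $\Gamma$-action with the cutting-off operation on subsets.

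First I would check that $\Xi\mapsto\gamma\Xi$ is a group action: $e\Xi=\{e\calP\mid\calP\in\Xi\}=\Xi$ since $\Gamma\curvearrowright\Pi$ is a group action, and $(\gamma\eta)\Xi=\{(\gamma\eta)\calP\mid\calP\in\Xi\}=\{\gamma(\eta\calP)\mid\calP\in\Xi\}=\gamma(\eta\Xi)$. Each map $\Xi\mapsto\gamma\Xi$ is a bijection of $2^{\Pi}$ with inverse $\Xi\mapsto\gamma^{-1}\Xi$. (There is no topology on $2^{\Pi}$ beyond the abstract pattern space structure, so ``homeomorphism'' here just means the action is by the cutting-off-preserving bijections that Definition \ref{def_gamma-pattern_space} requires; alternatively one notes $\Gamma$ acts on $X$ by homeomorphisms and the induced map on $\calC(X)$ is compatible.) Then I would verify equivariance of the cutting-off operation: for $\Xi\in 2^{\Pi}$, $C\in\calC(X)$ and $\gamma\in\Gamma$,
\begin{align*}
     (\gamma\Xi)\sci(\gamma C)=\{\calP\sci(\gamma C)\mid\calP\in\gamma\Xi\}
       =\{(\gamma\calQ)\sci(\gamma C)\mid\calQ\in\Xi\}
       =\{\gamma(\calQ\sci C)\mid\calQ\in\Xi\}=\gamma(\Xi\sci C),
\end{align*}
where the third equality uses that $\Pi$ is an abstract pattern space over $(X,\Gamma)$, i.e.\ $(\gamma\calQ)\sci(\gamma C)=\gamma(\calQ\sci C)$. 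This is the whole proof.

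There is essentially no obstacle here — the statement is a routine compatibility check, and the only thing to be careful about is that $\gamma C\in\calC(X)$ (which holds because $\Gamma$ acts on $X$ by homeomorphisms, so maps closed sets to closed sets), and that one applies the defining equivariance identity of $\Pi$ to each element of $\Xi$ before collecting into the set on the right-hand side. I would simply write out the two displays above as the proof, perhaps preceded by one sentence recalling that $2^\Pi$ is already an abstract pattern space over $X$ so that only equivariance of the $\Gamma$-action need be checked.
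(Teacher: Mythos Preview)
Your proof is correct; the paper states this lemma without proof, treating the verification as routine, and your direct check of the group action and equivariance of the cutting-off operation is exactly what is needed.
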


We now collect examples of abstract pattern spaces over $(X,\Gamma)$.
\begin{ex}\label{ex_patch_Gamma_pattern_sp}
     Suppose $X$ is a metric space and the action $\Gamma\curvearrowright X$ is
      isometric.
     For $\calP\in\Patch(X)$ and $\gamma\in\Gamma$,
      set $\gamma\calP=\{\gamma T\mid T\in\calP\}$.
     This defines an action of $\Gamma$ on $\Patch(X)$ and makes $\Patch(X)$ a 
     abstract pattern space over $(X,\Gamma)$.
\end{ex}

\begin{ex}\label{2X_as_Gamma_pattern_sp}
     Let $X$ be a metric space and let a group $\Gamma$ act on $X$ as isometries.
     $2^X$ (Example \ref{example_2^X_calC(X)}) is an abstract pattern space over $(X,\Gamma)$.
    By Lemma \ref{lem_pattern_subspace_over_X_Gamma},  the spaces $\LF(X)$(Example \ref{example_LF(X)}),
     $\UD(X)$ and $\UD_r(X)$ (Example \ref{example_UD}, $r>0$) are all
     abstract pattern spaces over $(X,\Gamma)$.
\end{ex}

The following example anticipates an application of our theory to the theory of
pattern-equivariant functions, which were defined in \cite{MR1985494} and
\cite{de2006pattern}. By this group action pattern equivariance becomes equivalent to
local derivability. See Section \ref{section_application_translation_thm}.
\begin{ex}\label{ex_map_rho}
       Take a non-empty set $Y$, an element $y_0\in Y$
      and an action $\phi\colon\Gamma\curvearrowright Y$ that fixes $y_0$.
      As was mentioned before (Example \ref{example_map}), 
       $\Map (X,Y,y_0)$ is an abstract pattern space over $X$.       
       Define an action of $\Gamma$ on $\Map(X,Y,y_0)$ by
        \begin{align*}
	       (\gamma f)(x)=\phi(\gamma)(f(\gamma^{-1}x)).
	\end{align*}
        For each $f\in\Map(X,Y,y_0)$, $\gamma\in\Gamma$ and $C\in\calC(X)$,
        \begin{align*}
	      (\gamma f)\sci (\gamma C)(x)=&\begin{cases}
					          (\gamma f)(x)&\text{if $x\in\gamma C$}\\
					          y_0         &\text{otherwise}
					   \end{cases}\\
	                                    =&\begin{cases}
					           \phi(\gamma)(f(\gamma^{-1}x))&
                                                          \text{if $\gamma^{-1}x\in C$}\\
					           \phi(\gamma)y_0&\text{otherwise}
					      \end{cases}\\
	                                    =&\phi(\gamma)(f\sci C)(\gamma^{-1}x)\\
	                                   =&\gamma(f\sci C)(x),
	\end{align*}
         for each $x\in X$ and so $\Map(X,Y,y_0)$ is an abstract pattern space over $(X,\Gamma)$.
         This $\Gamma$-abstract pattern space is denoted by $\Map_{\phi}(X,Y,y_0)$.
         If $\phi$ sends every group element to the identity, we denote the corresponding
         space by $\Map(X,Y,y_0)$.
\end{ex}

\begin{ex}\label{ex_Gamma-pattern_space_measures}
       Let $X$ be a locally compact $\sigma$-compact space and let  a group $\Gamma$ act on
       $X$ as homeomorphisms.
       The dual space $C_c(X)^{*}$ with respect to the inductive limit topology
       is an abstract pattern space over $X$
       (Example \ref{ex_space_of_measures}).
       For $\varphi\in C_c(X)$ and $\gamma\in\Gamma$, set
      $(\gamma\varphi)(x)=\varphi(\gamma^{-1}x)$.
       For $\Phi\in C_c(X)^*$ and $\gamma\in\Gamma$, set
       $\gamma\Phi(\varphi)=\Phi(\gamma^{-1}\varphi)$.
       Then $C_c(X)^{*}$ is an abstract pattern space over $(X,\Gamma)$.
\end{ex}
We have introduced various examples of abstract pattern spaces over $(X,\Gamma)$. Next,
we mention three examples of subshifts.
\begin{ex}\label{ex_subshift_of_Delone_sets}
       For a (proper)
      metric space $X$,
        the set $\Del(X)$ of all Delone sets in $X$ (Example \ref{example_UD})
       is a subshift of $\UD(X)$.
\end{ex}

\begin{ex}
       For a metric space $X$, a patch $\calT\in\Patch(X)$ is called a tiling
       if $\supp\calT=X$. The space of all tilings is a subshift of $\Patch(X)$.
\end{ex}

\begin{ex}\label{ex_subshift_conti_maps}
      In example \ref{ex_map_rho}, assume $Y$ is a topological space and each
 $\phi(\gamma)$ is continuous.
      The space $C(X,Y)$ of all continuous maps  is a subshift of
      $\Map_{\phi}(X,Y,y_0)$.
\end{ex}

We defined the terms ``locally finite'', ``pairwise compatible'' and ``glueable''
for abstract pattern spaces over metric spaces in Definition \ref{def_local_finite_compatible}
and Definition \ref{def_glueable_pattern_space}.
Using these definitions we define the following concepts.

\begin{defi}
        Let $X$ be a metric space and $\Gamma$ a group which acts on
       $X$ as isometries.
        Let $\Pi$ be an abstract pattern space over $(X,\Gamma)$.
        We say $\Pi$ is a \emph{glueable abstract pattern space over $(X,\Gamma)$}
 if it is a glueable
        abstract pattern space over $X$.
        For a glueable abstract pattern space $\Pi$ over $(X,\Gamma)$,
        a subset $\Sigma$ of $\Pi$ such that
 $\bigvee\Xi\in\Sigma$
        for any pairwise compatible and locally finite subset $\Xi$ of $\Sigma$
        is said to be \emph{supremum-closed}. (Here, the supremum $\bigvee\Xi$ exists
       by assumption.)
\end{defi}
We have introduced gluing in $\Gamma$-abstract pattern spaces and it is natural to ask the relation
between the group action and the operation of taking supremum.
We show $\bigvee$ and the group action is commutative.
\begin{lem}\label{lem_sup_action_commute}
        Let $X$ be a metric space and $\Gamma$ a group which acts on $X$ as isometries.
        Let $\Pi$ be a glueable abstract pattern space over $(X,\Gamma)$.
        If $\gamma\in\Gamma$ and $\Xi\subset\Pi$ is a subset which is
        both locally finite and pairwise compatible,
        then $\gamma\Xi$ (Lemma \ref{lem_power_equivariant_pat_sp}) is both locally finite and pairwise compatible.
        In this case, we have
        \begin{align*}
	         \gamma\bigvee\Xi=\bigvee(\gamma\Xi).
	\end{align*}
\end{lem}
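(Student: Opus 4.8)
The plan is to verify the two defining properties of glueability for the set $\gamma\Xi$, namely that it is locally finite and pairwise compatible, and then to establish the identity $\gamma\bigvee\Xi=\bigvee(\gamma\Xi)$ by checking that $\gamma\bigvee\Xi$ satisfies the universal property that characterizes the supremum of $\gamma\Xi$. Throughout I would exploit that $\Gamma$ acts by isometries (so $\gamma$ sends balls to balls, $\gamma B(x,r)=B(\gamma x,r)$) and that the $\Gamma$-action commutes with the cutting-off operation (Definition \ref{def_gamma-pattern_space}) and preserves the order $\geqq$ and supports (Lemma \ref{lem1_before_examples_equiv_pat_sp}).

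First I would show $\gamma\Xi$ is locally finite. Fix $x\in X$ and $r>0$; I must show $(\gamma\Xi)\sci B(x,r)$ is finite. Using equivariance and $\gamma^{-1}B(x,r)=B(\gamma^{-1}x,r)$, the map $\calP\mapsto\gamma\calP$ induces a surjection from $\Xi\sci B(\gamma^{-1}x,r)$ onto $(\gamma\Xi)\sci B(x,r)$: indeed $(\gamma\calP)\sci B(x,r)=\gamma\bigl(\calP\sci\gamma^{-1}B(x,r)\bigr)=\gamma\bigl(\calP\sci B(\gamma^{-1}x,r)\bigr)$. Since $\Xi$ is locally finite, $\Xi\sci B(\gamma^{-1}x,r)$ is finite, hence so is its image $(\gamma\Xi)\sci B(x,r)$. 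For pairwise compatibility, take $\gamma\calP,\gamma\calQ\in\gamma\Xi$ with $\calP,\calQ\in\Xi$; by pairwise compatibility of $\Xi$ there is $\calR\in\Pi$ with $\calR\geqq\calP$ and $\calR\geqq\calQ$, and then by Lemma \ref{lem1_before_examples_equiv_pat_sp} we get $\gamma\calR\geqq\gamma\calP$ and $\gamma\calR\geqq\gamma\calQ$, so $\gamma\calR$ witnesses compatibility. Since $\Pi$ is glueable, $\bigvee(\gamma\Xi)$ therefore exists.

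Next I would prove $\gamma\bigvee\Xi=\bigvee(\gamma\Xi)$. Write $\calS=\bigvee\Xi$. For each $\calP\in\Xi$ we have $\calS\geqq\calP$, hence $\gamma\calS\geqq\gamma\calP$ by Lemma \ref{lem1_before_examples_equiv_pat_sp}; thus $\gamma\calS$ is an upper bound for $\gamma\Xi$. Conversely, suppose $\calT\in\Pi$ is any upper bound for $\gamma\Xi$, i.e. $\calT\geqq\gamma\calP$ for all $\calP\in\Xi$. Applying the order-preservation of the $\Gamma$-action with the element $\gamma^{-1}$ gives $\gamma^{-1}\calT\geqq\gamma^{-1}\gamma\calP=\calP$ for all $\calP\in\Xi$, so $\gamma^{-1}\calT$ is an upper bound for $\Xi$, and by definition of supremum $\gamma^{-1}\calT\geqq\calS$. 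Applying the action by $\gamma$ once more yields $\calT\geqq\gamma\calS$. Hence $\gamma\calS$ is the least upper bound of $\gamma\Xi$, i.e. $\gamma\bigvee\Xi=\bigvee(\gamma\Xi)$.

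The argument is essentially a formal consequence of the equivariance axioms, so there is no serious obstacle; the only point requiring a little care is the local-finiteness verification, where one must correctly track that $\gamma^{-1}$ turns $B(x,r)$ into $B(\gamma^{-1}x,r)$ (this is exactly where the hypothesis that $\Gamma$ acts by \emph{isometries} is used) and that the surjection $\Xi\sci B(\gamma^{-1}x,r)\twoheadrightarrow(\gamma\Xi)\sci B(x,r)$ is the right direction to conclude finiteness of the target from finiteness of the source.
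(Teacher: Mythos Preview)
Your proof is correct and follows essentially the same approach as the paper: you verify local finiteness using the isometry property $\gamma^{-1}B(x,r)=B(\gamma^{-1}x,r)$ and equivariance of the cutting-off operation, verify pairwise compatibility via Lemma~\ref{lem1_before_examples_equiv_pat_sp}, and then establish the supremum identity by showing $\gamma\bigvee\Xi$ is an upper bound and that any upper bound $\calT$ for $\gamma\Xi$ yields $\gamma^{-1}\calT\geqq\bigvee\Xi$. The only cosmetic difference is that the paper treats compatibility before local finiteness and states the local-finiteness step as an equality of sets rather than a surjection, but the content is the same.
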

\begin{proof}
        If $\calP\in\Xi$ and $\calQ\in\Xi$, then there is $\calR\in\Pi$ such that
        $\calR\geqq\calP$ and $\calR\geqq\calQ$. By Lemma \ref{lem1_before_examples_equiv_pat_sp},
        we see $\gamma\calR\geqq\gamma\calP$ and $\gamma\calR\geqq\gamma\calQ$ and so
        $\gamma\calP$ and $\gamma\calQ$ are compatible.
        If $x\in X$ and $r>0$ then since $\gamma$ is an isometry, 
        $\gamma^{-1}B(x,r)=B(\gamma^{-1}x,r)$.
        By
        \begin{align*}
	      \{\gamma\calP\sci B(x,r)\mid\calP\in\Xi\}=
	        \gamma\{\calP\sci B(\gamma^{-1}x,r)\mid\calP\in\Xi\},
	\end{align*}
         we see this set is finite. We have proved $\gamma\Xi$ is both pairwise compatible
         and locally finite.

        Next, we show the latter statement.
        We use Lemma \ref{lem1_before_examples_equiv_pat_sp} several times.
        For any $\calP\in\Xi$, $\gamma\bigvee\Xi\geqq\gamma\calP$.
        This means that $\gamma\bigvee\Xi$ is a majorant for $\gamma\Xi$.
        To show this is the supremum, take a majorant $\calR$ for $\gamma\Xi$.
        Then
         $\gamma^{-1}\calR$ is a majorant for $\Xi$ and so $\gamma^{-1}\calR\geqq\bigvee\Xi$.
         We have $\calR\geqq\gamma\bigvee\Xi$, and so 
         $\gamma\bigvee\Xi$ is the supremum for $\gamma\Xi$.
\end{proof}

\subsection{Local derivability}
\label{subsection_local_derivable}
\begin{setting}
    \emph{In this subsection, $X,Y$ and $Z$ are non-empty proper metric spaces and
     $\Gamma$ is a group which acts on $X,Y$ and $Z$ as isometries.}
\end{setting}

Local derivability was defined in \cite{MR1132337} for tilings or more generally patterns
in $\Rd$.
Here we define local derivability
 for  two abstract patterns $\calP_1$ and $\calP_2$.
 Note that these $\calP_1$ and $\calP_2$ may be in different abstract pattern spaces
$\Pi_1$ and $\Pi_2$, and these $\Pi_1$ and $\Pi_2$ may be over different metric spaces
$X$ and $Y$.
However, we assume $\Pi_1$ and $\Pi_2$ are $\Gamma$-abstract pattern spaces for the same
group $\Gamma$.

 Our definition is equivalent to the original definition in \cite{MR1132337}
 (see also \cite{baake2013aperiodic}, p.133) for patterns under an assumption
 (Lemma \ref{lem_LD_equiv_with_original}).

We first prove a lemma in order to define local derivability in our setting.
Recall that in the original definition of local derivability, a pattern $\calQ$
in $\Rd$ is
locally derivable from a pattern $\calP$ in $\Rd$ if there is a compact $K\subset\Rd$
such that
$x,y\in\Rd$ and $(\calP-x)\sqcap K=(\calP-y)\sqcap K$ always imply
$(\calQ-x)\sqcap \{0\}=(\calQ-y)\sqcap \{0\}$, where the operation $\sqcap$ is defined
in Definition \ref{def_sqcap}.
We replace $\calP$ and $\calQ$ with
general abstract patterns and $x$ and $y$ with elements $\gamma,\eta\in\Gamma$.
We replace $\sqcap$ with $\sci$, and cut off in a slightly different manner.
(In general situation $\sqcap$ cannot be defined. In order to include measures,
the space of which does not admit an analogue of $\sqcap$, we replace
$\sqcap$ with $\sci$.)
Since there is no special point like $0\in\Rd$ in general metric spaces,
we choose points $x_0\in X$ and $y_0\in Y$, and show the definition is independent of
this choice (Lemma \ref{lem_local_derivability}).
The equivalence between our definition and the original one under an assumption is
proved in Lemma \ref{lem_LD_equiv_with_original} and
Corollary \ref{cor_equiv_ourLD_original}.
(These are well-known if $X=\Rd$ but we show them in a more general situation.)
Since we often assume the first condition in Assumption \ref{simplicity_assumption},
this equivalence assures that our definition is sufficient
in this article. 

\begin{lem}\label{lem_local_derivability}
        Let $\Pi_1$ be an abstract pattern space over $(X,\Gamma)$ and $\Pi_2$ an abstract pattern space over
        $(Y,\Gamma)$.
         For two abstract patterns $\calP_1\in\Pi_1$ and $\calP_2\in\Pi_2$,
         consider the following three conditions:
         \begin{enumerate}
	  \item There exist $x_{0}\in X$, $y_0\in Y$ and $R_0\geqq 0$ such that 
		if $\gamma,\eta\in\Gamma$, $R\geqq 0$ and
		\begin{align*}
		     (\gamma\calP_1)\sci B(x_0,R+R_0)=(\eta\calP_1)\sci B(x_0,R+R_0),
		\end{align*}
		then
		\begin{align*}
		     (\gamma\calP_2)\sci B(y_0,R)=(\eta\calP_2)\sci B(y_0,R).
		\end{align*}
	  \item For any $x_{1}\in X$ and  $y_1\in Y$ there exists $R_1\geqq 0$ such that 
		if $\gamma,\eta\in\Gamma$, $R\geqq 0$ and
		\begin{align*}
		     (\gamma\calP_1)\sci B(x_1,R+R_1)=(\eta\calP_1)\sci B(x_1,R+R_1),
		\end{align*}
		then
		\begin{align*}
		     (\gamma\calP_2)\sci B(y_1,R)=(\eta\calP_2)\sci B(y_1,R).
		\end{align*}
	\item For any compact $K_2\subset Y$ there exists a compact $K_1\subset X$ such
	       that $\gamma,\eta\in\Gamma$ and
	      \begin{align}
	            (\gamma\calP_1)\sci K_1=(\eta\calP_1)\sci K_1\label{eq_sci_K_1}
	      \end{align}
	      imply
	      \begin{align}
	             (\gamma\calP_2)\sci K_2=(\eta\calP_2)\sci K_2.\label{eq_sci_K_2}
	      \end{align}
	 \end{enumerate}
         Then condition 1 and 2 are always equivalent and condition 1 and 2
 always
 imply condition 3. If the action $\Gamma\curvearrowright Y$ is transitive,
      $\Pi_2$ is glueable, $X=Y$ and
     $\calP_2$ consists of bounded components
       (Definition \ref{def_bounded_coponents}), then  condition 3  implies
         condition 1 and 2.
\end{lem}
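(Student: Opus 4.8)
The plan is to prove the asserted equivalences and implications in the order: conditions 1 and 2 are equivalent; condition 1 (equivalently 2) implies condition 3; and, under the extra hypotheses, condition 3 implies condition 1. The only tools needed are the two abstract-pattern-space axioms, equivariance of the cutting-off operation, the order lemmas (Lemma \ref{lem_order_pattern_space}, Lemma \ref{lemma_support_calP_sci_C}, Lemma \ref{lem_support_supremum}), properness of the metrics (so closed balls are compact), and the elementary observation that $\calS\geqq\calR$ together with $\supp\calR=\supp\calS$ forces $\calR=\calS$, since then $\calR=\calS\sci\supp\calR=\calS\sci\supp\calS=\calS$.

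Condition 2 implies condition 1 trivially: pick any $x_1\in X$, $y_1\in Y$ and use the $R_1$ produced by condition 2 as $R_0$. For the converse, given $x_0,y_0,R_0$ from condition 1 and arbitrary $x_1,y_1$, I would put $R_1:=R_0+\rho(x_0,x_1)+\rho(y_0,y_1)$; from agreement of $\calP_1$ at $\gamma,\eta$ on $B(x_1,R+R_1)$, the inclusion $B(x_0,R+R_0+\rho(y_0,y_1))\subset B(x_1,R+R_1)$ and axiom 1 give agreement on $B(x_0,R+R_0+\rho(y_0,y_1))$; condition 1 with radius $R+\rho(y_0,y_1)$ yields agreement of $\calP_2$ on $B(y_0,R+\rho(y_0,y_1))\supset B(y_1,R)$, and cutting by $B(y_1,R)$ finishes it. For condition 1 $\Rightarrow$ condition 3: a compact $K_2\subset Y$ is bounded, so $K_2\subset B(y_0,R)$ for some $R\geqq 0$; take $K_1:=B(x_0,R+R_0)$, compact by properness. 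Agreement of $\calP_1$ on $K_1$ forces agreement of $\calP_2$ on $B(y_0,R)$ by condition 1, and cutting by $K_2$ gives \eqref{eq_sci_K_2}.

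The substantive part is condition 3 $\Rightarrow$ condition 1. I would first prove a localized version of condition 1 and then globalize it by gluing. Write $R_Q:=R_{\calP_2}>0$; one checks each $\gamma\calP_2$ again consists of bounded components with the same constant $R_Q$, using $\gamma\supp\calS=\supp(\gamma\calS)$ and $\gamma B(y,r)=B(\gamma y,r)$. Apply condition 3 to the compact set $B(y_0,2R_Q)$ (since $X=Y$, fix a base point $x_0=y_0$ in $X$) to get a compact $K_1$ with $K_1\subset B(x_0,S)$ for some $S\geqq 0$. For any $y\in Y$, transitivity of $\Gamma\curvearrowright Y$ gives $g\in\Gamma$ with $gy_0=y$, hence $gx_0=y$ and $gK_1\subset B(y,S)$; so agreement of $\calP_1$ at $\gamma,\eta$ on $B(y,S)$, after cutting by $gK_1$ and applying $g^{-1}$ (equivariance), becomes agreement of $g^{-1}\gamma\calP_1,g^{-1}\eta\calP_1$ on $K_1$, whence condition 3 and applying $g$ give agreement of $\gamma\calP_2,\eta\calP_2$ on $B(y,2R_Q)$; this localized statement holds for all $y$. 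Now set $R_0:=S$ and assume agreement of $\calP_1$ on $B(y_0,R+R_0)$. For each $y\in B(y_0,R)$ we have $B(y,S)\subset B(y_0,R+R_0)$, so the localized statement gives agreement of $\gamma\calP_2,\eta\calP_2$ on $B(y,2R_Q)$, hence (cutting by $B(y_0,R)$) on $B(y,2R_Q)\cap B(y_0,R)$. Choosing, by compactness of $B(y_0,R)$, a finite $D\subset B(y_0,R)$ with $B(y_0,R)\subset\bigcup_{y\in D}B(y,R_Q)^{\circ}$, I form the finite pairwise-compatible family $\Xi_\gamma:=\{(\gamma\calP_2)\sci(B(y,2R_Q)\cap B(y_0,R))\mid y\in D\}$ (pairwise compatible because every member is $\leqq(\gamma\calP_2)\sci B(y_0,R)$), so by glueability $\bigvee\Xi_\gamma$ exists. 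I claim $\bigvee\Xi_\gamma=(\gamma\calP_2)\sci B(y_0,R)$: the right side is a majorant, so $\bigvee\Xi_\gamma\leqq(\gamma\calP_2)\sci B(y_0,R)$; and a point $z$ of $\supp((\gamma\calP_2)\sci B(y_0,R))$ lies in $B(y_0,R)$, hence (bounded components) in $\supp((\gamma\calP_2)\sci(B(y_0,R)\cap B(z,R_Q)))$, and picking $y\in D$ with $\rho(y,z)<R_Q$ we have $B(z,R_Q)\subset B(y,2R_Q)$, so $z$ lies in the support of a member of $\Xi_\gamma$; thus the two supports coincide and the elementary observation gives equality. Since $\Xi_\gamma=\Xi_\eta$, the same identity for $\eta$ yields $(\gamma\calP_2)\sci B(y_0,R)=(\eta\calP_2)\sci B(y_0,R)$, i.e.\ condition 1 with $x_0=y_0$ and $R_0=S$, and condition 2 follows from the first part.

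The main obstacle is exactly this globalization: condition 3 only compares behaviour on one fixed pair of compact sets, so to extract a single constant $R_0$ valid for the whole sliding family $B(y_0,R)$ one must transport the fixed-compact-set comparison to every base point — where transitivity of the $\Gamma$-action and $X=Y$ enter — and then reassemble the local agreements into agreement on all of $B(y_0,R)$, where glueability (to form $\bigvee\Xi_\gamma$) and the bounded-components hypothesis on $\calP_2$ (to ensure each point of $\supp((\gamma\calP_2)\sci B(y_0,R))$ is already seen inside one small window) are both indispensable. The only delicate bookkeeping is matching the radii, using $2R_Q$-windows against an $R_Q$-dense net $D$ so that $B(z,R_Q)\subset B(y,2R_Q)$ whenever $\rho(y,z)<R_Q$.
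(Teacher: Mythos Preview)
Your proof is correct and follows essentially the same approach as the paper's: the equivalence $1\Leftrightarrow 2$ via the constant $R_1=R_0+\rho(x_0,x_1)+\rho(y_0,y_1)$, the implication $1\Rightarrow 3$ by enclosing $K_2$ in a ball, and the implication $3\Rightarrow 1$ by transporting the fixed-window comparison to every point via transitivity, then gluing the local agreements over a finite $R_{\calP_2}$-net of $B(y_0,R)$ and using bounded components plus the support-equality trick to identify the supremum with $(\gamma\calP_2)\sci B(y_0,R)$. The only cosmetic differences are your explicit verification that $\gamma\calP_2$ inherits the bounded-components constant and your use of open balls for the finite cover, neither of which changes the argument.
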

\begin{proof}
       In order to prove the equivalence of 1 and 2, it suffices to prove
     the implication 1$\Rightarrow$2.
       If we assume 1, there are $x_0,y_0$ and $R_0$ that satisfy the condition in 1.
       Take $x_1\in X$ and $y_1\in Y$ arbitrarily.
       Set $R_1=R_0+\rho_X(x_0,x_1)+\rho_Y(y_0,y_1)$, where $\rho_X,\rho_Y$ are
       the metrics for  $X$ and $Y$, respectively.
       Take $\gamma,\eta\in\Gamma$ and $R>0$ arbitrarily and suppose
       \begin{align}
	      (\gamma\calP_1)\sci B(x_1,R_1+R)=(\eta\calP_1)\sci B(x_1,R_1+R).
              \label{eq_lemma_local_derive}
       \end{align}
       Since $B(x_0,R+\rho_Y(y_0,y_1)+R_0)\subset B(x_1, R_1+R)$,
       by taking cutting-off operation for both sides of (\ref{eq_lemma_local_derive}),
       we obtain
       \begin{align*}
	     (\gamma\calP_1)\sci B(x_0, R+\rho_{Y}(y_0,y_1)+R_0)=
              (\eta\calP_1)\sci B(x_0, R+\rho_{Y}(y_0,y_1)+R_0),
       \end{align*}
       and so 
       \begin{align*}
	        (\gamma\calP_2)\sci B(y_0, R+\rho_{Y}(y_0,y_1))=
                  (\eta\calP_2)\sci B(y_0, R+\rho_{Y}(y_0,y_1)).
       \end{align*}
     By $B(y_1,R)\subset B(y_0,R+\rho_Y(y_0,y_1))$,
       \begin{align*}
	      (\gamma\calP_2)\sci B(y_1, R)=
	       (\eta\calP_2)\sci B(y_1, R).
       \end{align*}

        Next we show that condition 1 and 2 imply condition 3.
        If we assume condition 1, there exists $R_0$ as in the condition for
        some $x_0\in X$ and $y_0\in Y$.
        Take a compact $K_2\subset Y$. We can take $R>0$ such that
          $K_2\subset B(y_0,R)$. If $\gamma,\eta\in\Gamma$ and
        \eqref{eq_sci_K_1} holds for $K_1=B(x_0,R+R_0)$, then
         \eqref{eq_sci_K_2} holds.
        
        Finally we assume $\calP_2$ consists of bounded components, $\Pi_2$ is glueable,
      $X=Y$, the action $\Gamma\curvearrowright X$ is
      transitive and condition 3
       holds. We claim condition 1 holds. Since $\calP_2$ consists of bounded
      components, we can take $R_{\calP_2}>0$
 as in Definition \ref{def_bounded_coponents}.
      To prove condition 1, 
      take $x_0\in X$. Set $K_2=B(x_0,2R_{\calP_2})$. We can take a compact
       $K_1\subset X$ as in condition 3. By compactness there exist $R_0>0$ such
       that $K_1\subset B(x_0,R_0)$. To prove this $R_0$ has the desired property,
       take arbitrary $\gamma,\eta\in\Gamma$ and $R\geqq 0$, and assume
 \begin{align*}
      (\gamma\calP_1)\sci B(x_0,R+R_0)=(\eta\calP_1)\sci B(x_0,R+R_0).
 \end{align*}
            Since $B(x_0,R)$ is compact, we can take its finite subset $F$ 
           such that $B(x_0,R)\subset\bigcup_{y\in F}B(y,R_{\calP_2})$.
           By transitivity, for each $y\in F$,
         we can take $\gamma_y\in\Gamma$ such that $\gamma_yx_0=y$.
          Moreover, we have $\rho(\gamma^{-1}_yx_0,x_0)=\rho(y,x_0)\leqq R$,
          and so
          \begin{align*}
	        (\gamma^{-1}_y\gamma\calP_1)\sci B(x_0,R_0)=(\gamma^{-1}_y\eta\calP_1)
	       \sci B(x_0,R_0),
	  \end{align*}
         which implies that
         \begin{align*}
	       (\gamma_y^{-1}\gamma\calP_2)\sci B(x_0,2R_{\calP_2})
	  =(\gamma^{-1}_y\eta\calP_2)\sci B(x_0,2R_{\calP_2}),
	 \end{align*}
         and
          \begin{align*}
	       (\gamma\calP_2)\sci B(y,2R_{\calP_2})=(\eta\calP_2)\sci B(y,2R_{\calP_2}).
	  \end{align*}
          Now the set $\{(\gamma\calP_2)\sci B(x_0,R)\sci B(y,2R_{\calP_2})\mid y\in F\}$
         is locally finite and pairwise compatible, and since $\Pi_2$ is glueable,
         this set admits a supremum $\calQ$ and $\calQ\leqq(\gamma\calP_2)\sci B(x_0,R)$.
       By the definition of $R_{\calP_2}$,
       $\supp\calQ=\supp((\gamma\calP_2)\sci B(x_0,R))$ and so
      $(\gamma\calP_2)\sci B(x_0,R)=\calQ$.
       The same argument holds for $\eta$. Hence
        \begin{align*}
	      (\gamma\calP_2)\sci B(x_0,R)&=
	     \bigvee\{(\gamma\calP_2)\sci B(x_0,R)\sci B(y,2R_{\calP_2})\mid y\in F\}\\
	    &= \bigvee\{(\eta\calP_2)\sci B(x_0,R)\sci B(y,2R_{\calP_2})\mid y\in F\}\\
	   &=(\eta\calP_2)\sci B(x_0,R),
	\end{align*}
       which completes the proof.
\end{proof}

\begin{rem}
       The proof of the implication from condition 3 to
    condition 1 can be modified so
       that we can replace the assumption $X=Y$ with a relation between the metrics
       $\rho_X$ and $\rho_Y$ of $X$ and $Y$, respectively. For example, if there exist
       $L>0$, $x_0\in X$ and $y_0\in Y$ such that
       \begin{align}
	   \rho_X(\gamma x_0,\eta x_0)\leqq\rho_{Y}(\gamma y_0,\eta y_0)+L
	\label{ineq_for_equiv_LD}
       \end{align}
       for each $\gamma,\eta\in\Gamma$, then we can prove condition 1 from
       condition 3.
    As an example, take a closed subgroup $\Gamma$ of the Euclidean group $\Ed$
that includes
   $\Rd$.
     If $X$ and $Y$ are either $\Rd$ or $\Gamma$, on which $\Gamma$ acts transitively,
      then the inequality \eqref{ineq_for_equiv_LD} holds and so we can prove the
 condition 1 from condition 3.
\end{rem}

\begin{defi}\label{def_local_derive}
        Let $\Pi_1$ be an abstract pattern space over $(X,\Gamma)$ and $\Pi_2$
        be an abstract pattern space over $(Y,\Gamma)$.
        If $\calP_1\in\Pi_1$ and $\calP_2\in\Pi_2$ satisfy
        condition 1 (and 2) in
        Lemma \ref{lem_local_derivability}, then we say $\calP_2$ is \emph{locally derivable from $\calP_1$}
        and write $\calP_1\LD\calP_2$.       
        If both $\calP_1\LD\calP_2$ and $\calP_2\LD\calP_1$ hold,
        we say $\calP_1$ and $\calP_2$ are \emph{mutually locally derivable (MLD)}
         and write
        $\calP_1\MLD\calP_2$. 
\end{defi}

We next prove that under an mild assumption our definition of local derivability is
equivalent to the original one in \cite{MR1132337}.

\begin{lem}\label{lem_LD_equiv_with_original}
    Assume the action $\Gamma\curvearrowright X$ is
    transitive. Take $x\in X$ and two patterns $\calP_1,\calP_2\in\Pattern(X)$
    (Definition \ref{def_patterns}). 
    Assume $\sup_{T\in\calP_2}\diam T<\infty$.
    Then the following two conditions are equivalent:
    \begin{enumerate}
      \renewcommand{\labelenumi}{(\alph{enumi}).}
     \item $\calP_1\LD\calP_2$.
	\item        there exists a compact $K\subset X$ such that,
       	    if $\gamma,\eta\in\Gamma$ and
       	        $(\gamma\calP_1)\sqcap K=(\eta\calP_2)\sqcap K$,
       	  then
       	        $(\gamma\calP_2)\sqcap\{x\}=(\eta\calP_2)\sqcap\{x\}$.
    \end{enumerate}    
\end{lem}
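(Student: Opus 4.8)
The plan is to match the two conditions by translating back and forth between the $\sci$\nobreakdash-with\nobreakdash-balls data underlying the definition of $\LD$ (conditions 1--2 of Lemma~\ref{lem_local_derivability}) and the $\sqcap$\nobreakdash-with\nobreakdash-compacts data of the classical definition. Transitivity of $\Gamma\curvearrowright X$ lets me take all base points to be the given $x$; write $M:=\sup_{T\in\calP_2}\diam T<\infty$, and recall that closed balls are compact since the metric is proper and that $\diam$ is isometry-invariant, so every $\gamma\calP_2$ also has tiles of diameter at most $M$. Two elementary ``dictionary'' facts do most of the work. The \emph{intrinsic-restriction} principle: for any $\calQ\in\Pattern(X)$, any $\delta\in\Gamma$ and closed $C\subset C'$ one has $(\delta\calQ)\sqcap C=\{T\in(\delta\calQ)\sqcap C':T\cap C\neq\emptyset\}$ and $(\delta\calQ)\sci C=\{T\in(\delta\calQ)\sqcap C:T\subset C\}$; since the selection conditions depend only on the tile $T$ and the fixed set $C$, agreement of the larger datum for two group elements $\gamma,\eta$ forces agreement of the smaller one. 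The \emph{$\calP_2$-sandwich}: a tile of $\gamma\calP_2$ through a point $z$ has diameter at most $M$, hence lies in $B(z,M)$, so $(\gamma\calP_2)\sqcap\{z\}=\{T\in(\gamma\calP_2)\sci B(z,M):z\in T\}$, and more generally $(\gamma\calP_2)\sci B(z,r)\subset(\gamma\calP_2)\sqcap B(z,r)\subset(\gamma\calP_2)\sci B(z,r+M)$. I will also use equivariance of $\sqcap$, namely $(\delta\calQ)\sqcap(\delta C)=\delta(\calQ\sqcap C)$, and $(\gamma\calQ)\sqcap B(z,r)=\bigcup_{w\in B(z,r)}(\gamma\calQ)\sqcap\{w\}$.

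For (a)$\Rightarrow$(b): assuming $\calP_1\LD\calP_2$, apply condition 2 of Lemma~\ref{lem_local_derivability} with both base points equal to $x$ to get a constant $R_1\ge 0$, and set $K:=B(x,M+R_1)$. If $\gamma,\eta\in\Gamma$ satisfy $(\gamma\calP_1)\sqcap K=(\eta\calP_1)\sqcap K$, the intrinsic-restriction principle gives $(\gamma\calP_1)\sci K=(\eta\calP_1)\sci K$, which is exactly the hypothesis of condition 2 for $R=M$; hence $(\gamma\calP_2)\sci B(x,M)=(\eta\calP_2)\sci B(x,M)$, and selecting the tiles containing $x$ yields $(\gamma\calP_2)\sqcap\{x\}=(\eta\calP_2)\sqcap\{x\}$. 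Thus (b) holds with this $K$.

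For (b)$\Rightarrow$(a): let $K$ be as in (b), choose $S>0$ with $K\subset B(x,S)$, and aim to verify condition 1 of Lemma~\ref{lem_local_derivability} with both base points $x$ and a constant $R_0$ to be chosen (this choice is the crux, see below). Given $\gamma,\eta\in\Gamma$ and $R\ge 0$ with $(\gamma\calP_1)\sci B(x,R+R_0)=(\eta\calP_1)\sci B(x,R+R_0)$, fix $z\in B(x,R)$, pick $\sigma_z\in\Gamma$ with $\sigma_z x=z$ by transitivity, and note $\sigma_z K\subset B(z,S)\subset B(x,R+S)$. The step to establish is $(\gamma\calP_1)\sqcap\sigma_z K=(\eta\calP_1)\sqcap\sigma_z K$; granting it, equivariance rewrites it as $(\sigma_z^{-1}\gamma\calP_1)\sqcap K=(\sigma_z^{-1}\eta\calP_1)\sqcap K$, (b) produces $(\sigma_z^{-1}\gamma\calP_2)\sqcap\{x\}=(\sigma_z^{-1}\eta\calP_2)\sqcap\{x\}$, and acting by $\sigma_z$ gives $(\gamma\calP_2)\sqcap\{z\}=(\eta\calP_2)\sqcap\{z\}$. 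Since every tile of $(\gamma\calP_2)\sci B(x,R)$ contains a point of $B(x,R)$, letting $z$ run over $B(x,R)$ forces $(\gamma\calP_2)\sci B(x,R)=(\eta\calP_2)\sci B(x,R)$, which is condition 1; by Lemma~\ref{lem_local_derivability} and Definition~\ref{def_local_derive} this is $\calP_1\LD\calP_2$.

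The hard part is exactly the boxed deduction $(\gamma\calP_1)\sqcap\sigma_z K=(\eta\calP_1)\sqcap\sigma_z K$ in (b)$\Rightarrow$(a): a tile that would witness a discrepancy must meet $\sigma_z K\subset B(x,R+S)$ and yet fail to be contained in $B(x,R+R_0)$, so the only obstruction is a tile of $\calP_1$ protruding far out of the large ball; it is the calibration of $R_0$ against the size of the tiles that can reach $\sigma_z K$ that must be done here, and it is exactly where the diameter hypothesis enters (on the $\calP_2$-side it makes the sandwich and the passage $\sqcap\{z\}\leftrightarrow\sci B(z,M)$ work, and it is the reason the classical $\sqcap$-formulation can be reproduced at all). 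Everything else is the routine dictionary bookkeeping above, together with the equivariance of the cutting-off operations already recorded in Section~\ref{section_gammma-pattern_space}.
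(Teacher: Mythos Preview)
Your plan is the paper's plan, and your (a)$\Rightarrow$(b) is complete and matches it. In (b)$\Rightarrow$(a) you explicitly leave the step
\[
(\gamma\calP_1)\sci B(x,R+R_0)=(\eta\calP_1)\sci B(x,R+R_0)\ \Longrightarrow\ (\gamma\calP_1)\sqcap\sigma_z K=(\eta\calP_1)\sqcap\sigma_z K
\]
undone and say this is ``where the diameter hypothesis enters''. That diagnosis is wrong: the hypothesis bounds tiles of $\calP_2$, not of $\calP_1$, and it is a $\calP_1$-tile meeting $\sigma_zK$ but escaping the large ball that obstructs the implication. No calibration of $R_0$ fixes this without controlling $\diam T$ for $T\in\calP_1$. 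The paper's proof makes exactly the same leap (from $(\xi^{-1}\gamma\calP_1)\sci B(x,R_0+L)=\cdots$ straight to $(\xi^{-1}\gamma\calP_1)\sqcap K=\cdots$) without justification, so the gap you flagged is present there too.

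In fact the step cannot be completed under the stated hypotheses: the lemma is false as written. Take $X=\mathbb{R}$, $\Gamma=\mathbb{R}$, $x=0$, $\calP_1=\{(n,\infty):n\in\mathbb{Z}\}$, $\calP_2=\{\{n\}:n\in\mathbb{Z}\}$; then $\sup_{T\in\calP_2}\diam T=0$. Condition (b) holds with $K=\{0\}$, since $(\calP_1+s)\sqcap\{0\}=(\calP_1+t)\sqcap\{0\}$ forces $s\equiv t\pmod 1$, hence $(\calP_2+s)\sqcap\{0\}=(\calP_2+t)\sqcap\{0\}$. But every tile of $\calP_1$ is unbounded, so $(\calP_1+s)\sci B(0,R')=\emptyset$ for all $s,R'$, making the hypothesis of (a) vacuous; yet $(\calP_2+0)\sci B(0,\tfrac14)\neq(\calP_2+\tfrac12)\sci B(0,\tfrac14)$, so (a) fails. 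The natural repair is to add the assumption $\sup_{T\in\calP_1}\diam T<\infty$; then your missing step is immediate (choose $R_0\geqq S+\sup_{T\in\calP_1}\diam T$), and this extra hypothesis is harmless for the paper since Corollary~\ref{cor_equiv_ourLD_original} and all later applications live in that regime anyway.
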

\begin{proof}
      Take $L>\sup_{T\in\calP_2}\diam T$.
      We first assume condition (a)
      and prove condition (b).
Setting  $x_1=y_1=x$, we get some $R_1\geqq 0$ such that
condition 2 in Lemma \ref{lem_local_derivability}
      holds. If $\gamma,\eta\in\Gamma$ and
       \begin{align*}
(\gamma\calP_1)\sqcap B(x,R_1+L)=(\eta\calP_1)\sqcap B(x,R_1+L),
       \end{align*}
     then
      \begin{align*}
  (\gamma\calP_1)\sci B(x,R_1+L)=(\eta\calP_1)\sci B(x,R_1+L)       
      \end{align*}
 and
 \begin{align*}
  (\gamma\calP_2)\sci B(x,L)=(\eta\calP_2)\sci B(x,L).
 \end{align*}
 By the definition of $L$, we have $(\gamma\calP_2)\sqcap \{x\}=(\eta\calP_2)\sqcap \{x\}$.

       Next, we assume condition (b)
 and prove condition (a).
          There exists $K$ as in condition (b). Since $K$ is compact, we can take
          $R_0\geqq 0$ such that $K\subset B(x,R_0)$. To prove condition (a), take
            $\gamma,\eta\in\Gamma$ and $R\geqq 0$ and assume
          $(\gamma\calP_1)\sci B(x,R_0+L+R)=(\eta\calP_1)\sci B(x,R_0+L+R)$.
         Since the action is transitive, for each $y\in B(x,R)$ there exists $\xi\in\Gamma$
 such that $\xi x=y$. By $B(x,R_0+L)\subset B(\xi^{-1}x,R_0+L+R)$, we have
         \begin{align*}
	       (\xi^{-1}\gamma\calP_1)\sci B(x,R_0+L)=(\xi^{-1}\eta\calP_1)\sci B(x,R_0+L),
	 \end{align*}
             and
           \begin{align*}
	         (\xi^{-1}\gamma\calP_1)\sqcap K=(\xi^{-1}\eta\calP_1)\sqcap K.
	   \end{align*}
           By the definition of $K$, we have
         \begin{align*}
	        (\xi^{-1}\gamma\calP_2)\sqcap \{x\}=(\xi^{-1}\eta\calP_2)\sqcap \{x\},
	 \end{align*}
         and $(\gamma\calP_2)\sqcap \{y\}=(\eta\calP_2)\sqcap \{y\}$.
         Since $y$ is arbitrary, we have
         $(\gamma\calP_2)\sqcap B(x,R)=(\eta\calP_2)\sqcap B(x,R)$, and
         $(\gamma\calP_2)\sci B(x,R)=(\eta\calP_2)\sci B(x,R)$.
\end{proof}

 \begin{cor}\label{cor_equiv_ourLD_original}
Suppose the action $\Gamma\curvearrowright X$ is transitive and
   $\calP_1$ and $\calP_2$ are
  patches in $X$. If $\calP_2$ consists of bounded components, 
  then
  our definition  of $\calP_1\LD\calP_2$ coincides with the original definition
  (condition (b) in Lemma \ref{lem_LD_equiv_with_original}).
 \end{cor}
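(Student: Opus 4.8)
The plan is to deduce this directly from Lemma \ref{lem_LD_equiv_with_original}, so that essentially all that remains is to match up hypotheses. First I would observe that a patch is in particular a pattern: every tile is a non-empty bounded subset of $X$, so $\calP_1,\calP_2\in\Patch(X)$ are elements of $\Pattern(X)$ as well. Moreover the cutting-off operation on $\Patch(X)$ given by \eqref{def_scissor_for_patch} is literally the restriction of the cutting-off operation on $\Pattern(X)$ (which is defined by the same formula in Example \ref{def_patterns}), and the $\Gamma$-action $\gamma\calP=\{\gamma T\mid T\in\calP\}$ is the same in both spaces. Since Definition \ref{def_local_derive} refers only to the cutting-off operation and the group action (via condition 1 of Lemma \ref{lem_local_derivability}), the relation $\calP_1\LD\calP_2$ in our sense does not depend on whether we regard $\calP_1,\calP_2$ as living in $\Patch(X)$ or in $\Pattern(X)$.

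Next I would translate the hypothesis that $\calP_2$ consists of bounded components. By the discussion preceding Definition \ref{def_bounded_coponents}, a patch consists of bounded components in the sense of that definition if and only if $\sup_{T\in\calP_2}\diam T<\infty$. Hence the standing assumption of the corollary is precisely the hypothesis $\sup_{T\in\calP_2}\diam T<\infty$ required in Lemma \ref{lem_LD_equiv_with_original}.

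Finally, choosing any $x\in X$ (possible since $X\neq\emptyset$) and using the assumed transitivity of $\Gamma\curvearrowright X$, Lemma \ref{lem_LD_equiv_with_original} gives the equivalence of its condition (a), which is $\calP_1\LD\calP_2$ in our sense, and its condition (b), which is exactly the definition of local derivability from \cite{MR1132337} as recalled in the paragraph preceding Lemma \ref{lem_local_derivability}. This is the asserted coincidence of the two definitions. I do not expect a genuine obstacle: the only points needing care are the bookkeeping that local derivability is insensitive to the ambient abstract pattern space containing $\calP_1$ and $\calP_2$ (so that the passage between $\Patch(X)$ and $\Pattern(X)$ is harmless), and the identification, for patches, of ``consisting of bounded components'' with ``having uniformly bounded tile diameters''.
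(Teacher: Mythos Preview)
Your proposal is correct and follows the same approach as the paper: the paper's proof is the single line ``A patch $\calP$ consists of bounded components if and only if $\sup_{T\in\calP}\diam T<\infty$,'' relying implicitly on Lemma \ref{lem_LD_equiv_with_original}. You have simply made explicit the bookkeeping (that $\Patch(X)\subset\Pattern(X)$ with the same cutting-off operation and $\Gamma$-action, so the lemma applies) that the paper takes for granted.
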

\begin{proof}
       A patch $\calP$ consists of bounded components if and only if
 $\sup_{T\in\calP}\diam T<\infty$.
\end{proof}


The following two lemmas are easy to prove. First we show $\MLD$ is an equivalence
relation.
\begin{lem}
        \begin{enumerate}
	 \item 	Let $\calP$ be an abstract pattern in an abstract pattern space over $(X,\Gamma)$.
               Then $\calP\MLD\calP$.
        \item  Let $\calP,\calQ$ and $\calR$ be abstract patterns in abstract pattern spaces
               over $(X,\Gamma),(Y,\Gamma)$, and $(Z,\Gamma)$, respectively.
                If $\calP\LD\calQ$ and $\calQ\LD\calR$, then $\calP\LD\calR$.
                Consequently, if $\calP\MLD\calQ$ and $\calQ\MLD\calR$, then
	      $\calP\MLD\calR$.
	\end{enumerate} 
\end{lem}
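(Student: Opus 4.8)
The plan is to verify the defining condition of local derivability (condition 1 in Lemma~\ref{lem_local_derivability}, equivalently condition 2) directly in each case, exploiting the freedom in the choice of base points that the equivalence of conditions 1 and 2 affords. Note that only these conditions are used; no hypothesis such as glueability, transitivity of the action, or boundedness of components enters, since we never invoke condition 3.

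For the reflexivity statement $\calP\MLD\calP$ it suffices to show $\calP\LD\calP$, because $\MLD$ is by definition the conjunction of two local-derivability relations (Definition~\ref{def_local_derive}). Here the two abstract pattern spaces coincide, so I would pick an arbitrary point $x_0\in X$, set $y_0=x_0$ and $R_0=0$. Then for any $\gamma,\eta\in\Gamma$ and $R\geqq 0$ the hypothesis $(\gamma\calP)\sci B(x_0,R+R_0)=(\eta\calP)\sci B(x_0,R+R_0)$ is literally the conclusion $(\gamma\calP)\sci B(x_0,R)=(\eta\calP)\sci B(x_0,R)$, so condition 1 holds and $\calP\LD\calP$.

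For transitivity of $\LD$, suppose $\calP\LD\calQ$ and $\calQ\LD\calR$ with $\calP,\calQ,\calR$ living over $(X,\Gamma),(Y,\Gamma),(Z,\Gamma)$. From $\calP\LD\calQ$ I get $x_0\in X$, $y_0\in Y$ and $R_0\geqq 0$ so that equality of $\gamma\calP$ and $\eta\calP$ after cutting off by $B(x_0,R+R_0)$ forces equality of $\gamma\calQ$ and $\eta\calQ$ after cutting off by $B(y_0,R)$. The one point requiring care is that the intermediate base point in $Y$ must match the one coming from $\calQ\LD\calR$; I would resolve this by invoking the equivalence of conditions 1 and 2 in Lemma~\ref{lem_local_derivability}, which lets me apply $\calQ\LD\calR$ with base point exactly this $y_0$ and any chosen $z_0\in Z$, obtaining some $R_0'\geqq 0$ with the analogous implication from $\calQ$ at $B(y_0,R+R_0')$ to $\calR$ at $B(z_0,R)$. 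Then I would take the composite radius $R_0''=R_0+R_0'$ and check: for $\gamma,\eta\in\Gamma$ and $R\geqq 0$, the hypothesis $(\gamma\calP)\sci B(x_0,R+R_0'')=(\eta\calP)\sci B(x_0,R+R_0'')$, rewritten as cutting off by $B(x_0,(R+R_0')+R_0)$, yields $(\gamma\calQ)\sci B(y_0,R+R_0')=(\eta\calQ)\sci B(y_0,R+R_0')$ by the first implication applied with $R+R_0'$ in place of $R$, and then $(\gamma\calR)\sci B(z_0,R)=(\eta\calR)\sci B(z_0,R)$ by the second. This establishes condition 1 for the pair $(\calP,\calR)$, that is, $\calP\LD\calR$.

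The final ``consequently'' clause is then immediate: if $\calP\MLD\calQ$ and $\calQ\MLD\calR$, unpack these into $\calP\LD\calQ$, $\calQ\LD\calP$, $\calQ\LD\calR$, $\calR\LD\calQ$, apply the transitivity just proved twice to get $\calP\LD\calR$ and $\calR\LD\calP$, and conclude $\calP\MLD\calR$. The whole argument is formal; the only genuine step is the base-point bookkeeping in the transitivity part, and I expect no other obstacle.
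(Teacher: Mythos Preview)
Your argument is correct and is precisely the straightforward verification the paper has in mind: the paper does not write out a proof at all, merely remarking beforehand that the lemma is ``easy to prove,'' and your base-point bookkeeping via the equivalence of conditions 1 and 2 in Lemma~\ref{lem_local_derivability} is exactly the expected route.
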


Next we investigate a relation between $\LD$ and the group action
$\Gamma\curvearrowright\Pi$.
\begin{lem}
       Let $\Pi_1$ be an abstract pattern space over $(X,\Gamma)$ and $\Pi_2$ be an abstract pattern space
        over $(Y,\Gamma)$.
       Take two abstract patterns $\calP_1\in\Pi_1$ and $\calP_2\in\Pi_2$
       and suppose $\calP_1\LD\calP_2$.
       Then for any $\gamma\in\Gamma$, we have $\gamma\calP_1\LD\gamma\calP_2$.
\end{lem}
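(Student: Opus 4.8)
The plan is to verify condition 1 of Lemma \ref{lem_local_derivability} for the pair $(\gamma\calP_1,\gamma\calP_2)$ by transporting, through the isometry $\gamma$, the data that witnesses $\calP_1\LD\calP_2$. Concretely, let $x_0\in X$, $y_0\in Y$ and $R_0\geqq 0$ be the objects provided by condition 1 for $\calP_1\LD\calP_2$. I claim that $\gamma x_0$, $\gamma y_0$ and the same constant $R_0$ witness condition 1 for $\gamma\calP_1\LD\gamma\calP_2$, and hence $\gamma\calP_1\LD\gamma\calP_2$.

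First I would record the two elementary facts that do all the work. Since $\Gamma$ acts on $X$ and on $Y$ as isometries, one has $\gamma B(x_0,r)=B(\gamma x_0,r)$ and $\gamma B(y_0,r)=B(\gamma y_0,r)$ for every $r\geqq 0$; and since $\Pi_1,\Pi_2$ are abstract pattern spaces over $(X,\Gamma)$ resp.\ $(Y,\Gamma)$, the cutting-off operation is equivariant, so $(\delta\calQ)\sci(\delta C)=\delta(\calQ\sci C)$ for all $\delta\in\Gamma$, $\calQ$ and $C$. Combining these, for any $\alpha\in\Gamma$ and $R\geqq 0$,
\begin{align*}
(\alpha\gamma\calP_1)\sci B(\gamma x_0,R+R_0)=(\alpha\gamma\calP_1)\sci\gamma B(x_0,R+R_0)=\gamma\bigl((\gamma^{-1}\alpha\gamma\calP_1)\sci B(x_0,R+R_0)\bigr),
\end{align*}
and the analogous identity holds with $\calP_2$ and $y_0$ in place of $\calP_1$ and $x_0$.

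Now suppose $\alpha,\beta\in\Gamma$, $R\geqq 0$ and $(\alpha\gamma\calP_1)\sci B(\gamma x_0,R+R_0)=(\beta\gamma\calP_1)\sci B(\gamma x_0,R+R_0)$. By the displayed identity and the fact that the $\Gamma$-action on $\Pi_1$ is a bijection (so $\gamma$ is injective on $\Pi_1$), we may cancel $\gamma$ and obtain $(\gamma^{-1}\alpha\gamma\calP_1)\sci B(x_0,R+R_0)=(\gamma^{-1}\beta\gamma\calP_1)\sci B(x_0,R+R_0)$. Applying condition 1 for $\calP_1\LD\calP_2$ to the elements $\gamma^{-1}\alpha\gamma,\gamma^{-1}\beta\gamma\in\Gamma$ yields $(\gamma^{-1}\alpha\gamma\calP_2)\sci B(y_0,R)=(\gamma^{-1}\beta\gamma\calP_2)\sci B(y_0,R)$; applying $\gamma$ and using equivariance together with $\gamma B(y_0,R)=B(\gamma y_0,R)$ in the reverse direction gives $(\alpha\gamma\calP_2)\sci B(\gamma y_0,R)=(\beta\gamma\calP_2)\sci B(\gamma y_0,R)$, which is exactly condition 1 for $\gamma\calP_1\LD\gamma\calP_2$.

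There is no serious obstacle here: the argument is pure bookkeeping with equivariance, the only points requiring care being the conjugation $\alpha\mapsto\gamma^{-1}\alpha\gamma$ and the appeal to injectivity of $\gamma$ on each $\Pi_i$, which is automatic for a group action. The MLD version then follows at once by symmetry: applying the statement to $\calP_2\LD\calP_1$ gives $\gamma\calP_2\LD\gamma\calP_1$ as well.
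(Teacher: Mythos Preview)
Your argument is correct; the paper itself omits the proof, deeming the lemma easy. One small simplification worth noting: you need not move the base points or conjugate. The very same $x_0$, $y_0$, $R_0$ that witness $\calP_1\LD\calP_2$ already witness $\gamma\calP_1\LD\gamma\calP_2$, because as $\alpha,\beta$ range over $\Gamma$ the products $\alpha\gamma,\beta\gamma$ also range over all of $\Gamma$; thus the hypothesis $(\alpha\gamma\calP_1)\sci B(x_0,R+R_0)=(\beta\gamma\calP_1)\sci B(x_0,R+R_0)$ is literally an instance of condition~1 for $\calP_1\LD\calP_2$ with $\gamma'=\alpha\gamma$, $\eta'=\beta\gamma$, and yields $(\alpha\gamma\calP_2)\sci B(y_0,R)=(\beta\gamma\calP_2)\sci B(y_0,R)$ directly. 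Your route via equivariance of the cutting-off and the isometry identity $\gamma B(x_0,r)=B(\gamma x_0,r)$ is perfectly valid, just slightly more machinery than the statement requires.
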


We use the following notion 
 in Section \ref{setting_of_main_thm}. This comprises one of Assumption
 \ref{simplicity_assumption}, which we noted
 in Introduction, to restrict the object of study to interesting abstract patterns.

\begin{defi}\label{def_Delone_deriving}
     Let $\Pi$ be an abstract pattern space over $(X,\Gamma)$.   
      $\calP\in\Pi$ is said to be \emph{Delone-deriving} if
       there is a Delone set $D$ in $X$ such that $\calP\LD D$.
\end{defi}

\begin{rem}
      Delone sets are Delone-deriving.
      If a tiling consists of finitely many types of tiles up to $\Gamma$
      and each tile $T$ admits a fixed point of its symmetry group $\Sym_{\Gamma}T$,
      then the tiling is Delone-deriving.
      On the other hand, constant functions are not Delone-deriving.
      Note that the symmetry group of a Delone set is discrete whereas the symmetry
      group of a constant function is not.
      If an abstract pattern is Delone-deriving, then it is ``discrete'' in a sense.

 It is worth noting that if $\mu\in C_c(X)^*$ and $|\mu|$ is its total variation, we
 have $\mu\LD |\mu|\LD\supp|\mu|$, where $\supp|\mu|$ coincides with the usual support of
 the positive measure $|\mu|$. In particular, if $\mu=\sum_{x\in D}w(x)\delta(x)$ for some
 function $w$ and a Delone $D\subset X$, then $\mu\LD D$, and $\mu$ is Delone-deriving.
\end{rem}

We finish this subsection by showing several canonical maps in aperiodic order
send an abstract pattern $\calP$ to one which is MLD with $\calP$.

It is common to convert a Delone set into a measure consisting of Dirac measures on each
point (\cite[Example 8.6]{baake2013aperiodic}). We show these abstract patterns are MLD.
\begin{prop}\label{prop_MLD_D_sum_of_dirac_delta}
       Let $X$ be a locally compact proper
       metric space on which a group $\Gamma$ acts as
       isometries. Let $D$ be a uniformly discrete subset of $X$ and
       set $\mu=\sum_{x\in D}\delta_x$, the sum of Dirac measures with
      respect to the vague topology. If we regard $D$ as an abstract pattern of
       $\UD(X)$ (Example \ref{2X_as_Gamma_pattern_sp}) 
       and $\mu$ an abstract pattern of $C_c(X)^{*}$
        (Example \ref{ex_Gamma-pattern_space_measures}),
        we have the following:
       \begin{enumerate}
	\item $\mu\sci C=\sum_{x\in D\cap C}\delta_x$ for each $C\in\calC(X)$, 
        \item $\gamma\mu=\sum_{x\in\gamma D}\delta_x$, and
        \item $\mu\MLD D$.
        \end{enumerate}
\end{prop}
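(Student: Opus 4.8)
The plan is to obtain (1) and (2) by a direct unwinding of the definitions in Example~\ref{ex_space_of_measures} and Example~\ref{ex_Gamma-pattern_space_measures}, and then to deduce (3) from Definition~\ref{def_local_derive}, i.e.\ from condition~1 of Lemma~\ref{lem_local_derivability}, using (1) and (2). For (1): since $D$ is locally finite, $m=\sum_{x\in D}\delta_x$ is a well-defined positive Radon measure, and $\mu$ is the Radon charge attached to the pair $(m,u)$ with $u\equiv 1$; hence for $C\in\calC(X)$ and $\varphi\in C_c(X)$ the cutting-off operation on $C_c(X)^{*}$ gives $(\mu\sci C)(\varphi)=\int_C\varphi\,dm=\sum_{x\in D\cap C}\varphi(x)$, i.e.\ $\mu\sci C=\sum_{x\in D\cap C}\delta_x$. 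For (2): the $\Gamma$-action on $C_c(X)^{*}$ satisfies $(\gamma\mu)(\varphi)=\mu(\gamma^{-1}\varphi)$ and $(\gamma^{-1}\varphi)(x)=\varphi(\gamma x)$, so $(\gamma\mu)(\varphi)=\sum_{x\in D}\varphi(\gamma x)=\sum_{y\in\gamma D}\varphi(y)$, that is $\gamma\mu=\sum_{y\in\gamma D}\delta_y$; the isometry hypothesis on $\Gamma\curvearrowright X$ is used only to know that $\gamma D$ is again uniformly discrete, so that this sum is meaningful.

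For (3) I will verify condition~1 of Lemma~\ref{lem_local_derivability} with $y_0=x_0$ an arbitrary point of $X$ and $R_0=0$, proving both implications simultaneously. Fix $\gamma,\eta\in\Gamma$ and $R\geqq 0$. In $\UD(X)$ the cutting-off operation is ordinary intersection and $\Gamma$ acts on subsets by pushforward, so $(\gamma D)\sci B(x_0,R)=\gamma D\cap B(x_0,R)$; applying (1) and (2) (with $\gamma D$ in place of $D$) gives $(\gamma\mu)\sci B(x_0,R)=\sum_{x\in\gamma D\cap B(x_0,R)}\delta_x$, and likewise for $\eta$. Because the metric is proper, $B(x_0,R)$ is compact, so $\gamma D\cap B(x_0,R)$ is finite and $\sum_{x\in\gamma D\cap B(x_0,R)}\delta_x$ is a finite atomic measure whose set of atoms is exactly $\gamma D\cap B(x_0,R)$. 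Hence the two conditions
\[
  (\gamma D)\sci B(x_0,R)=(\eta D)\sci B(x_0,R)
  \qquad\text{and}\qquad
  (\gamma\mu)\sci B(x_0,R)=(\eta\mu)\sci B(x_0,R)
\]
are equivalent: the first plainly forces the second, and conversely equal finite atomic measures have equal atom sets. This equivalence delivers $D\LD\mu$ and $\mu\LD D$ at once, hence $\mu\MLD D$.

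I do not expect a genuine obstacle here. The only points requiring care are keeping straight the cutting-off and $\Gamma$-action conventions of the two ambient abstract pattern spaces $\UD(X)$ and $C_c(X)^{*}$, and the elementary but essential observation that a finite sum of distinct Dirac measures determines the underlying point set, which is exactly what powers the implication $\mu\LD D$.
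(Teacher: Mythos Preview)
Your proposal is correct and follows essentially the same approach as the paper: the paper's proof simply states that (1) and (2) are clear by definition and that (3) follows from them, and your argument is exactly the natural unpacking of this, verifying condition~1 of Lemma~\ref{lem_local_derivability} with $R_0=0$ by observing that the finite atomic measure $\sum_{x\in\gamma D\cap B(x_0,R)}\delta_x$ determines and is determined by the set $\gamma D\cap B(x_0,R)$.
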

\begin{proof}
      The first two are clear by definition and the third condition follows from
       the first two conditions.
\end{proof}

It is common to identify a continuous bounded function $f$
on a locally compact abelian group
and $fd\mu$, $\mu$ being a Haar measure. See for example
\cite[Proposition 4.10.5, Lemma 5.4.6]{baake2017aperiodic}.
We show these are MLD.
\begin{prop}\label{prop_MLD_f_fdmu}
    Let $\Gamma$ be a $\sigma$-compact
    locally compact abelian group and $\mu$ its Haar measure.
    Let $f$ be a complex valued continuous bounded function on $\Gamma$.
    If we regard  $f$ as an abstract pattern in $\Map(\Gamma,\mathbb{C},0)$
     (Example \ref{ex_map_rho}) and
      $fd\mu$ as an element of $C_c(\Gamma)^*$
       (Example \ref{ex_Gamma-pattern_space_measures}) that sends 
      $\varphi\in C_c(\Gamma)$ to $\int\varphi fd\mu$,
     we have $f\MLD fd\mu$.
\end{prop}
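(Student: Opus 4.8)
The plan is to verify directly the two local derivabilities $f\LD fd\mu$ and $fd\mu\LD f$ of Definition~\ref{def_local_derive}, checking condition~1 of Lemma~\ref{lem_local_derivability} with $X=Y=\Gamma$, where $\Gamma$ acts on itself by left translations (an isometric action for a proper left-invariant metric, which exists under our standing hypotheses). Two bookkeeping facts will be used throughout. First, since $\mu$ is a Haar measure, a one-line change of variables shows $\gamma(fd\mu)=(\gamma f)d\mu$ for every $\gamma\in\Gamma$, where the left-hand side uses the action of Example~\ref{ex_Gamma-pattern_space_measures} and $(\gamma f)(x)=f(\gamma^{-1}x)$ as in Example~\ref{ex_map_rho}. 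Second, by the description of the cutting-off operation in Example~\ref{ex_space_of_measures}, for a bounded continuous $g$ and $C\in\calC(\Gamma)$ the functional $(gd\mu)\sci C$ is $\varphi\mapsto\int_C\varphi g\,d\mu$; and in $\Map(\Gamma,\mathbb{C},0)$ one has $g\sci C=h\sci C$ if and only if $g|_{C}=h|_{C}$. Note also that $f$ being bounded guarantees $fd\mu\in C_c(\Gamma)^{*}$, as remarked after Example~\ref{ex_space_of_measures}, so that the statement makes sense.

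For $f\LD fd\mu$ I would take $x_0=y_0=e$ and $R_0=0$. If $\gamma,\eta\in\Gamma$, $R\geqq 0$ and $(\gamma f)\sci B(e,R)=(\eta f)\sci B(e,R)$, then $\gamma f=\eta f$ on $B(e,R)$, so for every $\varphi\in C_c(\Gamma)$ we have $\int_{B(e,R)}\varphi(\gamma f)\,d\mu=\int_{B(e,R)}\varphi(\eta f)\,d\mu$; by the two facts above this is exactly the identity $(\gamma(fd\mu))\sci B(e,R)=(\eta(fd\mu))\sci B(e,R)$, which is condition~1.

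For $fd\mu\LD f$ I would take $x_0=y_0=e$ and $R_0=1$ (any positive number works). Suppose $(\gamma(fd\mu))\sci B(e,R+1)=(\eta(fd\mu))\sci B(e,R+1)$. Expanding both sides as above gives $\int_{\Gamma}\varphi\cdot\mathbf{1}_{B(e,R+1)}(\gamma f-\eta f)\,d\mu=0$ for all $\varphi\in C_c(\Gamma)$; since $\mathbf{1}_{B(e,R+1)}(\gamma f-\eta f)$ is bounded with compact support, hence in $L^1(\mu)$, this forces $\gamma f=\eta f$ $\mu$-almost everywhere on $B(e,R+1)$. Here comes the one genuinely non-formal step: because $\gamma f$ and $\eta f$ are continuous and a nonzero translation-invariant Radon measure assigns positive mass to every nonempty open set, the closed set $\{\,\gamma f=\eta f\,\}$ must contain the whole interior $B(e,R+1)^{\circ}$, and $B(e,R+1)^{\circ}\supset B(e,R)$. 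Hence $\gamma f=\eta f$ on $B(e,R)$, i.e. $(\gamma f)\sci B(e,R)=(\eta f)\sci B(e,R)$, establishing condition~1 in the reverse direction.

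Combining the two derivabilities yields $f\MLD fd\mu$. I expect the computation $\gamma(fd\mu)=(\gamma f)d\mu$ and the identification of $(gd\mu)\sci C$ to be entirely routine; the only point requiring care is the passage from almost-everywhere to everywhere equality in the $fd\mu\LD f$ direction, which is precisely where continuity of $f$ and the full support of Haar measure are used, and it is essential that the hypotheses include \emph{continuity} (not merely boundedness or local integrability) of $f$. No base points or radii beyond the trivial ones chosen above are needed.
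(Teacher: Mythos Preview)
Your proof is correct and follows essentially the same approach as the paper's own proof: both verify condition~1 of Lemma~\ref{lem_local_derivability} with base point $e$, using $R_0=0$ for $f\LD fd\mu$ and $R_0=1$ for $fd\mu\LD f$, and both reduce the measure identity to an integral identity via $\gamma(fd\mu)=(\gamma f)d\mu$. Your write-up is in fact more explicit than the paper's at the one nontrivial point---passing from the integral identity to pointwise equality on $B(e,R)$---where you spell out the almost-everywhere step and then invoke continuity of $f$ together with full support of Haar measure, whereas the paper simply asserts this conclusion.
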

\begin{proof}
     Take $R>0$ and $s,t\in\Gamma$ and assume
      \begin{align}
            (f-s)\sci B(e,R)=(f-t)\sci B(e,R).
             \label{eq_f_fdmu_MLD}
      \end{align}
      Here, $f-t$ and $f-s$ denote the image of $f$ by the group action.
     For each $\varphi\in C_c(\Gamma)$, the image by $(fd\mu-s)\sci B(e,R)$ is
     $\int_{B(e,R)}\varphi(x)f(x+s)d\mu$ and the image by $(fd\mu-t)\sci B(e,R)$ is
     $\int_{B(e,R)}\varphi(x)f(x+t)d\mu$.
     By (\ref{eq_f_fdmu_MLD}), for each $x\in B(e,R)$,
     \begin{align*}
           f(x+t)=(f\sci B(t,R))(x+t)=((f-t)\sci B(0,R))(x)=((f-s)\sci B(e,R))(x)=f(x+s),
     \end{align*}
     and so the images of $\varphi$ by $(fd\mu-s)\sci B(e,R)$ and $(fd\mu-t)\sci B(e,R)$
     are the same, and so these two maps are the same.

    Conversely, suppose $R>0$, $s,t\in\Gamma$ and
    \begin{align*}
             (fd\mu-s)\sci B(e,R+1)=(fd\mu-t)\sci B(e,R+1).
    \end{align*}
     For any $\varphi\in C_c(\Gamma)$ with $\supp\varphi\subset B(e,R+1)$,
     we have 
     \begin{align*}
           \int\varphi(x)f(x+s)d\mu(x)=\int\varphi(x)f(x+t)d\mu(x),
     \end{align*}
      and so for any $x\in B(e,R)$, we have $f(x+s)=f(x+t)$ and
      \begin{align*}
          (f-s)\sci B(e,R)=(f-t)\sci B(e,R).
      \end{align*}
\end{proof}

\emph{For the rest of this subsection,
 $(\Rd,\rho)$ is the Euclidean space with the Euclidean metric 
   and $D$ is a Delone subset (Example \ref{example_UD}) of $\Rd$
which is $R$-relatively dense  and $r$-uniformly discrete for some $R,r>0$.}

It is sometimes useful to convert $D$ in $\Rd$ into a tiling.
This is done by constructing Voronoi cells and Voronoi tilings
\cite{MR1340198}.
The set $V_x$ below (or its closure) is called the Voronoi cell of $D$ at $x$.
The set of all the Voronoi cells $V_x$, $x\in D$, forms a tiling called
Voronoi tiling or Voronoi tessellation, but
the original Delone set $D$ is not necessarily locally derivable from the tiling.
For example, consider the Delone set
$D=\{a+n\mid a\in\{\frac{1}{5},-\frac{1}{5}\},n\in\mathbb{Z}\}$ in $\mathbb{R}$.
The set of all $V_x$'s form a tiling
$\{(0,\frac{1}{2})+n\mid n\in\frac{1}{2}\mathbb{Z}\}$,
but the symmetry group of the tiling is $\frac{1}{2}\mathbb{Z}$, which is strictly larger
than the symmetry group $\mathbb{Z}$ of the original $D$.
The symmetry group is preserved under MLD, and so $D$ and this tiling are not MLD.
We circumvent this problem by considering punctured Voronoi cells $U_x$.

Although the construction is well-known, we do not omit it and prove MLD
with or without rotation.
\begin{defi}
      For each $x\in D$, we denote by $V_x$ the set
      \begin{align*}
            V_x=\{y\in \Rd\mid\text{$\rho(x,y)<\rho(x',y)$ for
                         any $x'\in D\setminus\{x\}$.}\}
      \end{align*}
\end{defi}

\begin{lem}\label{lem1_voronoi}
       For each $x\in D$, $V_x$ is nonempty and $V_x\subset B(x,R)^{\circ}$.
        Moreover,
        \begin{align}
	      V_x=\{y\in B(x,R)^{\circ}\mid\text{
                        $\rho(x,y)<\rho(x',y)$ for each
                         $x'\in D'$}\}
                 \label{eq_voronoi_lemma}
	\end{align}
         for each $D'$ with  $D\setminus\{x\}\cap B(x,2R)\subset D'
                   \subset D\setminus\{x\}$.
        In particular $V_x$ is open for each $x\in D$.
\end{lem}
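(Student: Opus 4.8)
The plan is to prove the four assertions in the order they are stated, since each feeds into the next. Nonemptiness is immediate: $x\in V_x$, because $D$ is $r$-uniformly discrete, so $\rho(x',x)>r>0=\rho(x,x)$ for every $x'\in D\setminus\{x\}$. For the containment $V_x\subset B(x,R)^{\circ}$, I would argue by contradiction: suppose $y\in V_x$ but $\rho(x,y)\geqq R$. Since $D$ is $R$-relatively dense, $D\cap B(y,R)^{\circ}\neq\emptyset$, so there is $x'\in D$ with $\rho(x',y)<R\leqq\rho(x,y)$; in particular $x'\neq x$, and this contradicts $\rho(x,y)<\rho(x',y)$. Hence $\rho(x,y)<R$.

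Next I would establish the identity \eqref{eq_voronoi_lemma}. Denote its right-hand side by $W$. The inclusion $V_x\subset W$ follows at once from $V_x\subset B(x,R)^{\circ}$ together with $D'\subset D\setminus\{x\}$. For $W\subset V_x$, let $y\in W$ and let $x'\in D\setminus\{x\}$ be arbitrary; I must check $\rho(x,y)<\rho(x',y)$. If $x'\in D'$ this is part of the definition of $W$. If $x'\notin D'$, then $x'\notin(D\setminus\{x\})\cap B(x,2R)$, so $\rho(x,x')>2R$; since $y\in B(x,R)^{\circ}$ gives $\rho(x,y)<R$, the triangle inequality yields $\rho(x',y)\geqq\rho(x,x')-\rho(x,y)>2R-R=R>\rho(x,y)$. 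Thus $y\in V_x$, and $W=V_x$.

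For openness, I would apply \eqref{eq_voronoi_lemma} with the particular choice $D'=(D\setminus\{x\})\cap B(x,2R)$, which satisfies the required inclusions and which is finite because $D$, being uniformly discrete, is locally finite while $B(x,2R)$ is compact. Then $V_x$ is the intersection of the open ball $B(x,R)^{\circ}$ with the finitely many open sets $\{y\in\Rd\mid\rho(x,y)<\rho(x',y)\}$, $x'\in D'$ (each being the locus where the continuous function $y\mapsto\rho(x',y)-\rho(x,y)$ is positive), and hence is open.

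The only genuine subtlety, and thus the step to watch, is the case $x'\notin D'$ in the proof of $W\subset V_x$: one has to see that distant points of $D$ automatically lose the competition to $x$ on all of $B(x,R)^{\circ}$, which is precisely what makes a finite, local condition enough to describe $V_x$ — and which in turn is what will make the Voronoi construction locally derivable later. Everything else is routine.
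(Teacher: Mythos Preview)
Your proof is correct and follows essentially the same approach as the paper: nonemptiness via $x\in V_x$, the containment $V_x\subset B(x,R)^{\circ}$ by relative denseness, and the identity \eqref{eq_voronoi_lemma} via the triangle inequality for points $x'$ outside $B(x,2R)$. Your treatment is in fact slightly more explicit than the paper's in two places --- you note that the $x'$ produced by relative denseness must differ from $x$, and you spell out the openness argument as a finite intersection of open sets --- whereas the paper leaves both of these implicit.
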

\begin{proof}
         Since $x\in V_x$,
          $V_x\neq\emptyset$.
         If $y\in\Rd\setminus B(x,R)^{\circ}$, then
          since there is $x'\in D\cap B(y,R)^{\circ}$,
         we have $\rho(x',y)<R\leqq\rho(x,y)$ and so
         $y\notin V_x$.

         Assume $y\in B(x,R)^{\circ}$ and $\rho(x,y)<\rho(x',y)$ for each
         $x'\in (D\setminus\{x\})\cap B(x,2R)$. If $x'\in D\setminus\{x\}$ and
         $\rho(x,x')>2R$, then
         $\rho(x',y)\geqq\rho(x,x')-\rho(x,y)>R>\rho(x,y)$ and
         so $y\in V_x$.
         This observation shows the equality (\ref{eq_voronoi_lemma}).
\end{proof}

\begin{defi}
        For each $x\in D$, set
	      $U_x=V_x\setminus\{x\}.$
         Set $\calT=\{U_x\mid x\in D\}$.
\end{defi}

\begin{lem}
        $\calT$ is a tiling of $\Rd$.
\end{lem}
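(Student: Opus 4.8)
The plan is to verify the three conditions defining a tiling: each $U_x$ is a tile (open, nonempty, bounded), the collection $\calT$ is a patch (its tiles are pairwise disjoint or equal), and $\supp\calT=\Rd$.

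For the first point, fix $x\in D$. Then $x\in V_x$, since $\rho(x,x)=0<\rho(x',x)$ for every $x'\in D\setminus\{x\}$. By Lemma \ref{lem1_voronoi}, $V_x$ is open, nonempty and contained in $B(x,R)^{\circ}$, so it contains an open ball $B(x,\delta)^{\circ}$; as $d\geq 1$ this ball is infinite, hence $U_x=V_x\setminus\{x\}$ is nonempty. It is open because $\{x\}$ is closed, and bounded because $U_x\subset B(x,R)^{\circ}$. So each $U_x$ is a tile in $\Rd$. For the second point, suppose $x\neq x'$ in $D$ and $y\in U_x\cap U_{x'}$; then $y\in V_x$ gives $\rho(x,y)<\rho(x',y)$ while $y\in V_{x'}$ gives $\rho(x',y)<\rho(x,y)$, a contradiction. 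Thus $U_x\cap U_{x'}=\emptyset$ whenever $x\neq x'$, so distinct punctures give disjoint (hence distinct) tiles and $\calT$ is a patch.

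The main work, and the step I expect to be the real obstacle, is showing $\supp\calT=\overline{\bigcup_{x\in D}U_x}=\Rd$. First I would observe $\overline{U_x}=\overline{V_x}$: the inclusion $\subseteq$ is clear, and $x\in\overline{U_x}$ because $V_x$ contains a punctured ball around $x$ and $d\geq 1$; hence $\overline{\bigcup_x U_x}=\overline{\bigcup_x V_x}$, and it suffices to prove $\bigcup_{x\in D}V_x$ is dense in $\Rd$. The complement $B=\Rd\setminus\bigcup_{x\in D}V_x$ is exactly the set of $y$ whose distance to $D$ is realized by two or more points of $D$: for any $y$ the set $D\cap B(y,R)$ is nonempty by $R$-relative denseness and finite by uniform discreteness, so it contains a point $x_0$ minimizing $\rho(\cdot\,,y)$, and $x_0$ minimizes over all of $D$ since points of $D$ outside $B(y,R)$ are at distance $>R>\rho(x_0,y)$; if $y\notin V_{x_0}$ then some $x'\neq x_0$ has $\rho(x',y)\leq\rho(x_0,y)$, forcing $\rho(x',y)=\rho(x_0,y)$, i.e.\ $y$ lies on the perpendicular bisector hyperplane $H_{x_0,x'}=\{z:\rho(x_0,z)=\rho(x',z)\}$. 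Therefore $B\subset\bigcup_{x\neq x'\in D}H_{x,x'}$, a countable union of affine hyperplanes, each closed with empty interior, and by the Baire category theorem this union has empty interior; consequently $\bigcup_x V_x$ is dense and $\supp\calT=\Rd$. (One can avoid Baire by noting that only finitely many of the $H_{x,x'}$ meet any fixed bounded ball, since only points of $D$ within a bounded distance can be nearest neighbours of points of that ball.) Combining the three points, $\calT$ is a tiling of $\Rd$.
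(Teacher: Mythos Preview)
Your proof is correct, but your argument for $\supp\calT=\Rd$ takes a genuinely different route from the paper's. The paper proceeds constructively: given $y\in\Rd$ (with $y\notin D$), it picks a nearest point $x\in D$ and shows directly that every point $y_t=tx+(1-t)y$ on the open segment from $y$ to $x$ lies in $V_x$, by a case analysis on whether $y-x$ and $y-x'$ are linearly independent (strict triangle inequality) or dependent (a one-dimensional check). This yields $y\in\overline{V_x}=\overline{U_x}$ immediately and in fact shows $\overline{V_x}$ is star-shaped about $x$. Your approach is more global: you identify the complement of $\bigcup_xV_x$ as a subset of the union of perpendicular bisector hyperplanes $H_{x,x'}$ and invoke Baire (or local finiteness) to conclude this complement has empty interior. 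Both are perfectly sound; the paper's argument is more elementary and self-contained, while yours is shorter and highlights the measure-zero nature of Voronoi boundaries.

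One small imprecision: your parenthetical ``only finitely many of the $H_{x,x'}$ meet any fixed bounded ball'' is not literally true, since for arbitrary pairs $x,x'\in D$ the bisector can pass through any prescribed ball. What you mean (and what suffices) is that for $y$ in a bounded ball only finitely many pairs $(x,x')$ can be \emph{nearest} to $y$, so $B$ restricted to that ball is covered by finitely many hyperplanes. With that clarification the Baire-free variant goes through.
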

\begin{proof}
        By Lemma \ref{lem1_voronoi}, $U_x$ is open, bounded and nonempty.
        By definition of $V_x$, if $x\neq x'$ we have $U_x\cap U_{x'}=\emptyset$.
        Next we take $y\in\Rd$ and show that
       there is $x\in D$ such that $y\in\overline{U_x}$.
        To this purpose we may assume that $y\neq x$ for any $x\in D$.
        Since $\{x\in D\mid \rho(x,y)<R\}$ is finite and nonempty,
        $F=\{x\in D\mid \text{$\rho(x,y)\leqq\rho(x',y)$ for any $x'\in D$}\}$
        is nonempty and finite. Take $x\in F$.
        For each $t\in (0,1)$, set $y_t=tx+(1-t)y$.
        Then $\rho(x,y_t)=\|(1-t)(y-x)\|$.
         If $x'\in D$ and $\{y-x,y-x'\}$ is linearly independent,
         we have
         \begin{align*}
	        \rho(x',y_t)=\|(1-t)y+tx-x'\|>\|y-x'\|-t\|y-x\|\geqq(1-t)\|y-x\|
               =\rho(x,y_t).
	 \end{align*}
          If $x'\in D\setminus\{x\}$ and $\{y-x,y-x'\}$ is linearly dependent, then 
          there is $\lambda\in\mathbb{R}$ such that
          $x'-y=\lambda(x-y)$.
         Since $\lambda>1$ or $\lambda\leqq -1$, we see $\rho(y_t,x)<\rho(y_t,x')$.
         By these observations we see $y_t\in V_x$, and so
          $y\in\overline{V_x}=\overline{U_x}$.
\end{proof} 

\begin{rem}
       There is $s>0$ such that $B(x,s)\subset U_x\cup\{x\}$.
       Conversely, if $y\in\Rd\setminus U_x$ and there is $s>0$ such that
       $B(y,s)\subset U_x\cup\{y\}$, then $x=y$.
        Thus if $x,y\in D$, $\gamma,\eta\in\Gamma$ and $\gamma U_x=\eta U_y$, then
       $\gamma x=\eta y$.
\end{rem}

\begin{prop}\label{prop_MLD_voronoi}
      Let $\Gamma$ be a closed subgroup of $\Ed$.
      If we regard $D$ as an element of $\UD(\Rd)$, which is an abstract pattern space over
        $(\Rd,\Gamma)$, and $\calT$ as an element of $\Patch(\Rd)$, which is also a
        abstract pattern space over $(\Rd,\Gamma)$, we have $D\MLD\calT$.
\end{prop}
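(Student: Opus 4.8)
The plan is to verify the ``compact-set'' version of local derivability (condition~3 of Lemma~\ref{lem_local_derivability}) in both directions and then to upgrade it to condition~1, which by Definition~\ref{def_local_derive} is the meaning of $\LD$. I would begin by recording that the punctured Voronoi construction is $\Ed$-equivariant: for $\gamma\in\Gamma$, substituting $z=\gamma y$ in the strict inequalities defining $V_x$ and using that $\gamma$ is an isometry gives $\gamma V_x=V_{\gamma x}^{\gamma D}$, hence $\gamma U_x=U_{\gamma x}^{\gamma D}$, so that $\gamma\calT=\{\gamma U_x\mid x\in D\}$ (the $\Gamma$-action on $\Patch(\Rd)$ of Example~\ref{ex_patch_Gamma_pattern_sp}) is precisely the punctured Voronoi tiling of $\gamma D$. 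Since $\gamma D$ is again $R$-relatively dense and $r$-uniformly discrete, Lemma~\ref{lem1_voronoi} and the puncture remark preceding the proposition apply verbatim to $\gamma D$ (and to $\eta D$).

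For $D\LD\calT$: given a compact $K_2\subset\Rd$, choose $\rho_0>0$ with $K_2\subset B(0,\rho_0)$ and put $K_1=B(0,\rho_0+2R)$. Suppose $\gamma,\eta\in\Gamma$ and $(\gamma D)\cap K_1=(\eta D)\cap K_1$. If $T\in(\gamma\calT)\sci K_2$, then $T=U_z^{\gamma D}$ for some $z\in\gamma D$, and since $V_z^{\gamma D}$ is open and contains $z$ we get $z\in\overline{T}\subset K_2\subset B(0,\rho_0)$, hence $z\in(\gamma D)\cap B(0,\rho_0)=(\eta D)\cap B(0,\rho_0)$. By Lemma~\ref{lem1_voronoi} the cell $V_z^{\gamma D}$ is determined by $(\gamma D\setminus\{z\})\cap B(z,2R)$, and $B(z,2R)\subset B(0,\rho_0+2R)=K_1$, so $V_z^{\gamma D}=V_z^{\eta D}$; therefore $T=U_z^{\eta D}\in\eta\calT$ and, being contained in $K_2$, lies in $(\eta\calT)\sci K_2$. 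Exchanging the roles of $\gamma$ and $\eta$ gives $(\gamma\calT)\sci K_2=(\eta\calT)\sci K_2$, i.e.\ condition~3 with this $K_1$.

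For $\calT\LD D$: given a compact $K_2\subset\Rd$, choose $\rho_0>0$ with $K_2\subset B(0,\rho_0)$ and put $K_1=B(0,\rho_0+R)$. Suppose $\gamma,\eta\in\Gamma$ and $(\gamma\calT)\sci K_1=(\eta\calT)\sci K_1$. Let $x\in(\gamma D)\cap K_2$, say $x=\gamma x'$ with $x'\in D$. Then $\gamma U_{x'}\in\gamma\calT$ and $\gamma U_{x'}\subset\gamma(B(x',R)^{\circ})=B(x,R)^{\circ}\subset B(0,\rho_0+R)=K_1$, so $\gamma U_{x'}\in(\gamma\calT)\sci K_1=(\eta\calT)\sci K_1$; hence $\gamma U_{x'}=\eta U_{y'}$ for some $y'\in D$, and the puncture remark yields $x=\gamma x'=\eta y'\in\eta D$. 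Thus $(\gamma D)\cap K_2\subset\eta D$, and by symmetry $(\gamma D)\sci K_2=(\eta D)\sci K_2$, which is condition~3.

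Finally I would invoke the last part of Lemma~\ref{lem_local_derivability} to pass from condition~3 to condition~1 in each direction: here $X=Y=\Rd$ with the same metric, the $\Gamma$-action on $\Rd$ is transitive, the target abstract patterns consist of bounded components ($\calT$ because each tile lies in some $B(x,R)^{\circ}$, and $D$ trivially as a subset of $\Rd$), and the target spaces are glueable --- $\Patch(\Rd)$ by Example~\ref{ex_patch_compatible} when deriving $\calT$, and $\LF(\Rd)$ (rather than $\UD(\Rd)$, which need not be glueable) when deriving $D$, this substitution being harmless since the cutting-off operation entering condition~1 is the same in either space. This gives $D\LD\calT$ and $\calT\LD D$, hence $D\MLD\calT$. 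Since the construction itself is classical, I expect no conceptual obstacle; the parts demanding care are the radius bookkeeping in condition~3 (the inclusion $V_x\subset B(x,R)^{\circ}$ of Lemma~\ref{lem1_voronoi} is exactly what controls $K_1$) and the verification that every hypothesis of Lemma~\ref{lem_local_derivability} --- transitivity, glueability of the target, bounded components --- is actually in force, the glueability point being the reason for working with $\LF(\Rd)$.
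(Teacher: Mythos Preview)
Your core computations are correct and essentially identical to the paper's: the paper also uses Lemma~\ref{lem1_voronoi} to show that the cell $U_x$ depends only on $D\cap B(x,2R)$, and the puncture remark to recover $x$ from $U_x$. The difference is that the paper verifies condition~1 of Lemma~\ref{lem_local_derivability} directly, working with balls $B(0,L+2R)$ and $B(0,L+R)$, whereas you verify condition~3 and then invoke the implication $3\Rightarrow 1$.

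That last step is where there is a genuine gap. The implication $3\Rightarrow 1$ in Lemma~\ref{lem_local_derivability} requires the action $\Gamma\curvearrowright\Rd$ to be transitive, and you assert this explicitly. But the proposition only assumes $\Gamma$ is a closed subgroup of $\Ed$; there is no hypothesis that $\Gamma\supset\Rd$ or that $\Gamma$ acts transitively (for instance $\Gamma$ could be a lattice, or $\Od$, or trivial). So as written your proof establishes the result only under an extra hypothesis that is not present.

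The fix is immediate and costs nothing: your condition~3 argument already uses $K_1,K_2$ that are balls centred at $0$, so simply take $K_2=B(0,L)$ for arbitrary $L>0$ and $K_1=B(0,L+2R)$ (respectively $B(0,L+R)$), and observe that what you wrote is then precisely condition~1 with $x_0=y_0=0$ and $R_0=2R$ (respectively $R_0=R$). This is exactly the paper's route, and it bypasses Lemma~\ref{lem_local_derivability} entirely --- in particular it avoids the transitivity and glueability hypotheses you were worried about.
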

\begin{proof}
Take $L>0$ and $\gamma,\eta\in\Gamma$ and assume
\begin{align}
        (\gamma D)\cap B(0,L+2R)=(\eta D)\cap B(0,L+2R).\label{eq1_D_voronoi_MLD}
\end{align}
Suppose $x\in D$ and $\gamma U_x\subset B(0,L)$.
Since $\gamma x\in B(0,L)$, by (\ref{eq1_D_voronoi_MLD}), we see $\gamma x\in \eta D$
and $y=\eta^{-1}\gamma x\in D$.
By setting $D'=(D\setminus\{x\})\cap B(\gamma^{-1}0,L+2R)$ in Lemma \ref{lem1_voronoi},
we have
\begin{align*}
       \gamma U_x&=
          \gamma\{z\in B(x,R)^{\circ}\mid\text{$\rho(x,z)<\rho(x',z)$ for any
                       $x'\in (D\setminus\{x\})\cap B(\gamma^{-1}0,L+2R)$}\}\\
                 &=
         \{z\in B(\gamma x,R)^{\circ}\mid
                \text{$\rho(\gamma x,z)<\rho(x',z)$ for any
                   $x'\in'(\gamma D)\cap B(0,L+2R)\setminus\{\gamma x\}$ }\}\\         
                &=
         \{z\in B(\eta y,R)^{\circ}\mid
                 \text{$\rho(\eta y,z)<\rho(x',z)$ for any
                  $x'\in (\eta D)\cap B(0,L+2R)\setminus\{\eta y\})$}\}\\
               &=\eta U_y,
\end{align*}
and so $\gamma U_x\in\eta\calT$.
We have shown $(\gamma\calT)\sci B(0,L)\subset\eta\calT$ and by symmetry 
this implies that
 $(\gamma\calT)\sci B(0,L)=(\eta\calT)\sci B(0,L)$.

Conversely, assume $L>0$, $\eta,\gamma\in\Gamma$ and
\begin{align}
        (\gamma \calT)\sci B(0,L+R)=(\eta\calT)\sci B(0,L+R).
        \label{eq2_voronoi_MLD}
\end{align}
If $x\in D$ and $\gamma x\in B(0,L)$, then 
 $\gamma U_x\subset B(0, L+R)$ and so by (\ref{eq2_voronoi_MLD}) we have
$\gamma U_x\in(\eta\calT)\sci B(0,L+R)$.
There is $y\in D$ such that $\gamma U_x=\eta U_y$, and so $\gamma x=\eta y\in\eta D$.
We have shown $(\gamma D)\cap B(0,L)\subset\eta D$ and by symmetry
we obtain $(\gamma D)\cap B(0,L)=(\eta D)\cap B(0,L)$.
\end{proof}

\section{Translation theorem for certain abstract patterns}
\label{section_trans_thm}
\begin{setting}\label{setting_of_main_thm}
        \emph{In this section $X=\Rd$  and $\Gamma$ is a closed subgroup of $\Ed$ that contains
        $\Rd$. $\Pi$,
        $\Pi_1$ and $\Pi_2$ are glueable abstract pattern spaces over $(\Rd,\Gamma)$.
         }

         \emph{Note that we endow $\Gamma$ a metric
         $\rho_{\Gamma}$ given in Notation \ref{notation}.}
\end{setting}

In this section we prove Theorem \ref{translation_thm}, which answers the second
question in Introduction, Problem \ref{problem}.
The first three subsections are preliminaries for
the proof.

\subsection{Decomposition of Abstract Patterns by Delone Sets}
\label{subsection_decomposition}


To explain the idea of this subsection, consider a tiling $\calT$ in $\Rd$, where we only
consider translations. Assume the diameters of tiles is bounded from above.
Suppose
 we pick one point $x_T$ from each $T\in\calT$, in such a way that if
 $S,T\in\calT$ are translationally equivalent, then $x_T$ and $x_S$ are also
 translationally equivalent by the same vector.
Then the set $D=\{x_T\mid T\in\calT\}$
is a Delone set that is locally derivable from $\calT$. Since the diameters of tiles
in $\calT$ is bounded, if $R>0$ is large enough we have
\begin{align*}
     \calT=\bigcup_{x\in D}\calP\sci B(x,R)=\bigvee\{\calP\sci B(x,R)\mid x\in D\}.
\end{align*}
In this way we can ``decompose'' $\calT$ into family of patches
$\Xi=\{\calP\sci B(x,R)\mid x\in D\}$, from which we can reconstruct $\calT$.
Each element of $\Xi$ describes the behavior of $\calT$ around $x$, and we can take
a tuple  $(\calP_{\lambda})_{\lambda}$ of patches which are located around $0\in\Rd$
and such that for each $x\in D$ there is one and only one $\calP_{\lambda}$ that is
a translate of $\calT\sci B(x,R)$. In other words, $(\calP_{\lambda})_{\lambda}$ is the
tuple of all possible behaviors of $\calT$ around each $x\in D$.
We can reconstruct $\calT$ from ``the tuple of components'',
$(\calP_{\lambda})_{\lambda}$, and the plan, that is, the information of
``where translates of each $\calP_{\lambda}$ appears'', just as a machine or a building
is constructed from their components and plans.
We show the original $\calT$ and its plan
are MLD (Proposition \ref{prop_calP_MLD_Gamma_lambda}).


\begin{defi}\label{def_D_R_decomposes_calP}
      Take an abstract pattern $\calP\in\Pi$.
      We say a pair $(D,R)$ of a Delone set in $X$ and a positive number $R>0$
 \emph{decomposes
      $\calP$} if the following three conditions are satisfied:
      \begin{enumerate}
       \item $\calP\LD D$,
       \item $\calP=\bigvee\{\calP\sci B(x,R)\mid x\in D\}$, and
       \item $\sup_{x\in D}\card(\Sym_{\Gamma_x}\calP\sci B(x,R))$ is finite.
      \end{enumerate}
\end{defi}

In this definition, the third condition is a technical one that only arises when
we consider $\Od$-actions.
We first investigate a
relation between decomposition by a Delone set and a positive number,
and the group action $\Gamma\curvearrowright\Pi$.
\begin{lem}
      If $(D,R_0)$ decomposes $\calP$ and $\gamma\in\Gamma$, then $(\gamma D,R_0)$ decomposes
      $\gamma\calP$.
\end{lem}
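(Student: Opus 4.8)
The plan is to verify, one by one, the three defining conditions of Definition \ref{def_D_R_decomposes_calP} for the pair $(\gamma D,R_0)$ and the abstract pattern $\gamma\calP$, exploiting that $\gamma$ acts on $X$ as an isometry and on $\Pi$ as an automorphism of the abstract pattern space structure.

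First I would dispose of condition 1. Since $\gamma$ acts on $X$ by an isometry, it preserves uniform discreteness and relative denseness, so $\gamma D$ is again a Delone set (Example \ref{ex_subshift_of_Delone_sets}); in particular $(\gamma D,R_0)$ is a legitimate candidate pair. From $\calP\LD D$ and the lemma above asserting that local derivability is preserved by the $\Gamma$-action, we obtain $\gamma\calP\LD\gamma D$.

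Next, condition 2. The key observation is that $\gamma\colon\Pi\to\Pi$ is an order isomorphism for $\geqq$: applying Lemma \ref{lem1_before_examples_equiv_pat_sp}(2) to $\gamma$ and to $\gamma^{-1}$ gives $\calP'\geqq\calQ'\iff\gamma\calP'\geqq\gamma\calQ'$, and an order isomorphism carries suprema to suprema. Hence from $\calP=\bigvee\{\calP\sci B(x,R_0)\mid x\in D\}$ we get $\gamma\calP=\bigvee\{\gamma(\calP\sci B(x,R_0))\mid x\in D\}$. Now equivariance of the cutting-off operation together with $\gamma B(x,R_0)=B(\gamma x,R_0)$ (which holds since $\gamma$ is an isometry) yields $\gamma(\calP\sci B(x,R_0))=(\gamma\calP)\sci B(\gamma x,R_0)$, and as $x$ ranges over $D$ the point $\gamma x$ ranges over $\gamma D$; thus $\gamma\calP=\bigvee\{(\gamma\calP)\sci B(y,R_0)\mid y\in\gamma D\}$, which is condition 2. (Alternatively one could invoke Lemma \ref{lem_sup_action_commute}, after checking that $\{\calP\sci B(x,R_0)\mid x\in D\}$ is locally finite — which follows from uniform discreteness of $D$ and the uniqueness of the zero element in the glueable space $\Pi$.)

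Finally, condition 3. For $y=\gamma x\in\gamma D$ one has $\Gamma_y=\gamma\Gamma_x\gamma^{-1}$, and, writing $\calQ_x=\calP\sci B(x,R_0)$, the computation above gives $(\gamma\calP)\sci B(y,R_0)=\gamma\calQ_x$. I would then check that conjugation $\xi\mapsto\gamma\xi\gamma^{-1}$ restricts to a bijection from $\Sym_{\Gamma_x}\calQ_x$ onto $\Sym_{\Gamma_y}(\gamma\calQ_x)$: for $\xi\in\Gamma_x$ we have $(\gamma\xi\gamma^{-1})(\gamma\calQ_x)=\gamma(\xi\calQ_x)$, which equals $\gamma\calQ_x$ exactly when $\xi\calQ_x=\calQ_x$, since $\gamma$ acts injectively on $\Pi$. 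Consequently $\card\bigl(\Sym_{\Gamma_y}(\gamma\calP)\sci B(y,R_0)\bigr)=\card\bigl(\Sym_{\Gamma_x}\calQ_x\bigr)$ for every $x\in D$, so the supremum over $\gamma D$ equals the supremum over $D$ and is therefore finite. The whole argument is essentially bookkeeping around the fact that $\gamma$ is a structure-preserving bijection; the only place that needs a moment's care is the commutation of $\gamma$ with $\bigvee$ in condition 2, and that is immediate once one records that $\gamma$ is an order isomorphism.
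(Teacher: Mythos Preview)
Your proof is correct. The paper states this lemma without proof, regarding it as a routine consequence of the equivariance built into the $\Gamma$-abstract pattern space structure; your verification of the three conditions is exactly the natural one, and the only point requiring any thought---that $\gamma$ commutes with $\bigvee$ because it is an order isomorphism---is handled cleanly.
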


\emph{For the rest of this subsection 
        $\calP$ is an element of $\Pi$,
      $D$ a Delone set in $X$ and $R_0$  a positive real number
       and we assume that $(D,R_0)$ decomposes $\calP$.}
We will use the following lemma to define tuple of components and plan.

\begin{lem}\label{lem_existence_components}
        There exist a set $\Lambda$ and 
         $\calP_{\lambda}\in\Pi$ for each $\lambda\in\Lambda$
        such that
         \begin{enumerate}
	  \item for each $\lambda\in\Lambda$, we have $\supp\calP_{\lambda}\subset B(0,R_0)$, and
          \item for each $x\in D$ there are a unique $\lambda_x\in\Lambda$ and $\gamma\in\Gamma$
                such that $\calP\sci B(x,R_0)=\gamma\calP_{\lambda_x}$ and $x=\gamma 0$.
	 \end{enumerate}
\end{lem}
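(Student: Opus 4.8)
The plan is to \emph{recenter} each local piece of $\calP$ at the origin and then group the resulting abstract patterns into orbits. Since $\Gamma\supset\Rd$, the action $\Gamma\curvearrowright\Rd$ is transitive, so I would first fix, for every $x\in D$, an element $\gamma_x\in\Gamma$ with $\gamma_x 0=x$, and set $\calQ_x=\gamma_x^{-1}(\calP\sci B(x,R_0))\in\Pi$. Equivariance of the cutting-off operation together with Lemma \ref{lem1_before_examples_equiv_pat_sp}(1) and Lemma \ref{lemma_support_calP_sci_C} gives
\[
 \supp\calQ_x=\gamma_x^{-1}\supp(\calP\sci B(x,R_0))\subset\gamma_x^{-1}\big((\supp\calP)\cap B(x,R_0)\big)\subset\gamma_x^{-1}B(x,R_0)=B(0,R_0),
\]
the last equality because $\gamma_x$ is an isometry sending $0$ to $x$. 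Hence every $\calQ_x$ is supported in $B(0,R_0)$.

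Next I would introduce the relation $x\approx y$ on $D$ defined by the existence of $\gamma\in\Gamma$ with $\gamma(\calP\sci B(x,R_0))=\calP\sci B(y,R_0)$ and $\gamma x=y$. It is immediate that $\approx$ is an equivalence relation (take $e$, $\gamma^{-1}$, and compositions). The computational core of the argument is the equivalence
\[
 x\approx y\ \Longleftrightarrow\ \calQ_y\in\Gamma_0\,\calQ_x,
\]
where $\Gamma_0$ denotes the isotropy group of $0\in\Rd$; this is a direct manipulation using $\calP\sci B(x,R_0)=\gamma_x\calQ_x$ and the observation that an element of $\Gamma$ carrying $x$ to $y$ becomes, after precomposition with $\gamma_x$ and postcomposition with $\gamma_y^{-1}$, exactly an element of $\Gamma_0$. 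In particular, $\approx$ does not depend on the auxiliary choices of the $\gamma_x$.

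I would then take $\Lambda=D/\!\approx$, choose a representative $x_\lambda$ in each class $\lambda$, and set $\calP_\lambda=\calQ_{x_\lambda}$; condition 1 is exactly the support bound above. For condition 2, given $x\in D$ with class $\lambda_x$, the relation $x\approx x_{\lambda_x}$ produces $\sigma\in\Gamma_0$ with $\calQ_x=\sigma\calP_{\lambda_x}$, whence $\calP\sci B(x,R_0)=\gamma_x\calQ_x=(\gamma_x\sigma)\calP_{\lambda_x}$ and $(\gamma_x\sigma)0=\gamma_x 0=x$, so $\gamma=\gamma_x\sigma$ works. For uniqueness of $\lambda_x$: if $\calP\sci B(x,R_0)=\gamma\calP_\mu$ with $\gamma 0=x$, then $\calP_\mu=\gamma^{-1}\gamma_x\calQ_x$ with $\gamma^{-1}\gamma_x\in\Gamma_0$, so $\calQ_{x_\mu}=\calP_\mu\in\Gamma_0\calQ_x$, hence $x\approx x_\mu$ and therefore $\mu=\lambda_x$. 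Finally, any two admissible choices of $\gamma$ differ on the right by an element of $\Sym_{\Gamma_0}\calP_{\lambda_x}=\Gamma_0\cap\Sym_\Gamma\calP_{\lambda_x}$, a group conjugate (via $\gamma_{x_{\lambda_x}}$) to $\Sym_{\Gamma_{x_{\lambda_x}}}(\calP\sci B(x_{\lambda_x},R_0))$ and hence finite by the third condition in Definition \ref{def_D_R_decomposes_calP}; it is trivial, so that $\gamma$ is genuinely unique, precisely when this point-symmetry group is trivial (for instance when $\Gamma=\Rd$, or when tiles carry punctures).

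Most of this is bookkeeping; the one genuinely delicate point is establishing the orbit equivalence $x\approx y\Longleftrightarrow\calQ_y\in\Gamma_0\calQ_x$, since this is simultaneously what makes $\Lambda=D/\!\approx$ independent of the choices $\gamma_x$, what produces the required $\gamma$, and what forces uniqueness of $\lambda_x$. The remaining subtlety is purely notational: keeping track of the finite subgroup $\Sym_{\Gamma_0}\calP_{\lambda_x}$ modulo which $\gamma$ is pinned down, whose finiteness is exactly what the third condition in Definition \ref{def_D_R_decomposes_calP} guarantees.
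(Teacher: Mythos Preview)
Your proof is correct and follows the same route as the paper: define the equivalence relation $\approx$ on $D$, take representatives, and recenter each local patch at the origin via a chosen $\gamma_x$; you simply fill in more of the verification than the paper's terse argument does. One clarification: the lemma intends uniqueness only of $\lambda_x$, not of $\gamma$ (as the remark immediately after Definition~\ref{def_tuple_ingredients} confirms), so your closing discussion of $\Sym_{\Gamma_0}\calP_{\lambda_x}$, while accurate, goes beyond what is required.
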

\begin{proof}
      Define an equivalence relation $\sim$ on $D$ as follows:
      we have $x\sim y$ if there is $\gamma\in\Gamma$ such that (1) $\gamma x=y$, and (2)
      $\gamma(\calP\sci B(x,R_0))=\calP\sci B(y,R_0)$.
       Then by taking one point from each equivalence class for $\sim$, we obtain a set
       $\Lambda$.

      For each $x\in\Lambda$, take an element $\gamma_x\in\Gamma$ such that $\gamma_x 0=x$.
      Set $\calP_x=\gamma_x^{-1}(\calP\sci B(x,R_0))$; then $\Lambda$ and $\calP_x,x\in\Lambda$,
      satisfy the conditions.
\end{proof}

\begin{rem}
       By the second condition of Lemma \ref{lem_existence_components},
       we see $\Sym_{\Gamma_{0}}\calP_{\lambda_x}$ is conjugate
       to $\Sym_{\Gamma_x}\calP\sci B(x,R_0)$.
      In particular, $\card\Sym_{\Gamma_{0}}\calP_{\lambda}$,
       where $\lambda\in\Lambda$, is bounded from above.
\end{rem}

\begin{defi}\label{def_tuple_ingredients}
         Any tuple of abstract patterns $(\calP_{\lambda})_{\lambda\in\Lambda}$ 
         which satisfies the conditions in Lemma \ref{lem_existence_components}
         is
         called the \emph{tuple of components} for $\calP$ with respect to $(D,R_0)$.
         For each $\lambda\in\Lambda$, set
         \begin{align*}
	      P_{\lambda}= P_{\lambda}(\calP,D,R_0,(\calP_{\lambda})_{\lambda})
                  =\{\gamma\in\Gamma\mid\text{$\gamma 0\in D$ and 
                   $\calP\sci B(\gamma 0,R_0)=\gamma\calP_{\lambda}$}\}
	 \end{align*}       
         and call the tuple $(P_{\lambda})_{\lambda}$ the \emph{plan}
 for $\calP$ with respect to
        $(D,R_0,(\calP_{\lambda}))$.
\end{defi}

\begin{ex}
      Define two labeled tiles $I_W$ and $I_B$ in $\Rd$ via $I_W=([0,1]^d,W)$ and
      $I_B=([0,1]^d,B)$. These are ``black tile'' and ``white tile'' and we can
     consider a (labeled) tiling $\calT$ in $\Rd$ like a checkerboard, that is,
      the set of all tiles $I_B+(z_1,z_2,\ldots ,z_d)$ with $z_j\in\mathbb{Z}$ and
      $z_1+z_2+\cdots +z_d\in 2\mathbb{Z}$  and
 $I_W+(z_1,z_2,\ldots ,z_d)$ with $z_j\in\mathbb{Z}$ and
      $z_1+z_2+\cdots +z_d\in 2\mathbb{Z}+1$.
            A Delone set $\mathbb{Z}^d$ is locally derivable from this tiling $\calT$.
         For any large $R>0$, the tuple of components is
        two patches $\calP_{R,B}$ and $\calP_{R,W}$, where the former is nothing but
        $\calT\sci B(0,R)$ and the latter is obtained by reversing colors of  tiles
     in the former.
        The plan for these tuple of components is
        $\{(z_1,z_2,\ldots ,z_d)\in\mathbb{Z}^d\mid \sum z_j\in 2\mathbb{Z}\}$ and
         $\{(z_1,z_2,\ldots ,z_d)\in\mathbb{Z}^d\mid \sum z_j\in 2\mathbb{Z}+1\}$.
       (Here we only consider translations, but if $\Gamma$ is larger than $\Rd$
      the plan becomes bigger.)
\end{ex}

The following lemma on a relation among the group action, tuple of components
and plan is easy to prove.
\begin{lem}
      Let $(\calP_{\lambda})_{\lambda\in\Lambda}$
      be a tuple of components for $\calP$ with respect to $(D,R_0)$.
      Let $(P_{\lambda})_{\lambda\in\Lambda}$ be the plan for $\calP$ with
      respect to $(D,R_0,(\calP_{\lambda}))_{\lambda}$.
For any $\gamma\in\Gamma$, $(\calP_{\lambda})_{\lambda}$ is
     a tuple of components for $\gamma\calP$ with respect to $(D,R_0)$ and
      $(\gamma P_{\lambda})_{\lambda}$ is the plan for $\gamma\calP$ with respect to
      $(D,R_0,(\calP_{\lambda}))_{\lambda}$.
\end{lem}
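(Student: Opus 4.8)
The plan is to derive both assertions from the equivariance of the cutting-off operation (Definition \ref{def_gamma-pattern_space}) together with Lemma \ref{lem1_before_examples_equiv_pat_sp}, which records $\gamma\supp\calQ=\supp(\gamma\calQ)$ and the compatibility of $\geqq$ with the $\Gamma$-action. Everything rests on the single identity
\begin{align*}
   (\gamma\calP)\sci B(\gamma x,R_0)=(\gamma\calP)\sci\gamma B(x,R_0)=\gamma\bigl(\calP\sci B(x,R_0)\bigr),
\end{align*}
valid for every $x\in X$ since $\gamma$ is an isometry and hence $\gamma B(x,R_0)=B(\gamma x,R_0)$. Note that, by the preceding decomposition lemma, it is $(\gamma D,R_0)$ that decomposes $\gamma\calP$, so the local configurations of $\gamma\calP$ are indexed by the translated Delone set $\gamma D$; the content here is that the \emph{same} tuple $(\calP_{\lambda})_{\lambda}$ still serves as a tuple of components and that the plan is merely translated to $(\gamma P_{\lambda})_{\lambda}$.

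For the tuple-of-components claim I would first observe that condition 1 of Lemma \ref{lem_existence_components}, namely $\supp\calP_{\lambda}\subset B(0,R_0)$, concerns only the $\calP_{\lambda}$ and is therefore unaffected. For condition 2, fix a point $\gamma x$ with $x\in D$. By hypothesis there are a unique $\lambda_x\in\Lambda$ and some $\gamma_x\in\Gamma$ with $\calP\sci B(x,R_0)=\gamma_x\calP_{\lambda_x}$ and $x=\gamma_x 0$. Applying $\gamma$ and using the displayed identity yields
\begin{align*}
   (\gamma\calP)\sci B(\gamma x,R_0)=\gamma\gamma_x\calP_{\lambda_x},\qquad (\gamma\gamma_x)0=\gamma x,
\end{align*}
which is exactly the representation required by condition 2, with the same index $\lambda_x$ and the group element $\gamma\gamma_x$. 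Uniqueness of the index transports verbatim: a second such representation of $(\gamma\calP)\sci B(\gamma x,R_0)$ would, after applying $\gamma^{-1}$, give a second representation of $\calP\sci B(x,R_0)$, contradicting the uniqueness already available for $\calP$.

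For the plan I would simply unfold Definition \ref{def_tuple_ingredients}. The plan of $\gamma\calP$ is the set of $h\in\Gamma$ with $h0\in\gamma D$ and $(\gamma\calP)\sci B(h0,R_0)=h\calP_{\lambda}$. Writing $h=\gamma g$, the condition $h0\in\gamma D$ is equivalent to $g0\in D$, and by the displayed identity
\begin{align*}
   (\gamma\calP)\sci B(\gamma g0,R_0)=\gamma\bigl(\calP\sci B(g0,R_0)\bigr),
\end{align*}
which equals $h\calP_{\lambda}=\gamma g\calP_{\lambda}$ precisely when $\calP\sci B(g0,R_0)=g\calP_{\lambda}$, i.e.\ precisely when $g\in P_{\lambda}$. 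Hence the plan of $\gamma\calP$ equals $\{\gamma g\mid g\in P_{\lambda}\}=\gamma P_{\lambda}$, as claimed.

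The two equivariance computations are routine; the only point demanding care is the bookkeeping of the indexing Delone set, namely that the configurations of $\gamma\calP$ sit over $\gamma D$ rather than over $D$ (this is forced by the preceding decomposition lemma). Once this is kept consistent, both the uniqueness of the index and the identification of the plan follow by transporting the corresponding facts for $\calP$ through $\gamma$, and I anticipate no deeper obstacle.
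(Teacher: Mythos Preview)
Your proof is correct and is exactly the routine equivariance argument the paper has in mind; the paper itself omits the proof, stating only that the lemma ``is easy to prove.'' You also correctly note (and work around) the evident typo in the statement: the tuple of components and plan for $\gamma\calP$ are with respect to $(\gamma D,R_0)$, not $(D,R_0)$, as forced by the preceding decomposition lemma.
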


\begin{rem}
       Let $(\calP_{\lambda})_{\lambda\in\Lambda}$ be a tuple of components
        for $\calP$ with respect to $(D,R_0)$.
        Let $(P_{\lambda})$ be the plan for $\calP$
        with respect to $(D,R_0,(\calP_{\lambda}))$.
       Then
\begin{align*}
           \{\calP\sci B(x,R_0)\mid x\in D\}=\{\gamma\calP_{\lambda}\mid\lambda\in\Lambda, 
             \gamma\in P_{\lambda}\}.
\end{align*}
      This implies that $\calP=\bigvee\{\gamma\calP_{\lambda}\mid\lambda\in\Lambda, 
             \gamma\in P_{\lambda}\}$.
\end{rem}

Now we prove the goal of this subsection.

\begin{prop}\label{prop_calP_MLD_Gamma_lambda}
         Let $(\calP_{\lambda})_{\lambda\in\Lambda}$ be a tuple of components
         for $\calP$ with respect to $(D,R_0)$ and
         $(P_{\lambda})$ the plan for $\calP$ with respect to 
        $(D,R_0,(\calP_{\lambda}))$.
         If we regard $(P_{\lambda})$ as an abstract pattern of 
        $\prod_{\lambda\in\Lambda}2^{\Gamma}$, which is an abstract pattern space over $(\Gamma,\Gamma)$,
        (Lemma \ref{lem_product_Gamma-pattern_space}, 
         Definition \ref{def_product_Gamma_pattern_spd}, Example
         \ref{2X_as_Gamma_pattern_sp}) 
        we have
         \begin{align*}
	        \calP\MLD (P_{\lambda})_{\lambda}.
	 \end{align*}
\end{prop}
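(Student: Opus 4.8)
The plan is to verify the two conditions $\calP\LD (P_\lambda)_\lambda$ and $(P_\lambda)_\lambda\LD\calP$ separately, taking $0\in\Rd$ as the base point for $\calP$ and $e\in\Gamma$ as the base point for $(P_\lambda)_\lambda$ in condition~1 of Lemma \ref{lem_local_derivability}, and using the comparison $\rho(\gamma 0,\eta 0)\leqq\rho_{\Gamma}(\gamma,\eta)\leqq\rho(\gamma 0,\eta 0)+2$ from \eqref{eq_relation_rho_rhoGamma} to translate ball radii between $\Rd$ and $\Gamma$. The key preliminary observation is that, after the substitution $\delta\mapsto\gamma\delta$,
\begin{align*}
     \gamma P_{\lambda}=\{\delta\in\Gamma\mid \delta 0\in\gamma D\text{ and }(\gamma\calP)\sci B(\delta 0,R_0)=\delta\calP_{\lambda}\};
\end{align*}
that is, $(\gamma P_{\lambda})_{\lambda}$ is precisely the plan of $\gamma\calP$ for the decomposition $(\gamma D,R_0)$ and the same tuple of components, and membership of a given $\delta$ in $\gamma P_{\lambda}$ depends only on the behaviour of $\gamma\calP$ near $\delta 0$ together with the fixed data $\delta,\calP_{\lambda}$.

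For $\calP\LD(P_\lambda)_\lambda$: since $\calP\LD D$, fix $R_D\geqq 0$ with the property that agreement of $\gamma\calP$ and $\eta\calP$ on $B(0,R+R_D)$ forces $(\gamma D)\cap B(0,R)=(\eta D)\cap B(0,R)$, and set $R_0'=\max\{R_0,R_D\}$. Assume $(\gamma\calP)\sci B(0,R+R_0')=(\eta\calP)\sci B(0,R+R_0')$. For each $\lambda$ and each $\delta\in B(e,R)$ we have $\delta 0\in B(0,R)$ by \eqref{eq_relation_rho_rhoGamma}, hence $B(\delta 0,R_0)\subset B(0,R+R_0)$, so $(\gamma\calP)\sci B(\delta 0,R_0)=(\eta\calP)\sci B(\delta 0,R_0)$ and $\delta 0\in\gamma D\iff\delta 0\in\eta D$; by the displayed description this gives $\delta\in\gamma P_\lambda\iff\delta\in\eta P_\lambda$. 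Hence $(\gamma P_\lambda)\cap B(e,R)=(\eta P_\lambda)\cap B(e,R)$ for all $\lambda$, i.e. $\gamma(P_\lambda)_\lambda$ and $\eta(P_\lambda)_\lambda$ agree on $B(e,R)$, which is condition~1 of Lemma \ref{lem_local_derivability}.

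For $(P_\lambda)_\lambda\LD\calP$: applying the remark preceding this proposition to $\gamma\calP$ (which is decomposed by $(\gamma D,R_0)$, with components $(\calP_\lambda)_\lambda$ and plan $(\gamma P_\lambda)_\lambda$), we get $\gamma\calP=\bigvee\Xi_\gamma$ with $\Xi_\gamma=\{\delta\calP_\lambda\mid\lambda\in\Lambda,\ \delta\in\gamma P_\lambda\}$. Each member of $\Xi_\gamma$ equals $(\gamma\calP)\sci B(\delta 0,R_0)$ and so lies below $\gamma\calP$, whence $\Xi_\gamma$ is pairwise compatible; it is locally finite because $\gamma D$ is uniformly discrete, $\supp(\delta\calP_\lambda)\subset B(\delta 0,R_0)$, each point of $\gamma D$ singles out one index $\lambda$, and $\card\Sym_{\Gamma_0}\calP_\lambda$ is bounded (third condition of Definition \ref{def_D_R_decomposes_calP}). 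Glueability then gives $\gamma\calP\sci B(0,R)=\bigvee(\Xi_\gamma\sci B(0,R))$. If $\delta\notin B(e,R+R_0+2)$ then $\rho(\delta 0,0)>R+R_0$ by \eqref{eq_relation_rho_rhoGamma}, so $B(\delta 0,R_0)\cap B(0,R)=\emptyset$ and $(\delta\calP_\lambda)\sci B(0,R)$ has empty support, hence equals the unique zero element (Lemma \ref{uniqueness_zero_element}); discarding such terms (Lemma \ref{lem_supremum_Xi_and_Xi_zero}),
\begin{align*}
    \gamma\calP\sci B(0,R)=\bigvee\{(\delta\calP_\lambda)\sci B(0,R)\mid \lambda\in\Lambda,\ \delta\in(\gamma P_\lambda)\cap B(e,R+R_0+2)\}.
\end{align*}
Since each summand $(\delta\calP_\lambda)\sci B(0,R)$ depends only on $\delta$ and $\calP_\lambda$, the right-hand side depends only on $\big((\gamma P_\lambda)\cap B(e,R+R_0+2)\big)_\lambda=\gamma(P_\lambda)_\lambda\sci B(e,R+R_0+2)$. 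So with $R_0''=R_0+2$, agreement of $\gamma(P_\lambda)_\lambda$ and $\eta(P_\lambda)_\lambda$ on $B(e,R+R_0'')$ forces $\gamma\calP\sci B(0,R)=\eta\calP\sci B(0,R)$, giving $(P_\lambda)_\lambda\LD\calP$ and, with the first part, $\calP\MLD(P_\lambda)_\lambda$.

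I do not expect a deep obstacle here: both implications are direct unwindings of the definitions of the plan and of local derivability. The only care needed is the bookkeeping with the additive constant $2$ relating $\rho$ and $\rho_{\Gamma}$ when passing between balls in $\Rd$ and in $\Gamma$, and checking that the family $\Xi_\gamma$ of translated components is genuinely locally finite and pairwise compatible, so that the gluability axiom may legitimately be invoked and the zero-support terms discarded.
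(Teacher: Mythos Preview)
Your proof is correct and follows essentially the same route as the paper's: both directions are handled with the same choice of base points $0\in\Rd$ and $e\in\Gamma$, the same use of $\calP\LD D$ for the forward direction, and the same reduction of $(\gamma\calP)\sci B(0,R)$ to a supremum indexed by $(\gamma P_\lambda)\cap B(e,R+R_0+2)$ via the additive constant from \eqref{eq_relation_rho_rhoGamma} and Lemma~\ref{lem_supremum_Xi_and_Xi_zero}. Your explicit preliminary description of $\gamma P_\lambda$ as the plan of $\gamma\calP$ is a clean organizational device that the paper uses only implicitly, and your constant $R_0'=\max\{R_0,R_D\}$ is marginally tighter than the paper's sum $R_0+R_1$, but neither difference is substantive.
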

\begin{proof}
       \underline{Step 1: We prove $\calP\LD (P_{\lambda})_{\lambda}$.}
       Let $R_1>0$ be a constant for the local derivation $\calP\LD D$
       for points $0$ and $0$
        which appears in
       the definition of local derivability (Definition \ref{def_local_derive}).
       Let $L_0$ be an arbitrary positive real number.
       Set $L_1=L_0+R_0+R_1$.
       We assume $\gamma,\eta\in\Gamma$ and
       \begin{align}
	      (\gamma\calP)\sci B(0,L_1)=(\eta\calP)\sci B(0,L_1)
               \label{eq1_proof_calP_LD_Gammalambda}
       \end{align}
       and show
       \begin{align}
	      (\gamma P_{\lambda})\cap B(e,L_0)=(\eta P_{\lambda})\cap B(e,L_0)
             \label{eq2_prop_calP_MLD_Gamma_lambda}
       \end{align}
       for each $\lambda\in\Lambda$.

       Take $\lambda\in\Lambda$ and fix it.
       By (\ref{eq1_proof_calP_LD_Gammalambda}), we see
       \begin{align*}
	     (\gamma D)\cap B(0,L_0+R_0)=(\eta D)\cap B(0,L_0+R_0).
       \end{align*}
        Let $\zeta$ be an element of $P_{\lambda}$ such that
        $\gamma\zeta\in B(e,L_0)$.
        We claim that $\gamma\zeta\in\eta P_{\lambda}$.
        By the definition of the plan,
        $\zeta 0\in D$ and $\zeta\calP_{\lambda}=\calP\sci B(\zeta 0,R_0)$.
        Since $\rho(\gamma\zeta 0,0)\leqq\rho_{\Gamma}(\gamma\zeta,e)\leqq L_0$,
        $\gamma \zeta 0\in (\gamma D)\cap B(0,L_0)=(\eta D)\cap B(0,L_0)$, and so
        there is $y\in D$ such that $\eta y=\gamma\zeta 0$.
        Now
        \begin{align*}
	       \gamma\zeta\calP_{\lambda}&=(\gamma\calP)\sci B(\gamma\zeta 0,R_0)\\
	                                &=(\gamma\calP)\sci B(0,L_1)\sci B(\gamma\zeta 0,R_0)\\
                                       &=(\eta\calP)\sci B(0,L_1)\sci B(\eta y,R_0)\\
                                      &=\eta(\calP\sci B(y,R_0)).
	\end{align*}
        We have proved $\eta^{-1}\gamma\zeta 0\in D$ and 
        $\eta^{-1}\gamma\zeta \calP_{\lambda}=\calP\sci B(\eta^{-1}\gamma\zeta 0,R_0)$, and
       so $\eta^{-1}\gamma\zeta\in P_{\lambda}$, by which we proved the claim.
       Thus $(\gamma P_{\lambda})\cap B(e,L_0)\subset(\eta P_{\lambda})\cap B(e,L_0)$
       and by symmetry we have shown (\ref{eq2_prop_calP_MLD_Gamma_lambda}).

      \underline{Step 2: We prove $(P_{\lambda})_{\lambda}\LD\calP$.}
      Let $L_0>0$ be an arbitrary positive number and set $L_1=L_0+R_0+C_0$.
      Assume $\gamma,\eta\in\Gamma$ and
      \begin{align}
             (\gamma P_{\lambda})\cap B(e,L_1)=(\eta P_{\lambda})\cap B(e,L_1)
      \end{align}
      holds for each $\lambda\in\Lambda$.
       For each $\lambda\in\Lambda$ and $\xi\in P_{\lambda}$,
       if we have $(\gamma\xi\calP_{\lambda})\sci B(0,L_0)\neq 0$, then
       $B(\gamma\xi 0,R_0)\cap B(0,L_0)\neq\emptyset$. 
       This implies $\rho(\gamma\xi 0,0)\leqq L_0+R_0$ and
       $\rho_{\Gamma}(\gamma\xi,e)\leqq L_0+R_0+C_0=L_1$.
       We have the same observation if we replace $\gamma$ with $\eta$.
       Thus
       \begin{align*}
       	       \{(\gamma\xi\calP_{\lambda})\sci B(0,L_0)
                  & \mid\lambda\in\Lambda, \xi\in P_{\lambda}\}\cup\{0\}\\
         & =\{(\gamma\xi\calP_{\lambda})\sci B(0,L_0))\mid\text{$\lambda\in\Lambda$,
                        $\xi\in P_{\lambda}$ and $\gamma\xi\in B(e,L_1)$}\}\cup\{0\}\\
        &=\{(\eta\zeta\calP_{\lambda})\sci B(0,L_0)\mid\text{$\lambda\in\Lambda$,
                    $\zeta\in P_{\lambda}$ and $\eta\zeta\in B(e,L_1)$}\}\cup\{0\}\\
        &=\{(\eta\zeta\calP_{\lambda})\sci B(0,L_0)\mid\lambda\in\Lambda,\zeta\in P_{\lambda}\}
                \cup\{0\}.
       \end{align*}
       We obtain the desired result by Lemma \ref{lem_supremum_Xi_and_Xi_zero} and
       Lemma \ref{lem_sup_action_commute}:
       \begin{align*}
	     (\gamma\calP)\sci B(0,L_0)&=\bigvee 
                     \{(\gamma\xi\calP_{\lambda})\sci B(0,L_0)
                  \mid\lambda\in\Lambda, \xi\in P_{\lambda}\}\cup\{0\}\\
                 &=\bigvee \{(\eta\zeta\calP_{\lambda})\sci B(0,L_0)
                   \mid\lambda\in\Lambda, \zeta\in P_{\lambda}\}\cup\{0\}\\
                 & =(\eta\calP)\sci B(0,L_0).  
       \end{align*}
\end{proof}

\begin{rem}
      For tilings and Delone sets we have the concept of finite local complexity
      (FLC). We can generalize this concept to arbitrary abstract pattern spaces,
      by defining an abstract pattern $\calP$ has FLC if its continuous hull with respect
 to the local matching topology is compact. (Local matching topology can be defined by
 the structure of abstract pattern space. The usual ``finitely many behaviors when seen
 from a spherical window'' is not relevant for functions such as
 $\sin\colon\mathbb{R}\rightarrow\mathbb{R}$, and we define via compactness.)
      If $\calP$ has FLC,
 then the index set $\Lambda$ in Definition \ref{def_tuple_ingredients}
       is finite.
\end{rem}

\subsection{Families of building blocks and admissible digits}
In the last subsection, we studied the decomposition of abstract patterns.
Here we study construction of abstract patterns from ``building blocks''.
\label{subsection_family_build_blocks}
\begin{setting}
       \emph{In this subsection we assume, in addition to Setting \ref{setting_of_main_thm},
       that $\Sigma$ be a supremum-closed subshift inside $\Pi$.}
\end{setting}

Here we define and study ``building blocks'' and ``admissible digits''.
For example, a square $I=(0,1)\times (0,1)$ in $\mathbb{R}^2$
is a building block, in the sense
that we can juxtapose its copies to obtain a patch. But we cannot obtain a patch from
$I$ and $I+(1/2,0)$, because they overlap. So that the digit $\{(0,0),(1/2,0)\}$,
which describes the positions of these two copies, is not ``admissible'' and we should
rule it out. If the elements of digit $\{x_1,x_2\ldots\}$ are apart enough, then
we can obtain a patch $\{I+x_1,I+x_2\ldots\}$ by juxtaposing $I$.
In this case the digit $\{x_1,x_2,\ldots\}$ is admissible.
In general, we will define ``family of building blocks'', which is a family of
abstract patterns that can be juxtaposed to obtain a new abstract patterns, if the
ambient abstract pattern space is glueable. Admissible digits are possible positions
of copies of elements in family of building block by which we can juxtapose them
 without overlap.


\begin{defi}\label{def_family_build_block}
       Take a positive number $r>0$
       arbitrarily.
       A subset $\frakF\subset\Sigma$ is called a \emph{family of building blocks of $\Sigma$
       for $r$}
       if the following three conditions are satisfied:
       \begin{enumerate}
	\item $\frakF\neq\emptyset$ and $\emptyset\ne\supp\calP\subset B(0,r)$
	      for each $\calP\in\frakF$.
	\item If $\gamma,\eta\in\Gamma$, $\calP,\calQ\in\frakF$ and 
	      $\rho(\gamma 0,\eta 0)>4r$, then
	      $\gamma\calP$ and $\eta\calQ$ are compatible.
	\item If $\calP,\calQ\in\frakF$, $\gamma\in\Gamma$ and $\gamma\calP=\calQ$, then
	      $\calP=\calQ$ and $\gamma 0=0$.
       \end{enumerate}
       The elements of $\frakF$ are called \emph{building blocks} for $r$.
       If a building block $\calP$ for $r$ additionally satisfies the condition
       \begin{align*}
	      \Sym_{\Gamma}\calP=\Gamma_{0},
       \end{align*}
       then $\calP$ is called a \emph{symmetric building block} for $r$.

       Let $\frakF$ be a family of building blocks of $\Sigma$ for $r$.
       Then a tuple $(\mathcal{D}_{\calP})_{\calP\in\frakF}$ of subsets 
       $\mathcal{D}_{\calP}\subset\Gamma$ is called an \emph{admissible digit} if
 \begin{align*}
	     \text{$\calP,\calQ\in\frakF$, $\gamma\in\mathcal{D}_{\calP}$, 
	         $\eta\in\mathcal{D}_{\calQ}$ and $\rho(\gamma 0,\eta 0)\leqq 4r$}
 \end{align*}
	      imply    $\calP=\calQ$ and $\gamma\calP=\eta\calQ$.
\end{defi}

\begin{rem}
      A non-empty subset of a family of building blocks is again a family of building blocks.
\end{rem}

 We now give examples, and after that prove lemmas that will be useful later.
\begin{ex}
      Consider a pattern space $\Patch(\Rd)$ over $(\Rd,\Rd)$
      (Example \ref{example_patch}, Example \ref{ex_patch_Gamma_pattern_sp}).
      A set $\{(0,1)^d,(0,2)^d\}$ is an example of family of building blocks for
      $2\sqrt{d}$, since if any element of one patch and any element of another
     patch are disjoint, those patches are compatible (Example
       \ref{ex_patch_compatible}).
\end{ex}

\begin{ex}
      Consider a pattern space $\UD_r(\Rd)$ over $(\Rd,\Gamma)$ (Example
       \ref{2X_as_Gamma_pattern_sp}), where $\Gamma$ is as in Setting
        \ref{setting_of_main_thm}.
A one-point set $\{0\}$
 is a symmetric
        building block for $r$, since if $\rho(\gamma 0,\eta 0)>4r$, then
        $\{\gamma 0,\eta 0\}$ majorises both $\gamma \{0\}$ and $\eta \{0\}$,
       and so these two are compatible. For example, $(\mathcal{D}_{\{0\}})$, where
        $\mathcal{D}_{\{0\}}=5r\mathbb{Z}^d$, is an admissible digit.
\end{ex}
\begin{lem}\label{lem_admissible_digit}
      Let $\frakF$ be a family of building blocks for $r$ and
      $(\mathcal{D}_{\calP})_{\calP\in\frakF}$ be an admissible digit.
      Then $\{\gamma\calP\mid\calP\in\frakF, \gamma\in\mathcal{D}_{\calP}\}$
      is locally finite and pairwise compatible.
\end{lem}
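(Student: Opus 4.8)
The plan is to verify the two assertions about the set $\Xi:=\{\gamma\calP\mid\calP\in\frakF,\ \gamma\in\mathcal{D}_{\calP}\}$ separately, both driven by the dichotomy built into Definition \ref{def_family_build_block}: for placed building blocks $\gamma\calP$ and $\eta\calQ$ with $\gamma\in\mathcal{D}_{\calP}$ and $\eta\in\mathcal{D}_{\calQ}$, either $\rho(\gamma 0,\eta 0)>4r$, in which case the second axiom for $\frakF$ makes $\gamma\calP$ and $\eta\calQ$ compatible, or $\rho(\gamma 0,\eta 0)\leqq 4r$, in which case admissibility of $(\mathcal{D}_{\calP})_{\calP}$ forces $\calP=\calQ$ and $\gamma\calP=\eta\calQ$.

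For pairwise compatibility I would take two arbitrary elements $\gamma\calP,\eta\calQ\in\Xi$ and split on the size of $\rho(\gamma 0,\eta 0)$ as above. In the first case the second axiom for a family of building blocks directly provides a common majorant. In the second case admissibility gives $\gamma\calP=\eta\calQ$, so $\gamma\calP$ is itself a majorant of both (the order $\geqq$ is reflexive). This part is immediate.

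For local finiteness, fix $x\in\Rd$ and $s>0$ and recall that the cutting-off operation of the power pattern space $2^{\Pi}$ gives $\Xi\sci B(x,s)=\{(\gamma\calP)\sci B(x,s)\mid\gamma\calP\in\Xi\}$. First I would localise: by Lemma \ref{lem1_before_examples_equiv_pat_sp} and the first axiom for $\frakF$ (using that $\Gamma$ acts isometrically on $\Rd$) one has $\supp(\gamma\calP)=\gamma\supp\calP\subset B(\gamma 0,r)$, so by Lemma \ref{lemma_support_calP_sci_C} the abstract pattern $(\gamma\calP)\sci B(x,s)$ has empty support whenever $\rho(\gamma 0,x)>r+s$, hence equals the zero element $0$, which is unique because $\Pi$ is glueable (Lemma \ref{uniqueness_zero_element}). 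It then remains to bound the number of distinct values of $(\gamma\calP)\sci B(x,s)$ arising from pairs with $\gamma 0\in B(x,r+s)$. Since $\Rd$ is proper, $B(x,r+s)$ is compact and can therefore be covered by finitely many closed balls $B(y_1,2r),\dots,B(y_N,2r)$, so every such $\gamma 0$ lies in some $B(y_i,2r)$. For any two pairs $(\calP,\gamma),(\calQ,\eta)$ contributing to $\Xi$ with $\gamma 0,\eta 0\in B(y_i,2r)$ for a common $i$ one gets $\rho(\gamma 0,\eta 0)\leqq 4r$, so admissibility yields $\gamma\calP=\eta\calQ$ and therefore $(\gamma\calP)\sci B(x,s)=(\eta\calQ)\sci B(x,s)$; thus the pairs with $\gamma 0\in B(y_i,2r)$ contribute at most one value each. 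Altogether $\Xi\sci B(x,s)$ has at most $N+1$ elements, so $\Xi$ is locally finite.

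The only genuinely delicate point is this last step: a window $B(x,s)$ can be much larger than the scale $4r$ at which admissibility operates, so one must first pass to the supports in order to reduce to $\gamma 0$ ranging over a fixed compact set, and only then subdivide that compact set into pieces of diameter $\leqq 4r$ using compactness of closed balls in a proper metric space. The rest is routine bookkeeping with the two axioms in Definition \ref{def_family_build_block}, Lemma \ref{lemma_support_calP_sci_C}, and uniqueness of the zero element.
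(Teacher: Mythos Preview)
Your proof is correct and follows precisely the approach the paper has in mind; the paper's own proof is simply ``Clear by definition,'' and your writeup is the fully spelled-out version of that, exploiting the same dichotomy (either $\rho(\gamma 0,\eta 0)>4r$ and the second building-block axiom applies, or $\rho(\gamma 0,\eta 0)\leqq 4r$ and admissibility forces $\gamma\calP=\eta\calQ$) together with uniqueness of the zero element for the local-finiteness count.
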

\begin{proof}
Clear by definition. 
\end{proof}
Since a family of building blocks is inside a supremum-closed subshift,
there is a supremum $\bigvee\{\gamma\calP\mid\calP\in\frakF, 
\gamma\in\mathcal{D}_{\calP}\}$ inside $\Sigma$
under the same condition as in Lemma \ref{lem_admissible_digit}.

We finish this subsection by proving two lemmas which will be useful in 
Subsection \ref{subsection_proof_translation_thm}, when we prove
Theorem \ref{translation_thm}.

\begin{lem}\label{lem_first_two_aixon_of_fam_build_block}
      Let $\frakF$ be a family of building blocks for $r$.
      Take a real number $r'>2r$ arbitrarily.
      Let $(\mathcal{D}_{\calP}^{\lambda})_{\calP\in\frakF}$ be an admissible digit
      for each $\lambda$,
      where $\lambda$ belongs to an index set $\Lambda$,
      such that
      \begin{enumerate}
       \item for each $\lambda\in\Lambda$, we have
	     $\bigcup_{\calP\in\frakF}\mathcal{D}_{\calP}^{\lambda}\neq\emptyset$, and
	\item for each $\lambda$ and $\calP$, any element $\gamma\in\mathcal{D}_{\calP}^{\lambda}$
	      satisfies a condition 
	      \begin{align}
	           \rho(0,\gamma 0)<r'-2r.\label{eq_lem_family_of_adm_digit}
	      \end{align}
      \end{enumerate}
      Set $\calQ_{\lambda}=\bigvee\{\gamma\calP\mid\calP\in\frakF,
           \gamma\in\mathcal{D}_{\calP}^{\lambda}\}$ for each $\lambda\in\Lambda$.
       Then the family $\{\calQ_{\lambda}\mid\lambda\in\Lambda\}$
       satisfies the first two conditions of the definition of 
       family of building blocks for $r'$ (Definition \ref{def_family_build_block}).
\end{lem}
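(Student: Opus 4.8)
The plan is to verify the two requirements of Definition \ref{def_family_build_block} for the family $\{\calQ_\lambda\mid\lambda\in\Lambda\}$ and the radius $r'$ directly. For the first requirement I would fix $\lambda$ and use Lemma \ref{lem_support_supremum} to write $\supp\calQ_\lambda=\overline{\bigcup\{\gamma\,\supp\calP\mid\calP\in\frakF,\ \gamma\in\mathcal{D}_{\calP}^{\lambda}\}}$. Non-emptiness of $\supp\calQ_\lambda$ then follows from hypothesis (1) (some $\mathcal{D}_{\calP}^{\lambda}$ is non-empty) together with the fact that each $\supp\calP$ is non-empty. For the inclusion $\supp\calQ_\lambda\subset B(0,r')$, I would use that $\supp\calP\subset B(0,r)$ gives $\gamma\,\supp\calP\subset B(\gamma 0,r)$, and that hypothesis (2), $\rho(0,\gamma 0)<r'-2r$, then forces $\gamma\,\supp\calP\subset B(0,r'-r)$; since $B(0,r'-r)$ is a closed ball contained in $B(0,r')$, taking the union over all pieces and closing up stays inside it.

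For the second requirement I would fix $\lambda,\mu\in\Lambda$ and $\gamma,\eta\in\Gamma$ with $\rho(\gamma 0,\eta 0)>4r'$ and build a common majorant of $\gamma\calQ_\lambda$ and $\eta\calQ_\mu$. The idea is to unfold both suprema into building blocks: set $\Xi_1=\{\gamma\alpha\calP\mid\calP\in\frakF,\ \alpha\in\mathcal{D}_{\calP}^{\lambda}\}$, $\Xi_2=\{\eta\beta\calQ\mid\calQ\in\frakF,\ \beta\in\mathcal{D}_{\calQ}^{\mu}\}$, and $\Xi=\Xi_1\cup\Xi_2$, which lies in $\Sigma$ because $\Sigma$ is a subshift. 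I would then show $\Xi$ is locally finite and pairwise compatible. Local finiteness: by Lemma \ref{lem_admissible_digit} the sets $\{\alpha\calP\}$ and $\{\beta\calQ\}$ are locally finite, hence so are their translates $\Xi_1$ and $\Xi_2$ (by the argument in the proof of Lemma \ref{lem_sup_action_commute}), and a union of two locally finite sets is locally finite. Pairwise compatibility within $\Xi_1$ or within $\Xi_2$ again follows from Lemma \ref{lem_admissible_digit} together with Lemma \ref{lem1_before_examples_equiv_pat_sp}. For a cross pair $\gamma\alpha\calP\in\Xi_1$ and $\eta\beta\calQ\in\Xi_2$, I would estimate, using that $\Gamma$ acts by isometries and hypothesis (2),
\begin{align*}
\rho((\gamma\alpha)0,(\eta\beta)0)&\geqq\rho(\gamma 0,\eta 0)-\rho(0,\alpha 0)-\rho(0,\beta 0)\\
&>4r'-(r'-2r)-(r'-2r)=2r'+4r>4r,
\end{align*}
so that the second axiom of a family of building blocks for $r$ (Definition \ref{def_family_build_block}), applied to $\gamma\alpha$ and $\eta\beta$, yields that $\gamma\alpha\calP$ and $\eta\beta\calQ$ are compatible.

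Once $\Xi$ is known to be locally finite and pairwise compatible, supremum-closedness of $\Sigma$ gives $\calR=\bigvee\Xi\in\Sigma$. Since $\calR\geqq\gamma\alpha\calP$ for all admissible $\calP,\alpha$, it is a majorant of $\Xi_1$, and as $\gamma\calQ_\lambda=\gamma\bigvee\{\alpha\calP\}=\bigvee\Xi_1$ by Lemma \ref{lem_sup_action_commute}, we obtain $\calR\geqq\gamma\calQ_\lambda$; symmetrically $\calR\geqq\eta\calQ_\mu$. Hence $\gamma\calQ_\lambda$ and $\eta\calQ_\mu$ are compatible, which is the second requirement. (The third axiom of Definition \ref{def_family_build_block} is not part of the claim, which is fortunate, since it would require additional hypotheses on the digits.) The only step that is not routine bookkeeping is the cross-pair distance estimate above: it is precisely the point where the hypothesis $\rho(\gamma 0,\eta 0)>4r'$ and the ``smallness'' hypothesis $\rho(0,\gamma 0)<r'-2r$ on the digits cooperate to reproduce the separation $>4r$ demanded by the axioms at radius $r$.
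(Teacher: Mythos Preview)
Your proof is correct and follows essentially the same approach as the paper's: both verify condition 1 via Lemma \ref{lem_support_supremum} and the inclusion $B(\gamma 0,r)\subset B(0,r')$, and both verify condition 2 by showing $\Xi_1\cup\Xi_2$ is locally finite and pairwise compatible (the cross-pair estimate you spell out is exactly what the paper asserts without computation). The only cosmetic difference is that the paper packages the final step (passing from compatibility of $\Xi_1\cup\Xi_2$ to compatibility of $\gamma\calQ_\lambda$ and $\eta\calQ_\mu$) into Lemma \ref{lem_family_of_loc_fin_pair_comp_sets}, whereas you invoke Lemma \ref{lem_sup_action_commute} and argue directly; also, your appeal to supremum-closedness of $\Sigma$ is harmless but unnecessary, since compatibility only requires a majorant in $\Pi$.
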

\begin{proof}
       \emph{The first condition.}
        Take $\lambda\in\Lambda$ and fix it. Since $\supp\calQ_{\lambda}=
        \overline{
         \bigcup_{\calP\in\frakF,\gamma\in\mathcal{D}_{\calP}^{\lambda}}\supp\gamma\calP}$,
         it is nonempty.
         We have moreover 
              $\supp\gamma\calP\subset B(\gamma 0,r)\subset B(0,r')$
         by (\ref{eq_lem_family_of_adm_digit}), for each $\calP\in\frakF$ and
         $\gamma\in\mathcal{D}_{\calP}^{\lambda}$, and so
         $\supp\calQ_{\lambda}\subset B(0,r')$.

        \emph{The second condition.}
         Take $\lambda,\mu\in\Lambda$ and $\gamma,\eta\in\Gamma$ such that
         $\rho(\gamma 0,\eta 0)>4r'$.
         We show that $\gamma\calQ_{\lambda}$ and $\eta\calQ_{\mu}$ are compatible.
         For each $\calP,\calQ\in\frakF$, $\xi\in\mathcal{D}_{\calP}^{\lambda}$ and
         $\zeta\in\mathcal{D}_{\calQ}^{\mu}$, by (\ref{eq_lem_family_of_adm_digit}),
         we have $\rho(\gamma\xi 0,\eta\zeta 0)>4r$.
         Thus $\gamma\xi\calP$ and $\eta\zeta\calQ$ are compatible and so 
         together with Lemma \ref{lem_admissible_digit},
         the set
         $\Xi_1\cup\Xi_2$ is locally finite and pairwise compatible. Here,
         \begin{align*}
	      \Xi_1=\{\gamma\xi\calP\mid\calP\in\frakF,\xi\in\Gamma_{\calP}^{\lambda}\},
	 \end{align*}
         and
         \begin{align*}
	      \Xi_2=\{\eta\zeta\calQ\mid\calQ\in\frakF,\zeta\in\Gamma_{\calQ}^{\mu}\}.
	 \end{align*}
         By Lemma \ref{lem_family_of_loc_fin_pair_comp_sets} and the fact that
         $\gamma\calQ_{\lambda}=\bigvee\Xi_1$ and $\eta\calQ_{\mu}=\bigvee\Xi_2$,
         we see $\gamma\calQ_{\lambda}$ and $\eta\calQ_{\mu}$ are compatible.
\end{proof}

\begin{rem}
       In Lemma \ref{lem_first_two_aixon_of_fam_build_block}, the third condition
      of the definition of family of building blocks
      (Definition \ref{def_family_build_block})
       is  not always satisfied.
       When we use this lemma in Subsection \ref{subsection_proof_translation_thm},
       we prove the third condition in an ad hoc way.
\end{rem}

\begin{lem}\label{useful_lem_for_building_block}
       Take $r>0$ arbitrarily.
       Let $\mathfrak{F}$ be a family of building blocks
       for $r$.
        Take two admissible digits $(\mathcal{D}_{\calP}^1)_{\calP\in\mathfrak{F}}$
        and $(\mathcal{D}_{\calP}^2)_{\calP\in\mathfrak{F}}$.
        Suppose both $\bigcup_{\calP}\mathcal{D}_{\calP}^1$ and
        $\bigcup_{\calP}\mathcal{D}_{\calP}^2$ are finite.
         Suppose also that 
         \begin{align*}
	  	    \bigvee \{\gamma \calP\mid\calP\in\mathfrak{F}, \gamma\in\mathcal{D}_{\calP}^1\}
                = \bigvee\{\gamma \calP\mid\calP\in\mathfrak{F},\gamma\in\mathcal{D}_{\calP}^2\}.
	 \end{align*}
         Then for any $\calP\in\mathfrak{F}$ and $\gamma\in\mathcal{D}_{\calP}^1$
 there is
        $\eta\in\mathcal{D}_{\calP}^2$ such that $\gamma\calP=\eta\calP$.
\end{lem}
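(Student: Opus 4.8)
The plan is to re-index the two families of placed building blocks by their base points in $\Rd$ and then invoke Lemma \ref{useful_lem_for_last_half_of_main_thm}(3). For $j=1,2$ set $F_j=\{\gamma 0\mid \calP\in\mathfrak{F},\ \gamma\in\mathcal{D}_{\calP}^j\}$; this is the image of the finite set $\bigcup_{\calP\in\mathfrak{F}}\mathcal{D}_{\calP}^j$ under $\gamma\mapsto\gamma 0$, hence finite. The first key point is that admissibility pins a placed building block down from its base point: if $\gamma\in\mathcal{D}_{\calP}^j$ and $\gamma'\in\mathcal{D}_{\calQ}^j$ with $\gamma 0=\gamma' 0$, then $\rho(\gamma 0,\gamma' 0)=0\leqq 4r$, so $\calP=\calQ$ and $\gamma\calP=\gamma'\calQ$. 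Hence for each $x\in F_j$ one may unambiguously define $\calR_x^j=\gamma\calP$ by choosing any $(\calP,\gamma)$ with $\calP\in\mathfrak{F}$, $\gamma\in\mathcal{D}_{\calP}^j$ and $\gamma 0=x$. By Lemma \ref{lem1_before_examples_equiv_pat_sp} and because $\Gamma$ acts by isometries, $\supp\calR_x^j=\gamma\supp\calP$ satisfies $\emptyset\neq\supp\calR_x^j\subset B(x,r)$.

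Next I would verify that distinct points of $F_j$ are more than $4r$ apart. Suppose $x\neq x'$ in $F_j$ with $\rho(x,x')\leqq 4r$; pick $\gamma\in\mathcal{D}_{\calP}^j$, $\gamma'\in\mathcal{D}_{\calQ}^j$ with $\gamma 0=x$, $\gamma' 0=x'$. Admissibility gives $\calP=\calQ$ and $\gamma\calP=\gamma'\calP$, so $\gamma'^{-1}\gamma\calP=\calP$; the third axiom in Definition \ref{def_family_build_block} then forces $\gamma'^{-1}\gamma\,0=0$, i.e.\ $x=x'$, a contradiction. Since $\{\calR_x^j\mid x\in F_j\}=\{\gamma\calP\mid\calP\in\mathfrak{F},\ \gamma\in\mathcal{D}_{\calP}^j\}$ as sets (the forward inclusion is clear; the reverse is exactly well-definedness of $\calR_x^j$), the supremum $\calQ^j:=\bigvee\{\calR_x^j\mid x\in F_j\}$ — which exists by Lemma \ref{lem_admissible_digit} together with supremum-closedness, and in any case is presupposed in the hypothesis — equals the supremum in the statement, so $\calQ^1=\calQ^2$.

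Now Lemma \ref{useful_lem_for_last_half_of_main_thm}(3) applies verbatim, with this $r$, these $F_1,F_2$ and $\calR_x^j$: for each $x\in F_1$ there is a (unique) $y\in F_2$ with $\calR_x^1=\calR_y^2$. To conclude, given $\calP\in\mathfrak{F}$ and $\gamma\in\mathcal{D}_{\calP}^1$, put $x=\gamma 0\in F_1$, so $\calR_x^1=\gamma\calP$; take the corresponding $y\in F_2$ and write $\calR_y^2=\eta\calQ$ with $\calQ\in\mathfrak{F}$ and $\eta\in\mathcal{D}_{\calQ}^2$. Then $\gamma\calP=\eta\calQ$, hence $\eta^{-1}\gamma\calP=\calQ$, and the third axiom of Definition \ref{def_family_build_block} gives $\calP=\calQ$; therefore $\gamma\calP=\eta\calP$ with $\eta\in\mathcal{D}_{\calP}^2$, as required. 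The only mildly delicate step is the bookkeeping that makes $\calR_x^j$ well defined and the base points within each $F_j$ genuinely $4r$-separated; once that is set up, the statement is an immediate consequence of Lemma \ref{useful_lem_for_last_half_of_main_thm}.
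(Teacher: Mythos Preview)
Your proof is correct and follows essentially the same approach as the paper: define the base-point sets $F_j$, assign to each $x\in F_j$ the well-defined abstract pattern $\calR_x^j=\gamma\calP$, and invoke Lemma~\ref{useful_lem_for_last_half_of_main_thm}(3). The paper's proof is much terser and leaves implicit the verifications you spell out (well-definedness, the $4r$-separation of $F_j$, and the final use of the third axiom of Definition~\ref{def_family_build_block} to pass from $\gamma\calP=\eta\calQ$ to $\calP=\calQ$), but the strategy is identical.
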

\begin{proof}
                Consider two finite sets
         \begin{align*}
	    F_1= \{\gamma 0\mid\gamma\in\bigcup_{\calP}\mathcal{D}_{\calP}^1\}
	 \end{align*}
          and 
         \begin{align*}
F_2  = \{\gamma 0\mid\gamma\in\bigcup_{\calP}\mathcal{D}_{\calP}^2\}.
          \end{align*}
        For each $x\in F_1$, 
       there are $\calP\in\mathfrak{F}$ and $\gamma\in\Gamma_{\calP}^1$
        such that $x=\gamma 0$. Set $\calP_x^1=\gamma\calP$. This is independent of
        the choice of $\calP$ and $\gamma$. Define $\calP_x^2$ 
       for each $x\in F_2$ in a similar way.
      The claim follows from Lemma \ref{useful_lem_for_last_half_of_main_thm}.
\end{proof}

\subsection{Preliminary Lemmas}
\label{subsection_preliminary_lemma}


Here we prove some technical lemmas which are used in the next subsection.

\begin{lem}\label{assumption_existence_y_G}
        Let $\mathfrak{G}$ be a set of subgroups of $\Gamma_0$ which is at most countable.
        Suppose $\max_{G\in\mathfrak{G}}\card G<\infty$.
        Then for each two numbers $r,s$ such that $r>s>0$,
        there are $\e>0$ and 
        a point $y_G\in B(0,r)^{\circ}\setminus B(0,s)$ for each $G\in\mathfrak{G}$
         such that
        \begin{enumerate}
	 \item if $G\in\mathfrak{G}$ and $\gamma\in G\setminus\{e\}$, then
                $\rho(y_G,\gamma y_G)>\e$, and
        \item if $G\neq H$, then $\rho(0,y_G)\neq\rho(0,y_H)$.
	\end{enumerate}
\end{lem}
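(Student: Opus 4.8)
The plan is to exploit that $\Gamma_0$ fixes $0$, so $\Gamma_0$ is a subgroup of $\Od$ and every $G\in\mathfrak{G}$ is a finite subgroup of $\Od$ of order at most $N:=\max_{G\in\mathfrak{G}}\card G$. If $N\leqq 1$ every $G$ is trivial, condition~1 is vacuous, and it suffices to place the $y_G$ with pairwise distinct norms in $B(0,r)^{\circ}\setminus B(0,s)$ along one fixed ray, with any $\e>0$; so assume $N\geqq 2$. For $G\in\mathfrak{G}$ and $\gamma\in G\setminus\{e\}$ I write $\mathrm{Fix}(\gamma)=\Ker(\gamma-\Id)$, a \emph{proper} linear subspace of $\Rd$ since $\gamma$ is orthogonal and $\gamma\neq e$, and set $W_G=\bigcup_{\gamma\in G\setminus\{e\}}\mathrm{Fix}(\gamma)$, a union of at most $N-1$ proper subspaces.

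The key quantitative input is the uniform bound: for every $\gamma\in\Od$ with $\gamma^k=\Id$ for some $2\leqq k\leqq N$, $\gamma\neq\Id$, and every $y\in\Rd$,
\begin{align*}
  \|\gamma y-y\|\geqq 2\sin(\pi/N)\,\mathrm{dist}(y,\mathrm{Fix}(\gamma)).
\end{align*}
This follows by putting $\gamma$ in real normal form: $\Rd$ decomposes orthogonally into the $\pm1$-eigenspaces of $\gamma$ and $2$-dimensional blocks on which $\gamma$ is a planar rotation by some angle $\theta\notin 2\pi\mathbb{Z}$; then $\mathrm{Fix}(\gamma)$ is the $+1$-eigenspace, and on the other blocks $\Id-\gamma$ acts as multiplication by $2$ or by $2|\sin(\theta/2)|$. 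Since $\gamma^k=\Id$ the angles $\theta$ lie in $\tfrac{2\pi}{k}\mathbb{Z}$ and are nonzero modulo $2\pi$, so their distance to $2\pi\mathbb{Z}$ is at least $2\pi/k\geqq 2\pi/N$, whence $2|\sin(\theta/2)|\geqq 2\sin(\pi/N)$; as $2\geqq 2\sin(\pi/N)$ as well, decomposing $y$ into its components parallel and perpendicular to $\mathrm{Fix}(\gamma)$ gives the claim.

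With this, I would fix a closed subinterval $[a,b]\subset(s,r)$ with $a>0$ and use the elementary estimate that the normalised surface measure, in the sphere of radius $\rho$, of the $\delta$-neighbourhood of a proper linear subspace of $\Rd$ is $O(\delta/\rho)$, to pick $\delta>0$ (depending only on $d,N,a$) so small that for every $\rho\in[a,b]$ and every family of at most $N-1$ proper subspaces, the union of their $\delta$-neighbourhoods does not cover the sphere of radius $\rho$. Set $\e=\sin(\pi/N)\,\delta>0$. Enumerate $\mathfrak{G}=\{G_n\}$ and choose pairwise distinct $\rho_n\in[a,b]$; for each $n$, since $W_{G_n}$ is a union of at most $N-1$ proper subspaces, there is $y_{G_n}$ on the sphere of radius $\rho_n$ with $\mathrm{dist}(y_{G_n},W_{G_n})\geqq\delta$. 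Then $y_{G_n}\in B(0,r)^{\circ}\setminus B(0,s)$ with the $\|y_{G_n}\|=\rho_n$ pairwise distinct (condition~2), and for $\gamma\in G_n\setminus\{e\}$, which is orthogonal of order dividing $\card G_n\leqq N$ and $\neq\Id$, using $\mathrm{Fix}(\gamma)\subset W_{G_n}$,
\begin{align*}
  \rho(y_{G_n},\gamma y_{G_n})\geqq 2\sin(\pi/N)\,\mathrm{dist}(y_{G_n},\mathrm{Fix}(\gamma))\geqq 2\sin(\pi/N)\,\mathrm{dist}(y_{G_n},W_{G_n})\geqq 2\sin(\pi/N)\,\delta>\e,
\end{align*}
giving condition~1.

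The main obstacle is exactly the uniformity of $\e$ over the possibly infinite family $\mathfrak{G}$: a single $G$ trivially yields its own positive threshold, but without a bound these could degenerate to $0$. Both sources of uniformity above genuinely use $\max_G\card G<\infty$ — once through the lower bound $2\sin(\pi/N)$ on how far a nontrivial element of bounded order must move a vector off its fixed subspace, and once in making a single $\delta$ work simultaneously for all of the countably many sets $W_G$. (One may replace the measure estimate by a compactness argument on the relevant Grassmannians if preferred; the rest of the argument is unchanged.)
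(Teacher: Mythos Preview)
Your proof is correct and follows the same overall architecture as the paper's: show that for each nontrivial $\gamma\in\Od$ of order at most $N$ the set of points moved by at most $\e$ lies in a thin neighbourhood of the proper subspace $\mathrm{Fix}(\gamma)$, use a measure argument to conclude that the union of at most $N-1$ such sets cannot cover the relevant region, and then enumerate $\mathfrak{G}$ and pick points with pairwise distinct norms.

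The technical ingredients differ. For the distance estimate, the paper uses a barycenter trick: if $x\in S_{A,\e,r}$ and $A$ has order $k<m$, then the average $y=\tfrac1k\sum_{j=0}^{k-1}A^jx$ lies in $S_{A,0,r}$ and $\rho(x,y)\leqq\tfrac{m}{2}\e$, whence $S_{A,\e,r}\subset S_{A,0,r}+B(0,\tfrac{m}{2}\e)$. You instead invoke the real normal form of orthogonal matrices to get the sharper explicit inequality $\|\gamma y-y\|\geqq 2\sin(\pi/N)\,\mathrm{dist}(y,\mathrm{Fix}(\gamma))$. For the measure step, the paper works with Lebesgue measure on the full annulus $B(0,r)^\circ\setminus B(0,s)$ and avoids previously used norms a posteriori (each sphere has Lebesgue measure zero), whereas you fix the radii $\rho_n$ in advance and work with surface measure on each sphere, which makes condition~2 automatic. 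Your route is slightly more quantitative (you obtain an explicit $\e$ in terms of $N$ and $\delta$), while the paper's averaging argument is a bit softer and avoids the spectral decomposition; either way the uniform bound $\max_G\card G<\infty$ enters exactly where you identify it.
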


To prove Lemma \ref{assumption_existence_y_G},
we prepare the following notation.

\begin{nota}
      For any $A\in\Od$, $r>0$ and $\e\geqq 0$, set
       \begin{align*}
	     S_{A,\e,r}=\{x\in B(0,r)\mid \rho(Ax,x)\leqq\e\}.
       \end{align*}
\end{nota}

We prove two lemmas beforehand to prove Lemma \ref{assumption_existence_y_G}.
\begin{lem}
      If the order of an element $A\in\Od$ is less than an integer $m$, then
      $S_{A,\e,r}\subset S_{A,0,r}+B(0,\frac{m}{2}\e)$.
\end{lem}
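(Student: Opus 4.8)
The natural approach is \emph{averaging over the orbit of the cyclic group $\langle A\rangle$}. Let $k$ be the order of $A$; by hypothesis $k$ is a positive integer with $k\leqq m-1$, and $A^k=\Id$. Given $x\in S_{A,\e,r}$, so that $x\in B(0,r)$ and $\rho(Ax,x)\leqq\e$, I would set
\begin{align*}
     y=\frac{1}{k}\sum_{j=0}^{k-1}A^j x.
\end{align*}
The first step is to check $y\in S_{A,0,r}$. Since $A^k x=x$, we get $Ay=\frac{1}{k}\sum_{j=0}^{k-1}A^{j+1}x=\frac{1}{k}\sum_{j=1}^{k}A^j x=y$, so $y$ is fixed by $A$; and $y$ is a convex combination of the points $A^0x,\dots,A^{k-1}x$, each of which lies in $B(0,r)$ because each $A^j$ is orthogonal, hence fixes $0$ and is an isometry, so $\rho(A^j x,0)=\rho(x,0)\leqq r$. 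As $B(0,r)$ is convex, $y\in B(0,r)$, and therefore $y\in S_{A,0,r}$.

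The remaining task is the estimate $\rho(x,y)\leqq\frac{m}{2}\e$, which follows from a telescoping argument. Viewing points of $\Rd$ as vectors, $x-y=\frac{1}{k}\sum_{j=0}^{k-1}(x-A^j x)$, so the triangle inequality gives $\rho(x,y)\leqq\frac{1}{k}\sum_{j=0}^{k-1}\rho(x,A^j x)$. For each $j$, applying the triangle inequality along the chain $x,Ax,\dots,A^j x$ yields $\rho(x,A^j x)\leqq\sum_{i=0}^{j-1}\rho(A^i x,A^{i+1}x)$, and since each $A^i$ is an isometry fixing $0$, $\rho(A^i x,A^{i+1}x)=\rho(x,Ax)=\rho(Ax,x)\leqq\e$; hence $\rho(x,A^j x)\leqq j\e$. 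Summing, $\rho(x,y)\leqq\frac{1}{k}\sum_{j=0}^{k-1}j\e=\frac{k-1}{2}\e\leqq\frac{m}{2}\e$, using $k\leqq m-1$. Thus $x=y+(x-y)\in S_{A,0,r}+B(0,\frac{m}{2}\e)$, which is the claimed inclusion.

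There is no serious obstacle here; the telescoping estimate is the only real computation, and it is routine. The degenerate cases are harmless: if $A=\Id$ (order $1$) then $S_{A,\e,r}=S_{A,0,r}=B(0,r)$ and the inclusion is immediate, and if $\e=0$ the averaging simply returns $y=x$, since $Ax=x$ forces $A^j x=x$ for all $j$. I would also note that the finiteness of the order (the hypothesis ``less than $m$'') is precisely what makes the orbit average well defined and fixed by $A$, ruling out elements of infinite order such as irrational rotations.
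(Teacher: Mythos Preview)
Your proof is correct and follows essentially the same approach as the paper: average over the cyclic orbit to produce a fixed point $y=\frac{1}{k}\sum_{j=0}^{k-1}A^jx$, use convexity of $B(0,r)$ to get $y\in S_{A,0,r}$, and then telescope to bound $\rho(x,y)\leqq\frac{1}{k}\sum_{j=0}^{k-1}j\e\leqq\frac{m}{2}\e$. Your write-up is in fact slightly more explicit than the paper's (you verify $Ay=y$ and discuss the degenerate cases), but there is no substantive difference.
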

\begin{proof}
       Take an element $x\in S_{A,\e,r}$. Let $k$ be the order of $A$.
       Set $y=\frac{1}{k}\sum_{j=0}^{k-1}A^j x$.
      By convexity of $B(0,r)$, $y$ is in $B(0,r)$, and so
      $y\in S_{A,0,r}$. Moreover,
      \begin{align*}
            \rho(x,y)&=\|\frac{1}{k}\sum(A^jx-x)\|\\
                    &\leqq\frac{1}{k}\sum_{j=0}^{k-1}\sum_{i=0}^{j-1}\|A^ix-A^{i+1}x\|\\
                   &\leqq\frac{1}{k}\sum_{j=0}^{k-1}j\e\\
                  &\leqq\frac{m}{2}\e.
      \end{align*}
\end{proof}

\begin{lem}
       Let $m$ be a positive integer and $r$ be a positive real number.
       We have $\lim_{\e\rightarrow 0}\mu(S_{A,\e,r})=0$ uniformly for all $A\in\Od\setminus\{e\}$
 such that
       the order of $A$ is less than $m$.
\end{lem}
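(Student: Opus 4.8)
The plan is to reduce, via the preceding lemma, to a single uniform volume estimate for tubular neighborhoods of proper linear subspaces. First I would fix $A\in\Od\setminus\{e\}$ of order less than $m$ and note that, since $A\neq e$, its fixed-point set $\mathrm{Fix}(A)=\{x\in\Rd\mid Ax=x\}$ is a linear subspace of dimension at most $d-1$, and $S_{A,0,r}=\mathrm{Fix}(A)\cap B(0,r)$. The preceding lemma gives $S_{A,\e,r}\subset S_{A,0,r}+B(0,\tfrac{m}{2}\e)$, so it suffices to bound $\mu\bigl((V\cap B(0,r))+B(0,\delta)\bigr)$ with $\delta=\tfrac{m}{2}\e$, \emph{uniformly} over all linear subspaces $V\subset\Rd$ with $\dim V\leqq d-1$.

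Next I would establish this uniform bound by a direct computation. Splitting $\Rd=V\oplus V^{\perp}$ and writing $x=(x',x'')$ accordingly, any point of $(V\cap B(0,r))+B(0,\delta)$ satisfies $|x''|\leqq\delta$ and $|x'|\leqq r+\delta$, so the set is contained in the product of a $(\dim V)$-dimensional ball of radius $r+\delta$ and a $(d-\dim V)$-dimensional ball of radius $\delta$. Hence its volume is at most $\omega_{\dim V}(r+\delta)^{\dim V}\,\omega_{d-\dim V}\,\delta^{\,d-\dim V}$, where $\omega_j$ is the volume of the unit ball in $\mathbb{R}^j$. Using $d-\dim V\geqq 1$ to replace $\delta^{\,d-\dim V}$ by $\delta$ when $\delta\leqq 1$, bounding $(r+\delta)^{\dim V}\leqq(r+1)^{d-1}$, and crudely replacing the two ball constants by $C_d=(\max_{0\leqq j\leqq d}\omega_j)^2$, I obtain $\mu(S_{A,\e,r})\leqq C_d(r+1)^{d-1}\tfrac{m}{2}\e$ for all $\e\leqq 2/m$ and all admissible $A$. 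This right-hand side is independent of $A$ and tends to $0$ as $\e\to 0$, which is exactly the asserted uniform convergence.

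I do not expect a genuine obstacle here; the only thing to keep in mind is that $\dim\mathrm{Fix}(A)$ can be as large as $d-1$ (e.g. for reflections), so one cannot hope for a rate faster than $O(\e)$ — but $O(\e)\to 0$ is all that is required. The essential point making the argument work is that the tube-volume estimate depends on $V$ only through its dimension, so passing to the worst case over $0\leqq\dim V\leqq d-1$ yields a bound with no dependence on $A$.
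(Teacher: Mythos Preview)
Your proof is correct and follows essentially the same approach as the paper: both reduce via the preceding lemma to a uniform tube-volume estimate around a proper linear subspace, which the paper simply asserts while you supply the explicit product-of-balls bound yielding the $O(\e)$ rate. The only cosmetic difference is that the paper embeds $\mathrm{Fix}(A)$ into a fixed $(d-1)$-dimensional subspace $V_A$ rather than working with its actual dimension, but this is immaterial.
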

\begin{proof}
       For each such $A$ there is a $d-1$ dimensional vector subspace $V_A$ of $\Rd$ such that
       $S_{A,0,r}\subset V_A\cap B(0,r)$, and so 
       $S_{A,\e,r}\subset (V_A\cap B(0,r))+B(0,\frac{m}{2}\e)$.
       For any $d-1$ dimensional vector subspace $V$ of $\Rd$,
       the limit $\lim_{\e\rightarrow 0}\mu((V\cap B(0,r))+B(0,\frac{m}{2}\e))$ converges uniformly
      to $0$.
\end{proof}

\begin{proof}[Proof of Lemma \ref{assumption_existence_y_G}.]
      If $\e$ is small enough, for any $A\in\Od\setminus\{e\}$ of  order at most $m$,
      $m\mu(S_{A,\e,r})<\mu(B(0,r)^{\circ}\setminus B(0,s))$.
      This implies that $B(0,r)^{\circ}\setminus B(0,s)$ is not included in 
      $\bigcup_{A\in G,A\neq e}S_{A,\e,r}$ for any $G\in\mathfrak{G}$.
      To take each $y_G$, we enumerate $\mathfrak{G}$ as $\mathfrak{G}=\{G_1,G_2,\ldots\}$.
       First take 
       $y_{G_1}\in B(0,r)^{\circ}\setminus (B(0,s)\cup\bigcup_{A\in G_1,A\neq e}S_{A,\e,r})$.
       If we have taken $y_{G_1}, y_{G_2},\ldots, y_{G_{n-1}}$, we can take
       $y_{G_n}\in B(0,r)^{\circ}\setminus (B(0,s)\cup\bigcup_{A\in G_n,A\neq e}S_{A,\e,r})$
       such that $\|y_{G_n}\|\neq \|y_{G_j}\|$ for each $j=1,2,\ldots, n-1$.
       In this way, we can take $y_{G_1},y_{G_2},\ldots$ with the desired condition.
\end{proof}

We finish this subsection by proving two lemmas, which we will use to prove Theorem
\ref{translation_thm}.
     For each $j=1,2,\ldots, d$, let $e_j\in\Rd$ be the vector
      of which $i$th component is $0$ for $i\neq j$ and $j$th component
     is $1$.

\begin{lem}\label{assumption_existence_F}
        For any $r>0$ there is a subset $F\subset B(0,r)$ such that
        \begin{itemize}
	 \item  $1<\card F<\infty$, and
         \item $\Sym_{\Gamma}F=\{e\}$.
	\end{itemize}
\end{lem}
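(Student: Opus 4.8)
The plan is to exhibit an explicit finite set $F\subset B(0,r)$ with two properties: (i) $F$ affinely spans $\Rd$, and (ii) the distances $\rho(x,c)$, $x\in F$, from the points of $F$ to the centroid $c=\frac{1}{\card F}\sum_{x\in F}x$ are pairwise distinct. Granting such an $F$, here is why $\Sym_{\Gamma}F=\{e\}$. Since $\Gamma\subset\Ed$, any $\gamma\in\Sym_{\Gamma}F$ is an affine isometry of $\Rd$ with $\gamma F=F$; being affine it commutes with affine combinations, so it fixes the centroid, $\gamma c=c$; hence $\rho(\gamma x,c)=\rho(\gamma x,\gamma c)=\rho(x,c)$ for every $x\in F$, and by (ii) this forces $\gamma x=x$ for all $x\in F$; finally, an affine isometry fixing a set of points that affinely spans $\Rd$ must be the identity, so $\gamma=e$. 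This one argument simultaneously disposes of translations (already impossible on a finite nonempty set), rotations and reflections, with no case distinction; it is the conceptual core of the proof.

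It remains to produce $F$. For $d\geqq 2$ I would take
\[
   F=\Bigl\{0,\ \tfrac{r}{d+1}e_1,\ \tfrac{2r}{d+1}e_2,\ \dots,\ \tfrac{dr}{d+1}e_d\Bigr\},
\]
so that $\card F=d+1\geqq 3$ and $F\subset B(0,r)^{\circ}\subset B(0,r)$; the differences $\tfrac{jr}{d+1}e_j$ from the first point form a basis of $\Rd$, which gives (i). Writing $\alpha_j=\tfrac{jr}{d+1}$, a direct computation shows the squared distances $\rho(x,c)^2$, $x\in F$, are $\frac{1}{(d+1)^2}\sum_j\alpha_j^2$ and $\frac{1}{(d+1)^2}\bigl((d^2-1)\alpha_i^2+\sum_j\alpha_j^2\bigr)$ for $i=1,\dots,d$; since $d^2-1>0$ and the $\alpha_j$ are distinct positive numbers, these $d+1$ values are pairwise distinct, which gives (ii).

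The one genuine subtlety — and the only place a separate argument is needed — is $d=1$: a two-point set always has its two points equidistant from their midpoint, so (ii) fails whenever $\card F=2$, and one must use at least three points. Here I would simply take $F=\{0,\,r/3,\,r/2\}\subset B(0,r)$, which spans $\R$ affinely, has centroid $c=5r/18$ and distances $5r/18,\ r/18,\ 4r/18$ to $c$ — pairwise distinct — so the argument of the first paragraph applies and $\Sym_{\Gamma}F=\{e\}$, while $1<\card F=3<\infty$. (One could also phrase the whole existence step as a genericity statement: for $n=d+2$, the set of $n$-tuples in $(B(0,r)^{\circ})^n$ violating (i) or (ii) lies in a proper real-algebraic subset, so a generic choice works; the explicit sets above are merely convenient witnesses.) Thus the only real obstacle is spotting the low-dimensional degeneracy; everything else is a routine check.
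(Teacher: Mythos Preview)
Your proof is correct and close in spirit to the paper's: both exhibit an explicit set of the form $\{0,\alpha_1 e_1,\dots,\alpha_d e_d\}$ with distinct scalars and argue that any isometry of $F$ must fix every point. The paper singles out $0$ directly (it is the unique point whose distances to all the others lie in a common small range), then uses that the $\alpha_j$ are distinct to pin down each $\alpha_j e_j$; you instead observe that the centroid is automatically fixed and use pairwise distinct distances to the centroid. The two arguments are essentially interchangeable.

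One point worth noting: your separate treatment of $d=1$ is not just cautious, it is necessary. The paper's set $F=\{0,r_1 e_1,\dots,r_d e_d\}$ has only two points when $d=1$, and then the reflection about the midpoint lies in $\Sym_{\Gamma}F$ whenever $\Gamma$ contains $\mbox{O}(1)$; so the paper's construction (as written) does not cover that case. Your three-point set $\{0,r/3,r/2\}$ closes this gap.
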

\begin{proof}
        Take for each $j=1,2,\ldots, d$ a positive number $r_j>0$.
        Set $F=\{0,r_1e_1,r_2e_2,\ldots ,r_de_d\}$.
        If any two $r_j$'s are different but all close to $1$, then
        $0$ is the only vector in $F$ such that the distances with any other vectors
        are close to $1$.
        Thus if $\gamma\in\Gamma$ and $\gamma F=F$, then $\gamma 0=0$.
        Since  $r_j$'s are all different,$\gamma r_je_j=r_je_j$ for each $j$,
        and since $\{r_1e_1,\ldots ,r_de_d\}$ is a basis for $\Rd$, $\gamma$ must be $e$.        
\end{proof}

\begin{lem}\label{assumption_size_symmetry_group}
      For any $r>0$ and $R>0$ there are $R'>0$ and $C_1>0$ such that, if
      $x\in\Rd$ and
      $D$ is a Delone set of $\Rd$ which is $R$-relatively dense
      and $r$-uniformly
      discrete, then
      \begin{align}
             \card(\Sym_{\Gamma_x}D\cap B(x,R'))<C_1.\label{eq_assum5_translation_thm}
      \end{align}
\end{lem}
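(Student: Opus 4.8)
The plan is to exploit that an element of $\Sym_{\Gamma_x}(D\cap B(x,R'))$ is an isometry of $\Rd$ fixing $x$ which permutes the finite set $D\cap B(x,R')$: once $R'$ is chosen large enough for this finite set to ``pin down'' such an isometry, the symmetry group injects into a symmetric group on a set whose cardinality is controlled by $r$ and $R'$ alone. So neither $R'$ nor $C_1$ will depend on $x$ or $D$.

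First I would fix the radius. Set $L=(\sqrt d+1)R$ and $R'=L+R=(\sqrt d+2)R$. The point of this choice is that for \emph{every} $x\in\Rd$ and \emph{every} $R$-relatively dense Delone set $D$, the set $D\cap B(x,R')$ contains $d$ points $z_1,\dots,z_d$ with $z_1-x,\dots,z_d-x$ linearly independent. Indeed, applying $R$-relative denseness at the points $x+Le_j$, $j=1,\dots,d$, produces $z_j\in D\cap B(x+Le_j,R)^{\circ}$, so $z_j-x=Le_j+w_j$ with $\|w_j\|<R$; since the error matrix $[\,w_1\mid\dots\mid w_d\,]$ has operator norm at most $\sqrt d\,R<L$, the matrix $LI_d+[\,w_1\mid\dots\mid w_d\,]$ is invertible, i.e.\ the $z_j-x$ are linearly independent, and $0<L-R<\|z_j-x\|<L+R=R'$, so each $z_j$ lies in $D\cap B(x,R')$ and is distinct from $x$. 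Hence $\{x,z_1,\dots,z_d\}$ is an affinely independent subset of $\Rd$ contained in $\{x\}\cup(D\cap B(x,R'))$.

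Next I would bound the symmetry group. Any $\gamma\in\Sym_{\Gamma_x}(D\cap B(x,R'))$ is an isometry with $\gamma x=x$ that maps $D\cap B(x,R')$ onto itself, hence permutes this finite set, and restriction gives a group homomorphism $\Sym_{\Gamma_x}(D\cap B(x,R'))\to\mathfrak S(D\cap B(x,R'))$ into the symmetric group. An element of its kernel fixes $x$ and every $z_j$, hence fixes $d+1$ affinely independent points of $\Rd$ and is therefore the identity; so the homomorphism is injective and $\card\Sym_{\Gamma_x}(D\cap B(x,R'))\le\bigl(\card(D\cap B(x,R'))\bigr)!$. Finally a packing estimate using $r$-uniform discreteness finishes it: the balls $B(y,r/2)^{\circ}$ for $y\in D\cap B(x,R')$ are pairwise disjoint and lie in $B(x,R'+r/2)$, so comparing Lebesgue volumes gives $\card(D\cap B(x,R'))\le N$ for an explicit $N=N(r,R',d)$ such as $N=\bigl(1+2R'/r\bigr)^{d}$, and then $C_1:=N!+1$ works, so that \eqref{eq_assum5_translation_thm} holds for all admissible $x$ and $D$. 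The only step that needs genuine care is the uniform production of the ``general position'' points $z_1,\dots,z_d$ (the rest is routine bookkeeping); if one wants to weaken the hypotheses, note that the argument only uses that $D$ is $R$-relatively dense for the existence of the $z_j$ and that $D$ is $r$-uniformly discrete for the cardinality bound.
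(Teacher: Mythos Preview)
Your proof is correct and follows essentially the same approach as the paper: pick $d$ points of $D$ near $x+ce_j$ so that their translates by $-x$ form a basis, embed $\Sym_{\Gamma_x}(D\cap B(x,R'))$ into the symmetric group on $D\cap B(x,R')$, and bound the latter's cardinality by a packing argument. The only cosmetic differences are that you give explicit constants ($R'=(\sqrt d+2)R$, $N=(1+2R'/r)^d$) and phrase the linear-independence step via operator-norm invertibility, whereas the paper specifies $R'$ implicitly by a perturbation condition and uses $B(x,R'+r)$ rather than $B(x,R'+r/2)$ in the volume count.
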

\begin{proof}
         Take $R'>0$ large enough so that if $e'_1,e'_2,\ldots, e'_d\in\Rd$ and
         $\|e_j-e'_j\|<\frac{R}{R'-R}$ for each $j$, 
         then $\{e'_1,e'_2,\ldots, e'_d\}$ is linear independent.
         Set $C_1>k!$, where $k$ is an integer such that 
         $k>\frac{\mu(B(0,R'+r))}{\mu(B(0,\frac{r}{2}))}$.

        Take $(R,r)$-Delone set $D$ and $x\in \Rd$ arbitrarily.
        For each $j=1,2,\ldots, d$, there is $x_j\in D\cap B(x+(R'-R)e_j,R)$.
        Then for each $j$ we have $\|\frac{1}{R'-R}(x_j-x)-e_j\|<\frac{R}{R'-R}$
        and so the set of vectors $\{x_j-x\mid j=1,2,\ldots, d\}$ is a basis for $\Rd$.

        If  $\gamma\in\Gamma_x$ and $\gamma(y)=y$ for each $y\in D\cap B(x,R')$, then
        since $\gamma$ fixes $x,x_1,x_2,\ldots, x_d$,
        $\gamma=e$.
        Thus we have an embedding of $\Sym_{\Gamma_x}D\cap B(x,R')$ into
        the permutation group of the set $D\cap B(x,R')$.
        Since for any two distinct $y,z\in D\cap B(x,R')$ we have
        $B(y,r/2)\cap B(z,r/2)=\emptyset$,
        we see $\mu(B(0,r/2))\card D\cap B(x,R')\leqq\mu(B(0,R'+r))$.
        The order of the permutation group is less than $C_1$ which we took above.
        We thus see the inequality (\ref{eq_assum5_translation_thm}).
\end{proof}


\subsection{Proof of translation theorem}
\label{subsection_proof_translation_thm}

\begin{setting}
           \emph{In addition to Setting \ref{setting_of_main_thm}, in this subsection
   we assume $\Sigma$ is a supremum-closed subshift of $\Pi_2$ that
 contains sufficiently many symmetric building blocks, which means that
        for each $r>0$, there is a symmetric building block $\calP_r$ for $r$
        (Definition \ref{def_family_build_block}).}
\end{setting}

Here we prove Theorem \ref{translation_thm}, which answers the second question given in
Introduction.

\begin{thm}\label{translation_thm}
       Let $\calP$ be an abstract pattern in $\Pi_1$ which consists
       of bounded components (Definition \ref{def_bounded_coponents}) and
       is Delone-deriving (Definition \ref{def_Delone_deriving}).
Then there is an abstract pattern $\calS$ in $\Sigma$ such that 
       $\calP\MLD\calS$. Moreover, $\supp\calS$ is relatively dense in $\Rd$.
\end{thm}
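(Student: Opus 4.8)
The plan is to reduce the theorem to the ``plan representation'' of Subsection \ref{subsection_decomposition} and then to realise that plan inside $\Sigma$ by gluing building blocks. I would proceed in three stages: (i) decompose $\calP$ by means of a Delone set into a plan $(P_\lambda)_\lambda$ that is MLD with $\calP$; (ii) build, inside $\Sigma$, a family $\{\calG_\lambda\}_{\lambda\in\Lambda}$ of pairwise $\Gamma$-inequivalent symmetric building blocks, one for each component of $\calP$; (iii) assemble $\calS=\bigvee\{\gamma\calG_\lambda\mid\lambda\in\Lambda,\ \gamma\in P_\lambda\}$ and check that $\calS\in\Sigma$, that $\supp\calS$ is relatively dense, and that $\calS\MLD(P_\lambda)_\lambda$. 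For stage (i): since $\calP$ is Delone-deriving, fix a Delone set $D$ with $\calP\LD D$, say $R_D$-relatively dense and $r_D$-uniformly discrete, and a local-derivability radius $R_1$ for $\calP\LD D$ (uniform over recentred balls, using $\Rd\subset\Gamma$). I would pick $R_0$ large: larger than $R_\calP+R_D$, where $R_\calP$ is from Definition \ref{def_bounded_coponents}, and larger than $R_1+R'$, where $R'$ is the radius from Lemma \ref{assumption_size_symmetry_group} for $(R_D,r_D)$-Delone sets. Then $(D,R_0)$ decomposes $\calP$ in the sense of Definition \ref{def_D_R_decomposes_calP}: condition (1) is $\calP\LD D$; for (2), $\Xi=\{\calP\sci B(x,R_0)\mid x\in D\}$ is locally finite (as $D$ is) and pairwise compatible ($\calP$ being a common upper bound), so $\bigvee\Xi$ exists by glueability, lies below $\calP$, and has support $\supp\calP$ by Lemma \ref{lem_support_supremum}, the inclusion $\supp(\calP\sci B(x,R_0))\supset(\supp\calP)\cap B(x,R_0-R_\calP)$ (bounded components) and relative denseness of $D$, whence $\bigvee\Xi=\calP$; for (3), if $\gamma\in\Gamma_x$ fixes $\calP\sci B(x,R_0)$ then $(\gamma\calP)\sci B(x,R_0)=\calP\sci B(x,R_0)$, so local derivability gives $\gamma\bigl(D\cap B(x,R_0-R_1)\bigr)=D\cap B(x,R_0-R_1)$, and Lemma \ref{assumption_size_symmetry_group} bounds the cardinality uniformly in $x$. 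Taking a tuple of components $(\calP_\lambda)_{\lambda\in\Lambda}$ (Lemma \ref{lem_existence_components}) and its plan $(P_\lambda)_\lambda$ (Definition \ref{def_tuple_ingredients}) — with $\Lambda$ at most countable, as it injects into $D$ — Proposition \ref{prop_calP_MLD_Gamma_lambda} gives $\calP\MLD(P_\lambda)_\lambda$ inside $\prod_{\lambda}2^\Gamma$.

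For stage (ii), fix $r$ with $4r<r_D$ and enumerate $\Lambda=\{\lambda_1,\lambda_2,\dots\}$. Using that $\Sigma$ contains sufficiently many symmetric building blocks, I take a symmetric building block for a small radius and, using Lemmas \ref{assumption_existence_y_G} and \ref{assumption_existence_F}, choose for each $n$ an admissible digit whose marker positions form a $\Gamma$-rigid configuration inside $B(0,r)^\circ$ with a distinguishing feature (a finite set of radii from the centre) depending injectively on $n$; I set $\calG_{\lambda_n}$ to be the supremum of the corresponding translated building blocks, which exists and lies in $\Sigma$ by supremum-closedness. By Lemma \ref{lem_first_two_aixon_of_fam_build_block} the family $\{\calG_{\lambda_n}\}$ satisfies the first two axioms of a family of building blocks for $r$; the rigidity axiom (so that $\lambda\mapsto\calG_\lambda$ is injective and no translate or rotation of one gadget equals another) and the symmetry $\Sym_\Gamma\calG_{\lambda_n}=\Gamma_0$ I would check by hand from the choices made, as anticipated in the remark after Lemma \ref{lem_first_two_aixon_of_fam_build_block}.

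For stage (iii), put $\calS=\bigvee\{\gamma\calG_\lambda\mid\lambda\in\Lambda,\ \gamma\in P_\lambda\}$. For $x\in D$ there is, by Lemma \ref{lem_existence_components}, a unique $\lambda_x$ with $\calP\sci B(x,R_0)=\gamma\calP_{\lambda_x}$ and $\gamma 0=x$ for $\gamma\in P_{\lambda_x}$, and any two such $\gamma$ differ by an element of $\Sym_{\Gamma_0}\calP_{\lambda_x}\subset\Gamma_0=\Sym_\Gamma\calG_{\lambda_x}$, so $\gamma\calG_{\lambda_x}$ depends only on $x$; hence $\{\gamma\calG_\lambda\mid\lambda,\gamma\in P_\lambda\}$ is indexed by $D$, is locally finite, and is pairwise compatible (two gadgets at the same point of $D$ agree; otherwise their centres are more than $4r$ apart, so compatibility follows from the building-block axiom). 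Thus $\calS\in\Sigma$, and since each $\supp(\gamma\calG_\lambda)$ is nonempty and within $r$ of $\gamma 0\in D$, with $D$ relatively dense, $\supp\calS$ is relatively dense. It remains to prove $\calS\MLD(P_\lambda)_\lambda$: the implication $(P_\lambda)_\lambda\LD\calS$ reconstructs, from the plan read off a large ball, exactly which gadgets sit in a slightly smaller ball, and glueability of $\Pi_2$ then reconstructs $\calS$ there (mirroring Step 2 of the proof of Proposition \ref{prop_calP_MLD_Gamma_lambda}); the implication $\calS\LD(P_\lambda)_\lambda$ uses Lemma \ref{useful_lem_for_last_half_of_main_thm} (via Lemma \ref{useful_lem_for_building_block}) to match the gadgets of $\gamma\calS$ meeting a ball with those of $\eta\calS$ and the isometries relating them, and — since $\lambda$ and the relevant plan element are recoverable from a placed gadget because the $\calG_\lambda$ are rigid symmetric building blocks — recovers $(P_\lambda)_\lambda$ on the ball. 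Composing with $\calP\MLD(P_\lambda)_\lambda$ finishes the argument.

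The hard part will be stage (ii): producing, out of the bare existence of one symmetric building block per radius, an infinite yet locally finite family of symmetric building blocks that are pairwise inequivalent under the full group $\Gamma$, with supports shrinking into $B(0,r)$. This is exactly where the $\Od$-part of $\Gamma$ intervenes and why the combinatorial Lemmas \ref{assumption_existence_y_G}, \ref{assumption_existence_F} and the assembly Lemma \ref{lem_first_two_aixon_of_fam_build_block} are needed; the bookkeeping in stage (iii) that matches the $\Gamma_0$-ambiguity of the plan against $\Sym_\Gamma\calG_\lambda=\Gamma_0$ is routine, and condition (3) of Definition \ref{def_D_R_decomposes_calP} together with Lemma \ref{assumption_size_symmetry_group} is precisely what makes it go through.
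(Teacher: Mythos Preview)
Your stages (i) and (iii) match the paper's architecture, but stage (ii) contains a genuine gap that breaks stage (iii). You require $\Sym_\Gamma\calG_\lambda=\Gamma_0$ for every $\lambda$. With that choice, a placed gadget $\gamma\calG_\lambda$ determines only the coset $\gamma\Gamma_0$ and the label $\lambda$, not the $G_\lambda$-coset $\gamma G_\lambda$ (where $G_\lambda=\Sym_{\Gamma_0}\calP_\lambda$). But the plan satisfies $P_\lambda\cap\gamma\Gamma_0=\gamma G_\lambda$ for $\gamma\in P_\lambda$: it records, for each point of $D$, the \emph{orientation} of the component sitting there, up to its own symmetry $G_\lambda$. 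If some $G_\lambda$ is a proper subgroup of $\Gamma_0$ (e.g.\ $\calP_\lambda$ has no rotational symmetry while $\Gamma_0$ contains rotations), your $\calS$ forgets that orientation entirely, and the step ``the relevant plan element is recoverable from a placed gadget'' fails: from $(\gamma\calS)\sci B=(\eta\calS)\sci B$ you can only conclude $\eta^{-1}\gamma\xi\in\zeta\Gamma_0$ for matched gadgets, whereas membership in $\eta P_\lambda$ requires $\eta^{-1}\gamma\xi\in\zeta G_\lambda$. So $\calS\LD(P_\lambda)_\lambda$ does not follow.

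This is precisely why the paper does \emph{not} use symmetric building blocks for the $\lambda$-gadgets. It first manufactures, out of the given symmetric block $\calP_0$, an \emph{asymmetric} block $\calE$ with $\Sym_\Gamma\calE=\{e\}$ (by placing copies of $\calP_0$ on the rigid finite set $F$ from Lemma~\ref{assumption_existence_F}), and then sets $\calR_\lambda=\bigvee\{\calP_0\}\cup\{\gamma\gamma_{y_\lambda}\calE\mid\gamma\in G_\lambda\}$, using the points $y_\lambda$ from Lemma~\ref{assumption_existence_y_G} to ensure $\Sym_\Gamma\calR_\lambda=G_\lambda$ exactly (Lemma~\ref{lem_symmetry_Rlambda_Glambda}). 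With that calibration, $(P_\lambda)_\lambda$ is literally the plan of $\calS$ with respect to $(D,r_0,(\calR_\lambda)_\lambda)$, and Proposition~\ref{prop_calP_MLD_Gamma_lambda} applied a second time gives $\calS\MLD(P_\lambda)_\lambda$ directly. Your invocation of Lemma~\ref{assumption_existence_y_G} is the right instinct, but its role is to produce gadgets with symmetry $G_\lambda$, not $\Gamma_0$.
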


\begin{rem}
       This theorem holds if we replace $(\Rd,\Gamma)$ with a pair $(X,\Gamma)$ of a 
       proper metric space $X$
 and a group $\Gamma$ that acts on $X$ transitively as isometries such that
       inequality (\ref{eq_relation_rho_rhoGamma}), Lemma \ref{assumption_existence_y_G},
       Lemma \ref{assumption_existence_F} and Lemma \ref{assumption_size_symmetry_group}
       hold if we replace $2$ on the right-hand side of (\ref{eq_relation_rho_rhoGamma})
        with some positive number
       and $0\in\Rd$ in these assertions with some point in $X$.
\end{rem}

\begin{cor}\label{cor_MLD_with_Delone}
      Under the same assumption as in Theorem \ref{translation_thm} on $\calP$, there is
     a Delone set $D$ in $\Rd$ that is MLD
     with $\calP$.
\end{cor}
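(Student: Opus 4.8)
The plan is to apply Theorem~\ref{translation_thm} to a carefully chosen ambient subshift and then recognise its conclusion as a Delone set. Take $\Pi_2=\Sigma=\LF(\Rd)$, the abstract pattern space of all locally finite subsets of $\Rd$ (Example~\ref{example_LF(X)}), with the isometric $\Gamma$-action. First I would check that $\Sigma=\LF(\Rd)$, viewed as a subshift of itself, satisfies the hypotheses in force in Subsection~\ref{subsection_proof_translation_thm}: it is glueable (noted in Subsection~\ref{subsection_glueable_pat_sp}); being glueable, it is supremum-closed as a subshift of itself; and for every $r>0$ the singleton $\{0\}$ is a symmetric building block for $r$, since $\emptyset\neq\supp\{0\}=\{0\}\subset B(0,r)$, any two singletons are compatible in $\LF(\Rd)$ (the union of two finite sets is locally finite), $\gamma\{0\}=\{0\}$ forces $\gamma 0=0$, and $\Sym_{\Gamma}\{0\}=\Gamma_0$. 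Hence Theorem~\ref{translation_thm} applies to our $\calP$ --- which is assumed to consist of bounded components and to be Delone-deriving --- and produces $\calS\in\LF(\Rd)$ with $\calP\MLD\calS$ and with $\supp\calS=\calS$ relatively dense in $\Rd$.

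It remains to see that this $\calS$ is uniformly discrete; granting that, $\calS$ is a Delone set, and regarding $\calS$ instead as an element of $\UD(\Rd)$ (the defining data agree) it is $\MLD$ with its counterpart in $\LF(\Rd)$, hence with $\calP$, so $D:=\calS$ is the desired Delone set. To verify uniform discreteness I would unwind the construction of $\calS$ in Subsection~\ref{subsection_proof_translation_thm}: there $\calS$ is obtained as a supremum $\bigvee\Xi$ of a locally finite, pairwise compatible family $\Xi$ of $\Gamma$-translates of building blocks, where --- the ambient space now being $\LF(\Rd)$ --- each building block is a finite subset of a fixed ball $B(0,r)$, and the translating group elements carry $0$ into the points of a fixed Delone set $D'$ (the one arising from a pair $(D',R_0)$ decomposing $\calP$ in the sense of Definition~\ref{def_D_R_decomposes_calP}). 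Two uniform bounds then force $\calS$ to be uniformly discrete: within each building block the distance between distinct points is at least a single $\e_0>0$, which is precisely the uniformity asserted in Lemma~\ref{assumption_existence_y_G} and is available because the symmetry groups involved have uniformly bounded order (third clause of Definition~\ref{def_D_R_decomposes_calP}, Lemma~\ref{assumption_size_symmetry_group}); and distinct building-block copies lie in the disjoint balls $B(x,r)$, $x\in D'$, whose centres are at least $\rho_0$ apart, $\rho_0$ being the uniform-discreteness constant of $D'$, with $r$ taken small relative to $\rho_0$ in the construction. Hence any two distinct points of $\calS$ are at distance at least $\min(\e_0,\rho_0-2r)>0$, so $\calS\in\UD(\Rd)$.

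The main obstacle is this last step: the statement of Theorem~\ref{translation_thm} only delivers a locally finite $\calS$, and promoting local finiteness to uniform discreteness genuinely requires looking inside the proof and checking that the labelling building blocks there are finite point clusters with a common positive lower bound on their internal gaps placed at uniformly separated centres --- this is where the boundedness hypotheses on the symmetry groups and Lemma~\ref{assumption_existence_y_G} do their work. One could instead run the argument with $\Sigma=\Patch(\Rd)$ (also glueable, with small punctured cubes as symmetric building blocks) and then convert the resulting patch into a Delone set by selecting the punctures or by a Voronoi construction as in Proposition~\ref{prop_MLD_voronoi}; but recovering the patch from such a point set requires the tiles to be locally recognisable, which again holds only by construction, so no genuine simplification results and the $\LF(\Rd)$ route above is the most economical.
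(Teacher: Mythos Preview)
Your argument is correct, but the paper's route is much shorter. Instead of $\LF(\Rd)$, the paper takes $\Sigma=\UD_r(\Rd)$ for a fixed $r>0$; this space is glueable (noted in the remark following Example~\ref{ex_compatible_2^X}), and the singleton $\{0\}$ is a symmetric building block there (Example after Definition~\ref{def_family_build_block}). Theorem~\ref{translation_thm} then yields $\calS\in\UD_r(\Rd)$ with $\calP\MLD\calS$ and $\supp\calS=\calS$ relatively dense, so $\calS$ is Delone \emph{immediately}---uniform discreteness comes for free from the ambient space. Your choice of $\LF(\Rd)$ does have the minor advantage that $\{0\}$ is a building block for \emph{every} parameter $r'>0$ (in $\UD_r(\Rd)$ it is one only for $r'\geq r/4$, so implicitly the paper must take $r$ small relative to the constants produced in the proof of Theorem~\ref{translation_thm}); but the price you pay is having to open up the construction of $\calS$ and trace the separations $4r_2,\,3r_1,\,2r_0$ through Steps~1--3 to extract uniform discreteness by hand. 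The paper simply sidesteps that work by choosing $\Sigma$ so that the conclusion of the theorem already lands in a space of uniformly discrete sets.
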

\begin{proof}
      If $\Sigma=\UD_r(\Rd)$, the one-point set $\calP=\{0\}$ is a symmetric
      building block and so this $\Sigma$ satisfies the condition in
      Theorem \ref{translation_thm}.
\end{proof}
Note that if $\Gamma$ is bigger than $\Rd$, our ``MLD'' means ``S-MLD'' in \cite{MR1132337}.
\begin{rem}
       In Section \ref{section_application_translation_thm} we give  sufficient
       conditions for a subshift of functions to have sufficiently many
      symmetric building blocks.
      We will be able to apply Theorem \ref{translation_thm} when $\Sigma$ is
      a space of certain functions under a mild condition.
\end{rem}

The strategy of proof can be explained as follows.
We first prove that under a condition we can decompose
an abstract pattern $\calP$ as in Subsection \ref{subsection_decomposition}.
 We  decompose $\calP$ and replace the components
 (the tuple of components $(\calP_{\lambda})_{\lambda}$ as in Subsection
 \ref{subsection_decomposition})
with building blocks ($(\calR_{\lambda})_{\lambda}$ in page \pageref{construction_calR})
in another abstract pattern space. We then assemble such building
blocks $(\calR_{\lambda})_{\lambda}$
in the same way that $\calP$ is constructed from $(\calP_{\lambda})_{\lambda}$
(that is, with respect to the plan),
 to obtain an abstract pattern $\calS$, which is MLD with the original $\calP$.

      We now start the proof of Theorem \ref{translation_thm}.
         Let $\calP\in\Pi_1$ be an abstract pattern that consists of bounded components
          (Definition \ref{def_bounded_coponents}).
         Suppose $\calP$ is Delone-deriving, that is, 
         there is a Delone set $D$ in $\Rd$ such that $\calP\LD D$.
        We will use Proposition \ref{prop_calP_MLD_Gamma_lambda}, and so we first prove
	the following.
\begin{lem}\label{lem_thereis_R_that_decomposes}
         There exists $R_0>0$ such that $(D,R_0)$ decomposes $\calP$ (Definition 
         \ref{def_D_R_decomposes_calP}).
\end{lem}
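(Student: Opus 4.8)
The plan is to verify the three conditions in Definition \ref{def_D_R_decomposes_calP} for a suitable $R_0$, exploiting that $\calP$ consists of bounded components and that $\calP \LD D$. The first condition, $\calP \LD D$, holds by hypothesis, so no work is needed there; the real content lies in producing an $R_0$ large enough to witness conditions 2 and 3 simultaneously.

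First I would handle condition 2. Since $\calP$ consists of bounded components, fix $R_{\calP} > 0$ as in Definition \ref{def_bounded_coponents}. I claim $R_0 := 2R_{\calP}$ (or any number $\geqq R_{\calP}$ together with some slack, adjusted below) works, but we must also know the supremum $\bigvee\{\calP \sci B(x,R_0) \mid x \in D\}$ actually exists inside $\Pi_1$: this uses that $\Pi_1$ is glueable (Setting \ref{setting_of_main_thm}), once we check the family $\{\calP \sci B(x,R_0) \mid x \in D\}$ is locally finite and pairwise compatible. Pairwise compatibility is immediate since each $\calP \sci B(x,R_0) \leqq \calP$ by Lemma \ref{lem_order_pattern_space}, so $\calP$ is a common majorant; local finiteness follows because $D$ is uniformly discrete, so only finitely many balls $B(x,R_0)$, $x\in D$, meet any fixed bounded set, and $\calP \sci B(x,R_0) \sci B(z,s) = 0$ when $B(x,R_0) \cap B(z,s) = \emptyset$. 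Then the inequality $\bigvee\{\calP\sci B(x,R_0)\mid x\in D\} \leqq \calP$ is clear, and the reverse is a support computation: by Lemma \ref{lem_support_supremum} the support of the supremum is $\overline{\bigcup_{x\in D}\supp(\calP\sci B(x,R_0))}$, and by Definition \ref{def_bounded_coponents} together with the relative denseness of $D$ (every $w \in \supp\calP$ lies within $R_{\calP}$ of some point realizing the bounded-components property, and within distance $\leqq R$ of some $x \in D$ where $R$ is the relative denseness constant) one shows every point of $\supp\calP$ lies in some $\supp(\calP\sci B(x,R_0))$, provided $R_0 \geqq R + R_{\calP}$. Enlarging $R_0$ to $R_0 \geqq R + R_{\calP}$ takes care of condition 2, since if the supports agree and the supremum majorizes nothing larger than $\calP$, equality follows by cutting both sides off by the common support.

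Next, condition 3: $\sup_{x\in D}\card(\Sym_{\Gamma_x}(\calP\sci B(x,R_0)))$ must be finite. Here I would use Lemma \ref{assumption_size_symmetry_group} applied to the Delone set $D$ (which is $R$-relatively dense and $r$-uniformly discrete for the constants $R,r$ associated to $D$): it yields $R' > 0$ and $C_1 > 0$ with $\card(\Sym_{\Gamma_x}D \cap B(x,R')) < C_1$ for all $x$. The point is that any $\gamma \in \Sym_{\Gamma_x}(\calP \sci B(x,R_0))$ must in particular permute $D \cap B(x, R_0 - \text{(something)})$ once we know $D$ is locally derivable from $\calP$ with a derivation radius $R_1$ — more precisely, $\gamma(\calP\sci B(x,R_0)) = \calP\sci B(x,R_0)$ forces $(\gamma\calP)\sci B(x,R_0) = \calP \sci B(x,R_0)$, hence by $\calP \LD D$ (with constant $R_1$) we get $(\gamma D)\cap B(x,R_0 - R_1) = D \cap B(x, R_0-R_1)$, i.e. $\gamma$ restricts to a symmetry of $D$ near $x$. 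So if we additionally require $R_0 \geqq R' + R_1$, then $\Sym_{\Gamma_x}(\calP\sci B(x,R_0))$ embeds into $\Sym_{\Gamma_x}(D\cap B(x,R'))$, whose cardinality is bounded by $C_1$ uniformly in $x$. Thus taking $R_0 := \max\{R + R_{\calP},\ R' + R_1\}$ satisfies all three conditions, and the lemma follows.

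The main obstacle I anticipate is the bookkeeping in condition 3 — correctly tracking how a symmetry of the cut-off pattern $\calP\sci B(x,R_0)$ induces a symmetry of $D$ on a slightly smaller ball, which requires care about the interaction between the local-derivability radius $R_1$ for $\calP \LD D$ and the radius $R'$ from Lemma \ref{assumption_size_symmetry_group}; one must be sure the embedding $\Sym_{\Gamma_x}(\calP\sci B(x,R_0)) \hookrightarrow \Sym(D \cap B(x,R'))$ is genuinely injective, i.e. that a nontrivial symmetry of the cut-off pattern cannot act trivially on the nearby Delone points, which is exactly where we use $\calP \LD D$ in the reverse direction (a symmetry fixing $D$ near $x$ need not fix $\calP$, so the injectivity has to come from $D \LD$ ... wait — here I would instead argue via the derivation $\calP \LD D$ alone, noting the map $\gamma \mapsto \gamma|_{D\cap B(x,R')}$ need not be injective, so in fact one bounds $\card \Sym_{\Gamma_x}(\calP\sci B(x,R_0))$ by $\card\Gamma_x$ times the permutation bound, and $\card\Gamma_x$ is itself bounded since $\Gamma_x$ is a compact subgroup of $\Od$ acting on the finite point configuration — this is the delicate point and I would devote the most attention to getting it right).
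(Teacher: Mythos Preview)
Your approach is essentially the paper's: the same choice of $R_0$ (up to inessential constants), the same verification of local finiteness and pairwise compatibility, the same support argument for condition 2, and the same use of Lemma \ref{assumption_size_symmetry_group} together with $\calP\LD D$ for condition 3.

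The worry you flag at the end is unfounded, and resolving it removes the only weak spot in your write-up. Once you have shown that every $\gamma\in\Sym_{\Gamma_x}(\calP\sci B(x,R_0))$ satisfies $\gamma(D\cap B(x,R'))=D\cap B(x,R')$, you have established a \emph{containment of subgroups of $\Gamma_x$}:
\[
\Sym_{\Gamma_x}(\calP\sci B(x,R_0))\ \subset\ \Sym_{\Gamma_x}(D\cap B(x,R')).
\]
This is literally a subset inclusion inside $\Gamma_x$, so it is tautologically injective; there is no restriction map whose kernel needs to be analyzed. Lemma \ref{assumption_size_symmetry_group} already bounds $\card\Sym_{\Gamma_x}(D\cap B(x,R'))$ by $C_1$ directly, so you are done --- you never need to pass to the abstract permutation group $\Sym(D\cap B(x,R'))$ at all, and hence never need to argue that a nontrivial isometry fixing $x$ cannot fix all nearby points of $D$. (That argument is what the \emph{proof} of Lemma \ref{assumption_size_symmetry_group} does internally, but you may take the lemma as a black box.) Delete the final hedge and your proof is complete and matches the paper's.
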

\begin{proof}
        The set $D$ is Delone so that it is $R_D$-relatively dense for a positive $R_D>0$
        and $r_D$-uniformly discrete for some $r_D>0$.
        For these $R_D$ and $r_D$, there are $R'$ and $C_1$ as in Lemma
        \ref{assumption_size_symmetry_group}.
        The abstract pattern $\calP$ consists of bounded components so that there is
        $R_{\calP}$ as in Definition \ref{def_bounded_coponents}.
        Since $D$ is locally derivable from $\calP$, there is a constant $R_{LD}>0$
        for a point $x_0=y_0=0$ as in 1. of Lemme \ref{lem_local_derivability}.
        Take $R_0>R_D+R_{\calP}+R_{LD}+R'$.

         The first condition of Definition \ref{def_D_R_decomposes_calP} is satisfied
         by the assumption.

        \emph{The Second Condition of Definition \ref{def_D_R_decomposes_calP}.}
         First, we show $\{\calP\sci B(x,R_0)\mid x\in D\}$ is locally finite and 
         pairwise compatible.
        For each $x\in\Rd$ and $r>0$, we have an inclusion
        \begin{align*}
	       \{y\in D\mid B(y,R_0)\cap B(x,r)\neq\emptyset\}\subset D\cap B(x,R_0+r)
	\end{align*}
         and the latter is finite. 
         Hence $\{\calP\sci B(y,R_0)\sci B(x,r)\mid y\in D\}$ is finite, since
         it is a zero element
         except for finitely many $y's$ and by Lemma \ref{uniqueness_zero_element},
         zero element is unique.
        On the other hand, pairwise-compatibility is clear since for each $\calP\sci B(x,R_0)$,
        $\calP$ is a majorant.

        Since $\Pi_1$ is glueable, there is the supremum 
         $\calQ=\bigvee\{\calP\sci B(x,R_0)\mid x\in D\}$.
        On one hand, we see 
        by Lemma \ref{lem_support_supremum} that
        $\supp \calQ=\overline{\bigcup_{x\in D}\supp(\calP\sci B(x,R_0))}
        \subset \supp\calP$;
         on the other hand, if $y\in\supp\calP$, then
         \begin{align*}
	        y\in\supp(\calP\sci B(y,R_{\calP}))\subset\supp(\calP\sci B(x,R_0))
                    \subset\supp\calQ
	 \end{align*}
         for some $x\in D$, and so $\supp\calP\subset\supp\calQ$.
          Therefore $\supp\calP=\supp\calQ$.
          Since $\calP\geqq\calP\sci B(x,R_0)$ for each $x\in D$ and $\calQ$ is the supremum
         of such abstract patterns, we have $\calP\geqq\calQ$.
          Thus $\calP=\calP\sci\supp\calP=\calP\sci\supp\calQ=\calQ$ by the definition of 
           order $\geqq$ (Definition \ref{def_order_pattern_space}).

          \emph{The Third Condition of Definition \ref{def_D_R_decomposes_calP}.}
          For each $x\in D$, take $\gamma\in\Sym_{\Gamma_x}\calP\sci B(x,R_0)$.
          Then
          \begin{align*}
	       (\gamma\calP)\sci B(x,R_0)=\gamma(\calP\sci B(x,R_0))=\calP\sci B(x,R_0),
	  \end{align*}
          and since $\calP\LD D$ with respect to the constant $R_{LD}$,
          we have
          \begin{align*}
	       \gamma(D\cap B(x,R'))=(\gamma D)\cap B(x,R')=D\cap B(x,R').
	  \end{align*}
           This means that $\gamma\in\Sym_{\Gamma_x}D\cap B(x,R')$.
           By definition of $C_1$, 
           $\card\Sym_{\Gamma_x}\calP\sci B(x,R_0)\leqq\card\Sym_{\Gamma_x}D\cap B(x,R')<C_1$.
\end{proof}


By this Lemma \ref{lem_thereis_R_that_decomposes}, there is $R_0>0$ such that
$(D,R_0)$ decomposes $\calP$, and so we can find tuple of components and plan, as follows.

By Lemma \ref{lem_existence_components}, there is a set $\Lambda$ and
a 
 tuple of components $(\calP_{\lambda})_{\lambda\in\Lambda}$. Let $(C_{\lambda})_{\lambda\in\Lambda}$
be the plan for $\calP$ with respect to $(D,R_0,(\calP_{\lambda})_{\lambda})$.
Then we have the following:
\begin{itemize}
          \item      $\Lambda$ is a set which is 
		       at most countable.
          \item Since each $\calP_{\lambda}$ is a copy of an
                abstract pattern of the form $\calP\sci B(x,R_0)$
		($x\in D$)
                by an element $\gamma\in\Gamma$ such 
               that $\gamma x=0$, by Definition \ref{def_D_R_decomposes_calP} we have the following:
                 $G_{\lambda}=\Sym_{\Gamma_{0}}\calP_{\lambda}$ 
		is a finite group, for each $\lambda\in\Lambda$, and 
		$\max_{\lambda}\card G_{\lambda}<\infty$.
	\item For each $\lambda\in\Lambda$, 
	      $C_{\lambda}$ 
	      is a subset of $\Gamma$ such that
               \begin{align}
		    C_{\lambda}G_{\lambda}= C_{\lambda}.
		     \label{eq_ClambdaGlambda=Clambda}
	       \end{align}
         \item  $D$ is a Delone set such that 
		\begin{align}
		  D=\{\gamma 0\mid \lambda\in\Lambda, \gamma\in C_{\lambda}\}.
		       \label{eq_D_and_Clambda} 
		\end{align}
         \item There is $r_0>0$ such that, 
               \begin{align}
	  \text{ if $\lambda,\mu\in\Lambda, \gamma\in C_{\lambda}$,
	       $\eta\in C_{\mu}$ and $\rho(\gamma 0,\eta 0)\leqq 4r_0$, then
		  $\gamma 0=\eta 0$,}\nonumber\\
	      \text{and so $\lambda=\mu$ and $\gamma^{-1}\eta\in G_{\lambda}$.}
		               \label{condition_C_lambda_admissible_digit}	 
	       \end{align} 
\end{itemize} 

By Proposition \ref{prop_calP_MLD_Gamma_lambda}, we have
$\calP\MLD(C_{\lambda})_{\lambda}$. To prove $\calS\MLD\calP$ for some $\calS\in\Sigma$,
we construct an abstract pattern $\calS$ in $\Sigma$ such that 
$\calS\MLD (C_{\lambda})_{\lambda}$.
It consists of three steps.

\noindent\underline{Step 1: construction of $\calE$.}

As described in the beginning of this section, we will construct a family of
building blocks $(\calR_{\lambda})_{\lambda}$. In order to construct this, we first
construct a building block $\calE$, from which each $\calR_{\lambda}$ is constructed.

By Lemma \ref{assumption_existence_y_G}, there are 
$y_{\lambda}\in B(0,\frac{3}{4}r_0)\setminus B(0,\frac{1}{2}r_0)$ for each 
$\lambda\in\Lambda$ and $r_1\in (0,\frac{1}{8}r_0)$ such that
\begin{itemize}
 \item $\inf\{\rho(\gamma y_{\lambda},y_{\lambda})\mid \lambda\in\Lambda, 
       \gamma\in G_{\lambda}\setminus\{e\}\}>4r_1>0$, and
\item if $\lambda,\mu$ are two distinct elements of $\Lambda$, then we have
       $\rho(0,y_{\lambda})\neq\rho(0,y_{\mu})$.
\end{itemize}

By Lemma \ref{assumption_existence_F}, there are $F\subset B(0,\frac{1}{2}r_1)$
and $r_2\in (0,\frac{1}{4}r_1)$ such that
\begin{itemize}
 \item If $x,y\in F$ and $x\neq y$, then $\rho(x,y)>4r_2$,
\item $\Sym_{\Gamma}F=\{e\}$, and
\item $\infty>\card F>1$.
\end{itemize}

Take $\gamma_x\in\Gamma$, for each $x\in \Rd$, such that $\gamma_x0=x$.

\begin{nota}
       Let $\calP_0$ be a symmetric building block of $\Sigma$ for $r_2$.
        (Its existence is assumed in Setting \ref{setting_of_main_thm}.)
       Set $\calE=\bigvee\{\gamma_x\calP_0\mid x\in F\}$.
\end{nota}

\begin{rem}
        Since points of $F$ are separated by the distance $4r_2$,
        by the definition of building block the set $\{\gamma_x\calP_0\mid x\in F\}$
       is pairwise compatible. Since it is a finite set, it is locally finite.
       Its supremum exists.
\end{rem}         

If $\Sigma$ is the set of all $r$-uniformly discrete sets in $\Rd$, this $\calE$ is
nothing but $F$ itself, which has trivial symmetry.
In general cases we can also prove that $\calE$ has trivial symmetry:

\begin{lem}\label{lem_trivial_symmetry_calE}
       $\Sym_{\Gamma}\calE=\{e\}$.
\end{lem}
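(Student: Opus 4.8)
The plan is to reduce $\Sym_\Gamma\calE=\{e\}$ to the already-established fact $\Sym_\Gamma F=\{e\}$, by showing that every $\gamma\in\Sym_\Gamma\calE$ maps the finite point set $F$ onto itself. The engine for this will be Lemma~\ref{useful_lem_for_last_half_of_main_thm}(3), which is exactly designed to compare two sprinklings of well-separated ``local pieces'' that glue to the same abstract pattern.

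First I would record the elementary setup. The family $\{\gamma_x\calP_0\mid x\in F\}$ is finite, hence locally finite, and pairwise compatible: for distinct $x,x'\in F$ we have $\rho(\gamma_x0,\gamma_{x'}0)=\rho(x,x')>4r_2$, and $\calP_0$ is a building block for $r_2$, so $\gamma_x\calP_0$ and $\gamma_{x'}\calP_0$ are compatible; hence the supremum $\calE=\bigvee\{\gamma_x\calP_0\mid x\in F\}$ exists (as noted after its definition), and each piece $\gamma_x\calP_0$ has nonempty support contained in $\gamma_xB(0,r_2)=B(x,r_2)$. Now fix $\gamma\in\Sym_\Gamma\calE$. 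Since $\gamma$ is an isometry, by Lemma~\ref{lem_sup_action_commute}
\[
  \calE=\gamma\calE=\gamma\bigvee\{\gamma_x\calP_0\mid x\in F\}=\bigvee\{\gamma\gamma_x\calP_0\mid x\in F\},
\]
and $\{\gamma\gamma_x\calP_0\mid x\in F\}$ is again finite and pairwise compatible, with $\gamma\gamma_x\calP_0$ having nonempty support contained in $B(\gamma x,r_2)$, while $\gamma F$ is $4r_2$-separated just like $F$.

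Next I would apply Lemma~\ref{useful_lem_for_last_half_of_main_thm} with $r=r_2$, $F_1=F$, $\calP_x^1=\gamma_x\calP_0$, $F_2=\gamma F$, and $\calP_{\gamma x}^2=\gamma\gamma_x\calP_0$ (well defined since $\gamma$ is injective on $F$); all hypotheses hold, and $\calQ^1=\calE=\gamma\calE=\calQ^2$. Part (3) then yields, for each $x\in F$, a unique $y\in\gamma F$ — write $y=\gamma x''$ with $x''\in F$ — such that $\gamma_x\calP_0=\gamma\gamma_{x''}\calP_0$. Hence $(\gamma\gamma_{x''})^{-1}\gamma_x\in\Sym_\Gamma\calP_0=\Gamma_0$, so it fixes $0$, giving $x=\gamma_x0=\gamma\gamma_{x''}0=\gamma x''=y$. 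Thus $x\in\gamma F$ for every $x\in F$, i.e. $F\subset\gamma F$; since $\card F=\card\gamma F<\infty$, this forces $\gamma F=F$, so $\gamma\in\Sym_\Gamma F=\{e\}$ and $\gamma=e$.

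The only genuinely delicate points are expressing $\gamma\calE$ correctly as $\bigvee\{\gamma\gamma_x\calP_0\mid x\in F\}$ (this is precisely Lemma~\ref{lem_sup_action_commute}) and observing that the symmetry hypothesis $\Sym_\Gamma\calP_0=\Gamma_0$ on the building block is exactly what upgrades ``the local pieces agree'' to ``their centres agree''; everything else is bookkeeping with supports and the cutting-off axioms. The main conceptual step — and the one I would want to get right — is recognizing that Lemma~\ref{useful_lem_for_last_half_of_main_thm}(3) applies verbatim once $\calE$ is written as a supremum of well-separated copies of $\calP_0$.
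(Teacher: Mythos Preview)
Your proof is correct and follows essentially the same line as the paper's. The only cosmetic difference is that the paper invokes Lemma~\ref{useful_lem_for_building_block} (which is itself proved by a direct appeal to Lemma~\ref{useful_lem_for_last_half_of_main_thm}) and then uses the third axiom of a building block to deduce $\gamma\gamma_x 0=\gamma_y 0$, whereas you go straight to Lemma~\ref{useful_lem_for_last_half_of_main_thm}(3) and use the symmetric building block condition $\Sym_\Gamma\calP_0=\Gamma_0$ for the same conclusion; the underlying argument is identical.
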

\begin{proof}
         Take $\gamma\in\Gamma$ such that $\gamma\calE=\calE$.
         Since $\gamma\calE=\bigvee\{\gamma\gamma_x\calP_0\mid x\in F\}$,
         by Lemma \ref{useful_lem_for_building_block},
         for each $x\in F$ there is $y\in F$ such that 
         $\gamma\gamma_x\calP_0=\gamma_y\calP_0$.
         By the definition of building block (Definition \ref{def_family_build_block}), 
we have $\gamma\gamma_x0=\gamma_y0$
         and $\gamma x=y$.
         This implies that $\gamma F\subset F$ and $\gamma F=F$,
         which implies that $\gamma=e$.
\end{proof}

We use $\{\calP_0,\calE\}$ to construct $\calR_{\lambda}$'s.  To this aim we need to show
the following:
\begin{lem}\label{lem_calP_calE_family_building_block}
        The set $\{\calP_0,\calE\}$ is a family of building blocks of $\Sigma$
        for $r_1$.
\end{lem}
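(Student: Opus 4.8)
The plan is to check the three conditions of Definition~\ref{def_family_build_block} for $\frakF:=\{\calP_0,\calE\}$ with respect to the radius $r_1$.

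For the first two conditions I would invoke Lemma~\ref{lem_first_two_aixon_of_fam_build_block}. Since $\calP_0$ is a (symmetric) building block for $r_2$, it belongs to some family of building blocks for $r_2$, so by the remark following Definition~\ref{def_family_build_block} the singleton $\{\calP_0\}$ is itself a family of building blocks of $\Sigma$ for $r_2$; note $r_1>2r_2$ because $r_2<\frac{1}{4}r_1$. I would apply Lemma~\ref{lem_first_two_aixon_of_fam_build_block} to this family with $r'=r_1$, index set $\Lambda=\{1,2\}$, and the admissible digits $\mathcal{D}^1_{\calP_0}=\{\gamma_x\mid x\in F\}$ and $\mathcal{D}^2_{\calP_0}=\{e\}$. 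The digit $\mathcal{D}^1$ is admissible because distinct $x,y\in F$ satisfy $\rho(\gamma_x0,\gamma_y0)=\rho(x,y)>4r_2$, and $\mathcal{D}^2$ is admissible trivially; hypothesis~(1) of the lemma holds since both digits are non-empty, and hypothesis~(2) holds since $\rho(0,\gamma_x0)=\|x\|\leqq\frac{1}{2}r_1<r_1-2r_2$ for $x\in F$ (using $r_2<\frac{1}{4}r_1$) and $\rho(0,e\cdot0)=0<r_1-2r_2$. Because $\bigvee\{\gamma_x\calP_0\mid x\in F\}=\calE$ and $\bigvee\{\calP_0\}=\calP_0$, the lemma yields that $\{\calP_0,\calE\}$ satisfies conditions~1 and~2 of Definition~\ref{def_family_build_block} for $r_1$. (Condition~1 can also be seen directly: $\supp\calP_0\subset B(0,r_2)\subset B(0,r_1)$, and by Lemma~\ref{lem_support_supremum} $\supp\calE=\overline{\bigcup_{x\in F}\gamma_x\supp\calP_0}$ is non-empty and contained in $\bigcup_{x\in F}B(x,r_2)\subset B(0,\frac{3}{4}r_1)\subset B(0,r_1)$.)

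It then remains to verify condition~3: if $\calP,\calQ\in\{\calP_0,\calE\}$, $\gamma\in\Gamma$ and $\gamma\calP=\calQ$, then $\calP=\calQ$ and $\gamma0=0$. If $\calP=\calQ=\calP_0$, then $\gamma\in\Sym_{\Gamma}\calP_0=\Gamma_0$ since $\calP_0$ is a symmetric building block, hence $\gamma0=0$. If $\calP=\calQ=\calE$, then $\gamma=e$ by Lemma~\ref{lem_trivial_symmetry_calE}, hence $\gamma0=0$. The crossed cases $\gamma\calP_0=\calE$ and $\gamma\calE=\calP_0$ cannot occur: choosing a point $p\in\supp\calP_0$ and two distinct points $x,y\in F$ (possible because $\card F>1$), we have $\gamma_xp,\gamma_yp\in\supp\calE$ with $\gamma_xp\in B(x,r_2)$, $\gamma_yp\in B(y,r_2)$ and $\rho(x,y)>4r_2$, so $\diam\supp\calE\geqq\rho(\gamma_xp,\gamma_yp)>2r_2$; on the other hand $\supp(\gamma\calP_0)=\gamma\supp\calP_0\subset B(\gamma0,r_2)$ has diameter at most $2r_2$, so $\supp(\gamma\calP_0)\neq\supp\calE$ and therefore $\gamma\calP_0\neq\calE$; applying this with $\gamma^{-1}$ in place of $\gamma$ rules out $\gamma\calE=\calP_0$ as well.

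The only genuine work is in the crossed cases of condition~3, i.e.\ in showing that $\calE$ is never an isometric image of $\calP_0$; this is exactly where the hypotheses $\card F>1$ and $r_2<\frac{1}{4}r_1$ are used --- the former makes $\supp\calE$ too spread out to sit in a ball of radius $r_2$, while the latter keeps $\supp\calE$ inside $B(0,r_1)$. Conditions~1 and~2 are a routine application of Lemma~\ref{lem_first_two_aixon_of_fam_build_block}.
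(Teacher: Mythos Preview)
Your proof is correct and, for conditions 1 and 2 and the non-crossed cases of condition 3, essentially identical to the paper's. The one place you diverge is in ruling out $\gamma\calP_0=\calE$: the paper invokes Lemma~\ref{useful_lem_for_building_block} to deduce that $\gamma_x\calP_0=\gamma\calP_0$ for every $x\in F$, hence $x=\gamma 0$ for every $x\in F$, contradicting $\card F>1$; you instead compare diameters of supports directly, using that $\supp\calE$ contains points in $B(x,r_2)$ and $B(y,r_2)$ for distinct $x,y\in F$ with $\rho(x,y)>4r_2$, so $\diam\supp\calE>2r_2\geqq\diam\supp(\gamma\calP_0)$. Your route is slightly more elementary (it avoids the abstract matching lemma), while the paper's route illustrates the intended use of Lemma~\ref{useful_lem_for_building_block} and generalizes more cleanly when the building blocks are not so easily distinguished by support size alone. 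Either argument is fine here.
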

\begin{proof}
        We apply Lemma \ref{lem_first_two_aixon_of_fam_build_block}.
        The sets $\{e\}$ and $\{\gamma_x\mid x\in F\}$ play the role of 
        admissible digits. If $x\in F$, then
        \begin{align*}
	      \rho(\gamma_x 0,0)=\rho(x,0)\leqq\frac{1}{2}r_1<r_1-2r_2,
	\end{align*}
        and so by Lemma \ref{lem_first_two_aixon_of_fam_build_block} the first
        two axioms for family of building blocks are satisfied.

        Since $\calP_0$
 is a building block, we have $\Sym_{\Gamma}\calP_0\subset\Gamma_{0}$.
        Moreover, $\Sym_{\Gamma}\calE=\{e\}\subset\Gamma_{0}$.
        Finally,
        we never have $\gamma\calP_0=\calE$ for any $\gamma\in\Gamma$. If this holds
        we have, by Lemma \ref{useful_lem_for_building_block},
        $\gamma_x\calP_0=\gamma\calP_0$
 for any $x\in F$, and this implies $x=\gamma_x0=\gamma 0$
         for each $x\in F$.
         This contradicts the fact that $\card F>1$.
\end{proof}

\noindent\underline{Step 2: construction of $\calR_{\lambda}$.}
\label{construction_calR}
For each $\lambda\in\Lambda$, set
\begin{align*}
       \calR_{\lambda}=\bigvee\{\calP_0\}\cup\{\gamma\gamma_{y_{\lambda}}\calE\mid\gamma\in G_{\lambda}\}.
\end{align*}

We use $\calR_{\lambda}$'s to construct $\calS$. To this aim we need to show the
following:
\begin{lem}
       The set $\{\calR_{\lambda}\mid\lambda\in\Lambda\}$ is a family of building blocks for $r_0$.
\end{lem}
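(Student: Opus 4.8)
The plan is to check the three axioms of Definition \ref{def_family_build_block} for $\frakF=\{\calR_\lambda\mid\lambda\in\Lambda\}$ with radius $r_0$, obtaining the first two from Lemma \ref{lem_first_two_aixon_of_fam_build_block} and the third by a direct rigidity argument.

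First I would set up the hypotheses of Lemma \ref{lem_first_two_aixon_of_fam_build_block} with $\frakF=\{\calP_0,\calE\}$ (a family of building blocks for $r_1$ by Lemma \ref{lem_calP_calE_family_building_block}), with $r=r_1$, $r'=r_0$ (note $r_0>2r_1$ since $r_1<\tfrac18 r_0$), and for each $\lambda\in\Lambda$ the tuple $\mathcal{D}_{\calP_0}^{\lambda}=\{e\}$, $\mathcal{D}_{\calE}^{\lambda}=\{\gamma\gamma_{y_\lambda}\mid\gamma\in G_\lambda\}$. I would verify that each such tuple is an admissible digit for $\{\calP_0,\calE\}$ and $r_1$: the $\calP_0$-with-$\calP_0$ case is trivial; the mixed case is vacuous because $\rho(0,\gamma\gamma_{y_\lambda}0)=\rho(0,y_\lambda)\geqq\tfrac12 r_0>4r_1$; and in the $\calE$-with-$\calE$ case $\rho(\gamma_1\gamma_{y_\lambda}0,\gamma_2\gamma_{y_\lambda}0)=\rho(\gamma_2^{-1}\gamma_1 y_\lambda,y_\lambda)\leqq 4r_1$ forces $\gamma_1=\gamma_2$ by the separation property $\inf\{\rho(\delta y_\lambda,y_\lambda)\mid\delta\in G_\lambda\setminus\{e\}\}>4r_1$. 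Condition (1) of the lemma holds since $\{e\}\neq\emptyset$, and condition (2) holds since for $\calP_0$ the relevant element is $e$ and for $\calE$ one has $\rho(0,\gamma\gamma_{y_\lambda}0)=\rho(0,y_\lambda)\leqq\tfrac34 r_0<r_0-2r_1$ (again using $r_1<\tfrac18 r_0$). Since $\calR_\lambda=\bigvee\{\xi\calP\mid\calP\in\{\calP_0,\calE\},\ \xi\in\mathcal{D}_\calP^{\lambda}\}$ (and this supremum exists and lies in $\Sigma$, because the set is locally finite and pairwise compatible by Lemma \ref{lem_admissible_digit} and $\Sigma$ is supremum-closed), Lemma \ref{lem_first_two_aixon_of_fam_build_block} gives that $\{\calR_\lambda\mid\lambda\in\Lambda\}$ satisfies the first two axioms for $r_0$.

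For the third axiom I would argue directly, as anticipated in the remark following Lemma \ref{lem_first_two_aixon_of_fam_build_block}. Suppose $\gamma\calR_\lambda=\calR_\mu$ for some $\gamma\in\Gamma$. Writing $\gamma\calR_\lambda=\bigvee\{\xi\calP\mid\calP\in\{\calP_0,\calE\},\ \xi\in\gamma\mathcal{D}_\calP^{\lambda}\}$ and noting that $\gamma\mathcal{D}^{\lambda}$ is again an admissible digit (applying the isometry $\gamma$ preserves the defining distances and equalities), I would apply Lemma \ref{useful_lem_for_building_block} to the two admissible digits $\gamma\mathcal{D}^{\lambda}$ and $\mathcal{D}^{\mu}$, whose unions are finite. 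Taking $\calP=\calP_0$ and the unique element $\gamma\in\gamma\mathcal{D}_{\calP_0}^{\lambda}$ gives $\eta\in\mathcal{D}_{\calP_0}^{\mu}=\{e\}$ with $\gamma\calP_0=\calP_0$; since $\calP_0$ is a \emph{symmetric} building block, $\Sym_\Gamma\calP_0=\Gamma_0$, hence $\gamma 0=0$. Taking $\calP=\calE$ and $\xi=\gamma\gamma_{y_\lambda}\in\gamma\mathcal{D}_{\calE}^{\lambda}$ gives $\eta'\in G_\mu$ with $\gamma\gamma_{y_\lambda}\calE=\eta'\gamma_{y_\mu}\calE$; since $\Sym_\Gamma\calE=\{e\}$ by Lemma \ref{lem_trivial_symmetry_calE}, this forces $\gamma\gamma_{y_\lambda}=\eta'\gamma_{y_\mu}$, whence $\gamma y_\lambda=\eta' y_\mu$. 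As $\gamma\in\Gamma_0$ and $\eta'\in G_\mu\subset\Gamma_0$, we get $\rho(0,y_\lambda)=\rho(0,\gamma y_\lambda)=\rho(0,\eta' y_\mu)=\rho(0,y_\mu)$, and the choice of the $y_\lambda$'s (distinct indices give distinct radii) forces $\lambda=\mu$, so $\calR_\lambda=\calR_\mu$; together with $\gamma 0=0$ this is exactly the third axiom.

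The only step needing genuine care is this third axiom: Lemma \ref{lem_first_two_aixon_of_fam_build_block} deliberately does not supply it. The mechanism is that $\calR_\lambda$ encodes $\lambda$ in the radius $\rho(0,y_\lambda)$ at which the copies of $\calE$ are glued in, inside the shell $B(0,\tfrac34 r_0)\setminus B(0,\tfrac12 r_0)$, so any isometry $\calR_\lambda\to\calR_\mu$ must match $\calP_0$ to $\calP_0$ (pinning $\gamma 0=0$) and $\calE$ to $\calE$ (pinning the radius, hence $\lambda=\mu$). Everything else is bookkeeping with the nesting of constants $r_2<\tfrac14 r_1$, $r_1<\tfrac18 r_0$ together with the separation and trivial-symmetry facts established earlier in this subsection.
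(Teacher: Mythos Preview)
Your proof is correct and follows essentially the same approach as the paper: you invoke Lemma \ref{lem_first_two_aixon_of_fam_build_block} for the first two axioms with the same admissible digits $\{e\}$ and $\{\gamma\gamma_{y_\lambda}\mid\gamma\in G_\lambda\}$, and for the third axiom you use Lemma \ref{useful_lem_for_building_block} to first pin $\gamma 0=0$ via $\calP_0$ and then force $\lambda=\mu$ via the radius $\rho(0,y_\lambda)$ encoded by the $\calE$-copies. The paper's argument is the same, only slightly more terse in its bookkeeping.
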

\begin{proof}
        Since $\gamma 0=0$, we have for each $\gamma\in G_{\lambda}$,
          \begin{align*}
	          \rho(\gamma\gamma_{y_{\lambda}}0,0)=\rho(0,y_{\lambda})>\frac{1}{2}r_0>4r_1,
	  \end{align*}
         and by definition of $y_{\lambda}$'s, for each distinct 
         $\gamma,\eta\in G_{\lambda}$,
         \begin{align*}
	         \rho(\gamma\gamma_{y_{\lambda}}0,\eta\gamma_{y_{\lambda}}0)=
                         \rho(\eta^{-1}\gamma y_{\lambda},y_{\lambda})>4r_1
	 \end{align*}
          we see the pair of $\{e\}$ and $\{\gamma\gamma_{y_{\lambda}}\mid\gamma\in G_{\lambda}\}$
          forms an admissible digit for each $\lambda\in\Lambda$.
          Moreover, by
          \begin{align*}
	          \rho(\gamma\gamma_{y_{\lambda}}0,0)=\rho(0,y_{\lambda})
	                                                \leqq \frac{3}{4}r_0<r_0-2r_1
	  \end{align*}
           we see, by Lemma \ref{lem_first_two_aixon_of_fam_build_block}, 
           the first two axioms
           for the building block are satisfied.

          Suppose $\lambda,\mu
           \in\Lambda$, $\gamma_0\in\Gamma$ and $\gamma_0\calR_{\lambda}=\calR_{\mu}$.
          By Lemma \ref{useful_lem_for_building_block}, we have $\gamma_0\calP_0=\calP_0$
       and so $\gamma_0 0=0$.
          Again by Lemma \ref{useful_lem_for_building_block},
          there is $\gamma\in G_{\mu}$ such that
          $\gamma_0\gamma_{y_{\lambda}}\calE=\gamma\gamma_{y_{\mu}}\calE$, and so
          by Lemma \ref{lem_trivial_symmetry_calE}, 
          $\gamma_0\gamma_{y_{\lambda}}=\gamma\gamma_{y_{\mu}}$.
          This implies that (since $\gamma_0$ and $\gamma$ fix $0$)
            \begin{align*}
	     \rho(0,y_{\lambda})=\rho(0,\gamma_0\gamma_{y_{\lambda}}0)
         =\rho(0,\gamma\gamma_{y_{\mu}}0)
                  =\rho(0,y_{\mu})
	    \end{align*}
            and so $\lambda=\mu$.
\end{proof}

We need the fact that $\calR_{\lambda}$ has the same symmetry group as $\calP_{\lambda}$:
\begin{lem}\label{lem_symmetry_Rlambda_Glambda}
        $\Sym_{\Gamma}\calR_{\lambda}=G_{\lambda}$ for each $\lambda$.
\end{lem}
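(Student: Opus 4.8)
The plan is to prove the two inclusions $G_\lambda\subseteq\Sym_\Gamma\calR_\lambda$ and $\Sym_\Gamma\calR_\lambda\subseteq G_\lambda$ separately, using throughout the presentation
\begin{align*}
  \calR_\lambda=\bigvee\bigl(\{\calP_0\}\cup\{\gamma\gamma_{y_\lambda}\calE\mid\gamma\in G_\lambda\}\bigr)
\end{align*}
together with the fact---recorded in the proof that $\{\calR_\mu\mid\mu\in\Lambda\}$ is a family of building blocks for $r_0$---that the pair $\bigl(\{e\},\{\gamma\gamma_{y_\lambda}\mid\gamma\in G_\lambda\}\bigr)$ is an admissible digit for the family of building blocks $\{\calP_0,\calE\}$ for $r_1$ (Lemma \ref{lem_calP_calE_family_building_block}). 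By Lemma \ref{lem_admissible_digit} the set being glued is then locally finite and pairwise compatible, so Lemma \ref{lem_sup_action_commute} lets me push $\Gamma$-actions through this supremum.

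The inclusion $G_\lambda\subseteq\Sym_\Gamma\calR_\lambda$ is the easy half. Given $\gamma\in G_\lambda$, Lemma \ref{lem_sup_action_commute} gives $\gamma\calR_\lambda=\bigvee\bigl(\{\gamma\calP_0\}\cup\{\gamma\delta\gamma_{y_\lambda}\calE\mid\delta\in G_\lambda\}\bigr)$. Since $\calP_0$ is a \emph{symmetric} building block we have $\gamma\in G_\lambda\subseteq\Gamma_0=\Sym_\Gamma\calP_0$, so $\gamma\calP_0=\calP_0$; and since $G_\lambda$ is a group, $\gamma G_\lambda=G_\lambda$, so $\{\gamma\delta\gamma_{y_\lambda}\calE\mid\delta\in G_\lambda\}=\{\delta\gamma_{y_\lambda}\calE\mid\delta\in G_\lambda\}$. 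Hence $\gamma\calR_\lambda=\calR_\lambda$.

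For the reverse inclusion, take $\gamma_0\in\Sym_\Gamma\calR_\lambda$. Because $\{\calR_\mu\mid\mu\in\Lambda\}$ is a family of building blocks for $r_0$, its third axiom applied to the pair $(\calR_\lambda,\calR_\lambda)$ gives $\gamma_0 0=0$, whence $\gamma_0\in\Gamma_0=\Sym_\Gamma\calP_0$ and $\gamma_0\calP_0=\calP_0$. Then Lemma \ref{lem_sup_action_commute} yields
\begin{align*}
  \calR_\lambda=\gamma_0\calR_\lambda=\bigvee\bigl(\{\calP_0\}\cup\{\gamma_0\delta\gamma_{y_\lambda}\calE\mid\delta\in G_\lambda\}\bigr),
\end{align*}
which exhibits $\calR_\lambda$ as a supremum over a second tuple $\bigl(\{e\},\{\gamma_0\delta\gamma_{y_\lambda}\mid\delta\in G_\lambda\}\bigr)$; I would check this is again an admissible digit for $\{\calP_0,\calE\}$ (a short computation using $\gamma_0,G_\lambda\subseteq\Gamma_0$, the estimate $\rho(0,y_\lambda)>\tfrac{1}{2}r_0>4r_1$, and the separation $\rho(\delta^{-1}\delta' y_\lambda,y_\lambda)>4r_1$ for distinct $\delta,\delta'\in G_\lambda$---exactly the inequalities built into the choices of $r_1$ and the $y_\lambda$). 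Applying Lemma \ref{useful_lem_for_building_block} to these two admissible digits---both have finite union and the common supremum $\calR_\lambda$---with $\calP=\calE$ and the digit element $\gamma_{y_\lambda}$, I obtain $\delta\in G_\lambda$ with $\gamma_{y_\lambda}\calE=\gamma_0\delta\gamma_{y_\lambda}\calE$. Since $\Sym_\Gamma\calE=\{e\}$ (Lemma \ref{lem_trivial_symmetry_calE}), this forces $\gamma_{y_\lambda}=\gamma_0\delta\gamma_{y_\lambda}$, i.e.\ $\gamma_0=\delta^{-1}\in G_\lambda$.

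The only step I expect to need genuine care is this last inclusion: recognizing $\gamma_0\calR_\lambda$ as a supremum over a second admissible digit and verifying that digit's admissibility, so that the rigidity lemma (Lemma \ref{useful_lem_for_building_block}) can be used to match the $\calE$-pieces. After that the conclusion drops out of the trivial symmetry of $\calE$, and everything else is routine bookkeeping with Lemma \ref{lem_sup_action_commute} and the separation constants fixed earlier in the construction.
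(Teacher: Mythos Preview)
Your proof is correct and follows essentially the same route as the paper: both inclusions are handled via Lemma~\ref{lem_sup_action_commute}, Lemma~\ref{useful_lem_for_building_block}, and the triviality of $\Sym_\Gamma\calE$ (Lemma~\ref{lem_trivial_symmetry_calE}). The only difference is that you first invoke the third building-block axiom for $\{\calR_\mu\}$ to get $\gamma_0 0=0$ before applying Lemma~\ref{useful_lem_for_building_block}, whereas the paper skips this step---it is in fact unnecessary, since left-translating an admissible digit by any $\gamma_0\in\Gamma$ again yields an admissible digit (the defining condition only involves distances $\rho(\gamma 0,\eta 0)$, which are isometry-invariant), so Lemma~\ref{useful_lem_for_building_block} applies directly to the digits $(\{e\},\{\gamma\gamma_{y_\lambda}\})$ and $(\{\gamma_0\},\{\gamma_0\gamma\gamma_{y_\lambda}\})$.
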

\begin{proof}
       Take $\gamma_0\in\Sym_{\Gamma}\calR_{\lambda}$.
       By Lemma \ref{useful_lem_for_building_block}, there is $\gamma\in G_{\lambda}$
       such that $\gamma_0\gamma_{y_{\lambda}}\calE=\gamma\gamma_{y_{\lambda}}\calE$
       and so by Lemma \ref{lem_trivial_symmetry_calE} we have $\gamma_0=\gamma\in G_{\lambda}$.

       On the other hand, if $\gamma_0\in G_{\lambda}$, then
       \begin{align*}
       	   \gamma_0\calR_{\lambda}=&\bigvee\{\gamma_0\calP_0\}\cup
             \{\gamma_0\gamma\gamma_{y_{\lambda}}\calE\mid\gamma\in G_{\lambda}\}\\
              =&\bigvee\{\calP_0\}\cup
              \{\gamma\gamma_{y_{\lambda}}\calE\mid\gamma\in G_{\lambda}\}\\
             =&\calR_{\lambda},
       \end{align*}
       since $\calP_0$ is a symmetric building block.
\end{proof}

\noindent\underline{Step 3: Construction of $\calS$ and its property.}

Here we construct $\calS\in\Sigma$ by using $\calR_{\lambda}$'s in the exactly
same way as $\calP$ is constructed from $\calP_{\lambda}$'s, that is,
with respect to the plan $(C_{\lambda})_{\lambda}$.
We show $\calS$ and $(C_{\lambda})_{\lambda}$ are MLD (Theorem \ref{thm_calS_MLD_Clambda})
by using Proposition \ref{prop_calP_MLD_Gamma_lambda}, and this shows that
$\calP$ and $\calS$ are MLD since $\calP$ and $(C_{\lambda})_{\lambda}$ are also MLD
by again using Proposition \ref{prop_calP_MLD_Gamma_lambda}.

Define
\begin{align*}
        \calS=\bigvee\{\gamma\calR_{\lambda}\mid\lambda\in\Lambda, \gamma\in C_{\lambda}\}.
\end{align*}
by (\ref{condition_C_lambda_admissible_digit}), $(C_{\lambda})_{\lambda}$ is an admissible digit 
for $(\calR_{\lambda})_{\lambda\in\Lambda}$ and so $\calS$ is well-defined.
We first prove $(D,r_0)$ decomposes $\calS$ (Lemma \ref{lem_D_r_0_dec_calS}), and then
prove $(C_{\lambda})_{\lambda\in\Lambda}$ is the plan
(Lemma \ref{lem_Clambda_plan_for_calS}). By these we can use Proposition \ref{prop_calP_MLD_Gamma_lambda}.

In the following two lemmas we check the conditions in Definition \ref{def_D_R_decomposes_calP}.
\begin{lem}\label{lem_calS_LD_D}
      $\calS\LD D$.
\end{lem}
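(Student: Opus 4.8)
The plan is to verify condition~1 of Lemma~\ref{lem_local_derivability} for $\calS\LD D$ directly, with $x_0=y_0=0$ and radius constant $R_0'=3r_0$. By symmetry it then suffices to show: if $\gamma,\eta\in\Gamma$, $R\geqq 0$ and $(\gamma\calS)\sci B(0,R+3r_0)=(\eta\calS)\sci B(0,R+3r_0)$, then every $z\in(\gamma D)\cap B(0,R)$ lies in $\eta D$.

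First I would isolate a ``local structure'' statement. Combining the admissibility of $(C_\lambda)_\lambda$ for the family of building blocks $(\calR_\lambda)_{\lambda\in\Lambda}$ for $r_0$ (recorded in \eqref{condition_C_lambda_admissible_digit}) with the third axiom of Definition~\ref{def_family_build_block}: (a)~if $\zeta\in C_\mu$, $\zeta'\in C_\nu$ and $\rho(\zeta 0,\zeta' 0)\leqq 4r_0$, then $\zeta\calR_\mu=\zeta'\calR_\nu$ and $\zeta 0=\zeta' 0$; hence for any $\gamma\in\Gamma$ the points of $\gamma D=\{\gamma\zeta 0\mid\mu,\ \zeta\in C_\mu\}$ are pairwise at distance $>4r_0$, and each $w\in\gamma D$ carries a well-defined ``block'' $\beta^\gamma_w:=\gamma\zeta\calR_\mu$ (any $\zeta\in C_\mu$ with $\gamma\zeta 0=w$), with $\emptyset\ne\supp\beta^\gamma_w\subset B(w,r_0)$. (b)~Since $\Pi_2$ is glueable, $\gamma\calS=\bigvee\{\beta^\gamma_w\mid w\in\gamma D\}$, so by Lemma~\ref{lem_sup_action_commute} and Lemma~\ref{lem_supremum_Xi_and_Xi_zero} (zero summands may be discarded), $(\gamma\calS)\sci B(v,r_0)=\bigvee\{\beta^\gamma_w\sci B(v,r_0)\mid w\in\gamma D\}$ for any $v$; by (a), if this is nonzero there is exactly one $w\in\gamma D$ with $\rho(w,v)\leqq 2r_0$ and $(\gamma\calS)\sci B(v,r_0)=\beta^\gamma_w\sci B(v,r_0)$. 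In particular $(\gamma\calS)\sci B(w,r_0)=\beta^\gamma_w$ when $w\in\gamma D$, because $\supp\beta^\gamma_w\subset B(w,r_0)$.

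The argument then runs: write $z=\gamma\xi 0$ with $\xi\in C_\lambda$. Since $B(z,r_0)\subset B(0,R+3r_0)$, part~(b) gives $(\eta\calS)\sci B(z,r_0)=(\gamma\calS)\sci B(z,r_0)=\beta^\gamma_z=\gamma\xi\calR_\lambda\ne 0$, so there is a unique $w\in\eta D$ with $\rho(w,z)\leqq 2r_0$, and, writing $\mathcal{B}:=\beta^\eta_w$, we get $\mathcal{B}\sci B(z,r_0)=\gamma\xi\calR_\lambda$. Now $\rho(w,0)\leqq R+2r_0$, so $B(w,r_0)\subset B(0,R+3r_0)$, and applying (b) around $w$ on both sides (on the $\gamma$ side, $z$ is the unique point of $\gamma D$ within $2r_0$ of $w$) yields $\mathcal{B}=(\eta\calS)\sci B(w,r_0)=(\gamma\calS)\sci B(w,r_0)=\gamma\xi\calR_\lambda\sci B(w,r_0)$. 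By Lemma~\ref{lem_order_pattern_space}, the relation $\mathcal{B}\sci B(z,r_0)=\gamma\xi\calR_\lambda$ gives $\mathcal{B}\geqq\gamma\xi\calR_\lambda$ and the relation $\gamma\xi\calR_\lambda\sci B(w,r_0)=\mathcal{B}$ gives $\gamma\xi\calR_\lambda\geqq\mathcal{B}$, hence $\mathcal{B}=\gamma\xi\calR_\lambda$; writing $\mathcal{B}=\eta\zeta\calR_\mu$ this reads $((\gamma\xi)^{-1}\eta\zeta)\calR_\mu=\calR_\lambda$, and the third axiom of Definition~\ref{def_family_build_block} forces $(\gamma\xi)^{-1}\eta\zeta\cdot 0=0$, i.e.\ $z=\gamma\xi 0=\eta\zeta 0\in\eta D$, as desired.

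The delicate point is this last identification of $w$ with $z$: cutting the block $\mathcal{B}$ centered at $w$ by a ball centered at the nearby but possibly different point $z$ may destroy information, so $w$ cannot be read off $(\gamma\calS)\sci B(z,r_0)$ directly. The device that resolves it is to localize in \emph{both} directions---$z$ inside $B(w,r_0)$ and $w$ inside $B(z,r_0)$---and then use antisymmetry of the order $\geqq$ on $\Pi_2$ to force the two blocks to coincide; this is exactly what dictates enlarging the window by $3r_0$ rather than $r_0$. The remaining verifications (local finiteness and pairwise compatibility of the families involved, so that $\bigvee$ commutes with $\sci$ via glueability, and that $\supp\calR_\lambda\subset B(0,r_0)$ implies $\gamma\calR_\lambda\sci B(\gamma 0,r_0)=\gamma\calR_\lambda$) are routine.
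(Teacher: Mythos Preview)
Your proof is correct and follows essentially the same overall strategy as the paper: enlarge the window by $3r_0$, localize to identify a matching block on the $\eta$-side, upgrade the equality of cut-offs to an equality of full blocks, and then invoke axiom~3 of Definition~\ref{def_family_build_block}. The technical device for the upgrade differs, however. The paper cuts once by the large ball $B(0,L)$, applies Lemma~\ref{useful_lem_for_last_half_of_main_thm} to the finite sets $F_1,F_2$ to obtain $(\gamma\xi\calR_\lambda)\sci B(0,L)=(\eta\zeta\calR_\mu)\sci B(0,L)$, and then uses the diameter bound $\diam(\supp\eta\zeta\calR_\mu)\leqq 2r_0$ to conclude $\supp\eta\zeta\calR_\mu\subset B(0,L)$ and hence equality of the full blocks. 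You instead localize twice---once around $z$ and once around the candidate $w$---and combine the two resulting inequalities $\mathcal{B}\geqq\gamma\xi\calR_\lambda$ and $\gamma\xi\calR_\lambda\geqq\mathcal{B}$ via antisymmetry of the order. Your route is a bit more hands-on but avoids invoking Lemma~\ref{useful_lem_for_last_half_of_main_thm}; both arguments make the role of the constant $3r_0$ transparent.
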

\begin{proof}
      Let $R$ be an arbitrary positive real number.
      Set $L=R+3r_0$.
       Assume $\gamma,\eta\in\Gamma$ and
       \begin{align}
	    (\gamma\calS)\sci B(0,L)=(\eta\calS)\sci B(0,L).\label{eq1_calS_LD_D}
       \end{align}
       Set $\Xi=\{\xi\calR_{\lambda}\mid\lambda\in\Lambda, \xi\in C_{\lambda}\}$, then by
       (\ref{eq1_calS_LD_D}) and Lemma \ref{lem_sup_action_commute}, we see
       \begin{align*}
	     \bigvee(\gamma\Xi\sci B(0,L))=\bigvee(\eta\Xi\sci B(0,L)).
       \end{align*}
       Consider the following two finite sets:
       \begin{align*}
	F_1=\{\gamma\xi 0\mid\lambda\in\Lambda,\xi\in C_{\lambda},
             \gamma\xi\calR_{\lambda}\sci B(0,L)\neq 0\}
       \end{align*}
        and
        \begin{align*}
	 F_2=\{\eta\zeta 0\mid\lambda\in\Lambda,\zeta\in C_{\lambda},
             \eta\zeta\calR_{\lambda}\sci B(0,L)\neq 0\}.
	\end{align*}
         For each $x=\gamma\xi 0\in F_1$, we consider an abstract pattern 
         $\calP_x^1=\gamma\xi\calR_{\lambda}\sci B(0,L)$.
          This is included in $B(\gamma\xi 0,r_0)$.
         For $F_2$ we define $\calP_x^2$'s in a similar way.
         We can apply Lemma \ref{useful_lem_for_last_half_of_main_thm} and obtain
        the following:
        if $\lambda\in\Lambda$, $\xi\in C_{\lambda}$ and $\gamma\xi\calR_{\lambda}
        \sci B(0,L)\neq 0$,
        there is $\mu\in\Lambda$ and $\zeta\in C_{\mu}$ such that
        \begin{align*}
	       (\gamma\xi\calR_{\lambda})\sci B(0,L)=(\eta\zeta\calR_{\mu})\sci B(0,L).
	\end{align*}

        Now we prove $(\gamma D)\cap B(0,R)\subset(\eta D)\cap B(0,R)$.
        Take an element $\gamma\xi 0$ from the left-hand side set, where $\xi\in C_{\lambda}$ for some
        $\lambda$ and $\gamma\xi 0\in B(0,R)$.
        Then $\supp\gamma\xi R_{\lambda}\subset B(0,R+r_0)$. As in the previous paragraph,
        there are $\mu\in\Lambda$ and $\zeta\in C_{\mu}$ such that
       $\gamma\xi \calR_{\lambda}=(\eta\zeta\calR_{\mu})\sci B(0,L)$. The support of 
        this abstract pattern is included in $B(0,R+r_0)$ and the support of $\eta\zeta\calR_{\mu}$
        has a diameter less than $2r_0$; we have
        $\supp(\eta\zeta\calR_{\mu})\subset B(0,L)$ and so
        $\gamma\xi\calR_{\lambda}=\eta\zeta\calR_{\mu}$.
        Since $(\calR_{\lambda})_{\lambda}$ is a family of building blocks, we see $\lambda=\mu$ and
        $\gamma\xi 0=\eta\zeta 0\in\eta D$.
        We have proved $(\gamma D)\cap B(0,R)\subset(\eta D)\cap B(0,R)$ and by symmetry
       the reverse inclusion is true. 
\end{proof}

\begin{lem}\label{lem_calS_B(gamma0_r0)}
      For each $\lambda\in\Lambda$ and $\gamma\in C_{\lambda}$, we have
       \begin{align*}
	    \calS\sci B(\gamma 0,r_0)=\gamma\calR_{\lambda}.
       \end{align*}     
\end{lem}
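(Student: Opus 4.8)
The plan is to unfold the definition $\calS=\bigvee\Xi$ with $\Xi=\{\xi\calR_{\mu}\mid\mu\in\Lambda,\ \xi\in C_{\mu}\}$ and push the cutting-off operation through the supremum. Since $\{\calR_{\mu}\mid\mu\in\Lambda\}$ was shown to be a family of building blocks for $r_0$ and $(C_{\mu})_{\mu}$ is an admissible digit for it, Lemma \ref{lem_admissible_digit} gives that $\Xi$ is locally finite and pairwise compatible; as $\Pi_2$ is glueable, equation \eqref{eq_def_glueable} of Definition \ref{def_glueable_pattern_space} yields
\begin{align*}
  \calS\sci B(\gamma 0,r_0)=\bigvee\{(\xi\calR_{\mu})\sci B(\gamma 0,r_0)\mid \mu\in\Lambda,\ \xi\in C_{\mu}\}.
\end{align*}

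Next I would classify the terms on the right according to the distance $\rho(\xi 0,\gamma 0)$. By condition (1) of Definition \ref{def_family_build_block}, $\supp(\xi\calR_{\mu})=\xi\supp\calR_{\mu}\subset B(\xi 0,r_0)$. If $\rho(\xi 0,\gamma 0)>2r_0$, then the closed balls $B(\xi 0,r_0)$ and $B(\gamma 0,r_0)$ are disjoint, so Lemma \ref{lemma_support_calP_sci_C} forces $\supp\bigl((\xi\calR_{\mu})\sci B(\gamma 0,r_0)\bigr)=\emptyset$, and by uniqueness of the zero element (Lemma \ref{uniqueness_zero_element}) this term equals $0$. If instead $\rho(\xi 0,\gamma 0)\leqq 2r_0\leqq 4r_0$, then, since $(C_{\mu})_{\mu}$ is an admissible digit and $\gamma\in C_{\lambda}$, the defining property of admissible digits forces $\calR_{\mu}=\calR_{\lambda}$ (hence $\mu=\lambda$) and $\xi\calR_{\mu}=\gamma\calR_{\lambda}$. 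Using equivariance of the cutting-off operation (Definition \ref{def_gamma-pattern_space}) together with $\gamma^{-1}B(\gamma 0,r_0)=B(0,r_0)\supset\supp\calR_{\lambda}$ — the last inclusion again by Definition \ref{def_family_build_block}(1), and $\calR_{\lambda}\sci B(0,r_0)=\calR_{\lambda}$ by axiom 2 of Definition \ref{def_pattern_sp} — this term equals $\gamma(\calR_{\lambda}\sci B(0,r_0))=\gamma\calR_{\lambda}$.

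Finally, the index set in the displayed supremum is therefore contained in $\{0,\gamma\calR_{\lambda}\}$, and it does contain $\gamma\calR_{\lambda}$: taking $\mu=\lambda$ and $\xi=\gamma\in C_{\lambda}$, the corresponding term is $\gamma\calR_{\lambda}$ by the previous step. Discarding the zero element via Lemma \ref{lem_supremum_Xi_and_Xi_zero}, I get $\calS\sci B(\gamma 0,r_0)=\bigvee\{\gamma\calR_{\lambda}\}=\gamma\calR_{\lambda}$. I do not expect a real obstacle here; the only care needed is the bookkeeping that the admissible-digit threshold $4r_0$ comfortably absorbs the ball-overlap threshold $2r_0$, and that adding or removing the zero element leaves the supremum unchanged — the lemma is exactly the statement that $\calS$ is assembled from the $\calR_{\lambda}$ according to the plan $(C_{\lambda})_{\lambda}$ in the way the two definitions were designed to guarantee.
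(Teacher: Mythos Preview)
Your proof is correct and follows essentially the same approach as the paper's own proof: push the cutting-off operation through the supremum via glueability, observe that the only nonzero contributions come from indices with $\rho(\xi 0,\gamma 0)\leqq 2r_0$, and then use the admissible-digit condition \eqref{condition_C_lambda_admissible_digit} to identify each such contribution with $\gamma\calR_{\lambda}$. The paper's version is more terse but the logic is identical.
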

\begin{proof}
       If $\mu\in\Lambda$, $\eta\in C_{\mu}$ and $\eta\calR_{\mu}\sci B(\gamma 0,r_0)\neq 0$, then
       $\rho(\gamma 0,\eta 0)\leqq 2r_0$ and so by (\ref{condition_C_lambda_admissible_digit}),
        $\lambda=\mu$ and $\gamma\calR_{\lambda}=\eta\calR_{\mu}$. Hence
        \begin{align*}
	       \calS\sci B(\gamma 0,r_0)&=\bigvee\{\eta\calR_{\mu}\sci B(\gamma 0,r_0)\mid
                     \mu\in\Lambda, \eta\in C_{\mu}\}\\
                       &=\bigvee\{\gamma\calR_{\lambda}\sci B(\gamma 0 ,r_0)\}\\
                       &=\gamma\calR_{\lambda}.
	\end{align*}   
\end{proof}

\begin{lem}\label{lem_D_r_0_dec_calS}
       The pair $(D,r_0)$ decomposes $\calS$.
\end{lem}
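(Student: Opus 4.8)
The plan is to verify, one at a time, the three conditions of Definition \ref{def_D_R_decomposes_calP} for the pair $(D, r_0)$ and the abstract pattern $\calS$. The first condition, $\calS \LD D$, is precisely Lemma \ref{lem_calS_LD_D}, so nothing further is needed there.

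For the second condition I would start from the description $D = \{\gamma 0 \mid \lambda \in \Lambda, \gamma \in C_{\lambda}\}$ in \eqref{eq_D_and_Clambda}, together with Lemma \ref{lem_calS_B(gamma0_r0)}, which identifies $\calS \sci B(\gamma 0, r_0)$ with $\gamma \calR_{\lambda}$ for each $\lambda \in \Lambda$ and $\gamma \in C_{\lambda}$. The only point requiring care is that a single $x \in D$ may be written as $\gamma 0$ for several pairs $(\lambda, \gamma)$; but if $\gamma 0 = \eta 0$ with $\gamma \in C_{\lambda}$ and $\eta \in C_{\mu}$, then \eqref{condition_C_lambda_admissible_digit} forces $\lambda = \mu$ and $\gamma^{-1}\eta \in G_{\lambda} = \Sym_{\Gamma}\calR_{\lambda}$ (Lemma \ref{lem_symmetry_Rlambda_Glambda}), whence $\gamma \calR_{\lambda} = \eta \calR_{\mu}$, so $\calS \sci B(x, r_0)$ is unambiguous. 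Consequently $\{\calS \sci B(x, r_0) \mid x \in D\} = \{\gamma \calR_{\lambda} \mid \lambda \in \Lambda, \gamma \in C_{\lambda}\}$ as sets, and applying $\bigvee$ to both sides gives $\bigvee\{\calS \sci B(x, r_0) \mid x \in D\} = \calS$ directly from the definition of $\calS$.

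For the third condition I would compute the relevant symmetry group explicitly. By Lemma \ref{lem_calS_B(gamma0_r0)}, for $x = \gamma 0$ with $\gamma \in C_{\lambda}$ we have $\calS \sci B(x, r_0) = \gamma \calR_{\lambda}$, hence $\Sym_{\Gamma}(\calS \sci B(x, r_0)) = \gamma (\Sym_{\Gamma}\calR_{\lambda}) \gamma^{-1} = \gamma G_{\lambda} \gamma^{-1}$ by Lemma \ref{lem_symmetry_Rlambda_Glambda}. Since $G_{\lambda} \subset \Gamma_{0}$, this conjugate lies in $\Gamma_{\gamma 0} = \Gamma_{x}$, so $\Sym_{\Gamma_x}(\calS \sci B(x, r_0)) = \gamma G_{\lambda} \gamma^{-1}$, which has cardinality $\card G_{\lambda}$. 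As $\max_{\lambda \in \Lambda} \card G_{\lambda} < \infty$ was recorded among the properties of the plan, the supremum over $x \in D$ is finite, and all three conditions hold.

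The main obstacle is purely bookkeeping: ensuring that $x \mapsto \calS \sci B(x, r_0)$ is well defined and that the two index sets match up, i.e.\ that every $\gamma\calR_{\lambda}$ with $\gamma \in C_{\lambda}$ really occurs as $\calS \sci B(x, r_0)$ for some $x \in D$ and conversely. Both directions follow from \eqref{eq_D_and_Clambda}, \eqref{condition_C_lambda_admissible_digit}, Lemma \ref{lem_calS_B(gamma0_r0)} and Lemma \ref{lem_symmetry_Rlambda_Glambda}; no new geometric estimate is needed, since the delicate work (separation of the points $y_{\lambda}$, triviality of $\Sym_{\Gamma}\calE$, and the building-block axioms for $(\calR_{\lambda})_{\lambda}$) has already been done in the preceding lemmas.
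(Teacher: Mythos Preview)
Your proof is correct and follows the same approach as the paper, which simply says the claim is clear from the definition of $\calS$ together with Lemma \ref{lem_calS_LD_D}, Lemma \ref{lem_calS_B(gamma0_r0)}, and Lemma \ref{lem_symmetry_Rlambda_Glambda}. You have merely spelled out the bookkeeping behind that sentence, invoking the same ingredients (plus the auxiliary identities \eqref{eq_D_and_Clambda} and \eqref{condition_C_lambda_admissible_digit}) to make the three conditions explicit.
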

\begin{proof}
       Clear by the definition of $\calS$,
        Lemma \ref{lem_calS_B(gamma0_r0)} and Lemma \ref{lem_calS_LD_D}
        and Lemma \ref{lem_symmetry_Rlambda_Glambda}.
\end{proof}

\begin{lem}\label{lem_Clambda_plan_for_calS}
       $(\calR_{\lambda})_{\lambda}$ is a tuple of components for $\calS$ 
       with respect to
       $(D,r_0)$ and $(C_{\lambda})$ is the plan for $\calS$ with respect to 
       $(D,r_0,(\calR_{\lambda})_{\lambda\in\Lambda})$.
\end{lem}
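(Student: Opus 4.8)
The plan is to verify the two defining properties of a \emph{tuple of components} for $\calS$ with respect to $(D,r_0)$ --- that is, the two clauses of Lemma~\ref{lem_existence_components} with $\calP$ and $R_0$ replaced by $\calS$ and $r_0$ --- and then to compute the resulting plan and check that it coincides with $(C_{\lambda})_{\lambda}$. Essentially all of the substance has already been isolated in the lemmas just proved, so the argument is bookkeeping assembled from: Lemma~\ref{lem_calS_B(gamma0_r0)}, which says $\calS\sci B(\gamma 0,r_0)=\gamma\calR_{\lambda}$ whenever $\gamma\in C_{\lambda}$; Lemma~\ref{lem_symmetry_Rlambda_Glambda}, which says $\Sym_{\Gamma}\calR_{\lambda}=G_{\lambda}$; the fact proved just above that $\{\calR_{\lambda}\mid\lambda\in\Lambda\}$ is a family of building blocks for $r_0$, in particular the first and third axioms of Definition~\ref{def_family_build_block} together with the observation made in that proof that $\xi\calR_{\lambda}=\calR_{\mu}$ for some $\xi\in\Gamma$ already forces $\lambda=\mu$ (through the pairwise distinct distances $\rho(0,y_{\lambda})$); and the structural identities \eqref{eq_D_and_Clambda} and \eqref{eq_ClambdaGlambda=Clambda} for $(C_{\lambda})_{\lambda}$.

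First I would check the two conditions of Lemma~\ref{lem_existence_components}. The first, $\supp\calR_{\lambda}\subset B(0,r_0)$, is immediate from the first axiom of family of building blocks. For the second, fix $x\in D$; by \eqref{eq_D_and_Clambda} there exist $\lambda\in\Lambda$ and $\gamma\in C_{\lambda}$ with $\gamma 0=x$, and then Lemma~\ref{lem_calS_B(gamma0_r0)} gives $\calS\sci B(x,r_0)=\gamma\calR_{\lambda}$, which is the required existence. For uniqueness of the index, suppose also $\calS\sci B(x,r_0)=\eta\calR_{\mu}$ with $\eta 0=x$; then $(\eta^{-1}\gamma)\calR_{\lambda}=\calR_{\mu}$, and by the property of $\{\calR_{\lambda}\}$ recalled above this forces $\lambda=\mu$. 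Hence $(\calR_{\lambda})_{\lambda}$ is a tuple of components for $\calS$ with respect to $(D,r_0)$.

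It then remains to show that the plan $P_{\lambda}=\{\gamma\in\Gamma\mid \gamma 0\in D \text{ and }\calS\sci B(\gamma 0,r_0)=\gamma\calR_{\lambda}\}$ equals $C_{\lambda}$ for every $\lambda$. The inclusion $C_{\lambda}\subset P_{\lambda}$ is again \eqref{eq_D_and_Clambda} together with Lemma~\ref{lem_calS_B(gamma0_r0)}. Conversely, given $\gamma\in P_{\lambda}$, use \eqref{eq_D_and_Clambda} to pick $\mu\in\Lambda$ and $\eta\in C_{\mu}$ with $\eta 0=\gamma 0$; evaluating $\calS\sci B(\gamma 0,r_0)$ in two ways --- via membership in $P_{\lambda}$ and via Lemma~\ref{lem_calS_B(gamma0_r0)} applied to $\eta$ --- yields $\gamma\calR_{\lambda}=\eta\calR_{\mu}$, hence $(\eta^{-1}\gamma)\calR_{\lambda}=\calR_{\mu}$. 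As before $\lambda=\mu$, so $\eta^{-1}\gamma\in\Sym_{\Gamma}\calR_{\lambda}=G_{\lambda}$ by Lemma~\ref{lem_symmetry_Rlambda_Glambda}, and therefore $\gamma=\eta(\eta^{-1}\gamma)\in C_{\mu}G_{\lambda}=C_{\lambda}G_{\lambda}=C_{\lambda}$ by \eqref{eq_ClambdaGlambda=Clambda}. Combining the two inclusions gives $P_{\lambda}=C_{\lambda}$, which is the assertion about the plan.

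I do not expect a genuine obstacle: all the geometric input --- disjointness of building blocks, triviality of $\Sym_{\Gamma}\calE$, distinctness of the $\calR_{\lambda}$ --- was already extracted in the preceding lemmas, so this step is purely organizational. The only point requiring care is the interplay of the symmetry groups $G_{\lambda}$: one must use both that $\calR_{\lambda}$ determines $\lambda$ (so uniqueness of the index holds) and that passing from ``$\eta^{-1}\gamma$ stabilises $\calR_{\lambda}$'' back to ``$\gamma\in C_{\lambda}$'' relies precisely on the saturation identity $C_{\lambda}G_{\lambda}=C_{\lambda}$.
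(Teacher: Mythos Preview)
Your proposal is correct and follows essentially the same route as the paper: verify the conditions of Lemma~\ref{lem_existence_components} using Lemma~\ref{lem_calS_B(gamma0_r0)} and the fact that $\{\calR_{\lambda}\}$ is a family of building blocks, then prove both inclusions $C_{\lambda}\subset P_{\lambda}$ and $P_{\lambda}\subset C_{\lambda}$ via Lemma~\ref{lem_calS_B(gamma0_r0)}, Lemma~\ref{lem_symmetry_Rlambda_Glambda}, and the saturation identity \eqref{eq_ClambdaGlambda=Clambda}. Your write-up is slightly more explicit than the paper's (you spell out the support condition and the uniqueness argument), but the structure and all the key inputs are identical.
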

\begin{proof}
       Take $x\in D$ arbitrarily. By (\ref{eq_D_and_Clambda}), there are
       $\lambda\in\Lambda$ and $\gamma\in C_{\lambda}$ such that
       $x=\gamma 0$ and by Lemma \ref{lem_calS_B(gamma0_r0)},
       $\calS\sci B(x,r_0)=\calS\sci B(\gamma 0,r_0)=\gamma\calR_{\lambda}.$
       The uniqueness of such $\lambda$ is clear since $(\calR_{\mu})_{\mu\in\Lambda}$
       is a family of building blocks. We have shown that $(\calR_{\mu})$ is 
       a tuple of components for $\calP_0$ with respect to $(D,r_0)$.

      Next we show that $(C_{\lambda})_{\lambda}$ is the plan.
      If $\mu\in\Lambda$ and $\gamma\in C_{\mu}$, then
      $\gamma 0\in D$ by (\ref{eq_D_and_Clambda}) and
      $\calS\sci B(\gamma 0,r_0)=\gamma\calR_{\mu}$ by Lemma
       \ref{lem_calS_B(gamma0_r0)}, and so 
       $\gamma\in P_{\mu}(\calS,D,r_0,(\calR_{\lambda}))$.
       Conversely, if $\gamma\in P_{\mu}(\calS,D,r_0,(\calR_{\lambda}))$, then
       $\gamma 0\in D$ and $\gamma\calR_{\mu}=\calS\sci B(\gamma 0,r_0)$.
       By (\ref{eq_D_and_Clambda}), there is $\nu\in\Lambda$ and $\eta\in C_{\nu}$
       such that $\gamma 0=\eta 0$, and so by Lemma \ref{lem_calS_B(gamma0_r0)},
       $\eta \calR_{\nu}=\calS\sci B(\eta 0,r_0)$. 
       This implies that $\gamma\calR_{\mu}=\eta\calR_{\nu}$, and so
       $\mu=\nu$ and $\eta^{-1}\gamma\in\Sym_{\Gamma}\calR_{\mu}=G_{\mu}$.
      By (\ref{eq_ClambdaGlambda=Clambda}), we see $\gamma=\eta\eta^{-1}\gamma\in
        C_{\mu}G_{\mu}=C_{\mu}$.
       We have proved $C_{\mu}=P_{\mu}(\calS,D,r_0,(\calR_{\lambda}))$ for 
      any $\mu\in\Lambda$.
\end{proof}

\begin{thm}\label{thm_calS_MLD_Clambda}
       $\calS\MLD (C_{\lambda})_{\lambda}$.
\end{thm}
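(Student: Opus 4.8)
The plan is to read off the result directly from Proposition \ref{prop_calP_MLD_Gamma_lambda}, since all the hypotheses needed to apply it to $\calS$ have already been assembled. That proposition says that whenever a pair $(D,R_0)$ decomposes an abstract pattern $\calP$, and $(P_\lambda)_\lambda$ is the plan for $\calP$ with respect to $(D,R_0)$ and some choice of tuple of components, then $\calP\MLD(P_\lambda)_\lambda$ once $(P_\lambda)_\lambda$ is viewed as an abstract pattern in the product $\Gamma$-abstract pattern space $\prod_{\lambda\in\Lambda}2^{\Gamma}$. So first I would recall that $\calS$ lives in the glueable abstract pattern space $\Pi_2$ over $(\Rd,\Gamma)$: indeed $\calS=\bigvee\{\gamma\calR_\lambda\mid\lambda\in\Lambda,\ \gamma\in C_\lambda\}\in\Sigma\subset\Pi_2$, so the ambient hypothesis of Proposition \ref{prop_calP_MLD_Gamma_lambda} is in force.

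Next I would cite the two lemmas just proved. Lemma \ref{lem_D_r_0_dec_calS} shows that $(D,r_0)$ decomposes $\calS$ in the sense of Definition \ref{def_D_R_decomposes_calP} (its three conditions having been checked via Lemma \ref{lem_calS_LD_D}, Lemma \ref{lem_calS_B(gamma0_r0)}, and Lemma \ref{lem_symmetry_Rlambda_Glambda}). Lemma \ref{lem_Clambda_plan_for_calS} shows that the very tuple $(\calR_\lambda)_{\lambda\in\Lambda}$ used to construct $\calS$ is a tuple of components for $\calS$ with respect to $(D,r_0)$, and that $(C_\lambda)_{\lambda\in\Lambda}$ — again the same family of subsets of $\Gamma$ used in the construction — is the plan for $\calS$ with respect to $(D,r_0,(\calR_\lambda)_{\lambda})$. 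With these identifications in hand, Proposition \ref{prop_calP_MLD_Gamma_lambda} applied to $\calS$ in place of $\calP$, with $(\calR_\lambda)_\lambda$ in place of $(\calP_\lambda)_\lambda$ and $(C_\lambda)_\lambda$ in place of $(P_\lambda)_\lambda$, gives exactly $\calS\MLD(C_\lambda)_\lambda$.

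I do not anticipate a genuine obstacle here: the substantive work — building $\calS$ out of the building blocks $\calR_\lambda$, checking that $\{\calR_\lambda\mid\lambda\in\Lambda\}$ is a family of building blocks for $r_0$ with the correct symmetry groups, and verifying the decomposition/plan structure — was carried out in Step 1–Step 3 and the lemmas of this subsection. The only point requiring a moment's care is bookkeeping: one must make sure that the index set $\Lambda$ and the subsets $C_\lambda\subset\Gamma$ appearing in the statement $\calS\MLD(C_\lambda)_\lambda$ are literally the same $\Lambda$ and $C_\lambda$ fixed at the start of the proof (coming from the decomposition of the original $\calP$), so that ``$(C_\lambda)_\lambda$ regarded as an element of $\prod_{\lambda\in\Lambda}2^{\Gamma}$'' is unambiguous and matches the object used in Proposition \ref{prop_calP_MLD_Gamma_lambda}; this is immediate from the construction, so the proof is essentially a one-line invocation of Proposition \ref{prop_calP_MLD_Gamma_lambda} together with Lemma \ref{lem_D_r_0_dec_calS} and Lemma \ref{lem_Clambda_plan_for_calS}.
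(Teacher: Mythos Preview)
Your proposal is correct and is exactly the paper's approach: the paper's proof is the one-line ``Clear from Lemma \ref{lem_Clambda_plan_for_calS} and Proposition \ref{prop_calP_MLD_Gamma_lambda},'' which is precisely the invocation you describe (with Lemma \ref{lem_D_r_0_dec_calS} supplying the decomposition hypothesis implicitly required).
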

\begin{proof}
      Clear from Lemma \ref{lem_Clambda_plan_for_calS} and Proposition \ref{prop_calP_MLD_Gamma_lambda}.
\end{proof}

\begin{cor}
     $\calP\MLD\calS$.
\end{cor}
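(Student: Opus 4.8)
The plan is to close the chain of mutual local derivabilities that has now been assembled, using only the fact that $\MLD$ is an equivalence relation. All the substantive work has already been done in the preceding lemmas; this corollary is the bookkeeping step.

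First I would recall the two endpoints of the chain. On one side, Lemma~\ref{lem_thereis_R_that_decomposes} shows that $(D,R_0)$ decomposes $\calP$, and by construction $(\calP_\lambda)_{\lambda\in\Lambda}$ is a tuple of components for $\calP$ with respect to $(D,R_0)$ while $(C_\lambda)_{\lambda}$ is the plan for $\calP$ with respect to $(D,R_0,(\calP_\lambda)_\lambda)$. Applying Proposition~\ref{prop_calP_MLD_Gamma_lambda} to this data --- with $(C_\lambda)_\lambda$ regarded as an abstract pattern in the product $\Gamma$-abstract pattern space $\prod_{\lambda\in\Lambda}2^{\Gamma}$ --- gives $\calP\MLD(C_\lambda)_\lambda$. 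On the other side, Lemma~\ref{lem_D_r_0_dec_calS} shows $(D,r_0)$ decomposes $\calS$, and Lemma~\ref{lem_Clambda_plan_for_calS} identifies $(\calR_\lambda)_\lambda$ as a tuple of components for $\calS$ with respect to $(D,r_0)$ and the \emph{same} tuple $(C_\lambda)_\lambda$ as the corresponding plan; a second application of Proposition~\ref{prop_calP_MLD_Gamma_lambda} then yields $\calS\MLD(C_\lambda)_\lambda$, which is exactly Theorem~\ref{thm_calS_MLD_Clambda}.

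Finally I would invoke the fact that $\MLD$ is symmetric (by definition) and transitive (the equivalence-relation lemma proved just after Definition~\ref{def_local_derive}): from $\calP\MLD(C_\lambda)_\lambda$ and $(C_\lambda)_\lambda\MLD\calS$ we conclude $\calP\MLD\calS$. There is essentially no obstacle in the corollary itself; if anything, the only point to keep straight is that the very same plan $(C_\lambda)_\lambda$ governs both the decomposition of $\calP$ into the $\calP_\lambda$'s and the decomposition of $\calS$ into the $\calR_\lambda$'s --- this was the whole purpose of the construction in Steps 1--3 and of Lemma~\ref{lem_Clambda_plan_for_calS}, so that Proposition~\ref{prop_calP_MLD_Gamma_lambda} can be used twice with a common middle term. (The remaining assertion of Theorem~\ref{translation_thm}, that $\supp\calS$ is relatively dense in $\Rd$, is a separate matter and follows from $\supp\calS\supset D$, since $\supp\calS\sci B(x,r_0)$ is nonempty for each $x\in D$ by Lemma~\ref{lem_calS_B(gamma0_r0)} together with $\emptyset\neq\supp\calR_\lambda$, while $D$ is a Delone set and hence relatively dense.)
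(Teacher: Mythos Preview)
Your proof is correct and follows exactly the paper's approach: invoke Proposition~\ref{prop_calP_MLD_Gamma_lambda} once for $\calP$ (giving $\calP\MLD(C_\lambda)_\lambda$) and once for $\calS$ (which is Theorem~\ref{thm_calS_MLD_Clambda}), then use transitivity of $\MLD$. Your parenthetical on the relative denseness of $\supp\calS$ is, as you note, outside this corollary and is treated in the next lemma of the paper; be aware that the inclusion $\supp\calS\supset D$ is not quite what is established there---only that $\supp\calS$ meets each ball $B(y,r_0)$ with $y\in D$, which already suffices.
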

\begin{proof}
      By  Proposition 
       \ref{prop_calP_MLD_Gamma_lambda},
       $\calP\MLD (C_{\lambda})$
      because $(C_{\lambda})$ is a plan for $\calP$.
       Combined with Theorem \ref{thm_calS_MLD_Clambda} we have $\calP\MLD\calS$.
\end{proof}

\begin{lem}
      $\supp\calS$ is relatively dense.
\end{lem}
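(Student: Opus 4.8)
The plan is to deduce the relative denseness of $\supp\calS$ directly from the relative denseness of the Delone set $D$, using Lemma \ref{lem_calS_B(gamma0_r0)} to transfer ``largeness'' of $D$ to ``largeness'' of $\calS$. First I would record that $\supp\calR_{\lambda}\neq\emptyset$ for every $\lambda\in\Lambda$: since $\calR_{\lambda}=\bigvee(\{\calP_0\}\cup\{\gamma\gamma_{y_{\lambda}}\calE\mid\gamma\in G_{\lambda}\})$, Lemma \ref{lem_support_supremum} gives $\supp\calR_{\lambda}\supset\supp\calP_0$, and $\supp\calP_0\neq\emptyset$ because $\calP_0$ is a symmetric building block for $r_2$ (first condition of Definition \ref{def_family_build_block}).

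Next, pick $R_D>0$ such that the Delone set $D$ is $R_D$-relatively dense, and I claim that $\supp\calS$ is $(R_D+r_0)$-relatively dense. Fix $x\in\Rd$. There is $p\in D$ with $p\in B(x,R_D)^{\circ}$, and by \eqref{eq_D_and_Clambda} we may write $p=\gamma 0$ for some $\lambda\in\Lambda$ and $\gamma\in C_{\lambda}$. Lemma \ref{lem_calS_B(gamma0_r0)} gives $\calS\sci B(\gamma 0,r_0)=\gamma\calR_{\lambda}$; its support is $\gamma\supp\calR_{\lambda}$ by Lemma \ref{lem1_before_examples_equiv_pat_sp}, hence nonempty by the previous paragraph. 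At the same time Lemma \ref{lemma_support_calP_sci_C} yields $\supp(\gamma\calR_{\lambda})=\supp(\calS\sci B(\gamma 0,r_0))\subset(\supp\calS)\cap B(\gamma 0,r_0)$.

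Consequently there is $z\in\supp\calS$ with $\rho(z,\gamma 0)\leqq r_0$; together with $\rho(x,\gamma 0)<R_D$ this gives $\rho(x,z)<R_D+r_0$, that is, $z\in(\supp\calS)\cap B(x,R_D+r_0)^{\circ}$. As $x$ was arbitrary, $\supp\calS$ is relatively dense. I expect no real obstacle in this argument: the only points requiring care are verifying that the building block $\calR_{\lambda}$ has nonempty support (immediate from the building-block axioms) and keeping track of the two radius contributions, $R_D$ coming from $D$ and $r_0$ from the size of the blocks $\calR_{\lambda}$, which are combined by the triangle inequality.
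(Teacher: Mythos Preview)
Your argument is correct and follows essentially the same route as the paper's proof: pick a point of $D$ near $x$, write it as $\gamma 0$ with $\gamma\in C_{\lambda}$, and use that $\supp\gamma\calR_{\lambda}$ is nonempty, contained in $B(\gamma 0,r_0)$, and contained in $\supp\calS$. You have simply made explicit the lemma citations and the quantitative radius $R_D+r_0$ that the paper leaves implicit in the phrase ``near $x$''.
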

\begin{proof}
      For any $x\in\Rd$ there is $y\in D$ near $x$.
      By (\ref{eq_D_and_Clambda}), there are $\lambda\in\Lambda$ and 
      $\gamma\in C_{\lambda}$ such that
      $y=\gamma 0$. Since $\supp\gamma\calR_{\lambda}\subset B(y,r_0)$,
      any point in $\supp\gamma\calR_{\lambda}$, which is a point in
      $\supp\calS$, is near $x$.
\end{proof}

This lemma completes the proof of Theorem \ref{translation_thm}.

\section{A study of abstract patterns via arrows}
\label{section_application_translation_thm}
In this section we study the theory of pattern-equivariant functions in terms of
local derivability, by studying the graph with abstract patterns as vertices and
local derivability as edges.
We prove that the space of all pattern-equivariant functions
contains all of the information of the original abstract pattern up to MLD
(Theorem \ref{thm1_pat-equi_remembers} and Theorem \ref{thm_pat_equiv_ft_remembers_P}).

\subsection{The role of maximal elements}
We start with a definition in an abstract setting:
\begin{defi}\label{defi_Sigma_calP}
         Let $\Pi$ be an abstract pattern space over $(X,\Gamma)$ and $\Pi'$  a
          abstract pattern space over $(Y,\Gamma)$, where $\Gamma$ is a group
          which acts on metric spaces $X$ and $Y$ respectively as isometries.
         Let $\Sigma$ be a subshift of $\Pi'$.
          For each $\calP\in\Pi$, we set
          \begin{align*}
	         \Sigma_{\calP}=\{\calQ\in\Sigma\mid\calP\LD\calQ\}.
	  \end{align*}
\end{defi}

In order to study the relations between $\calP$ and
$\Sigma_{\calP}$, the maximal elements of $\Sigma_{\calP}$, that is,
the elements $\calQ\in\Sigma$ such that $\calP\MLD\calQ$, are useful, as the following
lemma shows:

\begin{lem}\label{lem_max_element_sigma_coincide}
   Let $\Pi_j$ be an abstract
 pattern space over $(X_j,\Gamma)$, for each $j=1,2,3$,
     where $X_j$ is a  metric space on which a group $\Gamma$ acts as isometries.
   Suppose a subshift $\Sigma$ of $\Pi_3$ satisfies the following condition:
    \begin{itemize}
     \item for each $\calP_1\in\Pi_1$, there is $\calP_1'\in\Sigma$ such that
	     $\calP_1\MLD\calP_1'$, and
    \item for each $\calP_2\in\Pi_2$, there is $\calP_2'\in\Sigma$ such that
	  $\calP_2\MLD\calP_2'$.
    \end{itemize}
      Then for each $\calP_1\in\Pi_1$ and $\calP_2\in\Pi_2$,
 we have $\calP_1\MLD\calP_2$ if and only if
      $\Sigma_{\calP_1}=\Sigma_{\calP_2}$.
\end{lem}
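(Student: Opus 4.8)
The plan is to exploit that $\LD$ is transitive (proved earlier) and that $\MLD$ is an equivalence relation, together with the hypothesis that every abstract pattern in $\Pi_1$ and in $\Pi_2$ has an $\MLD$-partner inside $\Sigma$. Fix $\calP_1\in\Pi_1$ and $\calP_2\in\Pi_2$, and choose $\calP_1'\in\Sigma$ with $\calP_1\MLD\calP_1'$ and $\calP_2'\in\Sigma$ with $\calP_2\MLD\calP_2'$.

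First I would prove the easy direction: suppose $\calP_1\MLD\calP_2$. Take any $\calQ\in\Sigma_{\calP_1}$, so $\calP_1\LD\calQ$. Since $\calP_2\LD\calP_1$ (from $\calP_2\MLD\calP_1$), transitivity of $\LD$ gives $\calP_2\LD\calQ$, hence $\calQ\in\Sigma_{\calP_2}$. Thus $\Sigma_{\calP_1}\subset\Sigma_{\calP_2}$, and the reverse inclusion is symmetric, so $\Sigma_{\calP_1}=\Sigma_{\calP_2}$. Note this direction does not even use the existence of the $\MLD$-partners $\calP_j'$.

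For the converse, assume $\Sigma_{\calP_1}=\Sigma_{\calP_2}$. The key observation is that $\calP_1'\in\Sigma_{\calP_1}$: indeed $\calP_1\MLD\calP_1'$ gives $\calP_1\LD\calP_1'$, and $\calP_1'\in\Sigma$, so $\calP_1'\in\Sigma_{\calP_1}=\Sigma_{\calP_2}$. Therefore $\calP_2\LD\calP_1'$. Combined with $\calP_1'\LD\calP_1$ (from $\calP_1'\MLD\calP_1$) and transitivity of $\LD$, this yields $\calP_2\LD\calP_1$. Running the symmetric argument with $\calP_2'$: from $\calP_2\MLD\calP_2'$ we get $\calP_2'\in\Sigma_{\calP_2}=\Sigma_{\calP_1}$, so $\calP_1\LD\calP_2'$, and with $\calP_2'\LD\calP_2$ and transitivity we get $\calP_1\LD\calP_2$. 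Hence $\calP_1\MLD\calP_2$.

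There is no real obstacle here — the statement is a formal consequence of transitivity of $\LD$ plus the hypothesis that $\Sigma$ is ``rich enough'' to contain $\MLD$-representatives of the patterns in question; the only thing to be careful about is bookkeeping the directions of the arrows, and recording explicitly that $\calP_j'\in\Sigma_{\calP_j}$ (which is what forces $\calP_j'\in\Sigma_{\calP_{3-j}}$ once the two sets are assumed equal). In the write-up I would state the two inclusions symmetrically and invoke the transitivity lemma for $\LD$ by name.
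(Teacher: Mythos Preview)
Your proof is correct and essentially identical to the paper's own argument: both directions use transitivity of $\LD$, with the converse pivoting on the observation that $\calP_j'\in\Sigma_{\calP_j}$ forces $\calP_j'\in\Sigma_{\calP_{3-j}}$ once the two sets coincide. Your write-up is in fact slightly more explicit than the paper's in recording why $\calP_1'\in\Sigma_{\calP_1}$, but the structure and content of the proof are the same.
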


\begin{proof}
        Take $\calP_1\in\Pi_1$ and $\calP_2\in\Pi_2$.
         There are $\calP_1',\calP_2'\in\Sigma$ as in the condition above.
         If $\calP_1\LD\calP_2$, then for each $\calQ\in\Sigma_{\calP_2}$, we have
        $\calP_1\LD\calQ$ by the transitivity of local derivability, and so
        $\calQ\in\Sigma_{\calP_1}$.
         Thus if $\calP_1\MLD\calP_2$, then $\Sigma_{\calP_1}=\Sigma_{\calP_2}$.
         On the other hand, if $\Sigma_{\calP_1}=\Sigma_{\calP_2}$, then
         $\calP_1'\in\Sigma_{\calP_2}$ and so $\calP_2\LD\calP_1'\LD\calP_1$.
         By transitivity, we have $\calP_2\LD\calP_1$. Similarly $\calP_1\LD\calP_2$ and
        so $\calP_1\MLD\calP_2$.
\end{proof}

\subsection{Pattern-equivariant functions without $\Od$-actions and their generalizations}
Next, we move on to the theory of pattern equivariant functions.
We will show that for certain $\Sigma$ consisting of functions, $\Sigma_{\calP}$ is
the space of pattern equivariant functions.
First, we recall the definition of pattern equivariant functions.
Kellendonk \cite{MR1985494} defined pattern-equivariant functions for
tilings or Delone sets in order to study the cohomology of the tiling spaces.
We recall the definitions here.


\begin{defi}[\cite{MR1985494}]\label{def_pat_equiv_Kellendonk}
      Let $D$ be a subset of $\Rd$ and $Y$ be a set. 
      A function $f\colon\Rd\rightarrow Y$ is said to be (strongly)
       \emph{$D$-equivariant}
      if there is $R>0$ such that
      $x,y\in\Rd$ and $(D-x)\cap B(0,R)=(D-y)\cap B(0,R)$ imply
      $f(x)=f(y)$.
\end{defi}

It is easy to show that this definition can be rephrased in terms of local
derivability:
\begin{lem}\label{lem_pat_equiv_local_der_Kellendonk}
      Let $D$ be a subset of $\Rd$, $Y$ be a set and $y_0\in Y$.
       Then for any $f\in\Map(\Rd,Y)$, $f$ is $D$-equivariant if and only if
       $D\LD f$.
       Here we regard $D$ as an element of $2^X$
       (Example \ref{2X_as_Gamma_pattern_sp}), which is an
       abstract pattern space over $(\Rd,\Rd)$, and $f$ as an element of 
       $\Map(\Rd,Y,y_0)$ (Example \ref{ex_map_rho}), 
       which is an abstract pattern space over $(\Rd,\Rd)$.
\end{lem}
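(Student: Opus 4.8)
The plan is to unwind both sides into elementary statements about $D$ and $f$ using the explicit form of the two $\Rd$-actions. Here $\Gamma=\Rd$ acts on $\Rd$ by translation, so for $\gamma\in\Rd$ one has $\gamma D=D+\gamma$ in $2^{\Rd}$ and $(\gamma f)(x)=f(x-\gamma)$ in $\Map(\Rd,Y,y_0)$ (the coefficient action $\phi$ being trivial). Two maps of the form $g\sci C$ and $h\sci C$ coincide precisely when $g$ and $h$ agree on $C$ (off $C$ both equal $y_0$); since $B(0,0)=\{0\}$, this gives $(\gamma f)\sci\{0\}=(\eta f)\sci\{0\}\iff f(-\gamma)=f(-\eta)$. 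Substituting $\gamma=-x$, $\eta=-y$, Kellendonk's condition (Definition \ref{def_pat_equiv_Kellendonk}) that $f$ be $D$-equivariant with radius $R$ becomes: for all $\gamma,\eta\in\Rd$, $(\gamma D)\cap B(0,R)=(\eta D)\cap B(0,R)$ implies $(\gamma f)(0)=(\eta f)(0)$.

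For the implication ``$f$ is $D$-equivariant $\Rightarrow D\LD f$'' I would check condition 1 of Lemma \ref{lem_local_derivability} with both base points taken to be $0$ and with the constant equal to the equivariance radius $R$. So assume $\gamma,\eta\in\Rd$, $R'\geqq 0$ and $(\gamma D)\cap B(0,R'+R)=(\eta D)\cap B(0,R'+R)$, and fix $z\in B(0,R')$; the goal is $(\gamma f)(z)=(\eta f)(z)$. Since $B(z,R)\subset B(0,R'+R)$, intersecting the hypothesis with $B(z,R)$ and translating by $-z$ yields $((\gamma-z)D)\cap B(0,R)=((\eta-z)D)\cap B(0,R)$; the reformulated equivariance then gives $((\gamma-z)f)(0)=((\eta-z)f)(0)$, which unwinds to $f(z-\gamma)=f(z-\eta)$, i.e.\ $(\gamma f)(z)=(\eta f)(z)$. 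This is the one step that is not pure bookkeeping: it is the familiar observation that local derivability, once known with a single centre and radius $R'=0$, propagates to all radii by transitivity of the translation action. Hence $(\gamma f)\sci B(0,R')=(\eta f)\sci B(0,R')$, condition 1 holds, and $D\LD f$.

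For the converse, since conditions 1 and 2 of Lemma \ref{lem_local_derivability} are equivalent, $D\LD f$ gives, upon taking both base points equal to $0$ in condition 2, a constant $R_1\geqq 0$ such that $(\gamma D)\cap B(0,R'+R_1)=(\eta D)\cap B(0,R'+R_1)$ implies $(\gamma f)\sci B(0,R')=(\eta f)\sci B(0,R')$ for all $\gamma,\eta$ and $R'\geqq 0$. Specialising to $R'=0$ and using $B(0,0)=\{0\}$, this reads: $(\gamma D)\cap B(0,R_1)=(\eta D)\cap B(0,R_1)$ implies $f(-\gamma)=f(-\eta)$; substituting $\gamma=-x$, $\eta=-y$ this is exactly $D$-equivariance with radius $R_1$ (replace $R_1$ by $\max\{R_1,1\}$ if a strictly positive radius is wanted, a larger ball only strengthening the hypothesis). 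The same computations show both conditions are insensitive to the choice of $y_0$, so the equivalence is well posed. In short, the lemma is a translation between two formulations; the only genuinely non-formal ingredient is the radius-$0$-to-all-radii bootstrap in the first implication.
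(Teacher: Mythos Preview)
Your proof is correct. The paper itself omits the proof entirely, merely prefacing the lemma with ``It is easy to show that this definition can be rephrased in terms of local derivability'', so there is no argument to compare against; your unwinding of the two $\Rd$-actions and the radius-$0$-to-all-radii bootstrap via translating the window is exactly the natural (and essentially only) way to fill in the details.
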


We generalize the definition of pattern-equivariant function as follows:
\begin{defi}
      Let $X$ be a (proper) metric space and $\Gamma$ be a group which acts on $X$ as
      isometries. Let $\calP$ be an abstract pattern in an abstract pattern space over
       $(X,\Gamma)$. A function $f$ in an abstract pattern space
       $\Map(X,Y,y_0)$ over $(X,\Gamma)$, where $Y$ is a set and $y_0\in Y$
        (Example \ref{ex_map_rho}), is said to be
       $\calP$-equivariant if $\calP\LD f$.
\end{defi}

For a subset $\Sigma$ of $\Map(X,Y,y_0)$, the set $\Sigma_{\calP}$
(Definition \ref{defi_Sigma_calP}) is the set of all $\calP$-equivariant functions in
$\Sigma$.

The following theorem is now easy to prove:
\begin{thm}\label{thm1_pat-equi_remembers}
      Let $D_1$ and $D_2$ be  uniformly discrete subsets
      of a (proper) metric space $X$ on which
     a group  $\Gamma$ acts transitively
       as isometries. Let 
      $\Sigma=C(X,\mathbb{C})$
     be a subshift (Example \ref{ex_subshift_conti_maps}), consisting of continuous
     functions,
    of the pattern space $\Map(X,\mathbb{C},0)$ over $(X,\Gamma)$
      (Example \ref{ex_map_rho}). Then
     $D_1\MLD D_2$ if and only if $\Sigma_{D_1}=\Sigma_{D_2}$ (that is, the space of
      continuous pattern-equivariant functions coincide.)
\end{thm}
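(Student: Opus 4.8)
The plan is to deduce the statement from Lemma \ref{lem_max_element_sigma_coincide}, applied with $\Pi_1=\Pi_2=\UD(X)$ (an abstract pattern space over $(X,\Gamma)$ by Example \ref{2X_as_Gamma_pattern_sp}), $\Pi_3=\Map(X,\mathbb{C},0)$ over $(X,\Gamma)$ (Example \ref{ex_map_rho}), and $\Sigma=C(X,\mathbb{C})$, which is a subshift of $\Pi_3$ (Example \ref{ex_subshift_conti_maps}). The only point requiring work is the hypothesis of that lemma: every uniformly discrete $D\subset X$ is MLD with some $f\in C(X,\mathbb{C})$. Note that Theorem \ref{translation_thm} cannot be invoked here, since a uniformly discrete set need not be Delone-deriving; so a direct construction is required.

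First I would fix such a $D$, say $r$-uniformly discrete, and set $f_D\colon X\to\mathbb{C}$ by $f_D(x)=\max\{0,\tfrac{r}{4}-\operatorname{dist}(x,D)\}$, where $\operatorname{dist}(x,D)=\inf_{p\in D}\rho(x,p)$ (and $f_\emptyset\equiv 0$). Since $\operatorname{dist}(\cdot,D)$ is $1$-Lipschitz, $f_D$ is continuous, hence lies in $\Sigma$; and since $\Gamma$ acts on $X$ by isometries while its action on the coefficient set $\mathbb{C}$ is trivial, one checks $\gamma f_D=f_{\gamma D}$ for all $\gamma\in\Gamma$, where $f_{\gamma D}$ is the analogous bump function for $\gamma D$. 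Then I would verify $D\MLD f_D$ directly from the definition (Definition \ref{def_local_derive}), using condition 1 of Lemma \ref{lem_local_derivability}; since by that lemma it is independent of the choice of base points, I may fix a single point $x_0\in X$ throughout. For $D\LD f_D$, take the constant $R_0=r/4$: if $(\gamma D)\sci B(x_0,R+R_0)=(\eta D)\sci B(x_0,R+R_0)$, then for each $x\in B(x_0,R)$ the value $f_{\gamma D}(x)$ depends only on $(\gamma D)\cap B(x,r/4)\subset(\gamma D)\cap B(x_0,R+R_0)$, so $f_{\gamma D}$ and $f_{\eta D}$ agree on $B(x_0,R)$, i.e. $(\gamma f_D)\sci B(x_0,R)=(\eta f_D)\sci B(x_0,R)$.

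The direction $f_D\LD D$ is where the (mild) content lies, namely recovering $D$ from $f_D$: because $\gamma D$ is uniformly discrete, hence closed in $X$, one has $p\in\gamma D\iff\operatorname{dist}(p,\gamma D)=0\iff f_{\gamma D}(p)=r/4$. Hence if $f_{\gamma D}$ and $f_{\eta D}$ agree on $B(x_0,R)$, then $(\gamma D)\cap B(x_0,R)=(\eta D)\cap B(x_0,R)$, so the constant $R_1=0$ works; the degenerate case $D=\emptyset$, $f_D=0$, is immediate and consistent. With the hypothesis of Lemma \ref{lem_max_element_sigma_coincide} thus established for $\Sigma=C(X,\mathbb{C})$ and $\Pi_1=\Pi_2=\UD(X)$, that lemma yields $D_1\MLD D_2$ if and only if $\Sigma_{D_1}=\Sigma_{D_2}$; and $\Sigma_{D_i}=\{f\in C(X,\mathbb{C})\mid D_i\LD f\}$ (Definition \ref{defi_Sigma_calP}) is by definition precisely the space of continuous $D_i$-equivariant functions, which is exactly the asserted equivalence.
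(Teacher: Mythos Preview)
Your proposal is correct and follows essentially the same approach as the paper: both reduce the statement to Lemma~\ref{lem_max_element_sigma_coincide} by exhibiting, for each uniformly discrete $D$, a continuous function MLD with $D$. The only difference is the concrete bump: the paper places translates of an abstract $\Gamma_{x_0}$-invariant bump $\varphi$ (with $\varphi(x)=1\iff x=x_0$) at each point of $D$, whereas you use the explicit tent $\max\{0,\tfrac{r}{4}-\operatorname{dist}(\cdot,D)\}$; your version has the minor advantage of not invoking transitivity in the construction itself and of spelling out both directions of the MLD verification, which the paper leaves as ``easy to show''.
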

\begin{proof}
       By Lemma \ref{lem_max_element_sigma_coincide},
      it suffices to show that for each uniformly discrete $D$ in $X$, there is
    $f\in\Sigma$ such that $D\MLD f$.
     There is $r>0$ such that if $x,y\in D$ and $x\neq y$, then
      $\rho(x,y)>2r$. Take $x_0\in X$ and a continuous function $\varphi$ such that
      \begin{enumerate}
       \item $\supp\varphi\subset B(x_0,r)$,
	\item $\varphi(x)=1\iff x=x_0$, and
	 \item for each $\gamma\in\Gamma_{x_0}$ and $x\in X$, we have
	       $\varphi(\gamma x)=\varphi(x)$.
      \end{enumerate}
       Define a continuous function $f\colon X\rightarrow \mathbb{C}$ as follows.
         Take an $x\in X$ and we should determine the value at $x$. If there is
       $\gamma\in\Gamma$ such that $\gamma x_0\in D$ and $\rho(\gamma x_0,x)<r$, then
       put $f(x)=\varphi(\gamma^{-1}x)$. Otherwise set $f(x)=0$. In other words,
      we put the copies of $\varphi$ on each point of $D$. It is easy to show that
      $f$ is continuous and $D\MLD f$.
\end{proof}

\subsection{Pattern-equivariant functions with $\Od$-actions}

 Rand \cite{de2006pattern} generalized the definition of pattern-equivariant functions
 in another way to
incorporate rotations and flips in the 2-dimensional cases.
Recall the notation $\calT\sqcap C$ for a tiling $\calT$ and a closed $C\subset \Rd$
defined in Definition \ref{def_sqcap}.

\begin{defi}[\cite{de2006pattern}]\label{def_pat_equiv_Rand}
      Let $\calT$ be a tiling of $\Rd$, $\Gamma$ a closed subgroup
     of $\Ed$ that contains $\Rd$,
      $G$ an abelian group and $\phi\colon\Gamma_0\rightarrow \Aut(G)$
      a group homomorphism. Here, $\Aut(G)$ is the group of automorphisms 
      of $G$.
     We say a function $f\colon\Rd\rightarrow G$ is \emph{$\calT$-equivariant}
      with representation $\phi$, or is $\phi$-invariant,
     if there is $R>0$ such that
     $x,x'\in\Rd$, $\gamma\in\Gamma_0$ and
     \begin{align*}
           (\calT\sqcap B(x',R))-x'=\gamma(\calT\sqcap B(x,R)-x)
     \end{align*}
    imply $f(x')=\phi(\gamma)(f(x))$.
\end{defi}

In this case, we can also capture pattern-equivariant functions in terms of
local derivability.
For what follows let $\pi\colon\Gamma\ni(a,A)\mapsto A\in\Gamma_0$
      be the projection.

\begin{lem}\label{lem_pat_equiv_local_der_Rand}
       Let $\calT$ be a tiling which consists of bounded components.
       Let $\Gamma$ be a closed subgroup of $\Ed$ that contains $\Rd$,
        $G$ an abelian group and $\phi\colon\Gamma_0\rightarrow\Aut(G)$
       a group homomorphism. 
       Then for any $f\in\Map(\Rd,G)$,
        $f$ is $\calT$-equivariant with representation $\phi$ if and only if
      $\calT\LD f$.
       Here $\calT$ is regarded as an element of $\Patch(\Rd)$
         (Example \ref{ex_patch_Gamma_pattern_sp}),
        which is a pattern 
      space over $(\Rd,\Gamma)$, and $f$ is regarded as an element of 
       $\Map_{\phi\circ\pi}(\Rd,G,e)$ (Example \ref{ex_map_rho}), 
       which is an abstract pattern space over $(\Rd,\Gamma)$.
\end{lem}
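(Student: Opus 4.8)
The plan is to follow the same scheme as the proof of Lemma~\ref{lem_LD_equiv_with_original}, inserting the bookkeeping that the $\Od$-part of $\Gamma$ requires. Two preliminaries set things up. First, since $\calT$ is a patch that consists of bounded components, $c:=\sup_{T\in\calT}\diam T<\infty$ (as in the proof of Corollary~\ref{cor_equiv_ourLD_original}), and a tile meeting $B(x,R)$ has diameter at most $c$, hence is contained in $B(x,R+c)$; so for all $\alpha,\beta\in\Gamma$, $x\in\Rd$ and $R\geqq0$,
\begin{align*}
(\alpha\calT)\sci B(x,R+c)=(\beta\calT)\sci B(x,R+c)\ \Rightarrow\ (\alpha\calT)\sqcap B(x,R)=(\beta\calT)\sqcap B(x,R),
\end{align*}
and symmetrically $\sqcap B(x,R+c)$-agreement of $\alpha\calT$ and $\beta\calT$ forces $\sci B(x,R)$-agreement (a tile contained in $B(x,R)$ meets $B(x,R+c)$). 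Second, since $\Rd\subset\Gamma$, the projection $\pi\colon\Ed\to\Od$ maps $\Gamma$ into $\Gamma_{0}$: for $\gamma=(a,A)\in\Gamma$ one has $(0,A)=(-a,I)\gamma\in\Gamma\cap\Od=\Gamma_{0}$. Thus $\phi\circ\pi$ is defined on all of $\Gamma$, $\Map_{\phi\circ\pi}(\Rd,G,e)$ is a legitimate abstract pattern space over $(\Rd,\Gamma)$, and every product $B^{-1}A$ with $A,B$ in the image of $\pi$ lies in $\Gamma_{0}$.

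For the implication ``$\calT\LD f\Rightarrow f$ is $\calT$-equivariant with representation $\phi$'', I would use the equivalence of conditions~1 and~2 of Lemma~\ref{lem_local_derivability} to obtain, at $x_{1}=y_{1}=0$, a radius $R_{1}\geqq0$, and show that $f$ is $\calT$-equivariant with radius $R_{1}+c$. Suppose $x,x'\in\Rd$ and $\theta\in\Gamma_{0}$ satisfy $(\calT\sqcap B(x',R_{1}+c))-x'=\theta((\calT\sqcap B(x,R_{1}+c))-x)$. Put $\mu=(-x',I)$ and $\nu=(-\theta x,\theta)$, both in $\Gamma$ (the first lies in $\Rd$; the second equals $(-\theta x,I)(0,\theta)$). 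Using equivariance of $\sqcap$ under isometries and the identities $\mu x'=0$, $\nu x=0$, a direct computation in $\Rd\rtimes\Od$ shows that the left side of the hypothesis equals $(\mu\calT)\sqcap B(0,R_{1}+c)$ and the right side equals $(\nu\calT)\sqcap B(0,R_{1}+c)$; hence these agree, so by the first preliminary $(\mu\calT)\sci B(0,R_{1})=(\nu\calT)\sci B(0,R_{1})$. Condition~2 (with $R=0$) then gives $(\mu f)(0)=(\nu f)(0)$, which — since $\pi(\mu)=\mathrm{Id}$ and $\mu^{-1}0=x'$, while $\pi(\nu)=\theta$ and $\nu^{-1}0=x$ — is exactly $f(x')=\phi(\theta)(f(x))$.

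For the converse I would verify condition~1 of Lemma~\ref{lem_local_derivability} with $x_{0}=y_{0}=0$ and $R_{0}=R+c$, where $R$ witnesses the $\calT$-equivariance of $f$. Given $\gamma,\eta\in\Gamma$, $R'\geqq0$ and $(\gamma\calT)\sci B(0,R'+R+c)=(\eta\calT)\sci B(0,R'+R+c)$, and any $z\in B(0,R')$, cutting off by the ball $B(z,R+c)\subset B(0,R'+R+c)$ (using $(\calP\sci C_{1})\sci C_{2}=\calP\sci(C_{1}\cap C_{2})$) and applying the first preliminary yields $(\gamma\calT)\sqcap B(z,R)=(\eta\calT)\sqcap B(z,R)$. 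Setting $x=\gamma^{-1}z$, $x'=\eta^{-1}z$, $A=\pi(\gamma)$, $B=\pi(\eta)$ and transporting these patches by $\gamma^{-1}$ and $\eta^{-1}$, a computation in $\Rd\rtimes\Od$ gives $(\calT\sqcap B(x',R))-x'=(B^{-1}A)((\calT\sqcap B(x,R))-x)$ with $B^{-1}A\in\Gamma_{0}$; so $\calT$-equivariance of $f$ produces $f(x')=\phi(B^{-1}A)(f(x))$, equivalently $\phi(A)(f(x))=\phi(B)(f(x'))$, which is precisely $(\gamma f)(z)=(\eta f)(z)$. Since $z\in B(0,R')$ was arbitrary, $(\gamma f)\sci B(0,R')=(\eta f)\sci B(0,R')$, as required.

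The routine-looking steps are genuinely routine, and the main care is needed in the semidirect-product manipulations above: faithfully translating Rand's ``recenter at the origin and rotate by $\theta$'' formulation into equalities of the shape $(\mu\calT)\sqcap B(0,\cdot)=(\nu\calT)\sqcap B(0,\cdot)$, keeping the translation and rotation parts of the group elements straight, and checking that the relevant orthogonal transformations actually belong to $\Gamma_{0}$ (which is exactly where the standing hypothesis $\Rd\subset\Gamma$ is used). Once those identities are in hand, the passage between $\sqcap$ and $\sci$ via the diameter bound $c$ and the invocation of Lemma~\ref{lem_local_derivability} are immediate.
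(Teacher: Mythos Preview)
The paper states this lemma without proof, treating it as a routine variant of Lemma~\ref{lem_LD_equiv_with_original} once the $\Gamma_{0}$-action is taken into account. Your argument is correct and is precisely the expected one: you pass between $\sqcap$ and $\sci$ using the diameter bound coming from bounded components, translate Rand's ``recenter and rotate'' hypothesis into an equality $(\mu\calT)\sci B(0,\cdot)=(\nu\calT)\sci B(0,\cdot)$ for suitable $\mu,\nu\in\Gamma$, and then invoke condition~1/2 of Lemma~\ref{lem_local_derivability}; the observation that $\pi(\Gamma)\subset\Gamma_{0}$ because $\Rd\subset\Gamma$ is exactly what is needed to make $\phi\circ\pi$ and the rotation $B^{-1}A$ legitimate. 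There is nothing to add.
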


For Rand's definition,
it may be that there is no maximal pattern-equivariant functions,
but Theorem \ref{translation_thm}
gives us a sufficient condition for $\calP$ and $\Sigma$ to admit
maximal elements.

By using Theorem \ref{translation_thm} we obtain a result similar to
Theorem \ref{thm1_pat-equi_remembers}, which says the space of pattern-equivariant
functions has all the information of the original object up to MLD.
Here is the setting for the rest of this section:
\emph{let $\Gamma$ be a closed subgroup of $\Ed$ that contains $\Rd$.
       Take a group homomorphism $\phi\colon\Gamma_0\rightarrow GL_m(\mathbb{C})$.
Let $C^{\infty}_{\phi\circ\pi}(\Rd,\mathbb{C}^m,0)$ be the subshift of
$\Map_{\phi\circ\pi}(\Rd,\mathbb{C}^m,0)$ consisting of all smooth elements of
$\Map_{\phi\circ\pi}(\Rd,\mathbb{C}^m,0)$.
(We say a map $f\colon\Rd\rightarrow\mathbb{C}^m$ is smooth if
$\langle f(\cdot),v\rangle$ is smooth for any $v\in\mathbb{C}^m$, where
$\langle\cdot,\cdot\rangle$ is the standard inner product.)}
In order to use Lemma \ref{lem_max_element_sigma_coincide}
to $\Sigma=C^{\infty}_{\phi\circ\pi}(\Rd,G,0)$, we need to show $\Sigma$ 
admits sufficiently many symmetric
building blocks.
We show in two cases there are sufficiently many building blocks
(Lemma \ref{lem_existence_symmetric_building_block_in_Cinfty} and
Lemma \ref{lem2_existence_many_building_blocks}.)

\begin{lem}\label{lem_existence_symmetric_building_block_in_Cinfty}
       Suppose there is $v\in\mathbb{C}^m\setminus\{0\}$ such that $\phi(\gamma) v=v$
       for each $\gamma\in\Gamma_0$.
       Then $C^{\infty}_{\phi\circ\pi}(\Rd,\mathbb{C}^m,0)$ has sufficiently many
        symmetric building blocks: in other words,
        for any $r>0$ there is a symmetric building block
        $g_r$ for $(0,r)$.
\end{lem}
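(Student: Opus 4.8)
The plan is to exhibit, for each $r>0$, a single smooth function $g_{r}$ supported in a small ball about the origin and valued along the invariant line $\mathbb{C}v$, and then to check that the one-element family $\{g_{r}\}$ satisfies the three axioms of Definition \ref{def_family_build_block} together with $\Sym_{\Gamma}g_{r}=\Gamma_{0}$. Concretely, I would fix a nonnegative radial bump function $\psi\in C^{\infty}(\Rd,\mathbb{R})$ (so $\psi(x)$ depends only on $\|x\|$) with $\psi(0)>0$ and $\supp\psi\subset B(0,r)$ — the standard mollifier rescaled to $B(0,r/2)$ will do — and set $g_{r}(x)=\psi(x)\,v$. Since $v\neq 0$ one gets $\supp g_{r}=\supp\psi$, which is nonempty (it contains $0$) and contained in $B(0,r)$, so the first axiom holds; and $g_{r}\in C^{\infty}_{\phi\circ\pi}(\Rd,\mathbb{C}^{m},0)$ because $\langle g_{r}(\cdot),w\rangle=\langle v,w\rangle\,\psi$ is smooth for every $w\in\mathbb{C}^{m}$.

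Next I would compute the symmetry group. Because $\Rd\subset\Gamma$, the isotropy group $\Gamma_{0}$ of $0$ consists exactly of the elements $(0,A)\in\Gamma$, identified via $\pi$ with a closed subgroup of $\Od$. For $\gamma=(0,A)\in\Gamma_{0}$ one has
\begin{align*}
   (\gamma g_{r})(x)=\phi(\pi(\gamma))\bigl(g_{r}(\gamma^{-1}x)\bigr)=\psi(A^{-1}x)\,\phi(A)v=\psi(x)\,v=g_{r}(x),
\end{align*}
using that $\psi$ is $\Od$-invariant and that $\phi(A)v=v$ by hypothesis; hence $\Gamma_{0}\subset\Sym_{\Gamma}g_{r}$. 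Conversely, if $\gamma=(a,A)\in\Gamma$ and $\gamma g_{r}=g_{r}$, then by Lemma \ref{lem1_before_examples_equiv_pat_sp} it fixes $\supp g_{r}=\supp\psi$, a closed Euclidean ball centred at $0$; but $\gamma$ carries this ball to the ball of the same radius centred at $\gamma 0=a$, forcing $a=0$ and $\gamma\in\Gamma_{0}$. This yields $\Sym_{\Gamma}g_{r}=\Gamma_{0}$, which is the symmetric-building-block condition and, applied to the singleton family $\{g_{r}\}$, also the third axiom of Definition \ref{def_family_build_block}.

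For the compatibility axiom I would argue that if $\gamma,\eta\in\Gamma$ satisfy $\rho(\gamma 0,\eta 0)>4r$, then, again by Lemma \ref{lem1_before_examples_equiv_pat_sp}, $\supp(\gamma g_{r})=\gamma(\supp g_{r})\subset B(\gamma 0,r)$ and $\supp(\eta g_{r})\subset B(\eta 0,r)$ are disjoint, so $\gamma g_{r}$ and $\eta g_{r}$ are compatible by the characterization of Lemma \ref{lem1_map_glueable} (the restriction condition is vacuous on an empty intersection of supports; an explicit common majorant is the smooth function $\gamma g_{r}+\eta g_{r}$, which again lies in $C^{\infty}_{\phi\circ\pi}(\Rd,\mathbb{C}^{m},0)$). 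Combining the three points shows that $\{g_{r}\}$ is a family of building blocks for $r$ consisting of a single symmetric building block, which is exactly the assertion.

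I do not expect a serious obstacle; this is essentially the standard bump-function construction. The one place that requires care is the bookkeeping of the twisted $\Gamma$-action on $\Map_{\phi\circ\pi}(\Rd,\mathbb{C}^{m},0)$: the rotational symmetry of the scalar profile $\psi$ lifts to the vector-valued $g_{r}$ precisely because $\pi(\gamma)$ fixes $v$ exactly when $\gamma\in\Gamma_{0}$, and one must record that supports transform equivariantly under this twisted action just as in the untwisted space $\Map(\Rd,\mathbb{C}^{m},0)$ — which is what licenses the use of Lemmas \ref{lem1_before_examples_equiv_pat_sp} and \ref{lem1_map_glueable} here.
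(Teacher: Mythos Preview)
Your argument is correct and follows essentially the same route as the paper: both take a radial smooth bump (the paper uses $f_{r}(x)=\exp\bigl(-1/(r^{2}-\|x\|^{2})\bigr)$ on $\|x\|<r$) multiplied by the invariant vector $v$, and verify the building-block axioms together with $\Sym_{\Gamma}g_{r}=\Gamma_{0}$. You are in fact more careful than the paper, which merely asserts $\Sym_{\Gamma}g_{r}=\Gamma_{0}$ and writes down the piecewise majorant without invoking Lemma~\ref{lem1_map_glueable}; your explicit verification of both inclusions for the symmetry group and your observation that $\gamma g_{r}+\eta g_{r}$ is the smooth majorant are welcome additions.
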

\begin{proof}
     For each $r>0$, set
     \begin{align*}
         f_r(x)=\begin{cases}
		 \exp(-\frac{1}{r^2-\|x\|^2}) &\text{if $\|x\|<r$}\\
                  0 &\text{otherwise}
		\end{cases}     
     \end{align*}
       for each $x\in\Rd$. Then $f_r$ is a smooth real-valued function on $\Rd$.
        Set $g_r(x)=f_r(x)v$.Then $\emptyset\neq\supp g_r\subset B(0,r)$.
        Moreover, if $\gamma,\eta\in\Gamma$ and $\rho(\gamma 0,\eta 0)>4r$, then
        $\gamma g_r$ and $\eta g_r$ are compatible since 
        \begin{align*}
	       g(x)=
                \begin{cases}
		     \gamma g_r(x)&\text{if $x\in B(\gamma 0,r)$}\\
                     \eta g_r(x) &\text{if $x\in B(\eta 0,r)$}\\
                      0         &\text{otherwise}
		\end{cases}
	\end{align*}
        is a majorant.
        Finally $\Sym_{\Gamma}g_r=\Gamma_0$.
\end{proof}

\begin{lem}\label{lem2_existence_many_building_blocks}
        Suppose $\Gamma_0$ is finite.
       Then $C^{\infty}_{\phi\circ\pi}(\Rd,\mathbb{C}^m,0)$ has sufficiently many building
       blocks.
\end{lem}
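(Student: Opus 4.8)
The plan is to produce, for every $r>0$, a single smooth map $g_r\in C^{\infty}_{\phi\circ\pi}(\Rd,\mathbb{C}^m,0)$ which is a \emph{symmetric} building block for $r$; since this works for all $r$, the space $C^{\infty}_{\phi\circ\pi}(\Rd,\mathbb{C}^m,0)$ then contains the symmetric building blocks required in the running hypotheses of Theorem \ref{translation_thm}. Unlike Lemma \ref{lem_existence_symmetric_building_block_in_Cinfty}, here I cannot assume $\phi$ has an invariant vector, so a single radial bump times a fixed vector need not be $\Gamma_0$-invariant. Instead I would use an \emph{averaged} (induction-style) bump: spread copies of a small bump over the $\Gamma_0$-orbit of a generic point, multiplying the $A$-copy by $\phi(A)v$. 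First note that since $\Gamma\supset\Rd$, every $(a,A)\in\Gamma$ satisfies $(0,A)=(-a,I)(a,A)\in\Gamma_0$, so $\Gamma=\{(a,A):a\in\Rd,\ A\in H\}$ and $\Gamma_0=\{(0,A):A\in H\}$ where $H=\pi(\Gamma_0)$ is a finite subgroup of $\Od$ (finite because $\Gamma_0$ is).

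Fix $r>0$. First I would choose $z\in\Rd$ with $\|z\|=r/2$ whose $H$-stabilizer is trivial (a generic choice works: the fixed-point sets of the non-identity elements of $H$ form finitely many proper subspaces), then pick $\delta\in(0,r/2)$ small enough that the closed balls $B(Az,\delta)$, $A\in H$, are pairwise disjoint. With $\chi$ the standard smooth bump satisfying $\{\chi\neq0\}=B(z,\delta)^{\circ}$ (for instance $\chi(x)=\exp(-1/(\delta^2-\|x-z\|^2))$ on that ball and $0$ outside) and any $v\in\mathbb{C}^m\setminus\{0\}$, set
\[
   g_r(x)=\sum_{A\in H}\chi(A^{-1}x)\,\phi(A)v .
\]
Then I would check the clauses of Definition \ref{def_family_build_block} for $\frakF=\{g_r\}$ in turn: (i) $g_r$ is smooth, and by disjointness of the balls $g_r$ equals $\chi(\cdot)v$ near $z$, so $\supp g_r=\bigcup_{A\in H}B(Az,\delta)$ is nonempty and contained in $B(0,r/2+\delta)\subset B(0,r)$; (ii) reindexing $A\mapsto B^{-1}A$ gives $g_r(Bx)=\phi(B)g_r(x)$ for $B\in H$, hence $(0,B)g_r=g_r$, i.e. $\Gamma_0\subset\Sym_{\Gamma}g_r$; (iii) if $\gamma,\eta\in\Gamma$ with $\rho(\gamma 0,\eta 0)>4r$ then $\supp(\gamma g_r)\subset B(\gamma 0,r)$ and $\supp(\eta g_r)\subset B(\eta 0,r)$ are disjoint, so $\gamma g_r$ and $\eta g_r$ are compatible by Lemma \ref{lem1_map_glueable}; (iv) if $\gamma=(a,A)\in\Sym_{\Gamma}g_r$, then $A\in H$, so $(0,A)\in\Gamma_0\subset\Sym_{\Gamma}g_r$ and therefore the translation $(a,A)(0,A)^{-1}=(a,I)$ also fixes $g_r$, i.e. $g_r(\cdot-a)=g_r$; a nonzero function with bounded support has no nonzero period, so $a=0$ and $\gamma 0=0$. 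Clauses (ii) and (iv) together give $\Sym_{\Gamma}g_r=\Gamma_0$, so $g_r$ is a symmetric building block for $r$.

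The only substantive point is (iv), and the worry it addresses: the averaged bump could conceivably acquire spurious symmetries (a translation, or an "off-orbit" rotation) beyond $\Gamma_0$. This worry evaporates once one observes that any symmetry of $g_r$ already lies in $\Gamma=\Rd\rtimes H$, so that dividing by the built-in $\Gamma_0$-symmetry reduces it to a translational symmetry, which a compactly supported function simply cannot have. Thus the real work is just the bookkeeping of the averaging — choosing $z$ generic and $\delta$ small so that the support stays inside $B(0,r)$ and no cancellation occurs — after which the symmetry group is read off directly; I do not anticipate any obstacle beyond this.
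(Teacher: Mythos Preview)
Your proof is correct and follows essentially the same construction as the paper: place $\phi$-twisted copies of a small bump at the $\Gamma_0$-orbit of a generic point near the origin and check that the resulting function is a symmetric building block for $r$. The paper writes this as a supremum $h=\bigvee\{(A,Ax)f\mid A\in\Gamma_0\}$ of translates of a radial bump $f=f_{r_1}(\cdot)v$, which for disjoint supports coincides with your explicit sum $g_r(x)=\sum_{A\in H}\chi(A^{-1}x)\,\phi(A)v$; your write-up is in fact more careful, spelling out the verification of $\Sym_{\Gamma}g_r=\Gamma_0$ (via the reduction to a translational period) that the paper leaves implicit.
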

\begin{proof}
      For any $r>0$, take $x\in\Rd$ and $r_1\in(0,r/4)$ such that $\|x\|<r/2$ and
       if $A\in\Gamma_0$ and $A\neq I$, then $\|Ax-x\|>4r_1$.
       Take $v\in\mathbb{C}^m$ and set $f(x)=f_{r_1}(x)v$
      (we defined $f_{r_1}$ in the proof of Lemma
 \ref{lem_existence_symmetric_building_block_in_Cinfty}.)
       Set $h=\bigvee\{(A,Ax)f\mid A\in\Gamma_0\}$.
 Then $h$ is a symmetric building block.
\end{proof}


\begin{thm}\label{thm_pat_equiv_ft_remembers_P}
       Assume the same assumption as in Lemma 
       \ref{lem_existence_symmetric_building_block_in_Cinfty} or in
       Lemma \ref{lem2_existence_many_building_blocks}.
       Let $\Pi$ and $\Pi'$ be glueable abstract pattern spaces over $(\Rd,\Gamma)$ and take
       $\calP$ and $\calP'$ from $\Pi$ and $\Pi'$ respectively.
       Assume $\calP$ and $\calP'$ are both Delone-deriving
       and consist of bounded components.
      Set $\Sigma=C^{\infty}_{\phi\circ\pi}(\Rd,\mathbb{C}^m,0)$.
       Then $\calP\MLD\calQ$ if and only if $\Sigma_{\calP}=\Sigma_{\calQ}$.
\end{thm}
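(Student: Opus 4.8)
The plan is to reduce the statement to Theorem \ref{translation_thm} and the elementary argument behind Lemma \ref{lem_max_element_sigma_coincide}. Concretely, I would apply Lemma \ref{lem_max_element_sigma_coincide} with $\Pi_1=\Pi$, $\Pi_2=\Pi'$, $\Pi_3=\Map_{\phi\circ\pi}(\Rd,\mathbb{C}^m,0)$, and with the subshift $\Sigma=C^{\infty}_{\phi\circ\pi}(\Rd,\mathbb{C}^m,0)$ of $\Pi_3$ (note $\Sigma$ is a subshift but, since cutting off destroys continuity, not an abstract pattern subspace, which is exactly why $\Pi_3$ is taken to be the full mapping space). The only thing that is not formal is to produce, for the given $\calP\in\Pi$ and $\calQ=\calP'\in\Pi'$, elements $\calS,\calS'\in\Sigma$ with $\calP\MLD\calS$ and $\calP'\MLD\calS'$. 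Once these exist, the three-line proof of Lemma \ref{lem_max_element_sigma_coincide} applies verbatim (it uses only the existence of such MLD-partners for the two patterns at hand together with transitivity of $\LD$): if $\calP\MLD\calP'$ then $\Sigma_{\calP}=\Sigma_{\calP'}$; and if $\Sigma_{\calP}=\Sigma_{\calP'}$ then $\calS'\in\Sigma_{\calP'}=\Sigma_{\calP}$ gives $\calP\LD\calS'\LD\calP'$, hence $\calP\LD\calP'$, and symmetrically $\calP'\LD\calP$, so $\calP\MLD\calP'$.

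To obtain $\calS$ and $\calS'$ I would invoke Theorem \ref{translation_thm}, so I must check its hypotheses for $\Sigma$ inside the ambient space $\Pi_2:=\Map_{\phi\circ\pi}(\Rd,\mathbb{C}^m,0)$. That $\Pi_2$ is a glueable abstract pattern space over $(\Rd,\Gamma)$ follows from Proposition \ref{prop_map_glueable}, since glueability is a property of the abstract-pattern-space-over-$\Rd$ structure alone and is unaffected by the equivariant $\Gamma$-action. That $\Sigma$ contains sufficiently many symmetric building blocks is precisely the content of Lemma \ref{lem_existence_symmetric_building_block_in_Cinfty} (resp. Lemma \ref{lem2_existence_many_building_blocks}) under the two alternative hypotheses. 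The remaining requirement is that $\Sigma$ be supremum-closed, i.e.\ that the supremum in $\Pi_2$ of a locally finite, pairwise compatible family $\Xi\subset\Sigma$ of smooth maps is again smooth. Here I would use two observations: first, a $C^{\infty}$ map $f\colon\Rd\to\mathbb{C}^m$ vanishes together with all of its partial derivatives at every point of $\partial(\supp f)$, because $f$ is identically $0$ on the open set $\Rd\setminus\supp f$ and every boundary point is a limit of points of that set; second, by Lemma \ref{lem1_map_glueable} any two members of $\Xi$ agree on the intersection of their supports, which (given the previous observation, since $\supp f$ is regular closed) forces their jets to match along those interfaces. Combining this with the explicit formula for $\bigvee\Xi$ from Proposition \ref{prop_map_glueable} and with local finiteness to reduce near each point to finitely many members of $\Xi$, one shows by induction on the order of differentiation that $\bigvee\Xi$ is smooth, hence lies in $\Sigma$. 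With the three hypotheses verified, Theorem \ref{translation_thm} applies to each of $\calP$ and $\calP'$ — which by assumption are Delone-deriving and consist of bounded components — and yields the desired $\calS,\calS'\in\Sigma$.

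The step I expect to be the main obstacle is precisely the supremum-closedness of $C^{\infty}_{\phi\circ\pi}(\Rd,\mathbb{C}^m,0)$: checking that gluing a locally finite pairwise compatible family of smooth maps along the boundaries of their supports introduces no loss of regularity requires the flatness-on-the-support-boundary phenomenon above and a somewhat delicate local analysis at points lying simultaneously on several support boundaries. Everything else — the reduction to Theorem \ref{translation_thm} and the final equivalence — is an essentially mechanical application of results already established.
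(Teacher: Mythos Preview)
Your approach is exactly the paper's: apply Theorem \ref{translation_thm} to obtain smooth MLD partners for $\calP$ and $\calP'$, then invoke Lemma \ref{lem_max_element_sigma_coincide}. On the supremum-closedness of $C^{\infty}_{\phi\circ\pi}(\Rd,\mathbb{C}^m,0)$ that you flag as the main obstacle: the delicate boundary analysis can be avoided, because every supremum actually formed in the proof of Theorem \ref{translation_thm} (the constructions of $\calE$, $\calR_\lambda$, and $\calS$) is over a family whose distinct members have pairwise \emph{disjoint}, indeed positively separated, supports --- this is forced by the building-block and admissible-digit conditions --- so closure of $C^\infty$ under those particular suprema is immediate.
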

\begin{proof}
         By Theorem \ref{translation_thm}, for each $\calP\in\Pi\cup\Pi'$ there is
         $f\in\Sigma$ such that $\calP\MLD f$. The claim follows from Lemma
 \ref{lem_max_element_sigma_coincide}.
\end{proof}

Theorem \ref{thm1_pat-equi_remembers} and Theorem \ref{thm_pat_equiv_ft_remembers_P}
shows that, in many cases,  in order to study abstract patterns, it suffices to
study the space $\Sigma_{\calP}$ 
of certain pattern-equivariant functions.
We may regard the space $\Sigma_{\calP}$ as the space of functions
that reflect the structure of $\calP$.
Sometimes in mathematics the set of functions that reflect the structure of
an object remembers the original object.
For example, consider a locally compact abelian group and its dual, or
a smooth manifold $M$ and its space $C^{\infty}(M)$ of smooth functions.
Theorem \ref{thm1_pat-equi_remembers} and
Theorem \ref{thm_pat_equiv_ft_remembers_P} is similar to such phenomena.

\section*{Acknowledgment }
I thank Takeshi Katsura for giving me comments for the draft of this article.
I also thank the referees for giving me advice on various points in the draft,
which significantly improved the article.

\bibliographystyle{amsplain}
\bibliography{tiling}

\providecommand{\bysame}{\leavevmode\hbox to3em{\hrulefill}\thinspace}
\providecommand{\MR}{\relax\ifhmode\unskip\space\fi MR }
\providecommand{\MRhref}[2]{%
  \href{http://www.ams.org/mathscinet-getitem?mr=#1}{#2}
}
\providecommand{\href}[2]{#2}
\begin{thebibliography}{10}

\bibitem{AP}
Jared~E. Anderson and Ian~F. Putnam, \emph{Topological invariants for
  substitution tilings and their associated {$C^*$}-algebras}, Ergodic Theory
  Dynam. Systems \textbf{18} (1998), no.~3, 509--537. \MR{1631708
  (2000a:46112)}

\bibitem{baake2013aperiodic}
Michael Baake and Uwe Grimm, \emph{Aperiodic order}, vol.~1, Cambridge
  University Press, 2013.

\bibitem{baake2017aperiodic}
\bysame, \emph{Aperiodic order: Volume 2, crystallography and almost
  periodicity}, vol. 166, Cambridge University Press, 2017.

\bibitem{MR1132337}
Michael Baake, Martin Schlottmann, and Peter~D Jarvis, \emph{Quasiperiodic
  tilings with tenfold symmetry and equivalence with respect to local
  derivability}, Journal of Physics A: Mathematical and General \textbf{24}
  (1991), no.~19, 4637.

\bibitem{BH}
Nicolas Bedaride and Arnaud Hilion, \emph{Geometric realizations of two
  dimensional substitutive tilings, arxiv:1101.3905v4 [math.gt]}.

\bibitem{MR2201938}
Alex Clark and Lorenzo Sadun, \emph{When shape matters: deformations of tiling
  spaces}, Ergodic Theory Dynam. Systems \textbf{26} (2006), no.~1, 69--86.
  \MR{2201938}

\bibitem{MR857454}
Branko Gr\"unbaum and G.~C. Shephard, \emph{Tilings and patterns}, W. H.
  Freeman and Company, New York, 1987. \MR{857454}

\bibitem{MR1985494}
Johannes Kellendonk, \emph{Pattern-equivariant functions and cohomology}, J.
  Phys. A \textbf{36} (2003), no.~21, 5765--5772. \MR{1985494 (2004e:52025)}

\bibitem{MR1976605}
Jeong-Yup Lee, Robert~V. Moody, and Boris Solomyak, \emph{Consequences of pure
  point diffraction spectra for multiset substitution systems}, Discrete
  Comput. Geom. \textbf{29} (2003), no.~4, 525--560. \MR{1976605 (2005g:37026)}

\bibitem{MR2851885}
Jeong-Yup Lee and Boris Solomyak, \emph{Pisot family self-affine tilings,
  discrete spectrum, and the {M}eyer property}, Discrete Contin. Dyn. Syst.
  \textbf{32} (2012), no.~3, 935--959. \MR{2851885 (2012h:37040)}

\bibitem{MR1460032}
Robert~V. Moody, \emph{Meyer sets and their duals}, The mathematics of
  long-range aperiodic order ({W}aterloo, {ON}, 1995), NATO Adv. Sci. Inst.
  Ser. C Math. Phys. Sci., vol. 489, Kluwer Acad. Publ., Dordrecht, 1997,
  pp.~403--441. \MR{1460032 (98e:52029)}

\bibitem{de2006pattern}
Betseygail Rand, \emph{Pattern-equivariant cohomology of tiling spaces with
  rotations}, Ph.D. thesis, The University of Texas at Austin, 2006.

\bibitem{MR2371606}
Lorenzo Sadun, \emph{Pattern-equivariant cohomology with integer coefficients},
  Ergodic Theory Dynam. Systems \textbf{27} (2007), no.~6, 1991--1998.
  \MR{2371606}

\bibitem{MR1971208}
Lorenzo Sadun and R.~F. Williams, \emph{Tiling spaces are {C}antor set fiber
  bundles}, Ergodic Theory Dynam. Systems \textbf{23} (2003), no.~1, 307--316.
  \MR{1971208}

\bibitem{MR1340198}
Marjorie Senechal, \emph{Quasicrystals and geometry}, Cambridge University
  Press, Cambridge, 1995. \MR{1340198}

\bibitem{MR0348037}
Raimond~A. Struble, \emph{Metrics in locally compact groups}, Compositio Math.
  \textbf{28} (1974), 217--222. \MR{0348037}

\bibitem{Wh}
Michael~F. Whittaker, \emph{{$C^*$}-algebras of tilings with infinite
  rotational symmetry}, J. Operator Theory \textbf{64} (2010), no.~2, 299--319.
  \MR{2718945 (2011m:46127)}

\end{thebibliography}
\end{document}